\tikzset{
	MyPersp/.style={scale=2,x={(0.8cm,0cm)},y={(0cm,0.25cm)},
    z={(0cm,1cm)}},
	MyPoints/.style={fill=white,draw=black,thick}
		}
\newcommand{\C}{\mathbb{C}}
\newcommand{\F}{\mathbb{F}}
\newcommand{\K}{\mathbb{K}}
\renewcommand{\O}{\mathbb{O}}
\newcommand{\E}{\mathbb{E}}
\newcommand{\Fq}{\F_q}
\newcommand{\Fqb}{\overline{\mathbb{F}}_q}
\newcommand{\Fpb}{\overline{\mathbb{F}}_p}
\newcommand{\Ql}{\mathbb{Q}_\ell}
\newcommand{\Zl}{\mathbb{Z}_\ell}
\newcommand{\Fl}{\mathbb{F}_\ell}
\newcommand{\Z}{\mathbb{Z}}
\newcommand{\V}{\mathbb{V}}
\newcommand{\cO}{\mathcal{O}}
\newcommand{\Fr}{\mathsf{Fr}}
\newcommand{\bA}{\mathbb{A}}
\DeclareMathOperator{\Spec}{Spec}
\newcommand{\pt}{\mathrm{pt}}
\newcommand{\Projfr}{\mathsf{Projf-}}
\newcommand{\Mod}{\mathsf{-Mod}}
\newcommand{\Modr}{\mathsf{Mod-}}
\newcommand{\Modrz}{\mathsf{Mod^\Z-}}
\newcommand{\Modf}{\mathsf{-Modf}}
\newcommand{\Modfr}{\mathsf{Modf-}}
\newcommand{\Modfrz}{\mathsf{Modf^{\Z}}-}
\newcommand{\Modfrdec}{\mathsf{Modf}_{\mathrm{dec}}\mathsf{-}}
\newcommand{\Modfrdecproj}{\mathsf{Modf}_{\mathrm{dec}}^{\mathrm{proj}}\mathsf{-}}
\newcommand{\cN}{{\mathcal{N}}}
\newcommand{\cB}{\mathcal{B}}
\newcommand{\cDb}{\cD^{\mathrm{b}}}
\newcommand{\cKb}{\mathcal{K}^{\mathrm{b}}}
\newcommand{\Perv}{\mathsf{Perv}}
\newcommand{\sfP}{\mathsf{P}}
\newcommand{\sfD}{\mathsf{D}}
\newcommand{\IC}{\mathrm{IC}}
\newcommand{\pH}{{}^p\! \mathcal{H}}
\newcommand{\free}{{\mathrm{free}}}
\newcommand{\sA}{\mathsf{A}}
\newcommand{\DGD}{\mathsf{-dgDer}}
\newcommand{\DGDr}{\mathsf{dgDer-}}
\newcommand{\DGHr}{\mathsf{dgHo-}}
\newcommand{\DGDZr}{\mathsf{dgDer}^{\mathbb{Z}}\mathsf{-}}
\newcommand{\DGDfr}{\mathsf{dgDerf-}}
\newcommand{\cF}{\mathcal{F}}
\newcommand{\cE}{\mathcal{E}}
\newcommand{\cG}{\mathcal{G}}
\newcommand{\cK}{\mathcal{K}}
\newcommand{\cP}{\mathcal{P}}
\newcommand{\cQ}{\mathcal{Q}}
\newcommand{\cH}{\mathcal{H}}
\newcommand{\cS}{\mathcal{S}}
\newcommand{\cM}{\mathcal{M}}
\newcommand{\cL}{\mathcal{L}}
\newcommand{\simto}{\xrightarrow{\sim}}
\newcommand{\lotimes}{{\stackrel{_L}{\otimes}}}
\newcommand{\lan}{\langle}
\newcommand{\ran}{\rangle}
\newcommand{\End}{\mathrm{End}}
\newcommand{\For}{\mathrm{For}}
\newcommand{\cD}{\mathcal{D}}
\newcommand{\onto}{\twoheadrightarrow}
\newcommand{\into}{\hookrightarrow}
\newcommand{\id}{\mathrm{id}}
\newcommand{\scS}{\mathscr{S}}
\newcommand{\cC}{\mathcal{C}}
\DeclareMathOperator{\Hom}{Hom}
\DeclareMathOperator{\Ext}{Ext}
\newcommand{\qis}{\mathrm{qis}}
\newcommand{\triright}{\stackrel{[1]}{\to}}
\newcommand{\excise}[1]{}
\newcommand{\res}{\mathsf{res}}
\newcommand{\Th}{\Theta}
\def\coH#1{\mathsf{H}^{#1}}
\def\coHc#1{\mathsf{H}_{\mathrm{c}}^{#1}}
\newcommand{\Ho}{\mathrm{H}}
\newtheorem{thm*}{Theorem}
\numberwithin{equation}{subsection}
\newtheorem{thm}[equation]{Theorem}
\newtheorem{lem}[equation]{Lemma}
\newtheorem{prop}[equation]{Proposition}
\newtheorem{cor}[equation]{Corollary}
\theoremstyle{definition}
\theoremstyle{remark}
\newtheorem{rmk}[equation]{Remark}
\newtheorem{ex}[equation]{Example}
\title[Modular Koszul Duality]{Modular Koszul duality}
\author{Simon Riche}
\address{Clermont Universit{\'e}, Universit{\'e} Blaise Pascal, Laboratoire de  
Math{\'e}ma\-tiques, BP 10448, F-63000 Clermont-Ferrand. \newline
\indent CNRS, UMR 6620, Laboratoire de Math{\'e}matiques, F-63177 Aubi{\`e}re.
}
\email{simon.riche@math.univ-bpclermont.fr}
\author{Wolfgang Soergel}
\address{Mathematisches Institut, Universit{\"a}t Freiburg, Eckerstra{\ss}e
1, D-79104 Freiburg, Germany.}
\email{Wolfgang.Soergel@math.uni-freiburg.de}
\author{Geordie Williamson}
\address{Max-Planck-Institut f\"ur Mathematik, Vivatsgasse 7, 53111,
  Bonn, Germany.
}
\email{geordie@mpim-bonn.mpg.de}
\begin{document}

\begin{abstract}
We prove an analogue of Koszul duality for category $\cO$ of a reductive group $G$ in positive
characteristic $\ell$ larger than $1$ plus the number of roots of $G$. However there are no Koszul rings, and we do not prove
an analogue of the Kazhdan--Lusztig conjectures in this context. The
main technical result is the formality of the dg-algebra of extensions
of parity sheaves on the flag variety if the characteristic
of the coefficients is at least the number of roots of $G$ plus $2$.
\end{abstract}

\maketitle

\section{Introduction}

\subsection{The scaffolding of Koszul duality}
\label{ss:scaffolding}

Given a $\Z$-graded ring $E$ let us consider the abelian category
$\Modrz E$ of all $\Z$-graded  right $E$-modules. There are two
ways to ``forget a part of the grading'' on its derived
category. That is, there are two triangulated functors:
\[
\xymatrix@=0.6cm{
\cD(\Modr E) & \cD(\Modrz
E) \ar[r]^-{\overline{v}} \ar[l]_-{v} & \DGDr E.
}
\]
On the left is simply the derived functor $v$ of forgetting the grading
$\Modrz E \to  \Modr E$. On the right is the derived category of the
differential graded ring $(E, d=0)$, which is obtained as the
localisation of homotopy category of differential graded right modules at
quasi-isomorphisms (see for example \cite{BL} for a thorough
discussion). The right-hand functor $\overline{v}$ sends a complex of graded
modules, thought of as a bigraded abelian group $(M^{i,j})$ (with
$(\cdot a) : M^{i,j} \to M^{i,j+|a|}$ for all $a \in E$ homogeneous of
degree $|a|$ and differential $d:M^{i,j} \to M^{i+1,j}$) to the
differential graded $E$-module $\overline{v} M$ with $(\overline{v} M)^n :=
\bigoplus_{i+j=n}M^{i,j}$ and obvious differential. It is this
construction which provides the basic homological scaffolding of
Koszul duality.

In order to obtain Koszul duality in the sense of \cite{BGS} we equip
the above picture with some finiteness conditions. Let $\Bbbk$ be a
field and let $E$ be
a finite dimensional $\Z$-graded $\Bbbk$-algebra of finite global
dimension. Consider the categories $\Modfr E$ (resp.~$\Modfrz E$) of finite
dimensional (resp.~finite dimensional $\Z$-graded) right
$E$-modules. Let $\DGDfr E \subset \DGDr E$ denote the full triangulated
subcategory with objects finite dimensional differential graded right
$E$-modules (see \S \ref{ss:DGDf}). In this setting Koszul duality for category $\cO$ 
means the existence of a finite dimensional $\Z$-graded $\C$-algebra
$E$ together with vertical equivalences of categories:
\begin{equation}
\label{eqn:diagram-Koszul-duality}
\vcenter{
\xymatrix@=0.6cm{
\cDb(\Modfr E)  & \ar[l]_-{v} \cDb(\Modfrz E ) \ar[r]^-{\overline{v}} & \DGDfr E  \\
\cDb(\cO_0) \ar[u]^{\wr} & & \cDb_{(B)} (G/B, \C) \ar[u]^\wr
}
}
\end{equation}
Here $G$ denotes a complex semi-simple group with Borel
subgroup $B \subset G$ and $\cDb_{(B)}(G/B,\C)$ denotes the full subcategory of the bounded derived category of
sheaves of $\C$-vector spaces on the flag variety $G/B$ whose
cohomology sheaves are constructible with respect to the
stratification by Bruhat cells. Such complexes will be called ``Bruhat
constructible'' from now on. On the left $\cO_0$  denotes the
principal block of category $\cO$ for the Langlands dual group
$G^\vee$.

We can choose our $\Z$-graded $\C$-algebra $E$ and our vertical
equivalences of categories so that the left-hand equivalence preserves
the t-structures (and hence is induced from an equivalence of
abelian $\C$-categories $\Modfr E \simto \cO_0$) and such
that for all $x$ in the Weyl group $W$ there exists a finite
dimensional $\Z$-graded $E$-module $\widetilde{M}_x$, which
specializes on the left-hand side to the Verma module with highest
weight $x \cdot 0$, and on the right-hand side to the derived direct
image of the constant perverse sheaf on the Bruhat cell $BxB/B$ under
the embedding in $G/B$. We can depict the situation as follows:
\[
M(x \cdot 0) \quad \mapsfrom \quad \widetilde{M}_x  \quad \mapsto \quad i_{x*} \underline{\C}_{BxB/B}[\ell(x)].
\]
Here $x \cdot 0$ means as usual $x \rho - \rho$ where $\rho$ is half the sum of
the positive roots, that is those roots whose weight spaces act locally
nilpotently on all objects in $\cO$.

\subsection{The modular setting}

The goal of the current work is to establish analogous statements in
the modular setting. To do this, let us choose a field $\F$ of 
characteristic $\ell > 0$ and consider the full subcategory
\[
\cDb_{(B)}(G/B,\F) \subset \cDb(G/B,\F)
\]
of Bruhat constructible complexes of sheaves on the complex flag
variety as above, the only difference being
that now we consider sheaves with coefficients in $\F$. On the
other side we consider $\cO_0(\F)$, the ``subquotient around
the Steinberg weight'' as defined in \cite{So2}. This is a
subquotient of the category of finite dimensional rational
representations of the group $G^\vee_{\F}$ over $\F$. In order for this to
make sense we need to restrict to the case where the characteristic is
bigger than the Coxeter number. 
This ensures that we can find a dominant weight in the interior of an
alcove obtained by stretching an alcove of the affine Weyl group by
$\ell$.

Under more restrictive assumptions on the characterstic of our
coefficients we prove modular analogues of the previous statements:

\begin{thm}[``Modular Koszul duality''] 
\label{thm:intro}
  Suppose that $\ell > |R|+1$, where $R$ is the root system of $G$. Then there exists a graded
  finite dimensional $\F$-algebra $E$ of finite global dimension together with vertical
  equivalences of categories:
\begin{equation}
\label{eqn:modular-Koszul-duality}
\vcenter{
\xymatrix@=0.6cm{
\cDb(\Modfr E)  & \ar[l]_-{v} \cDb(\Modfrz E) \ar[r]^-{\overline{v}} & \DGDfr E  \\
\cDb(\cO_0(\F)) \ar[u]^{\wr} & & \cDb_{(B)} (G/B, \F) \ar[u]^\wr
}
}
\end{equation}
Moreover, for all $x \in W$ there exists a finite dimension graded $E$-module
$\widetilde{M}_x$ which specialises to the standard object $M_x$ in
$\cO_0(\F)$ on the left and to $i_{x*} \underline{\F}_{BxB/B}[\ell(x)]$
on the right. In formulas:
\[
M_x\quad \mapsfrom \quad \widetilde{M}_x  \quad \mapsto \quad i_{x*} \underline{\F}_{BxB/B}[\ell(x)].
\]
\end{thm}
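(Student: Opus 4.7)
The whole construction revolves around a single algebra: the Ext-algebra of parity sheaves on $G/B$. Set $\cE := \bigoplus_{x \in W} \cE_x$, where $\cE_x$ is the indecomposable parity sheaf with support $\overline{BxB/B}$ normalised so that $\cE_x|_{BxB/B} \cong \underline{\F}_{BxB/B}[\ell(x)]$; under the hypothesis $\ell > |R|+1$ these objects exist and are stable under the usual six functors. Define
\[
A := R\Hom(\cE, \cE), \qquad E := \Ext^\bullet(\cE, \cE) = \Ho^\bullet(A),
\]
so that $E$ is a finite-dimensional graded $\F$-algebra.

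\textbf{Formality and the right-hand square.} The central technical result of the paper (announced in the abstract and, under our assumption $\ell \geq |R|+2$, to be imported here) is that the dg-algebra $A$ is \emph{formal}: $A \simeq (E, 0)$ in the homotopy category of dg-algebras. Once this is granted, standard dg-Morita theory \`a la Keller, combined with the fact that the $\cE_x$ generate $\cDb_{(B)}(G/B, \F)$ as a triangulated category, yields an equivalence
\[
R\Hom(\cE, -) : \cDb_{(B)}(G/B, \F) \simto \DGDfr E,
\]
forcing $E$ to have finite global dimension. To upgrade this to a graded statement, I work in the mixed (Weil) framework already developed in the paper: each $\cE_x$ admits a weight-pure mixed lift, so Frobenius weights endow $E$ with an internal grading, and $\overline{v}$ becomes the realisation functor from mixed to ordinary complexes -- giving the right-hand vertical equivalence of \eqref{eqn:modular-Koszul-duality}.

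\textbf{Left-hand square.} For the passage to representation theory I invoke Soergel's description of $\cO_0(\F)$ as the category of finite-dimensional modules over the endomorphism ring of its antidominant projective, together with the modular analogue of the Soergel $\V$-functor. Under the running hypotheses on $\ell$, this functor matches parity sheaves on $G/B$ with the relevant Soergel-type bimodules and, in particular, identifies the (graded) endomorphism algebra of our $\cE$ with the algebra controlling $\cO_0(\F)$. Finite global dimension of $E$ then lifts the resulting equivalence of abelian categories to $\cDb(\Modfr E) \simto \cDb(\cO_0(\F))$, and by construction this fits into the left-hand square with the grading-forgetting functor $v$.

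\textbf{Standards and the main obstacle.} For $x \in W$, set $\widetilde{M}_x := R\Hom(\cE, i_{x*}\underline{\F}_{BxB/B}[\ell(x)])$ computed in the mixed category; it is by construction a graded $E$-module with $\overline{v}(\widetilde{M}_x) \simeq i_{x*}\underline{\F}_{BxB/B}[\ell(x)]$, and it is sent to $M_x$ on the left because the Soergel functor carries mixed parity standards to standard objects in $\cO_0(\F)$. The principal obstacle is the formality assertion used in the second step: in characteristic zero this is an application of Deligne's theory of weights, but in the modular setting one has to reduce from an $\ell$-adic lift of $\cE$ over a finite field, and the numerical bound $\ell > |R|+1$ is precisely what is needed to guarantee the torsion-freeness and weight behaviour for the purity argument to survive reduction modulo $\ell$.
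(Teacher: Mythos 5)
Your outline reproduces the paper's architecture (formality of the Ext dg-algebra of parity objects, Soergel's algebraic description of $\cO_0(\F)$, graded lifts of standards), but the central step --- formality --- is asserted rather than proved, and the way you gesture at it would not work as stated. The difficulty is not that "purity must survive reduction modulo $\ell$": with coefficients in $\F$ the generalized Frobenius eigenspaces only produce a grading by the finite subgroup $\Sigma\subset\F^\times$ generated by $q$, not by $\Z$, and with coefficients in $\O$ a free module with an endomorphism whose eigenvalues are powers of $q$ need \emph{not} decompose into its generalized eigenspaces at all. So "each $\cE_x$ admits a weight-pure mixed lift, hence $E$ acquires an internal grading" does not give you the extra grading on a dg-model $E^\bullet$ computing $E$, which is what Deligne's shearing trick requires. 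The paper's solution is to replace injective resolutions by an explicitly controlled construction: it describes $\Perv_{(B_\circ)}(G_\circ/B_\circ,\O)$ as $\Modfr(A,\phi)$ for a Frobenius-equivariant projective generator, expresses $\pi_s^!\pi_{s!}$ algebraically via the bimodules $A^s$, builds a projective resolution of the Bott--Samelson objects out of $q$-decomposable $(\O,\phi)$-modules, and proves $q$-decomposability of $A$ and $A^s$ by bounding the Frobenius weights on $\coHc{\bullet}(X_v\cap X_u^-)$. None of this is present in your proposal, and it is exactly here that the numerical hypothesis enters: $\ell>|R|+1$ is used (via Dirichlet) to choose $q$ whose multiplicative order in $\F^\times$ exceeds $|R|=2\ell(w_0)$, so that the interval of weights $\llbracket-\ell(w_0),\ell(w_0)\rrbracket$ injects into $\F^\times$ and Proposition \ref{prop:criterion-decomposability-soe} applies --- it has nothing to do with torsion-freeness.

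A second, smaller gap concerns $\widetilde{M}_x$. Defining it as $R\Hom(\cE,\nabla_{x,\F})$ in a mixed enhancement gives a graded object whose image on the geometric side is correct, but the claim that "the Soergel functor carries mixed parity standards to standard objects in $\cO_0(\F)$" is not something you can quote: $\V$ is only fully faithful on projectives, and the matching of standard objects must be proved. The paper does this by an induction over the Bruhat order, defining $\widetilde{M}_{xs}\lan-1\ran$ as the cokernel of the adjunction map $\widetilde{M}_x\into\widetilde{T}_s\widetilde{T}^s_!\widetilde{M}_x$ and checking (Lemma \ref{lem:standard triangle}) that the corresponding triangle on the geometric side is $\nabla_{x,\F}\to\pi_s^!\pi_{s!}\nabla_{x,\F}\to\nabla_{xs,\F}[1]$, while on the $\cO$ side it is the wall-crossing sequence $M_x\into T_sT^sM_x\onto M_{xs}$. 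You should either supply this inductive compatibility or an equivalent argument; as written, the identification of $\widetilde{M}_x$ on the representation-theoretic side is unproved.
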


Let us emphasize, however, the difference to the non-modular case: in
the modular case it is not clear whether the $\Z$-graded algebra $E$
can be chosen such that it vanishes in negative degrees and is
semi-simple in degree 0 (unless we make some strong assumptions, see \S\ref{ss:koszulity} and \S\ref{ss:koszulity-standard}). Hence we are in no position to discuss the
Koszulity of $E$ in general.

We believe that the subquotient around the Steinberg
weight has no special meaning in itself. However this subquotient does
see some of the multiplicities of simple modules in Weyl modules. One
of the goals of this paper was to  try to develop an accessible model
which might help us approach Lusztig's modular conjecture. (Indeed
Lusztig's conjecture for multiplicities around the Steinberg weight is
equivalent to the existence of a positive grading as discussed above.)

A large part of the proof of this modular Koszul duality can already be found in
\cite{So2}. More precisely in \cite{So2} the second author considered
the full subcategory $\cK \subset \cDb_{(B)}(G/B,\F)$ consisting of all
direct images of constant sheaves on Bott--Samelson varieties, together
with their direct sums, summands and shifts. It was then showed that
for all $y \in W$ there exists a unique indecomposable object $\cE_y$
in $\cK$ whose support is contained in $\overline{ByB/B}$ and whose
restriction to $ByB/B$ is isomorphic to $\underline{\F}_{ByB/B}[\ell(y)]$. In
modern language $\cE_y$ is a parity sheaf and in \cite{JMW} their
existence and uniqueness is shown by purely geometric arguments,
without restriction on the characteristic. Finally, one of the main
results of \cite{So2} is an equivalence of categories
\[
\Modfr E \simto \cO_0(\F)
\]
where $E = \Ext^\bullet(\cE,\cE)$ is the algebra of self-extensions of
$\cE := \bigoplus_{y \in W} \cE_y$. More precisely, $E^i$ consists of
all morphisms $\cE \to \cE[i]$ in the derived category
$\cDb_{(B)}(G/B,\F)$ and $\cO_0$ denotes the regular subquotient around
the Steinberg weight from \cite[\S 2.3]{So2}. Hence all that remains in
order to construct a diagram as in \eqref{eqn:diagram-Koszul-duality}
is the construction of an equivalence of triangulated $\F$-categories
\[
\DGDfr E \simto \cDb_{(B)}(G/B,\F).
\]

\subsection{Some homological algebra}
\label{ss:homological-algebra}

We briefly recall some general constructions of
homological algebra, which allow us to establish such an
equivalence. Let $\mathsf{A}$ be an abelian category. We call a set of
complexes $\cC$ ``end-acyclic'' if, for all $T, T' \in \cC$ and $n \in
\Z$ the natural map gives an isomorphism
\[
\Hom_{\cK(\mathsf{A})}(T, T'[n]) \simto \Hom_{\cD(\mathsf{A})}(T, T'[n])
\]
between morphisms in the homotopy and 
derived categories. Given a finite end-acyclic family $T_1, \dots,
T_m$ of complexes we can consider the complex $T = \bigoplus_i T_i$. Its
endomorphism complex $E := \End_{\mathsf{A}}^{\bullet}(T)$ has a natural structure of a
dg-ring with idempotents $1_i \in E$ given by the
projection to each factor. Then the functor $\Hom_{\mathsf{A}}^{\bullet}(T, -)$
induces an equivalence of triangulated categories between the full
triangulated subcategory
\[
\langle T_1, \dots, T_m \rangle_{\Delta} \subset \cD(\mathsf{A})
\]
generated by the objects $T_i$ and the full subcategory
\[
\langle 1_1 E, \dots, 1_m E \rangle_{\Delta} \subset \DGDr E
\]
generated by the right dg-modules $1_i E$. An important stepping stone in
proving this claim is provided by the full triangulated subcategory 
\[
\langle T_1, \dots, T_m \rangle_{\Delta} \subset \cK(\mathsf{A})
\]
which by assumption is equivalent to the first triangulated subcategory
above. It is on this category that the functor $\Hom_{\mathsf{A}}^{\bullet}(T, -)$ first
really makes sense.

Furthermore, it it known that for any quasi-isomorphism $D \xrightarrow{\qis} E$ of
dg-rings (that is, a homomorphism of dg-rings which induces an
isomorphism on cohomology) the restriction induces an equivalence of
triangulated categories
\[
\DGDr E  \to \DGDr D.
\]
Possible sources for statements of this type are \cite{Rickard, Keller}.

\subsection{Construction of the equivalence}

We apply the above to
the abelian category of all sheaves of $\F$-vector spaces. It is
straightforward to see that the parity sheaves generate the
triangulated subcategory of Bruhat constructible sheaves. In formulas:
\[
\langle \cE_y \; | \; y \in W \rangle_{\Delta} = \cDb_{(B)}(G/B,\F).
\]
Let us choose a bounded below injective resolution $\cE_y^{\bullet}$ of each
parity sheaf $\cE_y$ and set $\cE^{\bullet} := \bigoplus_{y \in W}
\cE_y^{\bullet}$. If we consider the endomorphism dg-ring $E^{\bullet} := \End^{\bullet}(\cE^{\bullet})$
together with the obvious idempotents $1_y$ then the constructions of \S \ref{ss:homological-algebra} give an equivalence of triangulated categories
\[
\cDb_{(B)}(G/B,\F) \ \simto \ \langle 1_yE^{\bullet} \; | \; y \in W \rangle_{\Delta}
\subset \DGDr E^{\bullet}.
\]
By definition the cohomology of the dg-ring $E^{\bullet}$ is the graded ring $E =
\Ext^\bullet(\cE, \cE)$ from above. Suppose that we can find another
dg-ring $D$ and quasi-isomorphisms $E \overset{\sim}{\leftarrow} D \overset{\sim}{\rightarrow} E^{\bullet}$
together with homogeneous idempotents $1_y \in D$ which are sent to the
appropriate idempotents in $E^\bullet$ and $E$. Then we obtain a further
equivalence of derived categories
\[
\cDb_{(B)}(G/B,\F) \simto \langle 1_yE \; | \; y \in W \rangle_{\Delta}
\subset \DGDr E.
\]
Hence, having found $D$ together with these idempotents, all that
remains is to compare finiteness conditions and deduce that the above
can also be described using $\DGDfr E$

In order to find our dg-ring $D$ and the desired quasi-isomorphisms we
adapt the techniques of \cite{DGMS} to the context of modular \'etale
sheaves. 
Here the Frobenius action plays the part which
Hodge theory plays in \cite{DGMS}. In particular, we do not actually
work on a complex flag variety, but rather on the flag variety over a finite
field $\F_p$ of characteristic different from the characteristic
$\ell$ of $\F$. Also, in order to construct our dg-ring $E^{\bullet}$ with
compatible Frobenius action we instead work with perverse
sheaves (and projective resolutions). These technical demands account in part for the length of this paper. We explain in Remark \ref{rmk:etale-classical} how to deduce Theorem \ref{thm:intro} from its {\'e}tale counterpart.

\subsection{Formality in characteristic zero}

Except in special situations it is a difficult problem to
establish whether or not a given dg-algebra is formal (i.e.~quasi-isomorphic to its cohomology). However there
is a trick, orginally due to Deligne, which gives a general
method of establishing formality: any bigraded dg-algebra $R =
\bigoplus R^{i,j}$ with $d(R^{i,j}) \subset R^{i+1,j}$ and cohomology
concentrated on the diagonal $\{i = j\}$ is formal. The proof is
easy: it suffices to shear the bigrading by setting $R^{(i,j)}=R^{i-j,j}$ to arrive at
a bigraded dg-ring with cohomology 
concentrated in the cohomological degree $i=0$, which is well known to be formal.

In particular this trick can
be applied to establish the formality of the extension algebra of
Bruhat constructible intersection cohomology sheaves on the flag
variety. The idea being that if $E$ denotes this extension algebra
then $E$ can often be
equipped with an additional ``weight'' grading. The pointwise purity
of intersection cohomology complexes means that this grading is
diagonal. Hence if the dg algebra $E^\bullet$ computing $E$ can also
be equipped with a ``weight'' grading then the above trick can be used
to conclude that $E^\bullet$ is formal.

There are at least two settings where this approach can be carried
out. The first one is mixed \'etale sheaves on the flag variety over
the algebraic closure of a finite field $\Fq$, and the second one is mixed
Hodge modules on the complex flag variety. In both settings a key role
is played by the ``realization'' equivalence
\[
\cDb(\Perv_{(B)}(G/B)) \simto \cDb_{(B)}(G/B)
\]
between $\cDb_{(B)}(G/B)$ and the corresponding bounded derived category of perverse sheaves. 
In this context, we define $\cDb_{(B)}(G/B)$ to consist of
all constructible complexes of sheaves on $G/B$ 
satisfying the condition that 
when restricted to a Bruhat cell their cohomoloy sheaves  
are constant, and $\Perv_{(B)}(G/B)$ means the category of all perverse
sheaves inside. These are the objects we call
Bruhat-constructible. It will turn out that
 the Bruhat-constructible 
complexes form a triangulated category and the
Bruhat constructible perverse sheaves form the heart of a t-structure.
It is much easier to work in the
abelian category of Bruhat constructible perverse sheaves than the
abelian category of all sheaves on $G/B$. For example,
$\Perv_{(B)}(G/B)$ has enough projective objects and one even has an
inductive algorithm for their construction. 

In both settings $\Perv_{(B)}(G/B)$ has an ``enhancement''
(given by perverse sheaves on the flag variety over the finite field $\Fq$ in the
\'etale case, and by mixed Hodge modules in the complex
case). Moreover, one
can construct resolutions of the intersection cohomology complexes in
this enhanced category, which become projective resolutions when one
forgets the enhancement. It is the existence of these enhanced
resolutions that allows one to equip the dg algebra $E^\bullet$ with an
extra grading and deduce formality. (For a lucid explanation
of this argument in the case of mixed Hodge modules the reader is
referred to \cite{Schnuerer}.)

\subsection{Formality in characteristic $\ell$}

In this paper we adapt the above arguments to coefficients in a finite
field $\F$ of positive characteristic $\ell$. In order to have a suitable
enhancement we work on a flag variety defined over a finite field. (As
we have already remarked, the case of the flag variety over $\C$
can be deduced from this.) More precisely, fix
a finite field $\Fq$ of characteristic different from $\ell$ and
let $G_\circ$ denote a split semi-simple group over $\Fq$ with Borel
subgroup $B_\circ \subset G_\circ$ and flag variety $G_\circ / B_\circ$. We
follow the convention of \cite{BBD}: a 
subscript ``$\circ$'' denotes an object (variety or sheaf) defined over
$\Fq$, and suppression of the subscript denotes the
extension of scalars to the algebraic closure. Recall that our goal is
to obtain an algebraic description of $\cDb_{(B)}(G/B,\F)$ in terms of
the extension algebra of parity sheaves.

Here the first natural question arises: why do we consider
parity sheaves rather than intersection cohomology complexes?
With coefficients in $\F$ there seems to be no good notion of
purity and intersection cohomology complexes can be badly behaved.
(For example, their stalks may not satisfy parity
vanishing as they do in characteristic 0.) Instead, one can consider
indecomposable direct summands of ``Bott--Samelson sheaves'': those
sheaves obtained as direct images from Bott--Samelson
resolutions. These are the parity sheaves and it is  
straightforward to see that their $\Ext$ algebra satisfies a weak form of
purity (Theorem \ref{thm:H-diagonal}): it vanishes in odd degree and all eigenvalues of 
Frobenius in degree $2n$ are equal to the image of $q^{-n}$
in $\F_\ell$.

Analogously to the case of coefficients of characteristic
zero one can construct
resolutions of Bott--Samelson sheaves in $\Perv_{(B_\circ)}(G_\circ / B_\circ, \F)$ which
become projective resolutions when pulled back to $G/B$. Taking
endomorphisms we obtain a dg algebra $E^\bullet$ of $\F$-vector
spaces with compatible
automorphism induced by the Frobenius. It turns out that all
eigenvalues of Frobenius on $E^\bullet$ belong to the subgroup $\Sigma
\subset \F^\times$ generated by the image of $q$. Here we encounter a
problem: with coefficients in characteristic zero the
decomposition into generalized Frobenius eigenspaces gives a $\Z
\times \Z$-grading, whereas with coefficients in $\Fl$ we only obtain a
$\Z \times \Sigma$-grading. We have tried to depict the difference between
these two situations in the figure. In order to apply the trick to deduce formality we
would need to know, for example, that any fixed Frobenius eigenspace
of $E^\bullet$ (the columns in the figure) has cohomology concentrated
in only one degree. It is possible to conclude in this way but leads to unpleasant
bounds on $\ell$ in terms of the dimension of $G/B$.

\begin{figure}[h]
\[
\begin{array}{c}
\begin{tikzpicture}[scale=2]

       \def\h{3}

       \foreach \t in {1,2,...,23} 
       \draw[gray,opacity=0.7] (0.1,\t/8)--(3.1,\t/8);

       \foreach \t in {1,2,...,21} 
       \draw[gray,opacity=0.7] (\t/7,0)--(\t/7,\h);

       \foreach \t in {0,2,4,...,20}
       \node (a) at (0.22+\t/7,0.15+\t/8) {*};

       \node (a) [rotate=90] at (-0.2,1.5) {\tiny{$\leftarrow$  cohomological degree $\to$}};
       \node (a) at (1.5,3.2) { \tiny{$\leftarrow$ Frobenius eigenvalues $\to$}};

       \node (a) at (1.5,-0.6) {{\tiny \emph{characteristic $\ell =0$}}};

\end{tikzpicture}\end{array}
\leftrightarrow
\begin{array}{c}
\begin{tikzpicture}[MyPersp]
	\def\h{1.7}

	\foreach \t in {180,195,210,...,360}
		\draw[gray,opacity=0.7] ({cos(\t)},{sin(\t)},0)
      --({cos(\t)},{sin(\t)},{2.0*\h});

     \foreach \hi in {1,2,...,26} 
        \draw[gray,opacity=0.7] (-1,0,\hi/8) 
        	\foreach \t in {180,185,190,...,360}
        		{--({cos(\t)},{sin(\t)},\hi/8)};
     \foreach \hi in {0,1,2,3}
        \node (a) at ({cos(277+30*\hi)},{sin(277+30*\hi)},{\hi/4+0.0675}) {${\tiny *}$};
     \foreach \hi in {4,5,6,7,8}
        \node[gray,opacity=0.5] (a) at ({cos(277+30*\hi)},{sin(277+30*\hi)},{\hi/4+0.0675}) {${\tiny *}$};
     \foreach \hi in {9,10,...,13}
        \node (a) at ({cos(277+30*\hi)},{sin(277+30*\hi)},{\hi/4+0.0675}) {${\tiny *}$};

 \node (a) at (0,0,-0.5) {{\tiny \emph{characteristic $\ell > 0$}}};

\end{tikzpicture}
\end{array}
\]
\end{figure}

\subsection{Formality over $\O$}

Instead we fix a finite extension $\O$ of the $\ell$-adic integers $\Zl$
with residue field $\F$ and work with coefficients in $\O$. It turns
out that all essential parts of the construction of $E^\bullet$ can be lifted to
$\O$: projective perverse sheaves over $\F$
admit lifts to $\O$, morphism spaces between these sheaves are free
$\O$-modules and the Bott--Samelson sheaves admit resolutions
via perverse $\O$-sheaves on $G_\circ / B_\circ$ which become
projective resolutions when pulled back to $G/B$. Taking endomorphisms
we obtain a dg-algebra $E^\bullet_{\O}$ which is free as an $\O$-module
and has a compatible Frobenius automorphism $\phi$, all of whose
eigenvalues are integral powers of $q$. However, an $\O$-module with an endomorphism, even if it
is free of finite rank over $\O$, need not decompose as the direct sum 
of its generalized eigenspaces (see \S\ref{ss:O-modules-automorphisms}). In order
to conclude we need to prove that $E^\bullet_\O$ is
``$q$-decomposable'': that is, that $E^\bullet_\O$ is isomorphic to
the direct sum of its generalized $\phi$-eigenspaces with eigenvalues  powers of $q$.

To explain the proof of the $\phi$-decomposability we need to recall in more detail how the
resolutions of Bott--Samelson sheaves are constructed. We
first obtain a purely algebraic description of the category of perverse
$\O$-sheaves on $G_\circ / B_\circ$ by constructing a perverse sheaf
$\cP_\circ^\O$ on $G_\circ / B_\circ$ such that its pullback $\cP^\O$ to $G/B$ is a
projective generator of $\Perv_{(B)}(G/B, \O)$. Hence, if we write $A
= \End(\cP^\O)$ we have an equivalence
\begin{gather*}
\Perv_{(B)}(G/B, \O) \simto \Modfr A.
\end{gather*}
Moreover, because $\cP^\O$ is obtained by
pullback from $G_\circ / B_\circ$, $A$ is equipped with an
automorphism $\phi$ and we have an equivalence
\begin{gather*}
\Perv_{(B_\circ )}(G_\circ / B_\circ, \O) \simto \Modfr (A,\phi) 
\end{gather*}
where $\Modfr (A, \phi)$ denotes the category of pairs $(M,\phi_M)$
where $M$ is a finitely generated right $A$-module and $\phi_M$ is an automorphism of $M$
compatible with $\phi$ (see \S\ref
{ss:O-modules-automorphisms}) and $\Perv_{(B_\circ)}(G_\circ /B_\circ , \O)$ denotes 
the category of perverse sheaves on
$G_\circ /B_\circ$ whose pullback belongs to $\Perv_{(B)}(G/B, \O)$. Under these equivalences the pullback 
functor from $G_\circ / B_\circ$ to $G/B$ is given by forgetting the
endomorphism $\phi_M$ of $M$. These considerations allow us to
conclude that the realization functor
\[
\cDb(\Perv_{(B_\circ )}(G_\circ / B_\circ , \O)) \simto \cDb_{(B_\circ )}(G_\circ / B_\circ , \O)
\]
is an equivalence (see Proposition
\ref{prop:realization-equivalence-0}).
We also prove similar results for partial flag varieties
$G_\circ / P_{s,\circ}$, where $B_\circ \subset P_{s,\circ} \subset G_\circ $ is a minimal
parabolic subgroup corresponding to a simple reflection $s \in W$:
there exists an $\O$-algebra $A^s$ with automorphism $\phi^s$ such
that the analogues for $G/P_s$ of the above results hold. Moreover, for
any such $s$ we have a
morphism of algebras $A \to A^s$ and the direct and inverse image
functors $\pi_{s!}$ and $\pi_s^!$ (where $\pi_s : G/B \to G/P_s$
denotes the projection) can be
described algebraically in terms of derived induction and restriction
along this morphism (Proposition \ref{prop:direct-inverse-image-0}).

Finally, the Bott--Samelson sheaves may be obtained by repeatedly
applying the functors $\pi_s^! \pi_{s!}$ to the skyscraper sheaf
on the base point $B/B \in G/B$. The above results allow us to translate the
problem of constructing resolutions of the Bott--Samelson sheaves into
a purely algebraic construction involving the algebras $A$ and $A^s$. In
particular, this allows us to conclude that the dg algebra
$E^\bullet_\O$ is $q$-decomposable if $A$ and $A^s$ are
(Proposition \ref{prop:projective-resolution-decomposable}). We prove
that $A$ and $A^s$ are $q$-decomposable if the order of $q$ in
$\F^{\times}$ is at least the number of roots by examining  the eigenvalues of
Frobenius that may occur during the inductive construction of 
projective perverse sheaves. This in turn is a consequence of explicit
bounds on the weights in the cohomology of an intersection of a Bruhat
cell with an opposite Bruhat cell in $G/B$ (see
\S\S\ref{subsect:standardmorphisms}--\ref{ss:geom-projective-generator}).

\subsection{Organization of the paper}

This paper contains three parts. 

\begin{description}
\item[Part \ref{pt:generalities}] The first two sections after the
introduction are devoted to proving basic properties of
  the category of perverse sheaves with coefficients in a finite extension
  $\O$ of $\Zl$ on a sufficiently nice stratified variety. The corresponding
  statements for coefficients in a field are all well known.
  \begin{description}
  \item[Section \ref{sec:projective-tilting-perverse-sheaves}] We consider
    projective perverse sheaves when our variety is defined over an
    algebraically closed field.
\item[Section \ref{sec:perverse-finite-field}]We consider varieties defined over a finite field (with coefficients in $\O$ or its residue field).
  \end{description}
\item[Part \ref{pt:flag-variety}] In the next two sections, we specialize 
our results to perverse sheaves on the
  flag variety of a reductive group.
 \begin{description}
  \item[Section \ref{sec:bounding-weights}] We prove results on the action of Frobenius on morphisms between standard objects, and derive consequences for the structure of projective perverse sheaves.
\item[Section \ref{sec:formality}]We prove our formality theorem.
  \end{description}
\item[Part \ref{pt:MKD}] The last two sections are concerned with modular
Koszul duality. 
\begin{description}
  \item[Section \ref{sec:reminder}] We recall the results of \cite{So2} concerning the
modular category $\cO$.
\item[Section \ref{sec:MKD}]We  prove
Theorem \ref{thm:intro}.
  \end{description}

\end{description}

This article is the result of our efforts to join the two partially wrong and quite incomplete preprints \cite{SoergelKoszul} and \cite{RW} into one readable article.

\subsection{Acknowledgments}

We thank Annette Huber-Klawitter and Patrick Polo for helpful discussions.

Some of this work was
completed whilst G.W.~visited the Universit\'e Blaise Pascal -
Clermont-Ferrand II  and S.R.~visited the MPIM in
Bonn. We would like to thank both institutions for their support.
S.R.~was supported by ANR Grants No.~ANR-09-JCJC-0102-01 and No.~ANR-10-BLAN-0110. W.S.~acknowledges the support of the DFG in the framework of SPP 1388.

\subsection{Notation}

If $R$ is a ring (resp.~$\Z$-graded ring) we let $\Modr R$ (resp.~$\Modrz R$) be the category of right $R$-module (resp.~$\Z$-graded right $R$-modules). We denote by $\Modfr R$, resp.~$\Modfrz R$, the subcategory of finitely generated modules. We also denote by $\Projfr R$ the category of finitely generated projective right $R$-modules. We write $\Hom_{-R}(-,-)$ for $\Hom_{\Modr R}(-,-)$. We denote by
\[
\langle 1 \rangle : \Modrz R \to \Modrz R
\]
the auto-equivalence which sends a graded module $M=\oplus_i M^i$ to the graded module whose $i$-th component is $M^{i-1}$. We denote by $\langle n \rangle$ the $n$-th power of $\langle 1 \rangle$. With this convention, the functor $\overline{v}$ satisfies
\[
\overline{v}(M \lan 1 \ran) \ = \ \overline{v}(M)[-1].
\]
Similarly, we denote by $R\Modf$ the category of finitely generated left modules over the ring $R$.

If $R$ is a dg-ring, we denote by $\DGHr R$ the homotopy category of right $R$-dg-modules, and by $\DGDr R$ the associated derived category.

In the whole paper, we fix a prime number $\ell$. We also fix a finite extension $\mathbb{K}$ of $\Ql$, and denote its ring of integers by $\O$. It is a finite extension of $\Zl$. We fix a uniformizer $\pi$, and set $\F=\O/(\pi)$; it is a finite field of characteristic $\ell$. We denote by $\overline{\K}$ an algebraic closure of $\K$.

\part{Generalities on $\O$-perverse sheaves}
\label{pt:generalities}

\section{Projective and tilting perverse sheaves}
\label{sec:projective-tilting-perverse-sheaves}

Let us an integer $p$ which is either
$0$ or a prime number different from $\ell$. In this section, $X$ is
a variety (i.e.~a separated reduced scheme of finite type) over an algebraically closed field of characteristic $p$,
endowed with a finite stratification by locally closed subvarieties
\[
X=\bigsqcup_{s \in \scS} X_s
\]
each isomorphic to an affine space. For each $s \in \scS$ we denote by $i_s : X_s
\into X$ the inclusion.

Note that the constructions in this section work similarly if we assume $X$ is a complex algebraic variety, and work with the classical topology instead of the {\'e}tale topology.

\subsection{Definitions and first results}
\label{ss:definitions-first-results}

Consider the
constructible derived categories $\cDb_c(X,\O)$ and $\cDb_c(X,\F)$ see e.g.~\cite[\S
2.2]{BBD} or \cite[\S 2]{J} for definitions. For any $m \in \Z_{\geq
  1} \cup \{ \infty \}$ let $\cL_{s,m}$ denote the constant local
system on $X_s$ with stalk $\O / (\pi^m)$. (Here, by convention,
$\O / (\pi^\infty) = \O$.) For simplicity we sometimes write
$\cL_s$ for $\cL_{s, \infty}$. We write $\cL_{s,\F}$ for the constant
$\F$-local system of rank one on $X_s$. We assume that the following condition
is satisfied:
\begin{equation}
  \label{eq:strata assumption}
  \text{for all $s,t \in \scS$ and $n \in \Z$,
    $\mathcal{H}^n(i_t^*i_{s*}\cL_{s})$ is constant.}
\end{equation}
(If $p > 0$ then the strata $X_s$ are not simply-connected unless
$X_s$ is a point and so 
this condition is a priori stronger than requiring that
$\mathcal{H}^n(i_t^*i_{s*}\cL_{s})$ be locally constant.)
This implies the analogous condition for $\cL_{s,m}$ and $\cL_{s,\F}$. (See \cite[\S 2.1]{Yu} for a discussion of this condition; note that in our case an extension of constant sheaves on a stratum is constant.)

For $\E = \O$ or $\F$ we denote by $\cDb_{\scS}(X,\E)$
the full subcategory of $\cDb_c(X,\E)$ consisting of objects $\cF$
such that for any $s \in \scS$ and $n \in \Z$, $\mathcal{H}^n(i_s^*\cF)$
is a constant local system of finitely generated $\E$-modules. Our assumption \eqref{eq:strata assumption} guarantees
that if $i : Y \into X$ is a locally closed inclusion of a union of strata the functors $i_*$, $i_!$, $i^*$, $i^!$ restrict to functors between $\cDb_{\scS}(X,\E)$ and $\cDb_{\scS}(Y,\E)$ (where for simplicity we write $\scS$ also for the restriction of the stratification $\scS$ to $Y$), so that $\cDb_{\scS}(X, \E)$ can be endowed with the perverse
t-structure whose heart we denote by 
\[
\Perv_{\scS}(X,\E) \subset \cDb_{\scS}(X,\E).
\]
Recall (see e.g.~\cite{BBD, J}) that if we take coefficients in $\O$ we
cannot speak of \emph{the} subcategory of perverse
sheaves on $X$ with respect to this stratification. Instead one has
two ``natural'' perverse t-structures $p$ and $p^+$ with hearts $\Perv_{\scS}(X,\O)$ and
$\Perv^+_{\scS}(X,\O)$ respectively; Verdier duality exchanges these two hearts. (The
reader unfamiliar with these facts is encouraged to think about the
case of $X = \pt$.) In this paper we will only ever need to consider the perversity $p$ (except in Lemma \ref{lem:affine-exact}). Note that the category $\Perv_{\scS}(X,\O)$ is noetherian but not artinian.

We have a modular reduction functor
\[
\F \ := \ \F \, \lotimes_{\O} \, (-) : \cDb_{\scS}(X,\O) \to \cDb_{\scS}(X,\F).
\]
By definition, an object $\cM$ of $\cDb_{\scS}(X,\O)$ is a collection $(\cM_k)_{k \geq 1}$ (where each $\cM_k$ is an object of the derived category of {\'e}tale sheaves of $\O/(\pi^k)$-modules satisfying certain properties) together with isomorphisms $\cM_{k+1} \lotimes_{\O/(\pi^{k+1})} \O/(\pi^k) \cong \cM_k$; then we have $\F(\cM)=\cM_1$. By definition, this functor commutes with all direct and inverse image functors, see e.g.~\cite[Appendix A]{KW} and references therein.

The functor $\F$ does not preserve the subcategory of perverse sheaves
(again, this is already false for a point). However it is right exact and if $\cF \in
\Perv_{\scS}(X, \O)$ then $\pH^i (\F \cF) = 0$ if $i \ne 0, -1$.

Below we will need the following result.

\begin{lem}
\label{lem:affine-exact}

The functors $i_{s*}$ and $i_{s!}$ are exact for the perverse t-structure, for coefficients $\O$ or $\F$.

\end{lem}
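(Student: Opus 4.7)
The plan is to combine the adjoint formalism with Artin's affine vanishing theorem.  By the recollement construction of the perverse t-structure in \cite{BBD}, the functor $i_s^*$ is right t-exact and $i_s^!$ is left t-exact uniformly in the coefficients $\E = \O$ or $\F$.  From the adjunctions $i_s^* \dashv i_{s*}$ and $i_{s!} \dashv i_s^!$ one immediately concludes that $i_{s*}$ is left t-exact and $i_{s!}$ is right t-exact.

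To obtain the reverse bounds, I would invoke that $i_s$ is an affine morphism.  Each stratum $X_s \cong \mathbb{A}^{d_s}$ is itself affine; in the cellular stratifications considered here, the boundary $\overline{X_s} \setminus X_s$ is a union of strata of strictly smaller dimension forming a divisor in the closure, so the open immersion $X_s \hookrightarrow \overline{X_s}$ is affine.  Composing with the (trivially affine) closed immersion $\overline{X_s} \hookrightarrow X$ gives affineness of $i_s$, whereupon Artin's vanishing theorem \cite[4.1.1]{BBD} delivers the reverse conclusions: $i_{s*}$ is right t-exact and $i_{s!}$ is left t-exact for the perverse t-structure.

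Combining both directions shows that $i_{s*}$ and $i_{s!}$ are t-exact, hence exact on the perverse hearts $\Perv_\scS$.  The argument is insensitive to whether $\E = \O$ or $\F$, and to the choice of perversity $p$ versus $p^+$ in the $\O$ case, because Artin's vanishing applies in this generality.

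The main obstacle will be the geometric verification that $i_s$ is affine: this is the one step that uses more than the bare constructibility condition~\eqref{eq:strata assumption}.  For the Bruhat stratification of a flag variety, which is the setting of the later sections, affineness is guaranteed by the classical fact that the boundary of each Schubert cell is a divisor in its closure; in the present more abstract framework one must simply take this cellular property as implicitly built into the hypotheses on the stratification.
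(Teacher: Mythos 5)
Your overall strategy (affineness of $i_s$, Artin vanishing, adjunction) is the same as the paper's, and for coefficients $\F$ your argument is complete: this is exactly \cite[Corollaire 4.1.3]{BBD}. But for coefficients $\O$ there is a genuine gap at the final step, precisely at the point you wave away. Artin's theorem \cite[4.1.1]{BBD} is the statement that $i_{s*}$ is \emph{right} t-exact; the companion statement that $i_{s!}$ is \emph{left} t-exact (\cite[Corollaire 4.1.2]{BBD}) is not proved directly but deduced by Verdier duality from the first. Over a field the perverse t-structure is self-dual, so this is harmless. Over $\O$, Verdier duality exchanges the two perversities $p$ and $p^+$, so dualizing ``$i_{s*}$ is right exact for $p$'' only yields ``$i_{s!}$ is left exact for $p^+$'' --- not the statement you need for $p$. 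Your assertion that the argument ``is insensitive to the choice of perversity $p$ versus $p^+$ because Artin's vanishing applies in this generality'' is exactly the claim that requires proof. The paper closes this gap by first checking that $i_{s*}$ is left exact for both $p$ and $p^+$, then running the Artin argument to get right exactness of $i_{s*}$ for $p$, then invoking \cite[3.3.4]{BBD} (an $\O$-linear triangulated functor is right exact for $p$ iff it is for $p^+$) to get right exactness for $p^+$ as well; only then does duality deliver the exactness of $i_{s!}$ for $p$.

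A minor point in the other direction: your worry about verifying that $i_s$ is affine is unfounded, and the divisor condition you propose to ``build into the hypotheses'' is unnecessary. Each stratum $X_s$ is isomorphic to an affine space, hence is an affine scheme, and $X$ is assumed separated; any morphism from an affine scheme to a separated scheme is affine (the preimage of an affine open $U$ is a closed subscheme of $X_s \times U$ via the graph). So affineness of $i_s$ is automatic from the stated hypotheses.
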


\begin{proof}\footnote{We thank Daniel Juteau and Carl Mautner for explaining this proof to us.}
The morphism $i_s$ is affine. Hence, in the case of coefficients $\F$ the result is proved in \cite[Corollaire 4.1.3]{BBD}. Let us consider now the case of coefficients $\O$. As Verdier duality exchanges the t-structures $p$ and $p^+$, it is sufficient to prove that the functor $i_{s*}$ is exact for both t-structures $p$ and $p^+$. As the subcategories ${}^p \cD^{\geq 0}$ and ${}^{p^+} \cD^{\geq 0}$ are defined in terms of functors $i_t^!$, it is easy to check that this functor is right exact for both t-structures. The same arguments as for $\F$ prove that $i_{s*}$ is also right exact for the t-structure $p$. 
Finally, it is explained in \cite[3.3.4]{BBD} that an $\O$-linear triangulated functor from $\cDb_\scS(X_s)$ to $\cDb_\scS(X)$ is right exact for the t-structure $p$ iff it is for the t-structure $p^+$, which finishes the proof.
\end{proof}

\begin{rmk}
Similar arguments show more generally that if $f$ is an affine morphism and for coefficients $\O$, $f_*$ is right exact and $f_!$ is left exact for both t-structures $p$ and $p^+$. The case of coefficients in a field ($\F$ or $\K$) is proved in \cite[Th{\'e}or{\`e}me 4.1.1, Corollaire 4.1.2]{BBD}.
\end{rmk}

If $Y \subset X$ is a locally closed union of strata, and if $\E$ is $\O$ or $\F$, we denote by
\[
\IC(Y,-) : \Perv_{\scS}(Y,\E) \to \Perv_{\scS}(X,\E)
\]
the intermediate extension functor.
This functor is fully-faithful
(\cite[Proposition 2.29]{J})) and preserves injections and surjections
(\cite[Proposition 2.27]{J}), but is not exact.\footnote{In \cite{J},
  it is assumed on p.~1196 that the categories of perverse sheaves are
  noetherian and artinian; however this assumption is not used in the
  proofs of Propositions 2.27 and 2.29.} If $\cL$ is a local system on
$Y$ we will sometimes abuse notation and write $\IC(Y, \cL)$ for
$\IC(Y, \cL[\dim Y])$.

\begin{lem}
\label{lem:ICs-generate}
  Any object in $\Perv_{\scS}(X, \O)$ is a successive extension of
  $\IC(X_s, \cL_{s,m})$ for some $s \in \scS$ and $m \in \Z_{\ge 1}
  \cup \{ \infty \}$.
\end{lem}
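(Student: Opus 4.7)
The plan is to proceed by induction on the number of strata meeting $\supp(\cF)$, the base case $\supp(\cF) = \emptyset$ being vacuous. For the inductive step, I would fix a stratum $X_s$ open in $\supp(\cF)$ with inclusion $h \colon X_s \into X$, and peel off a filtration layer consisting of $\IC$'s supported at $X_s$.

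First I would show that $\cG := h^* \cF$ is perverse on $X_s$, hence of the form $\cL[d_s]$ for $d_s = \dim X_s$ and $\cL$ a constant local system of finitely generated $\O$-modules. Indeed, on the open subvariety $V := X \setminus (\supp(\cF) \setminus X_s)$, the stratum $X_s$ sits as a closed subvariety via some $\iota \colon X_s \into V$, and $\cF|_V$ is supported on $X_s$, so $\cF|_V \cong \iota_* \cG$; since $\iota_*$ is t-exact and fully faithful, $\cG$ is perverse. Next, by Lemma \ref{lem:affine-exact}, $h_!$ and $h_*$ preserve perversity, so both adjunctions yield morphisms $h_! \cG \to \cF$ and $\cF \to h_* \cG$ in $\Perv_{\scS}(X, \O)$ whose composite is the canonical map $h_! \cG \to h_* \cG$ with image $\IC(X_s, \cG)$. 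Set $\cF' := \ima(h_! \cG \to \cF)$; since $h_! \cG \to \cF$ is an isomorphism over $V$, the quotient $\cF / \cF'$ is supported in $\supp(\cF) \setminus X_s$. Moreover, the composite $\cF' \into \cF \to h_* \cG$ has image $\IC(X_s, \cG)$, yielding a short exact sequence $0 \to K' \to \cF' \to \IC(X_s, \cG) \to 0$. A diagram chase using that $h^* \cF' \cong \cG \cong h^* \IC(X_s, \cG)$ (with the map between them being the identity) shows $h^* K' = 0$, so $K'$ too is supported in $\supp(\cF) \setminus X_s$.

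The filtration $0 \subset K' \subset \cF' \subset \cF$ therefore has subquotients $K'$, $\IC(X_s, \cG)$, and $\cF / \cF'$. By induction the outer two admit filtrations with subquotients of the form $\IC(X_t, \cL_{t, m})$ for strata $X_t \ne X_s$, while the structure theorem for finitely generated $\O$-modules expresses $\cL$ as a finite direct sum of $\cL_{s, m}$'s; since $\IC$ commutes with finite direct sums, this makes $\IC(X_s, \cG)$ itself a direct sum of $\IC(X_s, \cL_{s, m})$'s. Concatenating the filtrations produces the required filtration of $\cF$. The delicate point will be the verification that $h^* K' = 0$: one passes to the derived triangles attached to the short exact sequences in $\Perv_{\scS}(X, \O)$ and uses that the canonical map $h_! \cG \to h_* \cG$ restricts to the identity on $X_s$ to force the vanishing.
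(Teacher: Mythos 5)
Your proof is correct and follows essentially the same route as the paper's: both peel off an open stratum of the support, produce a three-step filtration of $\cF$ whose middle subquotient is the intermediate extension of $i_s^*\cF$ (decomposed into $\IC(X_s,\cL_{s,m})$'s via the structure theory of finitely generated $\O$-modules) and whose outer subquotients are supported on a closed union of strictly fewer strata, and then induct. The only cosmetic differences are that you realize the middle layer concretely as the image of the composite $h_!\cG \to \cF \to h_*\cG$, whereas the paper recognizes its middle subquotient abstractly as an intermediate extension via \cite[Corollaire 1.4.25]{BBD}; and the ``delicate point'' you flag, namely $h^*K'=0$, is in fact immediate, since restricting the sequence $K' \into \cF' \onto \IC(X_s,\cG)$ to the open set $V$ turns the surjection into an isomorphism $\iota_*\cG \to \iota_*\cG$.
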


\begin{proof}
We prove this result by induction on the number of strata, the case when $X$ consists of a single stratum being obvious (since in this case the category $\Perv_{\scS}(X,\O)$ is equivalent to $\Modfr \O$).

Let $X_s$ be an open stratum, and let $Y=X \smallsetminus X_s$ (a closed union of strata). Let $i: Y \into X$ denote the
  inclusion. Because the functor
$i_* : \Perv_{\scS \setminus \{ s \}}(Y,\O) \to \Perv_{\scS}(X,\O)$ is fully-faithful we
may assume by induction that the lemma is true for objects supported on
$Y$. Now let $\cF$ be an arbitrary object of $\Perv_{\scS}(X,
\O)$. The morphism $\cF \to i_*\pH^0(i^*\cF)$ induced by adjunction is
surjective \cite[Proposition 1.4.17(ii)]{BBD} and hence we have an
exact sequence of perverse sheaves
\[
\mathrm{Ker} \into \cF \onto i_*\pH^0(i^*\cF).
\]
Hence by induction it is enough to prove the result for
$\mathrm{Ker}$. One sees easily that $i_*\pH^0(i^* \mathrm{Ker}) = 0$ and hence
$\mathrm{Ker}$ has no non-zero quotient supported on $Y$. Now, we have
a dual exact sequence (again by \cite[Proposition 1.4.17(ii)]{BBD})
\[
i_* \pH^0(i^!\mathrm{Ker}) \into \mathrm{Ker} \onto \mathrm{Coker}
\]
and hence it is sufficient to prove the result for $\mathrm{Coker}$.
Similarly to above one sees that $\mathrm{Coker}$ has no non-trivial subobject
or quotient supported on $Y$. By definition $\Perv_{\scS}(X,\O)$ is obtained by recollement from
$\Perv_{\scS\setminus \{ s \}}(Y,\O)$ and $\Perv_{\{s\}}(X_s,\O)$, and
hence we can apply \cite[Corollaire 1.4.25]{BBD} to conclude that $\mathrm{Coker}
\cong \IC(X, i_s^*\mathrm{Coker})$. By definition of
$\Perv_{\{s\}}(X_s,\O)$, $i_s^*\mathrm{Coker}$ is a constant local
system, hence isomorphic to a direct sum of local systems of the form
$\cL_{s,m}$. The lemma now follows.
\end{proof}

\begin{rmk}
One can deduce from this lemma (using induction on the number of strata) that the category $\Perv_{\scS}(X,\O)$ is generated (under extensions) by a finite number of objects, namely the collection of objects $\IC(X_s,\cL_{s,1})$ and $\IC(X_s,\cL_{s,\infty})$ for $s \in \scS$.
\end{rmk}

Recall that an additive category $\mathsf{A}$ \emph{satisfies the
  Krull--Schmidt property} if every object in $\mathsf{A}$ is
isomorphic to a finite
direct sum of indecomposable objects, and $\End_{\mathsf{A}}(M)$ is local for any indecomposable object $M$. In such a category, the decomposition as a direct sum of indecomposable objects is unique up to isomorphism and permutation of factors.

\begin{lem}
\label{lem:Krull-Schmidt}

The category $\Perv_{\scS}(X,\O)$ satisfies the Krull--Schmidt property.

\end{lem}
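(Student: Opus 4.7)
The plan is to deduce Krull--Schmidt from two ingredients: the noetherianity of $\Perv_{\scS}(X,\O)$ (stated in \S\ref{ss:definitions-first-results}), which guarantees that every object decomposes as a finite direct sum of indecomposables, together with semi-perfectness of the endomorphism ring $\End(\cF)$ for every $\cF \in \Perv_{\scS}(X,\O)$. Given both, each indecomposable summand $N$ has a semi-perfect endomorphism ring with no nontrivial idempotents, which is automatically local, as required for Krull--Schmidt.

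The crucial input will be that $\Hom(\cF,\cG)$ is a finitely generated $\O$-module for all $\cF,\cG \in \Perv_{\scS}(X,\O)$. By Lemma~\ref{lem:ICs-generate}, both $\cF$ and $\cG$ admit finite filtrations with subquotients of the form $\IC(X_s,\cL_{s,m})$, and a d\'evissage via the long exact sequences of $\Hom$ and $\Ext^1$ reduces the claim to the case where $\cF$ and $\cG$ are themselves of this shape. For such objects $R\Hom(\cF,\cG)$ is computed using the stratification together with the adjunctions for the locally closed inclusions $i_s$ and assumption~\eqref{eq:strata assumption}; finite generation then follows from the finite generation of the \'etale cohomology of an affine space with coefficients in $\O$ or $\O/(\pi^m)$.

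With this in hand, $\End(\cF)$ is a finitely generated $\O$-module and is therefore $\pi$-adically complete. Its reduction $\End(\cF)/\pi\End(\cF)$ is a finite-dimensional $\F$-algebra, hence artinian and in particular semi-perfect. Since $\pi$-adic completeness places $\pi$ in the Jacobson radical $J(\End(\cF))$ and $\End(\cF)$ is then also $J$-adically complete, Hensel's lemma lifts idempotents from the reduction, while $\End(\cF)/J(\End(\cF))$ is semisimple as a quotient of $\End(\cF)/\pi\End(\cF)$ by a further ideal. Thus $\End(\cF)$ is semi-perfect, which in combination with noetherianity yields the Krull--Schmidt property.

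I expect the only non-formal step to be the finite generation of $\Hom$ spaces between the $\IC$ sheaves; once that is established, the rest is a routine consequence of the complete discrete valuation ring structure on $\O$.
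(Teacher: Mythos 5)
Your proof is correct and follows essentially the same route as the paper: the paper's proof is precisely that every endomorphism ring in $\Perv_{\scS}(X,\O)$ is a finitely generated $\O$-module, hence semi-perfect (citing \cite[Example 23.3]{Lam}), and that an abelian category in which all endomorphism rings are semi-perfect is Krull--Schmidt (citing \cite[Theorem 23.6]{Lam}). You merely unpack the two citations — deriving semi-perfectness from $\pi$-adic completeness and idempotent lifting, and obtaining the finite decomposition into indecomposables from noetherianity rather than from the semi-perfect criterion — which is a sound, if more verbose, version of the same argument.
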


\begin{proof}
 The
  endomorphism ring of any object of $\Perv_{\scS}(X,\O)$ is a
  finitely generated $\O$-module, and hence is
  semi-perfect by \cite[Example 23.3]{Lam}. Now it is easily checked
  (using e.g.~\cite[Theorem 23.6]{Lam}) that if $\mathsf{A}$ is an
  abelian category in which every object has a semi-perfect
  endomorphism ring, then $\mathsf{A}$ satisfies the
  Krull--Schmidt property.
  \end{proof}

\subsection{Standard and costandard objects over $\O$ and $\F$}
\label{ss:standard}
For any $m \in \Z_{\geq
  1} \cup \{ \infty \}$, set
\[
\Delta_{s,m} = i_{s!}\cL_{s,m}[ \dim X_s], \quad
\nabla_{s,m} = i_{s*}\cL_{s,m} [\dim X_s], \quad
\IC_{s,m} = \IC(X_s, \cL_{s,m}).
\]
These objects are all in $\Perv_{\scS}(X,\O)$ (see Lemma \ref{lem:affine-exact}). We often abbreviate $\Delta_s = \Delta_{s,\infty}$, $\nabla_s =
\nabla_{s,\infty}$, $\IC_s = \IC_{s,\infty}$.

Let $\cL_{s,\F}$ denote the constant local system on $s$ with
stalk $\F$. We use the following notation for the analogous objects over $\F$
\[
\Delta_{s,\F} = i_{s!}\cL_{s,\F}[ \dim X_s], \quad
\nabla_{s,\F} = i_{s*}\cL_{s,\F} [ \dim X_s], \quad
\IC_{s,\F} = \IC(X_s, \cL_{s,\F})
\]
in $\Perv_{\scS}(X,\F)$. We have $\F(\Delta_s) \cong \Delta_{s,\F}$, $\F(\nabla_s) \cong \nabla_{s,\F}$.

\begin{lem}
\label{lem:perverse-sheaf-0}

Let $\cF$ be in $\cDb_{\scS}(X,\O)$. If $\F(\cF)=0$, then $\cF=0$.

\end{lem}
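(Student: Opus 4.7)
The plan is to reduce the statement to Nakayama's lemma via the standard $\pi$-multiplication triangle. Applying $- \lotimes_\O \cF$ to the short exact sequence of $\O$-modules
\[
0 \to \O \xrightarrow{\pi} \O \to \F \to 0
\]
yields a distinguished triangle
\[
\cF \xrightarrow{\pi} \cF \to \F(\cF) \to \cF[1]
\]
in $\cDb_{\scS}(X,\O)$ (regarding $\F(\cF)$ as an object of $\cDb_{\scS}(X,\O)$ via the obvious forgetful functor; this does not affect the vanishing question).

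First I would take the long exact sequence of ordinary cohomology sheaves attached to this triangle. The hypothesis $\F(\cF) = 0$ immediately forces multiplication by $\pi$ to be an isomorphism on $\mathcal{H}^n(\cF)$ for every $n \in \Z$.

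Next I would restrict to an arbitrary stratum $s \in \scS$. Since $\cF \in \cDb_{\scS}(X,\O)$, the sheaf $\mathcal{H}^n(i_s^* \cF)$ is a constant local system on $X_s$ whose stalk $M_{s,n}$ is a finitely generated $\O$-module, and $\pi$ acts bijectively on $M_{s,n}$ by the previous step. In particular $M_{s,n}/\pi M_{s,n} = 0$, so Nakayama's lemma gives $M_{s,n} = 0$. As this holds for every $s$ and every $n$, we conclude that $i_s^* \cF = 0$ for every stratum, hence $\cF = 0$.

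There is no real obstacle: once the $\pi$-multiplication triangle is in hand, the entire proof is a one-line application of Nakayama's lemma, and the finiteness built into the definition of $\cDb_{\scS}(X,\O)$ is precisely what makes Nakayama applicable on each stratum.
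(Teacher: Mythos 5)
Your proof is correct and follows exactly the paper's argument: the paper likewise uses the distinguished triangle $\cF \xrightarrow{\pi} \cF \to \F(\cF) \triright$ and concludes by taking cohomology of the stalks and applying Nakayama's lemma. Your version merely spells out the finiteness of the stalks on each stratum, which is the point that makes Nakayama applicable.
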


\begin{proof}
We 
always have an exact triangle 
$\mathcal F\rightarrow \mathcal F\rightarrow \mathbb F(\mathcal F)\triright$
with the first map multiplication by $\pi$. The result follows, taking cohomology of the 
stalks and using Nakayama's lemma.
\end{proof}


We will say that an object $\cF$ of $\Perv_{\scS}(X,\O)$ \emph{has a $\Delta$-filtration} (resp.~\emph{a $\nabla$-filtration}) if it admits a filtration in the abelian category $\Perv_{\scS}(X,\O)$ with subquotients of the form $\Delta_t$ (resp.~$\nabla_t$) for $t \in \scS$. Note that here we only allow \emph{free} (co-)standard objects. Similarly, we will say that an object $\cF$ of $\Perv_{\scS}(X,\F)$ \emph{has a $\Delta_{\F}$-filtration} (resp.~\emph{a $\nabla_{\F}$-filtration}) if it admits a filtration in the abelian category $\Perv_{\scS}(X,\F)$ with subquotients of the form $\Delta_{t,\F}$ (resp.~$\nabla_{t,\F}$) for $t \in \scS$.

\begin{lem}
\label{lem:Delta-flag}

Let $\cF$ be in $\Perv_{\scS}(X,\O)$. Assume that $\F(\cF)$ is in $\Perv_{\scS}(X,\F)$, and has a $\Delta_{\F}$-filtration. Then $\cF$ has a $\Delta$-filtration.

\end{lem}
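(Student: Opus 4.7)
The plan is to proceed by induction on $|\scS|$. For the base case $|\scS|=1$, the category $\Perv_\scS(X,\O)$ is equivalent to $\Modfr\O$ up to a shift by $\dim X$, and the hypothesis $\F(\cF) \in \Perv_\scS(X,\F)$ forces the underlying $\O$-module to be $\pi$-torsion-free, hence free; this immediately gives a $\Delta$-filtration.

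For the inductive step, I pick an open stratum $X_s \subset X$, set $j = i_s$, and let $i : Y := X \smallsetminus X_s \into X$ be the inclusion of the closed complement. The first subgoal is to show that $j^*\cF \cong \cL_s^{\oplus n}[\dim X_s]$ for some $n \geq 0$. Taking stalks at $x \in X_s$ in the distinguished triangle $\cF \xrightarrow{\pi} \cF \to \F(\cF) \triright$ yields short exact sequences
\[
0 \to H^k(\cF_x)/\pi \to H^k(\F(\cF)_x) \to H^{k+1}(\cF_x)[\pi] \to 0.
\]
The perversity of $\cF$ and $\F(\cF)$ concentrates the middle term in degree $-\dim X_s$, and Nakayama's lemma then forces $H^k(\cF_x) = 0$ for $k \neq -\dim X_s$ while $H^{-\dim X_s}(\cF_x)$ is $\pi$-torsion-free, hence free over $\O$. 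Combined with \eqref{eq:strata assumption}, this yields the claimed description of $j^*\cF$. Using the $(j_!, j^*)$-adjunction, I lift a chosen isomorphism $\cL_s^{\oplus n}[\dim X_s] \simto j^*\cF$ to a morphism $\phi : \Delta_s^{\oplus n} \to \cF$.

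The key step is to show that $\F(\phi)$ is injective. By the same adjunction, combined with the vanishing $j^* i_{t!} = 0$ (since $X_s$ is open and $X_s \cap \overline{X_t} = \emptyset$ for every $t \neq s$), one has $\Ext^1(\Delta_{s,\F}, \Delta_{t,\F}) = 0$ for all $t \neq s$, and $\Ext^1(\Delta_{s,\F}, \Delta_{s,\F}) = H^1(X_s, \F) = 0$ since $X_s$ is an affine space and $\ell \neq p$. Counting the occurrences of $\Delta_{s,\F}$ in the given $\Delta_\F$-filtration via restriction to $X_s$ (on which only $\Delta_{s,\F}$ survives) shows that there are exactly $n$ of them; the above $\Ext^1$-vanishings then allow one to rearrange the filtration so that these $n$ occurrences sit at the bottom, with the corresponding subobject isomorphic to $\Delta_{s,\F}^{\oplus n}$, giving an injection $\iota : \Delta_{s,\F}^{\oplus n} \into \F(\cF)$ that restricts to an isomorphism on $X_s$. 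The adjunction also identifies $\Hom(\Delta_{s,\F}^{\oplus n}, \F(\cF))$ with $M_n(\F)$, and both $\iota$ and $\F(\phi)$ correspond to invertible matrices there, so $\F(\phi) = \iota \circ A$ for some $A \in GL_n(\F)$ and is therefore injective.

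Finally, I apply the derived functor $\F$ to the exact sequence $0 \to \ker\phi \to \Delta_s^{\oplus n} \xrightarrow{\phi} \cF \to \mathrm{coker}\,\phi \to 0$. Since $\F(\Delta_s^{\oplus n})$ and $\F(\cF)$ are perverse, the long exact sequence of perverse cohomology together with the injectivity of $\F(\phi)$ forces $\F(\ker\phi) = 0$, hence $\ker\phi = 0$ by Lemma~\ref{lem:perverse-sheaf-0}. The same long exact sequence shows that $C := \mathrm{coker}\,\phi$ (which is supported on $Y$ because $j^*\phi$ is an isomorphism) satisfies $\F(C) \in \Perv_\scS(X,\F)$ and that $\F(C) \cong \F(\cF)/\iota(\Delta_{s,\F}^{\oplus n})$ inherits from the rearranged filtration a $\Delta_\F$-filtration using only $\Delta_{t,\F}$ with $t \neq s$. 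The inductive hypothesis applied to $C$ on $Y$ produces a $\Delta$-filtration of $C$, and splicing it with $0 \to \Delta_s^{\oplus n} \to \cF \to C \to 0$ yields the desired $\Delta$-filtration of $\cF$. The main obstacle is the injectivity of $\F(\phi)$, which is where the $\Delta_\F$-filtration hypothesis (as opposed to merely perversity of $\F(\cF)$) is genuinely used: the example $\cF = \IC_s$ on a simple variety already shows that the natural map $\Delta_{s,\F}^{\oplus n} \to \F(\cF)$ need not be injective under the weaker hypothesis.
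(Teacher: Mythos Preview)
Your proof is correct and follows essentially the same route as the paper: induct on the number of strata, show the restriction to an open stratum $X_s$ is free, prove that the adjunction map $j_!j^*\cF \to \cF$ is injective by first checking it modulo $\pi$ and then invoking Lemma~\ref{lem:perverse-sheaf-0}, and apply the inductive hypothesis to the cokernel on $Y$. The only point of divergence is how injectivity of $\F(\phi)$ is obtained. The paper does this in one line: since $i^*\Delta_{t,\F}$ is perverse on $Y$ for every $t$ (being either zero or a standard object on $Y$), an immediate induction on the length of the filtration shows that $i^*\F(\cF)$ is perverse, so the recollement triangle for $\F(\cF)$ is already a short exact sequence of perverse sheaves. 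Your argument via rearranging the filtration through the $\Ext^1$-vanishings and comparing with an explicit injection $\iota$ is correct but more laborious; the paper's shortcut is precisely the observation that $i^*$ sends $\Delta_\F$-filtered objects to perverse sheaves on $Y$, which you are effectively re-proving by hand.
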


\begin{proof}
We proceed by induction on the number of strata. We can assume that the support of $M$ is $X$, and let $X_s$ be an open stratum. Let $Y=X \smallsetminus X_s$, and let $i : Y \hookrightarrow X$ be the (closed) inclusion.

Consider the restriction $i_s^* \cF$. It is a shift of a constant local system on $X_s$. We have $\F(i_s^* \cF)=i_s^*(\F(\cF))$, hence by assumption $\F(i_s^* \cF)$ is also a shift (by the same integer) of a local system on $X_s$. This implies that $i_s^* \cF$ has no torsion, hence that $i_{s!} i_s^* \cF$ is a direct sum of finitely many copies of $\Delta_{s}$. Consider the exact triangle
\begin{equation}
\label{eqn:triangle}
i_{s!} i_s^* \cF \to \cF \to i_* i^* \cF \triright.
\end{equation}
Its modular reduction is the similar triangle for $\F \cF$:
\[
i_{s!} i_s^* \F \cF \to \F \cF \to i_* i^* \F \cF \triright.
\]
As $\F \cF$ has a filtration with subquotients of the form $\Delta_{s,\F}$, this triangle is an exact sequence of perverse sheaves. In particular, $\F(i_* i^* \cF)$ is a perverse sheaf.

We claim that $i_* i^* \cF$ is a perverse sheaf. Indeed, using \eqref{eqn:triangle} it can have non-zero perverse cohomology sheaves only in degrees $-1$ and $0$. Consider the truncation triangle
\[
\pH^{-1}(i_* i^* \cF)[1] \to i_* i^* \cF \to \pH^0(i_* i^* \cF) \triright,
\]
and its modular reduction
\[
\F(\pH^{-1}(i_* i^* \cF))[1] \to \F(i_* i^* \cF) \to \F(\pH^0(i_* i^* \cF)) \triright.
\]
We have seen above that the middle term is a perverse sheaf, while the left-hand side (resp.~the right-hand side) is concentrated in perverse degrees $-2$ and $-1$ (resp.~$-1$ and $0$). It easily follows that $\F(\pH^{-1}(i_* i^* \cF))=0$, hence $\pH^{-1}(i_* i^* \cF)=0$ by Lemma \ref{lem:perverse-sheaf-0}, which proves the claim.

By this claim, triangle \eqref{eqn:triangle} is an exact sequence of perverse sheaves. Moreover, $i_* i^* \cF$ again satisfies the conditions of the lemma. Hence, by induction, it has a filtration with subquotients of the form $\Delta_{t}$. The result follows.
\end{proof}

\subsection{Projective objects: existence}

\begin{prop}
\label{prop:existence-projectives}
For any $s \in \scS$ there exists a projective object $\cP$ in the category $\Perv_{\scS}(X,\O)$ which admits a $\Delta$-filtration and a surjection $\cP \twoheadrightarrow \IC_{s}$.

\end{prop}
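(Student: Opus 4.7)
Plan: I would proceed by induction on $n := |\scS|$. In the base case $n = 1$, $\Perv_{\scS}(X,\O)$ is equivalent to $\Modfr \O$ via $i_s^*[-\dim X_s]$, and $\IC_s = \Delta_s$ corresponds to the free rank-one $\O$-module, which is projective; take $\cP := \Delta_s$.

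For the induction step, choose an open stratum $X_t$, and write $j \colon X_t \hookrightarrow X$, $i \colon Y := X \smallsetminus X_t \hookrightarrow X$ for the complementary inclusions, $\scS' := \scS \smallsetminus \{t\}$. First I construct a $\Delta$-filtered ``starting object'' $\cF_0$ with a surjection onto $\IC_s$: if $s = t$, set $\cF_0 := \Delta_t$, with its canonical quotient map to $\IC_t$; if $s \neq t$, apply the inductive hypothesis on $(Y, \scS')$ to obtain a projective $\cQ$ in $\Perv_{\scS'}(Y,\O)$ admitting a $\Delta$-filtration and a surjection onto $\IC(X_s, \cL_s)$ computed inside $Y$, and set $\cF_0 := i_* \cQ$. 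By Lemma \ref{lem:affine-exact} the functor $i_*$ is t-exact and matches $\Delta_r^Y \leftrightarrow \Delta_r$ (for $r \in \scS'$) and $\IC(X_s,\cL_s) \leftrightarrow \IC_s$, so $\cF_0$ has the desired properties.

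Next I enlarge $\cF_0$ by a universal extension with copies of $\Delta_t$ to enforce projectivity. By Lemma \ref{lem:ICs-generate}, projectivity of $\cP$ in $\Perv_{\scS}(X,\O)$ is equivalent to the vanishing $\Ext^1(\cP, \IC_{r,m}) = 0$ for all $r \in \scS$ and $m \in \Z_{\geq 1} \cup \{\infty\}$. For $r \in \scS'$ this vanishing is automatic for any extension of $\cF_0$ by a direct sum of copies of $\Delta_t$: the adjunction $\Ext^*(\Delta_t, i_*-) = \Ext^*(j^*\Delta_t, j^* i_* -) = 0$ disposes of the $\Delta_t$-summands, while the projectivity of $\cQ$ together with the full faithfulness of $i_*$ (or, when $s = t$, the same adjunction again) disposes of the $\cF_0$ contribution. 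For $r = t$, I choose $N$ to be the minimal number of $\O$-generators of the finitely generated module $\Ext^1(\cF_0, \Delta_t)$ (finite by noetherianity of $\Perv_\scS(X,\O)$), pick $\xi \in \Ext^1(\cF_0, \Delta_t)^{\oplus N}$ a generating family, and define $\cP$ to be the resulting extension $0 \to \Delta_t^{\oplus N} \to \cP \to \cF_0 \to 0$. Then $\cP$ automatically inherits a $\Delta$-filtration and a surjection $\cP \twoheadrightarrow \cF_0 \twoheadrightarrow \IC_s$.

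The main obstacle is showing that this single construction kills $\Ext^1(\cP, \IC_{t,m})$ simultaneously for \emph{every} $m \in \Z_{\geq 1} \cup \{\infty\}$, not just $m = \infty$. The key input is the short exact sequence $0 \to \IC_t \xrightarrow{\pi^m} \IC_t \to \IC_{t,m} \to 0$, obtained by applying the intermediate extension $\IC(X_t, -)$ (exact on injections and surjections) to $0 \to \cL_t \xrightarrow{\pi^m} \cL_t \to \cL_{t,m} \to 0$. The associated long exact sequence then expresses $\Ext^1(\cP, \IC_{t,m})$ in terms of $\Ext^1(\cP, \IC_t)/\pi^m$ and the $\pi^m$-torsion of $\Ext^2(\cP, \IC_t)$; the first piece is annihilated by the choice of $\xi$, while handling the $\Ext^2$ piece requires either a further iteration of the same construction or a bookkeeping argument combining noetherianity of $\Perv_\scS(X,\O)$, Nakayama's lemma applied to the finitely generated $\O$-modules involved, and the mod-$\pi$ detection provided by Lemma \ref{lem:perverse-sheaf-0} to guarantee that the iteration terminates.
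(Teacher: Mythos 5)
Your skeleton is the same as the paper's (induction on the number of strata, projectivity of the standard object attached to the open stratum via adjunction, then a universal extension by copies of that standard object), but the final verification of projectivity has two genuine gaps, and the second one is the actual heart of the proof.

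First, the short exact sequence $0 \to \IC_t \xrightarrow{\pi^m} \IC_t \to \IC_{t,m} \to 0$ you want to use does not exist in general. The intermediate extension functor preserves injections and surjections but is \emph{not} exact, so all you get is that $\pi^m$ is injective on $\IC_t$ and that $\IC_t \onto \IC_{t,m}$; the cokernel of $\pi^m$ on $\IC_t$ is $\pH^0(\O/\pi^m \lotimes_\O \IC_t)$, which surjects onto $\IC_{t,m}$ with a kernel supported on the boundary that is nonzero exactly when modular reduction of the $\IC$-sheaf fails to be an $\IC$-sheaf (this is the phenomenon of nontrivial decomposition numbers studied in \cite{J}, and it does occur for flag varieties). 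So this route to $\IC_{t,m}$ for finite $m$ collapses. Second, and more seriously, your universal extension only arranges $\Ext^1(\cP,\Delta_t)=0$, where $\Delta_t = i_{t!}\cL_t[\dim X_t]$; for the open stratum this is \emph{not} equal to $\IC_{t,\infty}=\IC_t$ (the latter is a proper quotient of $\Delta_t$ whenever $\Delta_t$ has a quotient supported on the boundary). So even the case $m=\infty$, which you treat as already done, is not: you need to pass through the exact sequence $\mathrm{Ker} \into \Delta_t \onto \IC_{t,m}$, whose long exact sequence requires the vanishing of $\Ext^2(\cP,\mathrm{Ker})$.

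That $\Ext^2$-vanishing is precisely the step your ``further iteration or bookkeeping argument'' is standing in for, and it cannot be waved away: the paper proves $\Ext^2_{\cDb_\scS(X,\O)}(\cP,\cF)=0$ for \emph{every} perverse $\cF$ by reducing (via Lemma \ref{lem:ICs-generate}) to $\cF=\IC_{u,m}$, coresolving it by the exact sequence $\IC_{u,m}\into\nabla_{u,m}\onto\mathrm{Coker}$ with $\mathrm{Coker}$ supported on the closed complement, and then using two inputs you have available but never invoke: $\Ext^1(\cP,-)=0$ on objects supported on $Y$ (your ``Step for $r\in\scS'$''), and $\Ext^2(\cP,\nabla_{u,m})=0$ because $\cP$ is $\Delta$-filtered and $\Ext^{>0}(\Delta_a,\nabla_{b,m})=0$. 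With that in hand, both $m=\infty$ and finite $m$ are handled uniformly by the triangle $\mathrm{Ker}\into\Delta_t\onto\IC_{t,m}$, with no need for your $\pi^m$-sequence, no iteration, and no Nakayama argument. I recommend replacing the last paragraph of your proposal by this costandard-coresolution argument.
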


\begin{proof}
We prove the proposition by induction on the number of strata. First, the result is clearly true when $X$ consists of a single stratum.

Now, let $X_s$ be an open stratum, and set $Y=X \smallsetminus X_s$. We will abbreviate $\sfP = \Perv_{\scS}(X, \O)$ and $\sfP_Y =
\Perv_{\scS}(Y, \O)$. Given $\cF \in \sfP$, we have that $\cF \in \sfP_Y$ if and only if
$i_s^! \cF (= i_s^* \cF) = 0$. Also, note that if $\cF, \cG \in
\sfP_Y$ then
\[
\Ext^1_{\sfP_Y}(\cF, \cG) = \Ext^1_{\sfP}(\cF, \cG).
\]
(If one thinks about both groups as classifying extensions, then
any such extension has to be supported on $Y$.) Similarly, 
\[
 \Ext^1_{\sfP}(\cF,\cG) \cong \Ext^1_{\cDb_{\scS}(X,\O)}(\cF,\cG)
\]
if $\cF,\cG \in \sfP$. These equalities allow
us to omit the subscript from $\Ext^1$'s below. 

By adjunction, for any $\cF$ in $\sfP$ we have
\[
\Ext^1(\Delta_s, \cF) = 
\Hom(\Delta_s, \cF[1]) = \Hom_{\cDb_{\{s\}}(X_s,\cO)}(\cL_s, i_s^! \cF[1]) = 0,
\]
which implies that $\Delta_s \in \cP$ is projective (and
$\Delta$-filtered). Hence we have found our first surjection $\Delta_s \onto \IC_s$.

To construct the other projectives we will adopt the following strategy (essentially copying
\cite{BGS}): by induction we may assume that for each $t \in \scS \smallsetminus \{s\}$ we can find a
surjection $\cP \onto \IC_t$, with $\cP$ projective and $\Delta$-filtered
in $\sfP_Y$; we then see what needs to be done to enlarge $\cP$, so that
it is projective in $\sfP$.

So, fix $t \in \scS \smallsetminus \{s\}$ and assume that $\cP_Y$ is a $\Delta$-filtered projective object
in $\sfP_Y$ which surjects to $\IC_t$. Certainly we have
\begin{equation} \label{eq:P_I vanishing}
\Ext^1(\cP_Y, \cF) = 0\quad \text{for any} \quad\cF \in\sfP_Y.
\end{equation}
Now, consider $E = \Ext^1(\cP_Y, \Delta_s)$ and let $E_\free$ be a finitely generated free $\O$-module endowed with a surjection to $E$. We denote by $E_\free^*$ the dual $\O$-module. The sequence
of canonical morphisms
\[
\O \to E_\free^*\otimes_{\O} E_\free \to
E_\free^* \otimes_{\O} \Ext^1(\cP_Y, \Delta_s) \cong \Ext^1(\cP_Y,
E_\free^* \otimes_{\O} \Delta_s)
\]
gives us a distinguished extension
\[
E_\free^* \otimes_{\O} \Delta_s \hookrightarrow \cP \onto \cP_Y
\]
in $\sfP$. Composing $\cP \onto \cP_Y$ with $\cP_Y \onto \IC_t$ yields a
surjection of $\cP$ onto $\IC_t$, and $\cP$ is $\Delta$-filtered. We claim that
$\cP$ is projective, which will conclude the proof. To prove this claim we need four preliminary steps.

\emph{Step 1:} $\Ext^1(\cP, \cF) = 0$ for $\cF \in
\cP_Y$. This is clear from \eqref{eq:P_I vanishing}, the long exact sequence
\[
\dots \to \Ext^1(\cP_Y, \cF) \to 
\Ext^1(\cP, \cF) \to 
\Ext^1(E_\free^* \otimes_{\O} \Delta_s, \cF) \to \dots
\]
and the fact (already used above) that $\Delta_s \in \sfP$ is projective.

\emph{Step 2:} $\Ext^1(\cP, \Delta_{s}) = 0$. Consider the long
exact sequence:
\begin{multline*}
\dots \to \Hom(E_\free^*\otimes_{\O} \Delta_s, \Delta_s) \to \Ext^1(\cP_Y, \Delta_s) \to 
\Ext^1(\cP, \Delta_s) \\ \to 
\Ext^1(E_\free^* \otimes_{\O} \Delta_s, \Delta_s) \to \dots
\end{multline*}
Because $\Ext^1(\Delta_s, \Delta_s) = 0$ it is enough
to show that the first map above is surjective. However under the
canonical isomorphisms
\[
\Hom(E_\free^*\otimes_{\O} \Delta_s, \Delta_s) \cong E_\free \otimes_{\O}
\Hom(\Delta_s, \Delta_s) \cong E_\free
\]
this map corresponds to the map $E_\free \to \Ext^1(\cP_Y, \Delta_s)$,
which is surjective by construction.

\emph{Step 3:} $\Ext^2_{\cDb_{\scS}(X,\O)}(\cP, \cF) = 0$ for all $\cF \in \sfP$.
By Lemma \ref{lem:ICs-generate} it is enough to show that
$\Ext^2_{\cDb_{\scS}(X,\O)}(\cP, \IC_{t,m}) = 0$ for all $t \in \scS$, $m \in \Z_{\geq 1}
\cup \{ \infty \}$. The short
exact sequence
\[
\IC_{u,m} \hookrightarrow \nabla_{u,m} \onto \mathrm{Coker}
\]
leads to a long exact sequence
\[
\dots \to \Ext^1(\cP, \mathrm{Coker}) \to
\Ext^2_{\cDb_{\scS}(X,\O)}(\cP,\IC_{u,m}) \to \Ext^2_{\cDb_{\scS}(X,\O)}(\cP,
\nabla_{u,m}) \to \dots
\]
Now $\mathrm{Coker} \in \sfP_Y$ and hence $\Ext^1(\cP,
\mathrm{Coker}) = 0$ by Step 1. Also, $\cP$ is
$\Delta$-filtered, and hence $\Ext^2_{\cDb_{\scS}(X,\O)}(\cP,
\nabla_{u,m}) = 0$. It follows that $\Ext^2_{\cDb_{\scS}(X,\O)}(\cP,\IC_{u,m}) = 0$ as claimed.

\emph{Step 4:} $\Ext^1(\cP, \IC_{s,m}) = 0$ for all $m$. The short exact
sequence
\[
\mathrm{Ker} \hookrightarrow \Delta_s \onto \IC_{s,m}
\]
leads to a long exact sequence:
\[
\dots \to \Ext^1(\cP, \Delta_s) \to \Ext^1(\cP,
\IC_{s,m}) \to \Ext^2_{\cDb_{\scS}(X,\O)}(\cP, \mathrm{Ker}) \to \dots
\]
Step 2 and Step 3 show that the first and third terms are zero, which proves the claim.

Finally, we have shown in Steps 1 and 4 that $\Ext^1(\cP, \IC_{t,m}) = 0$ for all
$t \in \scS$ and $m \in \Z_{\geq 1} \cup \{ \infty \}$. Hence $\cP$ is
projective by Lemma \ref{lem:ICs-generate}.
\end{proof}

\begin{cor}
\label{cor:properties-projectives}

There are enough projective objects in $\Perv_{\scS}(X,\O)$. More precisely, for any $\cM$ in $\Perv_{\scS}(X,\O)$ there exists a projective object $\cP$ in $\Perv_{\scS}(X,\O)$ which admits a $\Delta$-flag and a surjection $\cP \onto \cM$.

\end{cor}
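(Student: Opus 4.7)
The plan is a straightforward induction on the length of the filtration supplied by Lemma \ref{lem:ICs-generate}, after first noting a mild extension of Proposition \ref{prop:existence-projectives} covering all $\IC_{s,m}$ rather than just $\IC_s = \IC_{s,\infty}$.

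For the extension step, I would use that the quotient $\cO \onto \cO/(\pi^m)$ induces a surjection of constant local systems $\cL_s \onto \cL_{s,m}$ on $X_s$, and that the intermediate extension functor $\IC(X_s,-)$ preserves surjections (recalled in \S\ref{ss:definitions-first-results}). This gives a surjection $\IC_s \onto \IC_{s,m}$ in $\Perv_\scS(X,\O)$, so composing with the surjection $\cP \onto \IC_s$ provided by Proposition \ref{prop:existence-projectives} yields a $\Delta$-filtered projective object surjecting onto $\IC_{s,m}$, for every $s \in \scS$ and every $m \in \Z_{\geq 1} \cup \{\infty\}$.

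For the main argument, given an arbitrary $\cM$ in $\Perv_\scS(X,\O)$, Lemma \ref{lem:ICs-generate} supplies a finite filtration $0 = \cM_0 \subset \cM_1 \subset \cdots \subset \cM_n = \cM$ whose successive quotients are of the form $\IC_{s_i,m_i}$, and I would induct on $n$ (the base case $n=0$ being trivial with $\cP = 0$). At the inductive step, the induction hypothesis furnishes a $\Delta$-filtered projective $\cP'$ together with a surjection $\cP' \onto \cM_{n-1}$, while the previous paragraph applied to $\cM/\cM_{n-1} \cong \IC_{s_n,m_n}$ furnishes a $\Delta$-filtered projective $\cQ$ together with a surjection $\cQ \onto \cM/\cM_{n-1}$. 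Projectivity of $\cQ$ lifts this second surjection to a map $\cQ \to \cM$, and then the induced map $\cP' \oplus \cQ \to \cM$ is surjective because its image contains $\cM_{n-1}$ (via the first summand) and surjects onto $\cM/\cM_{n-1}$ (via the second). Finally, $\cP' \oplus \cQ$ is projective and a $\Delta$-filtration is obtained by concatenating those of $\cP'$ and $\cQ$.

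There is no serious obstacle in this proof; the only point that deserves a moment's care is checking that the surjection of local systems $\cL_s \onto \cL_{s,m}$ on the stratum really does induce a surjection in $\Perv_\scS(X,\O)$ after applying $\IC(X_s,-)$, which is exactly the property of the intermediate extension functor recalled in \S\ref{ss:definitions-first-results}.
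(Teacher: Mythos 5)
Your proof is correct and follows essentially the same route as the paper: reduce to the objects $\IC_{s,m}$ via Lemma \ref{lem:ICs-generate} (you simply spell out the routine induction on the filtration length that the paper leaves implicit), and handle $\IC_{s,m}$ for finite $m$ by composing the surjection from Proposition \ref{prop:existence-projectives} with $\IC_{s,\infty}\onto\IC_{s,m}$, which holds because intermediate extension preserves surjections.
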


\begin{proof}
By Lemma \ref{lem:ICs-generate}, it is enough to prove the result for $\cM=\IC_{s,m}$ ($s \in \scS$, $m \in \Z_{\geq 1} \cup \{\infty\}$). The case $m=\infty$ is treated in Proposition \ref{prop:existence-projectives}; the general case follows from the fact that the natural morphism $\IC_{s,\infty} \to \IC_{s,m}$ is surjective, since intermediate extension preserves surjections.
\end{proof}

\begin{cor}
\label{cor:realization-functor}

\begin{enumerate}
\item The realization functor
\[
\cDb \Perv_{\scS}(X,\O) \to \cDb_{\scS}(X,\O)
\]
(see \cite{Be}) is an equivalence of categories.
\item The category $\Perv_{\scS}(X,\O)$ has finite projective dimension, i.e.~every object admits a finite projective resolution.
\end{enumerate}

\end{cor}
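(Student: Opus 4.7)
The two parts are closely linked, and the plan is to prove (2) first and then use it as the key ingredient in proving (1).

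\textbf{Proof of (2).} I would argue by induction on $|\scS|$. The base case $|\scS|=1$ reduces to $\Perv_\scS(X,\O) \simeq \Modf\O$, which has global dimension $1$ since $\O$ is a discrete valuation ring. For the inductive step, fix an open stratum $X_s$ with closed complement $i : Y \hookrightarrow X$ and let $d_Y$ denote the (finite, by induction) global dimension of $\Perv_{\scS\setminus\{s\}}(Y, \O)$. Given $\cM \in \Perv_\scS(X,\O)$, I would use the recollement triangle
\[
i_{s!} i_s^* \cM \to \cM \to i_* i^* \cM \triright
\]
together with Proposition \ref{prop:existence-projectives} and Corollary \ref{cor:properties-projectives}. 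Because $X_s$ is open, $\Delta_s$ is projective, so the perverse cohomologies of $i_{s!} i_s^* \cM$, which are built from pieces of the form $\Delta_s \otimes_\O M$ for finitely generated $\O$-modules $M$, have projective dimension at most $1$. The perverse cohomologies of $i_* i^* \cM$ are supported on $Y$ and admit finite projective resolutions in $\Perv_{\scS\setminus\{s\}}(Y, \O)$ of length at most $d_Y$; these lift to finite projective resolutions in $\Perv_\scS(X,\O)$ by replacing each $\Perv_{\scS\setminus\{s\}}(Y,\O)$-projective by the $\Perv_\scS(X,\O)$-projective constructed from it in Proposition \ref{prop:existence-projectives}. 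Splicing yields a uniform bound on the projective dimension of $\cM$.

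\textbf{Proof of (1).} Essential surjectivity of $\real$ is routine: every object of $\cDb_\scS(X,\O)$ is an iterated extension of shifts of its perverse cohomology sheaves via the perverse truncation triangles, and each such perverse cohomology lies in the image of $\real$. For full faithfulness, one needs
\[
\Ext^k_{\Perv_\scS(X,\O)}(M,N) \simto \Hom_{\cDb_\scS(X,\O)}(M,N[k])
\]
for all $M, N \in \Perv_\scS(X,\O)$ and all $k \geq 0$. The cases $k = 0, 1$ follow from the general formalism of t-structures. For $k \geq 2$, I would use (2) to pick a finite projective resolution $\cP^\bullet \to M$; a standard dimension-shift argument, together with the long exact sequence obtained by applying $\Hom_{\cDb_\scS(X,\O)}(-, N[\bullet])$ to the short exact sequences of syzygies, reduces the claim to the vanishing
\[
\Hom_{\cDb_\scS(X,\O)}(\cP, N[k]) = 0 \quad \text{for every projective $\cP$, every perverse $N$, and every $k \geq 1$.}
\]
The case $k = 1$ is immediate from projectivity of $\cP$.

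\textbf{Main obstacle.} The delicate point is the vanishing above for $k \geq 2$. I would prove it by induction on $|\scS|$ using the presentation of $\cP$ from Proposition \ref{prop:existence-projectives} as an extension $E_\free^* \otimes_\O \Delta_s \to \cP \to \cP_Y \triright$ with $X_s$ open and $\cP_Y$ projective in $\Perv_{\scS\setminus\{s\}}(Y,\O)$. The $\Delta_s$ contribution vanishes by a direct computation (using $i_s^! = i_s^*$ on the open stratum together with the t-exactness of $i_s^*$ there). The $\cP_Y$ contribution reduces via adjunction to $\Hom_{\cDb_{\scS\setminus\{s\}}(Y,\O)}(\cP_Y, i^! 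N[k])$; the object $i^! N$ is not perverse but lies in non-negative perverse degrees, so a devissage along its perverse truncation filtration, combined with an appropriately strengthened inductive statement covering $\Hom_\cD(\cP_Y, -)$ against arbitrary objects in perverse degrees $\geq 0$ (rather than only perverse objects), closes the induction. The main subtlety is orchestrating this strengthening consistently while accounting for the $\O$-torsion that appears in the intermediate extensions $\IC_{s,m}$ for finite $m$. Alternatively one may bypass this direct verification by invoking Be\v{\i}linson's general realization theorem \cite{Be}, which gives the equivalence from the existence of a filtered enhancement of $\cDb_\scS(X,\O)$.
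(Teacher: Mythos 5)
Your overall architecture for (1) is the right one and matches the paper's: reduce full faithfulness to the vanishing $\Hom_{\cDb_{\scS}(X,\O)}(\cP,N[k])=0$ for $\cP$ projective, $N$ perverse and $k\geq 1$, the cases $k=0,1$ being handled by \cite[Remarque 3.1.17]{BBD}. (Note that only the existence of enough projectives is needed for the dimension shift, so (1) does not actually depend on (2); the paper proves them in the other order.) But your treatment of the ``main obstacle'' has a genuine gap. The paper avoids any induction on strata here: since $\cP$ has a $\Delta$-filtration, adjunction gives $\Hom^{i}(\cP,\nabla_{s,m})=0$ for $i\geq 1$ (because $i_t^*i_{s!}=0$ for $t\neq s$ and strata are affine spaces), and one then reduces $N$ to the generators $\IC_{s,m}$, embeds $\IC_{s,m}\into\nabla_{s,m}$, and inducts on $k$ via the cokernel. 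Your alternative route through the presentation $E_\free^*\otimes_\O\Delta_s\into\cP\onto\cP_Y$ handles the $\Delta_s$-part correctly, but the proposed ``strengthened inductive statement covering $\Hom_{\cD}(\cP_Y,-)$ against arbitrary objects in perverse degrees $\geq 0$'' is false as stated: for $C=M[-k]$ with $M$ perverse one has $\Hom(\cP_Y,C[k])=\Hom(\cP_Y,M)\neq 0$. The devissage of $i^!N$ produces the term $\Hom_{\Perv}(\cP_Y,\pH^k(i^!N))$ in degree $0$, which no Ext-vanishing can kill. What rescues the argument is a perverse amplitude bound: $i^!N$ lies in perverse degrees $[0,1]$, because in the triangle relating $i_*i^!N$, $N$ and $i_{s*}i_s^*N$ the third term is perverse by Lemma \ref{lem:affine-exact} (affineness of the open stratum). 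With that bound, $\pH^k(i^!N)=0$ for $k\geq 2$ and your induction closes. Without identifying this input the proof does not go through. Also, the fallback of ``invoking Be\u{\i}linson's general realization theorem'' is not available: Be\u{\i}linson's result concerns the category of all perverse sheaves, whereas the whole point of the corollary is the equivalence for the fixed stratification $\scS$, which genuinely requires the projective objects constructed in Proposition \ref{prop:existence-projectives}.

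For (2) your argument via the recollement sequence
$0 \to \pH^{-1}(i_*i^*\cM) \to \pH^0(i_{s!}i_s^*\cM) \to \cM \to \pH^0(i_*i^*\cM) \to 0$
is a viable alternative to the paper's, provided you justify that objects of $\Perv_{\scS\smallsetminus\{s\}}(Y,\O)$ of finite projective dimension on $Y$ also have finite projective dimension in $\Perv_{\scS}(X,\O)$ (each $\Perv_Y$-projective has $\Perv_X$-projective dimension $\leq 1$ by the construction of $\cP$, and one totalizes). The paper instead devisses to the objects $\Delta_{s,m}$, uses $\Delta_s\into\Delta_s\onto\Delta_{s,m}$ to reduce to free standard objects, and runs a decreasing induction on $\dim X_s$ using the $\Delta$-filtration of $\ker(\cP_s\onto\Delta_s)$; this is somewhat more economical but both routes are sound.
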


\begin{proof}
(1) The idea of this proof is taken from \cite[Corollary 3.3.2]{BGS}. As $\Perv_{\scS}(X,\O)$ generates the category $\cDb_{\scS}(X,\O)$, it is enough to prove that the realization functor is fully faithful, or equivalently that it induces an isomorphism
\begin{equation}
\label{eqn:realization-functor}
\Hom_{\cDb \Perv_{\scS}(X,\O)}^i(\cF,\cG) \xrightarrow{\sim} \Hom_{\cDb_{\scS}(X,\O)}^i(\cF,\cG).
\end{equation}
for all $i \in \Z$ and all $\cF,\cG$ in $\Perv_{\scS}(X,\O)$.

First, we claim that both sides in \eqref{eqn:realization-functor} vanish if $\cF$ is projective with a $\Delta$-filtration and $i \geq 1$ (and for any $\cG$). This claim is obvious by definition for the left-hand side. We prove the claim for the right-hand side by induction on $i$. If $i=1$, then morphism \eqref{eqn:realization-functor} is an isomorphism by \cite[Remarque 3.1.17]{BBD}, which proves the vanishing of the right-hand side. Now let $i \geq 2$, and assume the claim is known for $i-1$. By Lemma \ref{lem:ICs-generate} it suffices to treat the case $\cG=\IC_{s,m}$ for $s \in \scS$ and $m \in \Z_{\geq 1} \cup \{\infty\}$. Consider the exact sequence
\[
\IC_{s,m} \into \nabla_{s,m} \onto \mathrm{Coker}.
\]
Applying $\Hom(\cF,-)$ we obtain an exact sequence:
\[
\Hom_{\cDb_{\scS}(X,\O)}^{i-1}(\cF,\mathrm{Coker}) \to \Hom_{\cDb_{\scS}(X,\O)}^{i}(\cF,\IC_{s,m}) \to \Hom_{\cDb_{\scS}(X,\O)}^i(\cF,\nabla_{s,m}).
\]
The left term vanishes by induction. The right term vanishes because $\cF$ has a $\Delta$-filtration. Hence the middle term also vanishes, which finishes the proof of the claim.

Now we deduce that \eqref{eqn:realization-functor} is an isomorphism in general. First, both sides vanish if $i<0$, and the realization functor induces an isomorphism for $i=0$ by definition and for $i=1$ by \cite[Remarque 3.1.17]{BBD}. We treat the case $i \geq 2$ by induction. So, assume that that $i \geq 2$ and that the result is known for $i-1$. Let $\cP$ be a projective object with a $\Delta$-filtration surjecting onto $\cF$ (see Corollary \ref{cor:properties-projectives}), and consider an exact sequence
\[
\mathrm{Ker} \into \cP \onto \cF.
\]
We have a commutative diagram with exact rows
{\small
\[
\xymatrix{
\Hom_{\cDb \sfP}^{i-1}(\cP,\cG) \ar[r] \ar[d] & \Hom_{\cDb \sfP}^{i-1}(\mathrm{Ker},\cG) \ar[r] \ar[d] & \Hom_{\cDb \sfP}^{i}(\cF,\cG) \ar[r] \ar[d] & \Hom_{\cDb \sfP}^{i}(\cP,\cG) \ar[d] \\
\Hom_{\sfD}^{i-1}(\cP,\cG) \ar[r] & \Hom_{\sfD}^{i-1}(\mathrm{Ker},\cG) \ar[r] & \Hom_{\sfD}^{i}(\cF,\cG) \ar[r] & \Hom_{\sfD}^{i}(\cP,\cG)
}
\]
}where $\sfP=\Perv_{\scS}(X,\O)$ and $\sfD=\cDb_{\scS}(X,\O)$. By our claim above, the terms in the right and the left column vanish. And by induction the second vertical arrow is an isomorphism. It follows that the third vertical arrow is also an isomorphism.

(2) The idea for this proof is taken from \cite[Corollary 3.2.2]{BGS}. By Lemma \ref{lem:ICs-generate} it is enough to prove that any object $\IC_{s,m}$ ($s \in \scS$, $m \in \Z_{\geq 1} \cup \{\infty\}$) has a finite projective resolution. Then, by induction on $\dim(X_s)$, it is enough to prove the claim for objects $\Delta_{s,m}$ ($s \in \scS$, $m \in \Z_{\geq 1} \cup \{\infty\}$). If $m \neq \infty$, there is an exact sequence
\[
\Delta_s \into \Delta_s \onto \Delta_{s,m}
\]
since the functor $i_{s!}$ is exact (see Lemma \ref{lem:affine-exact}). Hence we only have to prove the claim for objects $\Delta_s$, $s \in \scS$. Then we use a decreasing induction on $\dim(X_s)$. If this dimension is maximal, then $\Delta_s$ is projective. The induction step follows from the property that there exists a projective $\cP_s$ surjecting onto $\Delta_s$ such that the kernel has a $\Delta$-filtration with subquotients $\Delta_t$ with $\dim(X_t) > \dim(X_s)$ (see the proof of Proposition \ref{prop:existence-projectives}).
\end{proof}

Thanks to Corollary \ref{cor:realization-functor}, we do not have to mention whether we consider $\Ext$-groups in $\Perv_{\scS}(X,\O)$ or in $\cDb_{\scS}(X,\O)$. For simplicity, for $\cF,\cG$ in $\Perv_{\scS}(X,\O)$ we will write $\Ext^i_X(\cF,\cG)$ for $\Ext^i_{\Perv_{\scS}(X,\O)}(\cF,\cG) \cong \Hom^i_{\cDb_{\scS}(X,\O)}(\cF,\cG)$.

\subsection{Projective objects: properties}

Recall that the projective objects in the category $\Perv_{\scS}(X,\F)$ can be described using \cite[Theorem 3.2.1]{BGS}. In particular, there are enough projective objects in this category, and any such object has a $\Delta_{\F}$-filtration. Moreover, by the arguments of \cite[Corollary 3.3.2]{BGS} the realization functor
\[
\cDb \Perv_{\scS}(X,\F) \ \to \ \cDb_{\scS}(X,\F)
\]
is an equivalence of categories. For $s \in \scS$, we denote by $\cP_{s,\F}$ the projective cover of $\IC_{s,\F}$ in $\Perv_{\scS}(X,\F)$.

\begin{prop}
\label{prop:properties-projectives-2}

Let $\cP$ be a projective object in $\Perv_{\scS}(X,\O)$.

\begin{enumerate}
\item The object $\F(\cP)$ is a perverse sheaf, which is projective in $\Perv_{\scS}(X,\F)$.
\item For any $s \in \scS$, the $\O$-module $\Hom(\cP,\Delta_s)$ is free, and the natural morphism
\[
\F \otimes_{\O} \Hom_{\O}(\cP, \Delta_s) \ \to \ \Hom_{\F}(\F (\cP), \Delta_{s,\F})
\]
is an isomorphism.
\item $\cP$ admits a $\Delta$-filtration.
\item For any $\cQ$ projective in $\Perv_{\scS}(X,\O)$, the $\O$-module 
\[
\Hom_{\O}(\cP,\cQ)
\]
is free. Moreover, the natural morphism
\[
\F \otimes_{\O} \Hom_{\O}(\cP,\cQ) \ \to \ \Hom_{\F}(\F(\cP),\F(\cQ))
\]
is an isomorphism.
\end{enumerate}

\end{prop}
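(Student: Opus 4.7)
The plan is to prove the four claims in the order (3), (1), (2), (4), since part (3) feeds into each of the others.

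For (3), Corollary \ref{cor:properties-projectives} produces a $\Delta$-filtered projective $\tilde{\cP}$ surjecting onto $\cP$, and projectivity of $\cP$ splits the surjection, exhibiting $\cP$ as a direct summand in a decomposition $\cP \oplus \cP' = \tilde{\cP}$. Since $\F$ sends $\Delta_s$ to $\Delta_{s,\F}$, the object $\F(\tilde{\cP})$ is perverse with a $\Delta_{\F}$-filtration. Now $\F(\cP)$ and $\F(\cP')$ both lie in perverse degrees $-1$ and $0$, and the vanishing
\[
\pH^{-1}(\F(\tilde\cP)) \;=\; \pH^{-1}(\F(\cP)) \oplus \pH^{-1}(\F(\cP')) \;=\; 0
\]
forces $\F(\cP)$ itself to be perverse. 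In the highest weight category $\Perv_{\scS}(X,\F)$, direct summands of $\Delta_{\F}$-filtered objects are again $\Delta_{\F}$-filtered (a standard consequence of $\Ext^1(\Delta_{t,\F}, \nabla_{u,\F}) = 0$), so $\F(\cP)$ carries a $\Delta_{\F}$-filtration, and Lemma \ref{lem:Delta-flag} upgrades this to a $\Delta$-filtration on $\cP$.

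For (1), once (3) is established $\F(\cP)$ is automatically perverse. Projectivity in $\Perv_{\scS}(X,\F)$ follows from adjunction: $\F \lotimes_{\O} (-)$ is left adjoint to the natural embedding $\For$ of $\F$-coefficient perverse sheaves into $\O$-coefficient perverse sheaves, so for $\cG \in \Perv_{\scS}(X,\F)$ and $i \geq 1$,
\[
\Ext^i_X(\F(\cP), \cG) \;\cong\; \Ext^i_X(\cP, \For(\cG)) \;=\; 0.
\]

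For (2) and (4), the key observation is that any $\Delta$-filtered perverse sheaf $\cQ$ (in particular $\Delta_s$, and by (3) any projective) is $\pi$-torsion-free: $\pi$ is injective on the local system $\cL_s$ and $i_{s!}$ is exact (Lemma \ref{lem:affine-exact}), so $\pi$ is injective on each $\Delta_s$, and a snake-lemma induction along the $\Delta$-filtration propagates this to $\cQ$. We therefore obtain a short exact sequence
\[
0 \to \cQ \xrightarrow{\pi} \cQ \to \F(\cQ) \to 0
\]
in $\Perv_{\scS}(X,\O)$. Applying $\Hom(\cP, -)$ and invoking $\Ext^1(\cP, \cQ) = 0$ yields
\[
0 \to \Hom(\cP, \cQ) \xrightarrow{\pi} \Hom(\cP, \cQ) \to \Hom(\cP, \F(\cQ)) \to 0,
\]
so $\Hom(\cP, \cQ)$ is $\pi$-torsion-free and hence free as a finitely generated $\O$-module; the surjection identifies $\F \otimes_\O \Hom_\O(\cP, \cQ)$ with $\Hom_\O(\cP, \F(\cQ))$, which by the adjunction of (1) is $\Hom_\F(\F(\cP), \F(\cQ))$. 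Specialising $\cQ$ to $\Delta_s$ and to an arbitrary projective gives (2) and (4) respectively. The only genuine obstacle is part (3); everything else is a formal consequence of (3), adjunction, and the projectivity of $\cP$.
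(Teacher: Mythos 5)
Your argument is correct, but it reorganizes the proof around different pivots than the paper. The paper proves (1) and (2) together: it realizes $\cP$ as a summand of a $\Delta$-filtered projective to get perversity of $\F(\cP)$ (as you do), but then obtains (2) from the base-change isomorphism $\F\bigl(R\Hom_{\O}(\cP,\Delta_s)\bigr) \cong R\Hom_{\F}(\F(\cP),\Delta_{s,\F})$ together with a degree count (the left side lives in degrees $-1,0$, the right side in degrees $\geq 0$), deduces projectivity of $\F(\cP)$ from the resulting $\Ext^{>0}$-vanishing, and only then gets the $\Delta_{\F}$-filtration of $\F(\cP)$ from \cite[Theorem 3.2.1]{BGS} and hence (3) from Lemma \ref{lem:Delta-flag}; (4) is then assembled from (2) and (3). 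Your route replaces the $R\Hom$ degree count by two more elementary devices: the extension/restriction-of-scalars adjunction $\Ext^i(\F(\cP),\cG) \cong \Ext^i(\cP,\For\cG)$ (valid here because an $\F$-perverse sheaf is $p$-perverse over $\O$, and $\Ext^{\geq 1}(\cP,-)$ vanishes on $\Perv_{\scS}(X,\O)$ by Corollary \ref{cor:realization-functor}), and the short exact sequence $\cQ \xrightarrow{\pi} \cQ \onto \F(\cQ)$ for $\Delta$-filtered $\cQ$, which gives freeness of $\Hom(\cP,\cQ)$ and the reduction isomorphism in one stroke and makes (4) independent of the filtration bookkeeping. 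Both approaches are sound, and yours is arguably more uniform.

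The one step you should tighten is the assertion in (3) that direct summands of $\Delta_{\F}$-filtered objects are $\Delta_{\F}$-filtered ``as a consequence of $\Ext^1(\Delta_{t,\F},\nabla_{u,\F})=0$.'' That vanishing only yields the easy implication ($\Delta$-filtered $\Rightarrow$ $\Ext^1(-,\nabla)=0$); closure under summands requires the converse of the cohomological criterion, which is a genuine theorem about highest weight categories, not a formal consequence of the displayed vanishing. The fact is true here (the category is highest weight by \cite{BGS}), so this is an imprecision rather than an error; but you can avoid the issue entirely by reordering: your summand argument already gives perversity of $\F(\cP)$ and your adjunction argument its projectivity without using (3), after which \cite[Theorem 3.2.1]{BGS} hands you the $\Delta_{\F}$-filtration of $\F(\cP)$ directly, and Lemma \ref{lem:Delta-flag} finishes (3) exactly as in the paper.
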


\begin{proof}
We prove (1) and (2) simultaneously. By Corollary \ref{cor:properties-projectives}, there exists a projective object $\cQ$ of $\Perv_{\scS}(X,\O)$ which admits a $\Delta$-filtration and a surjection $\cQ \twoheadrightarrow \cP$. By projectivity, $\cP$ is direct factor of $\cQ$. As $\cQ$ admits a $\Delta$-filtration, $\F(\cQ)$ is a perverse sheaf; it follows that $\F(\cP)$ is also perverse. 

Now we observe that we have
\[
\F \bigl( R\Hom_{\O}(\cP,\Delta_s) \bigr) \ \cong \ R\Hom_{\F} (\F(\cP),\Delta_{s,\F}).
\]
The left hand side is concentrated in degrees $-1$ and $0$ by Corollary \ref{cor:realization-functor}, and the right-hand side is in non-negative degrees since $\F(\cP)$ is perverse. It follows that both complexes are concentrated in degree $0$, and (2) follows. Then one easily deduces the projectivity of $\F(\cP)$ from the fact that $\Ext^{>0}(\F(\cP),\Delta_{s,\F})=0$ for any $s \in \scS$.

(3) By (1) and the remarks before the corollary, $\F(\cP)$ is a perverse sheaf and it admits a $\Delta_{\F}$-filtration. The result follows using Lemma \ref{lem:Delta-flag}.

(4) follows easily from (2) and (3).
\end{proof}

\begin{cor}
\label{cor:projectives-classification}

For any $s \in \scS$, there exists, up to isomorphism, a unique
indecomposable projective object $\cP_s$ in $\Perv_{\scS}(X,\O)$ such
that $\F(\cP_s) \cong \cP_{s,\F}$. Moreover, these objects are
pairwise non isomorphic, and any projective object in
$\Perv_{\scS}(X,\O)$ is a direct sum of objects $\cP_s$, $s \in
\scS$.
\end{cor}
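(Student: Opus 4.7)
The plan is to transport the classification of indecomposable projectives from $\Perv_{\scS}(X,\F)$ up to $\O$ using the Krull--Schmidt property (Lemma \ref{lem:Krull-Schmidt}) and the compatibility of $\F$ with $\Hom$ provided by Proposition \ref{prop:properties-projectives-2}(4). The crucial technical observation, which underlies every step, is this: for any indecomposable projective $\cP$ in $\Perv_{\scS}(X,\O)$, the ring $\End(\cP)$ is local (by Krull--Schmidt, since $\End(\cP)$ is a finitely generated $\O$-algebra), and $\pi$ lies in its maximal ideal. Indeed the quotient $\End(\cP)/\pi\End(\cP) \cong \End_{\F}(\F\cP)$ (by Proposition \ref{prop:properties-projectives-2}(4)) is nonzero since $\F\cP \neq 0$ by Proposition \ref{prop:properties-projectives-2}(1) and Lemma \ref{lem:perverse-sheaf-0}, so $\pi$ is a non-unit.

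For existence, fix $s \in \scS$ and let $\cP'$ be the projective from Proposition \ref{prop:existence-projectives} surjecting onto $\IC_s$. Composing with $\IC_s \onto \IC_{s,\F}$ yields $\cP' \onto \IC_{s,\F}$. Decompose $\cP' = \bigoplus_i \cQ_i$ into indecomposables; since $\IC_{s,\F}$ is simple, some summand $\cQ$ surjects onto it. As $\pi$ annihilates $\IC_{s,\F}$, this surjection factors through $\F\cQ$. By Proposition \ref{prop:properties-projectives-2}(1) the perverse sheaf $\F\cQ$ is projective in $\Perv_{\scS}(X,\F)$, and it is indecomposable since $\End_{\F}(\F\cQ)$ is a quotient of the local ring $\End(\cQ)$, hence local. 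An indecomposable projective surjecting onto a simple object is its projective cover, so $\F\cQ \cong \cP_{s,\F}$. Set $\cP_s := \cQ$.

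For uniqueness, suppose $\cP$ is another indecomposable projective with $\F\cP \cong \cP_{s,\F}$. Pick an isomorphism $\phi \colon \F\cP_s \simto \F\cP$; by the surjectivity of reduction in Proposition \ref{prop:properties-projectives-2}(4) it lifts to some $f \in \Hom(\cP_s, \cP)$, and $\phi^{-1}$ lifts to some $g \in \Hom(\cP,\cP_s)$. Then $gf \in \End(\cP_s)$ reduces to $\id$ modulo $\pi\End(\cP_s)$; since $\pi$ lies in the maximal ideal of the local ring $\End(\cP_s)$, $gf$ is a unit, and symmetrically so is $fg$. Hence $f$ is an isomorphism. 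The objects $\cP_s$ are pairwise non-isomorphic because $\F$ sends them to the pairwise non-isomorphic $\cP_{s,\F}$.

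Finally, any projective object decomposes as a direct sum of indecomposables by Krull--Schmidt, and each such indecomposable summand $\cQ$ has $\F\cQ$ indecomposable projective in $\Perv_{\scS}(X,\F)$, necessarily of the form $\cP_{s,\F}$ for some $s$; by the uniqueness just proved, $\cQ \cong \cP_s$. The only real obstacle is establishing that reduction preserves indecomposability of projectives, which is precisely why one must work with the locality of $\End(\cP)$ combined with $\pi \in \rad(\End(\cP))$.
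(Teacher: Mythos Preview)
Your proof is correct and follows the same overall strategy as the paper: use Krull--Schmidt over $\O$, show $\F$ preserves indecomposability of projectives via Proposition~\ref{prop:properties-projectives-2}(4), and match up with the known classification over $\F$. The differences are in the local mechanics. For indecomposability of $\F\cQ$, the paper invokes idempotent lifting (citing Feit), whereas you observe directly that $\End_\F(\F\cQ)\cong\End(\cQ)/\pi\End(\cQ)$ is a quotient of a local ring, hence local; these are of course equivalent, but your phrasing is more self-contained. For uniqueness and for the final classification, the paper lifts a single morphism and appeals to Lemma~\ref{lem:perverse-sheaf-0} on its cone, while you lift both directions and argue that the compositions are units in local rings; your version avoids Lemma~\ref{lem:perverse-sheaf-0} entirely at the cost of a slightly longer argument. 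One small point: when you write ``$\IC_s \onto \IC_{s,\F}$'' and ``since $\IC_{s,\F}$ is simple'', you are implicitly treating $\IC_{s,\F}$ as the $\pi$-torsion object $\IC_{s,1}$ in $\Perv_\scS(X,\O)$ and using its simplicity there; this is true but not stated in the paper, and you could sidestep it (as the paper does) by surjecting onto $\Delta_s$ instead and applying $\F$ to get $\F\cQ \onto \Delta_{s,\F} \onto \IC_{s,\F}$.
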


\begin{proof}
By Proposition \ref{prop:existence-projectives} and its proof, there
exists an indecomposable projective $\cP_s$ surjecting onto $\Delta_s$.  By Proposition \ref{prop:properties-projectives-2}(1), $\F(\cP_s)$ is a projective perverse sheaf. Moreover it surjects onto $\Delta_{s,\F}$, hence to $\IC_{s,\F}$. Next, recall that $\Perv_{\scS}(X,\O)$ and $\Perv_{\scS}(X,\F)$ both satisfy the
Krull--Schmidt property (see Lemma \ref{lem:Krull-Schmidt}), and hence
an object is indecomposable if and only if its endomorphism ring is
local. Then indecomposability of $\F(\cP_s)$ follows from Proposition \ref{prop:properties-projectives-2}(4), using the fact (see \cite[Theorem 12.3]{Feit}) that one can lift
idempotents from $\F \otimes_{\O} \End_{\Perv_{\scS}(X,\O)}(\cP_s)$ to
$\End_{\Perv_{\scS}(X,\O)}(\cP_s)$. As $\F(\cP_s)$ surjects to $\IC_{s,\F}$, we deduce that $\F(\cP_s) \cong \cP_{s,\F}$.

Let us show uniqueness. Let $\cQ_s$ be another indecomposable projective such that $\F(\cQ_s) \cong \cP_{s,\F}$. By Proposition \ref{prop:properties-projectives-2}, the identity of $\cP_{s,\F}$ can be lifted to a morphism $f: \cP_s \to \cQ_s$.  By Lemma \ref{lem:perverse-sheaf-0}, $f$ is an isomorphism.

The fact that the $\cP_s$'s are pairwise non isomorphic is obvious, since so are the $\cP_{s,\F}$'s. Finally, let $\cP$ be any projective object in $\Perv_{\scS}(X,\O)$. Then $\F(\cP)$ is a projective perverse sheaf by Proposition \ref{prop:properties-projectives-2}(1), hence there exist integers $n_s$ ($s \in \scS$) and an isomorphism 
\[
\F(\cP) \ \cong \ \bigoplus_{s \in \scS} \cP_{s,\F}^{\oplus n_s}.
\]
By Proposition \ref{prop:properties-projectives-2}(4) this isomorphism can be lifted to a morphism 
\[
\cP \to \bigoplus_{s \in \scS} \cP_{s}^{\oplus n_s},
\]
which is an isomorphism again by Lemma \ref{lem:perverse-sheaf-0}.
\end{proof}

\section{Perverse sheaves on a variety over a finite field}
\label{sec:perverse-finite-field}

\subsection{Generalities on $\O$-modules with an endomorphism}
\label{ss:O-modules-automorphisms}

Given an endomorphism $\phi$ of an $\O$-module $M$ 
and $\lambda\in \O$ let us put 
\[
M_\lambda :=\{ m \in M \mid (\phi - \lambda \cdot \id)^n(m)=0 \text{ for } n \gg 0 \}.
\]
We call $M_\lambda$ the generalized eigenspace of $M$ with eigenvalue $\lambda$.

\begin{prop}
\label{prop:criterion-decomposability-soe}

Let $M$ be a free finite rank $\O$-module
with an endomorphism  $\phi$. Suppose that $\chi(\phi)=0$ for some $\chi \in \O[X]$ such that $\chi=\prod_{\lambda \in \Lambda} (X-\lambda)^{n_\lambda}$ for some finite set $\Lambda \subset \O$ and some positive integers $n_\lambda$. Assume moreover that if $\lambda,\mu \in \Lambda$ have the same image in $\F$ then $\lambda=\mu$. Then $M$ decomposes into the direct sum of its generalized 
eigenspaces, i.e.~the natural map
\begin{equation}
\label{eqn:eigenspaces-M}
\bigoplus_{\lambda\in\O} \, M_\lambda \ \to \ M
\end{equation}
is an isomorphism.
\end{prop}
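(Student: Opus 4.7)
The plan is to set $R := \O[X]/(\chi)$ and exhibit $R$ as a product of local $\O$-algebras, which will induce the desired decomposition of $M$ since $M$ becomes an $R$-module via $X \mapsto \phi$. For each $\lambda \in \Lambda$, write $p_\lambda(X) := (X-\lambda)^{n_\lambda}$ and $\chi_\lambda(X) := \chi(X)/p_\lambda(X) = \prod_{\mu \neq \lambda}(X-\mu)^{n_\mu}$. If for every $\lambda$ the elements $p_\lambda$ and $\chi_\lambda$ are coprime in $R$, the Chinese Remainder Theorem will yield a ring isomorphism
\[
R \ \simto \ \prod_{\lambda \in \Lambda} \O[X]/(p_\lambda),
\]
with orthogonal idempotents $e_\lambda \in R$ summing to $1$. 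Applying these to $M$ gives $M = \bigoplus_\lambda e_\lambda M$, and it will remain to identify $e_\lambda M$ with $M_\lambda$.

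The coprimality step is where the hypothesis on $\Lambda$ enters. The assumption that distinct $\lambda, \mu \in \Lambda$ have distinct images in $\F$ implies $\lambda - \mu \in \O^\times$, whence the reductions $\bar{p}_\lambda$ and $\bar{\chi}_\lambda$ are coprime in the principal ideal domain $\F[X]$. Thus in $R/\pi R \cong \F[X]/(\bar{\chi})$ the images of $p_\lambda$ and $\chi_\lambda$ already generate the unit ideal. Since $\chi$ is monic, $R$ is free of finite rank as an $\O$-module, so Nakayama's lemma applied to the $\O$-module $R/(p_\lambda, \chi_\lambda)$ upgrades this to $(p_\lambda, \chi_\lambda) = R$.

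To identify $e_\lambda M$ with the generalized eigenspace $M_\lambda$, one inclusion is immediate: on $e_\lambda M$ the operator $p_\lambda(\phi)$ vanishes, so $e_\lambda M \subseteq \ker(\phi-\lambda)^{n_\lambda} \subseteq M_\lambda$. For the converse, given $m \in M_\lambda$ I will show $p_\lambda(\phi)\,m = 0$; then $e_\lambda m = m$ will follow from the fact that $1 - e_\lambda$ lies in the ideal of $R$ generated by $p_\lambda$. Since $\chi(\phi)=0$ globally on $M$, we have $p_\lambda(\phi)\,\chi_\lambda(\phi)\,m = 0$, and it suffices to show that $\chi_\lambda(\phi)$ acts invertibly on the submodule $N := \O[\phi]\,m$, allowing one to cancel. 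For any $\mu \neq \lambda$ the factor $(\phi - \mu) = (\lambda - \mu)\bigl(1 + (\lambda - \mu)^{-1}(\phi - \lambda)\bigr)$ is a unit of $\O$ times $1$ plus a nilpotent endomorphism of $N$ (nilpotent because $m \in M_\lambda$ is killed by some power of $\phi - \lambda$ and this power then kills all of $N$), so its inverse on $N$ is given by a finite geometric series. Hence $\chi_\lambda(\phi)$ is invertible on $N$, giving $p_\lambda(\phi)\,m = 0$ and $m \in e_\lambda M$.

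The only delicate point is that $p_\lambda$ and $\chi_\lambda$ need not generate the unit ideal of $\O[X]$ itself; one must pass to the quotient ring $R$, whose finite $\O$-rank makes Nakayama available. Once that is secured the rest is purely formal, and in fact the freeness of $M$ is never used: the same argument proves the decomposition for any $\O$-module on which $\chi(\phi)$ vanishes.
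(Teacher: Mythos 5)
Your proof is correct and takes essentially the same route as the paper: identify $\O[X]/\langle\chi\rangle$ with $\prod_{\lambda\in\Lambda}\O[X]/\langle(X-\lambda)^{n_\lambda}\rangle$ via the Chinese Remainder Theorem after reduction modulo $\pi$, and pull back the idempotents; your verification that $e_\lambda M=M_\lambda$ is a welcome expansion of what the paper leaves implicit. Two small corrections, though. First, the statement asserts that $\bigoplus_{\lambda\in\O}M_\lambda\to M$ is an isomorphism, so you must also check that $M_\nu=0$ for $\nu\notin\Lambda$ (the paper notes this at the outset); this is where freeness genuinely enters --- e.g.\ embed $M$ into $\K\otimes_\O M$, where the generalized eigenvalues of $\phi$ lie in $\Lambda$ --- and it is why your closing claim that ``the freeness of $M$ is never used'' is false as stated: for $M=\O/(\pi)$ with $\phi=0$ and $\chi=X$ one has $M_0=M_\pi=M\neq 0$, so the sum over all of $\O$ is not direct (only the decomposition indexed by $\Lambda$ survives without freeness). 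Second, your ``delicate point'' is not actually delicate: since $\lambda-\mu\in\O^\times$ for distinct $\lambda,\mu\in\Lambda$, the polynomials $(X-\lambda)^{n_\lambda}$ and $(X-\mu)^{n_\mu}$ already generate the unit ideal of $\O[X]$ itself, so $p_\lambda$ and $\chi_\lambda$ are coprime in $\O[X]$ and the Nakayama detour, while valid, is unnecessary.
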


\begin{proof}
First observe that $M_\lambda=\{0\}$ unless $\lambda \in \Lambda$, so that the sum in the left hand side of \eqref{eqn:eigenspaces-M} reduces to $\bigoplus_{\lambda\in\Lambda} M_\lambda$. Then, consider $M$ as a module over $\O[X]/\langle\chi\rangle$. 
The natural morphism
\[
P_\chi :
\O[X]/\langle\chi\rangle \ \to \ 
\prod_{\lambda \in \Lambda} \O[X]/\langle (X-\lambda)^{n_\lambda}\rangle
\]
is a morphism between two free $\O$-modules of the same finite rank, and its reduction modulo $\pi$ is an isomorphism by our assumption on $\Lambda$ and the Chinese Remainder Theorem. We deduce that $P_\chi$ is already an isomorphism. Pulling back the obvious idempotents from the right 
hand side gives us idempotents $e_\lambda \in \O[X]/\langle \chi \rangle$ ($\lambda \in \Lambda$) such that the assignment $m \mapsto (e_\lambda \cdot m)_{\lambda \in \Lambda}$ is an inverse to \eqref{eqn:eigenspaces-M}. (To prove this ones uses the property that $\langle (X-\lambda)^{n_\lambda} \rangle + \langle (X-\mu)^{n_\mu} \rangle = \O[X]$ for $\lambda,\mu \in \Lambda$ with $\lambda \neq \mu$.)
\end{proof}


Given  a free finite rank $\O$-module $M$ 
with an endomorphism  $\phi$ let us say
 that $M$ is \emph{decomposable under $\phi$} if the natural map
is an isomorphism
$$\bigoplus_{\lambda\in\O} \, M_\lambda \ \xrightarrow{\sim} \ M.$$


\begin{lem}
\label{lem:quotient-decomposable}
Let $M$ be  a free finite rank $\O$-module $M$ 
which is decomposable under an endomorphism  $\phi$. 
Then given a $\phi$-stable submodule $N\subset M$ such that
$M/N$ is free over $\O$ as well, 
 both $N$ and $M/N$ are decomposable under the induced endomorphisms.

\end{lem}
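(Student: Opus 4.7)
My plan is to reduce the question to linear algebra over the fraction field $\K$ of $\O$, exploiting the fact that the freeness of $M/N$ forces $N$ to be a direct summand of $M$ as an $\O$-module. The main obstacle is that the decomposing idempotents $e_\lambda : M \to M_\lambda$ need not lie in $\O[\phi]$ when distinct eigenvalues share a residue modulo $\pi$, so $N$ need not be stable under them a priori; the direct-summand property is precisely what lets us sidestep this.

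First I would observe that, since $M/N$ is free over the local ring $\O$, the exact sequence $0 \to N \to M \to M/N \to 0$ splits, so $N$ is a direct summand of $M$; equivalently, $N = N_\K \cap M$ inside $M_\K := \K \otimes_\O M$. Setting $M_{\K,\lambda} := \K \otimes_\O M_\lambda$, we get the decomposition $M_\K = \bigoplus_\lambda M_{\K,\lambda}$ of $M_\K$ into generalized $\phi_\K$-eigenspaces. Over the field $\K$ the projections $e_\lambda : M_\K \to M_{\K,\lambda}$ are polynomials in $\phi_\K$ (by the Chinese Remainder Theorem in $\K[X]$), hence preserve the $\phi_\K$-stable subspace $N_\K$; this yields $N_\K = \bigoplus_\lambda (N_\K \cap M_{\K,\lambda})$.

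Setting $N_\lambda := N \cap M_\lambda$, I would then prove $N = \bigoplus_\lambda N_\lambda$. For any $n \in N$, the unique decomposition $n = \sum_\lambda m_\lambda$ in $M = \bigoplus_\lambda M_\lambda$ coincides, viewed inside $M_\K$, with the decomposition of $n$ furnished by $N_\K = \bigoplus (N_\K \cap M_{\K,\lambda})$. In particular each $m_\lambda$ lies in $N_\K \cap M_{\K,\lambda} \subseteq N_\K$, and combined with $m_\lambda \in M$ this gives $m_\lambda \in N_\K \cap M = N$, whence $m_\lambda \in N \cap M_\lambda = N_\lambda$. This shows $N = \bigoplus_\lambda N_\lambda$, and since $N_\lambda \subseteq M_\lambda$ the submodule $N_\lambda$ is manifestly the generalized $\lambda$-eigenspace of $\phi|_N$.

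Finally, for $M/N$, combining $M = \bigoplus M_\lambda$, $N = \bigoplus N_\lambda$, and $N_\lambda \subseteq M_\lambda$ yields $M/N = \bigoplus_\lambda M_\lambda/N_\lambda$, with each summand a direct summand of the free module $M/N$, hence free over $\O$, and (being a quotient of $M_\lambda$) contained in $(M/N)_\lambda$. For the reverse inclusion, observe that on $M_\mu/N_\mu$ with $\mu \neq \lambda$ the operator $\phi - \lambda$ equals $(\mu - \lambda)\cdot \mathrm{id}$ plus a nilpotent term; since $\mu - \lambda \in \O \setminus \{0\}$ becomes a unit in $\K$, $\phi - \lambda$ is invertible on $\K \otimes_\O (M_\mu/N_\mu)$, so any element of the torsion-free module $M_\mu/N_\mu$ killed by a power of $\phi - \lambda$ must vanish. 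Hence $(M/N)_\lambda = M_\lambda/N_\lambda$, and $M/N = \bigoplus_\lambda (M/N)_\lambda$ as claimed.
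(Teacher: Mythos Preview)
Your proof is correct and follows essentially the same approach as the paper's: pass to the fraction field $\K$, where standard linear algebra gives $m_\lambda \in N_\K$ for each component of $n \in N$, then use that $M/N$ is torsion-free (equivalently, your formulation $N = N_\K \cap M$) to descend to $m_\lambda \in N$. The paper's proof is terser and does not spell out the $M/N$ case, which you handle carefully; but the core idea is identical.
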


\begin{proof}
What we have to show is that if $m=\sum m_\lambda$ is the generalized 
eigenspace decomposition of $m\in N$, then all $m_\lambda$ already belong
to $N$. 
Linear algebra tells us
that all $m_\lambda$ belong to 
 $ \K \otimes_{\O} N\subset  \K \otimes_{\O} M$.
 It follows that for every $\lambda$
 there exists $n >0$ such that $\pi^n m_\lambda \in N$. 
Therefore the
 image of $m_\lambda$ in $M/N$ is killed by some power of $\pi$, 
hence is zero since $M/N$ has no torsion, i.e.~$m_\lambda \in N$.
\end{proof}

\begin{rmk}
\begin{enumerate}
\item The assumption that $M/N$ has no torsion cannot be removed. 
For example, assume that $\O=\Zl$. Then $M=\Zl^2$  is clearly decomposable
under  $\phi$ 
acting by the matrix
\[
\left(
\begin{array}{cc}
1 & 0 \\
0 & 1+\ell
\end{array}
\right),
\]
and its submodule $N:=\Zl(1,1) \oplus
\Zl ( 0,\ell)$ is stable under $\phi$. However $N$ is
 not decomposable under $\phi$, as will be
explained right after.

\item Observe that 
in the setting of the lemma, it can happen that both $N$ and $M/N$ are
decomposable, but $M$ is not. For a counterexample consider
 $M=\Zl^2$, with $\phi$ acting as
\[
\left(
\begin{array}{cc}
1 & 0 \\
1 & 1+\ell
\end{array}
\right);
\]
and $N=\Zl(0,1)$. Then $M$ is indecomposable, since on its reduction modulo
$\pi$ our $\phi$ is just a Jordan block.
By the way, this $M$ is as a  $\Zl$-module with endomorphism 
isomorphic to our  $N$ from the previous example.
\end{enumerate}

\end{rmk}

Next let 
$A$ be an $\O$-algebra which is finitely generated as an $\O$-module, 
endowed with an $\O$-algebra automorphism $\phi$. 
We switch here to automorphisms  as opposed to endomorphisms to avoid
the introduction of additional terminology later.
We denote by
\[
\Modfr (A,\phi)
\]
the abelian category of pairs $(M,\phi_M)$ where $M$ is a 
finitely generated right $A$-module  and 
$\phi_M : M \xrightarrow{\sim} M$ is an 
automorphism which satisfies 
\[
\phi_M(m \cdot a) = \phi_M(m) \cdot \phi(a)
\]
for $a \in A$, $m \in M$. Morphisms in $\Modfr (A,\phi)$ between objects
$(M,\phi_M)$ and $(N,\phi_N)$ are morphisms of $A$-modules $f : M \to N$ such
that $f \circ \phi_M=\phi_N \circ f$. Objects of $\Modfr (A,\phi)$ will
sometimes be called $(A,\phi)$-modules. By our assumptions on $A$, 
objects of $\Modfr (A,\phi)$ are always
finitely generated as $\O$-modules. 
 We will often omit the automorphism $\phi_M$ from notation. An important
 particular case is when $A=\O$ and $\phi=\id$; in this case we 
call our objects $(\O,\phi)$-modules.

\begin{lem}
\label{lem:existence-geom-projective}
Let 
$(A,\phi)$ be an $\O$-algebra which is finitely generated as an $\O$-module,
together
 with an  automorphism $\phi$. For any $(M,\phi_M)$ in $\Modfr (A,\phi)$, there exists an object $(P,\phi_P)$ in $\Modfr (A,\phi)$ such that $P$ is $A$-free and a surjection $(P,\phi_P) \onto (M,\phi_M)$ in $\Modfr (A,\phi)$.

\end{lem}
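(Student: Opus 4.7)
The plan is to realize the required surjection onto $(M, \phi_M)$ by taking $P = A^{2n}$ for a suitable $n$, equipped with an explicit $\phi$-semilinear automorphism given by a $2 \times 2$ block matrix constructed from compatible lifts of $\phi_M$ and $\phi_M^{-1}$.

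First I would choose $A$-generators $m_1, \ldots, m_n$ of $M$, giving a surjection $\pi_0 : A^n \twoheadrightarrow M$ via $e_i \mapsto m_i$. Because the $m_j$ generate $M$, one can find matrices $\alpha, \delta \in M_n(A)$ with $\phi_M(m_i) = \sum_j m_j \alpha_{ji}$ and $\phi_M^{-1}(m_i) = \sum_j m_j \delta_{ji}$. Setting $\mathbb{B} := \phi(\delta)$ (apply $\phi$ entrywise) and applying $\phi_M$ to the second relation, using $\phi$-semilinearity, yields $m_i = \sum_j m_j (\alpha \mathbb{B})_{ji}$; equivalently, $\alpha \mathbb{B} - I$ has all columns in $\ker \pi_0$.

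Next I would take $P = A^{2n}$ with basis $e_1, \ldots, e_n, f_1, \ldots, f_n$, define $\pi : P \to M$ by $\pi(e_i) = m_i$ and $\pi(f_i) = 0$, and let $\phi_P$ be the $\phi$-semilinear endomorphism whose matrix in this basis is
\[
\Phi := \begin{pmatrix} \alpha & I - \alpha \mathbb{B} \\ I & -\mathbb{B} \end{pmatrix} \in M_{2n}(A).
\]
The universal block identity
\[
\begin{pmatrix} \alpha & I - \alpha \mathbb{B} \\ I & -\mathbb{B} \end{pmatrix} \begin{pmatrix} \mathbb{B} & I - \mathbb{B}\alpha \\ I & -\alpha \end{pmatrix} = I_{2n},
\]
valid for any pair of $n \times n$ matrices $\alpha, \mathbb{B}$, exhibits $\Phi \in GL_{2n}(A)$, so $\phi_P$ is automatically an automorphism. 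The equivariance $\pi \circ \phi_P = \phi_M \circ \pi$ then reduces to the two relations $\sum_j m_j \alpha_{ji} = \phi_M(m_i)$ (for the $e$-block) and $\sum_j m_j (\alpha \mathbb{B})_{ji} = m_i$ (for the $f$-block), both already established.

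The main obstacle is that a naive single lift $\alpha \in M_n(A)$ of $\phi_M$ need not lie in $GL_n(A)$: since the $m_j$ need not form a free $A$-basis, the matrix representing $\phi_M$ in the chosen generators has no reason to be invertible. The block-matrix identity above is precisely the device that converts the bijectivity of $\phi_M$ (i.e.\ the mere existence of a lift $\delta$ of $\phi_M^{-1}$) into genuine invertibility of the enlarged lift $\Phi$, at the cost of doubling the $A$-rank and adjoining a kernel direction on which $\phi_P$ is still compatibly invertible.
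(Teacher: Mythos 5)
Your construction is correct: the block identity you write down is a two-sided inverse (it is the Whitehead-lemma factorization $\left(\begin{smallmatrix} \alpha & I-\alpha\mathbb{B}\\ I & -\mathbb{B}\end{smallmatrix}\right)=\left(\begin{smallmatrix} I & \alpha\\ 0 & I\end{smallmatrix}\right)\left(\begin{smallmatrix} 0 & I\\ I & 0\end{smallmatrix}\right)\left(\begin{smallmatrix} I & -\mathbb{B}\\ 0 & I\end{smallmatrix}\right)$), so $\phi_P$ is indeed a $\phi$-semilinear automorphism of $A^{2n}$, and the equivariance of $\pi$ follows from the two relations you verify. However, your route is genuinely different from the paper's. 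The paper first solves the problem for $A=\O$: it picks generators $e_1,\dots,e_n$ of $M$ lifting a basis of $\F\otimes_{\O}M$ (Nakayama), notes that the matrix $D\in\mathcal{M}_n(\O)$ expressing $\phi_M$ in these generators reduces modulo $\pi$ to the invertible matrix of the induced automorphism of $\F\otimes_\O M$ in a genuine basis, hence $D$ is already invertible since $\O$ is local; it then sets $P:=N\otimes_{\O}A$ with $N=(\O^n,D)$ and $\phi_P=\phi_N\otimes\phi$. Thus the paper exploits the locality of $\O$ and the hypothesis that $A$ (hence $M$) is module-finite over $\O$ to get invertibility of the naive lift for free, and this two-step structure (build an $(\O,\phi)$-module first, then apply $-\otimes_{\O}A$) is what gets reused later when weights of $\phi_N$ must be controlled (Lemma \ref{lem:free-resolution-weights}, Proposition \ref{prop:geom-projective-weights}). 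Your argument works directly over $A$ with arbitrary $A$-module generators and repairs the non-invertibility of the naive lift by stabilization; it buys greater generality (no use of $\O$ being local, nor of $M$ being $\O$-finite — only finite generation over $A$ and bijectivity of $\phi_M$) at the cost of doubling the rank and of losing the explicit $\O$-level description of $\phi_P$ that the paper's later arguments rely on.
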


\begin{proof}
First, consider $(M,\phi_M)$ as an $(\O,\phi)$-module. Then there exists
$(N,\phi_N)$ in $\Modfr (\O,\phi)$ such that $N$ is $\O$-free and a surjection
$(N,\phi_N) \onto (M,\phi_M)$. Indeed, let $e_1, \ldots, e_n$ be a
family of vectors in $M$ whose images in $\F \otimes_\O M$ form a basis. By Nakayama's lemma, this family generates $M$ as an $\O$-module. Choose a matrix $D \in \mathcal{M}_n(\O)$ such that $\phi_M(e_i) = \sum_j a_{ji} e_j$ ; this matrix is automatically invertible since its image in $\mathcal{M}_n(\F)$ is invertible. Then one can take $N=\O^n$, with $\phi_N$ acting with matrix $D$, and the natural surjection $N \onto M$. Finally, set $P:=N \otimes_{\O} A$, endowed with the automorphism $\phi_P$ induced by $\phi_N$.
\end{proof}

Now let us fix a unit $q\in\O^\times$ 
 which is not a  root of unity. 
Suppose $M$ is an object 
in $\Modfr (\O,\phi)$, i.e. a finitely generated $\O$-module 
with an automorphism $\phi$.  Given some finite subset 
$I \subset \Z$, we say that $M$ \emph{has $q$-weights obtained from $I$,}  if  $$\prod_{i \in I}(\phi-q^i
\cdot \id)$$ 
acts nilpotently on $M$. In addition, let us call $M$
\emph{$q$-decomposable}
if it is decomposable under $\phi$ and has $q$-weights obtained from some finite subset of $\Z$. 
Observe that if we have an exact sequence
\[
N \into M \onto P
\]
and if $N$, resp.~$P$, has $q$-weights obtained from $I$, resp.~$J$, then $M$ has $q$-weights obtained from $I \cup J$. Similarly, if $M$ has $q$-weights obtained from $I$ and $N$ has $q$-weights obtained from $J$, then $M \otimes_{\O} N$ has $q$-weights obtained from $I+J=\{i+j \mid i \in I, \ j \in J\}$.

\begin{rmk}\label{prop:criterion-decomposability}
  If $(M,\phi)$ 
is free of finite rank over $\O$ and has $q$-weights obtained from $I$,
  and if moreover the map $i\mapsto \bar q^i$ is an injection $I\hookrightarrow \F$ (where $\bar q$ is the image of $q$ in $\F$), then
  $M$ is $q$-decomposable by Proposition \ref{prop:criterion-decomposability-soe}. This criterion will be crucial for our proofs.
\end{rmk}

\begin{lem}
\label{lem:free-resolution-weights}

Let $M$ be in $\Modfr (\O,\phi)$, and assume $M$ has $q$-weights obtained from
$I$. Then there exists an object $M'$ in $\Modfr (\O,\phi)$, which is
$\O$-free and has $q$-weights obtained from $I$, together with
 a surjection $M' \twoheadrightarrow M$.

\end{lem}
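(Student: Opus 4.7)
The plan is to encode the $q$-weight condition into an auxiliary base ring and then build $M'$ as a free module over this ring. Since $M$ has $q$-weights obtained from $I$, there exists an integer $N \geq 1$ such that
\[
\chi(\phi_M) = 0, \quad \text{where } \chi(X) := \prod_{i \in I}(X-q^i)^N \in \O[X].
\]
I would then introduce the ring $R := \O[X]/\langle \chi \rangle$ and regard $M$ as an $R$-module via $X \cdot m := \phi_M(m)$.

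Next I would record the two properties of $R$ that make the construction go through. Since $\chi$ is monic, $R$ is free of finite rank as an $\O$-module. Since $q$ is a unit in $\O$, the constant term $\chi(0) = \prod_{i \in I}(-q^i)^N$ lies in $\O^\times$, and combined with the relation $\chi(X)=0$ in $R$ this gives an explicit expression for $X^{-1}$ as a polynomial in $X$; thus multiplication by $X$ is an $\O$-linear automorphism of any $R$-module.

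The construction of $M'$ is then immediate. Since $M$ is finitely generated as an $\O$-module, it is a fortiori finitely generated as an $R$-module, so I pick a surjection $\pi : R^k \twoheadrightarrow M$ of $R$-modules. Setting $M' := R^k$ equipped with the $\O$-linear automorphism $\phi_{M'}$ given by multiplication by $X$, one sees that $M'$ is $\O$-free of finite rank, that $\pi$ is an $\O$-linear surjection intertwining $\phi_{M'}$ and $\phi_M$ (because $\pi$ is $R$-linear), and that $\chi(\phi_{M'})=0$ by construction, so $M'$ has $q$-weights obtained from $I$.

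I do not anticipate any serious obstacle: the only point requiring genuine verification is the invertibility of $X$ in $R$, which is formal from $\chi(0) \in \O^\times$. The argument is essentially a refinement of the construction in Lemma \ref{lem:existence-geom-projective}, enriched by passing to the quotient ring $R$ in order to enforce the desired bound on the eigenvalues of $\phi_{M'}$.
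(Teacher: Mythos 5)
Your proposal is correct and coincides with the paper's own argument: the paper likewise views $M$ as a module over $R=\O[X]/\langle \prod_{i\in I}(X-q^i)^n\rangle$, takes $M'$ to be a free finite-rank $R$-module surjecting onto $M$ with $\phi$ acting as multiplication by $X$, and checks that this multiplication is an automorphism using $q\in\O^\times$. Your justification of invertibility via the unit constant term of $\chi$ is just a slight variant of the paper's reduction-mod-$\pi$ remark.
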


\begin{proof}
$M$ can be considered as a finitely generated 
module over the $\O$-free algebra
\[
R:=\O[X]/ \langle \prod_{i \in I}(X-q^i \id)^{n} \rangle
\]
for some positive integer $n$. Then take $M'$ to be a free 
finite rank $R$-module surjecting to $M$, with $\phi$ acting as multiplication
by $X$. Observe that multiplication by $X$ is an automorphism since its
reduction modulo $\pi$ is invertible by our assumption $q\in \O^\times$.
\end{proof}

Now let
$(A,\phi)$ be an $\O$-algebra, 
which is a free finite rank $\O$-module, 
endowed with an $\O$-algebra automorphism $\phi$,
such that $A$ is $q$-decomposable. 
We denote by
\[
\Modfrdec (A,\phi)
\]
the full  subcategory of $\Modfr (A,\phi)$ whose objects are
free of finite rank over $\O$ and $q$-decomposable as $(\O,\phi)$-modules .

\begin{lem}
\label{lem:projective-decomposable}

Assume that $(A,\phi)$ is $\O$-free of finite rank and $q$-decomposable as above.
For any $M$ in $\Modfrdec (A,\phi)$ there exists an object $Q$ in $\Modfrdec
(A,\phi)$ which is projective as an $A$-module along with
 a surjection $Q \onto M$.

\end{lem}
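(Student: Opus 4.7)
The plan is to combine Lemma \ref{lem:existence-geom-projective} (which gives an $A$-free cover) with a careful choice of the covering $(\O,\phi)$-module so that $q$-decomposability is preserved after tensoring with $A$. The key subtlety is that Lemma \ref{lem:free-resolution-weights} alone only produces a free $(\O,\phi)$-module with $q$-weights from a given set~$I$, and such a module need not be $q$-decomposable in general. We will circumvent this by first decomposing $M$ into its generalized eigenspaces and then lifting each piece separately.

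First, since $M$ is $q$-decomposable as an $(\O,\phi)$-module, we write $M=\bigoplus_{i\in I}M_{q^i}$ for some finite $I\subset\Z$. For each $i\in I$, choose $n_i\geq 1$ with $(\phi_M-q^i\cdot \id)^{n_i}=0$ on $M_{q^i}$, so that $M_{q^i}$ is a finitely generated module over $R_i:=\O[X]/\langle(X-q^i)^{n_i}\rangle$. Take $N_i$ to be a free $R_i$-module of finite rank surjecting onto $M_{q^i}$, viewed as a free finite-rank $\O$-module on which $\phi_{N_i}$ acts as multiplication by~$X$. Then each $N_i$ is $\O$-free, is annihilated by $(\phi_{N_i}-q^i\cdot\id)^{n_i}$, hence is $q$-decomposable with a single generalized eigenvalue $q^i$.

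Set $N:=\bigoplus_{i\in I}N_i$ with its induced automorphism $\phi_N$. By construction, $N$ is $\O$-free and $q$-decomposable (with $q$-weights obtained from $I$), and the maps $N_i\onto M_{q^i}$ assemble into a surjection $(N,\phi_N)\onto(M,\phi_M)$ of $(\O,\phi)$-modules. Now define
\[
Q:=N\otimes_{\O}A,\qquad \phi_Q:=\phi_N\otimes\phi_A,
\]
with right $A$-action on the second factor. Then $Q$ is free as a right $A$-module, and $\phi_Q$ satisfies $\phi_Q(m\cdot a)=\phi_Q(m)\cdot\phi(a)$, so $(Q,\phi_Q)\in\Modfr(A,\phi)$. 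The $A$-linear map $Q\to M$ sending $n\otimes a\mapsto n\cdot a$ is surjective (since $N\onto M$ already is and $A$ has a unit) and commutes with the $\phi$'s.

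It remains to verify $Q\in\Modfrdec(A,\phi)$. Certainly $Q$ is $\O$-free of finite rank. Writing $A=\bigoplus_{j\in J}A_{q^j}$ for the generalized-eigenspace decomposition provided by the $q$-decomposability of $A$, we get
\[
Q \ = \ \bigoplus_{i\in I,\,j\in J} N_i\otimes_{\O}A_{q^j},
\]
and on each summand $N_i\otimes_{\O}A_{q^j}$ the operator $\phi_Q-q^{i+j}\cdot\id$ is nilpotent (expand using that $\phi_{N_i}-q^i$ and $\phi_A-q^j$ are nilpotent on the respective factors). Hence each summand lies in a single generalized eigenspace of $\phi_Q$, so $Q$ is $q$-decomposable with $q$-weights obtained from $I+J$. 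The main (really only) point of care was the construction of $N$: rather than blindly invoking Lemma \ref{lem:free-resolution-weights}, we used the eigenspace decomposition of $M$ to produce a free $(\O,\phi)$-module which is itself $q$-decomposable, so that $q$-decomposability is preserved under $-\otimes_{\O}A$.
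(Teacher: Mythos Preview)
Your proof is correct and uses the same core idea as the paper --- namely, form $Q$ as $(\text{something }\O\text{-free and }q\text{-decomposable})\otimes_{\O}A$, which is then $A$-free and $q$-decomposable. However, your detour through the auxiliary module $N$ is unnecessary: by hypothesis $M\in\Modfrdec(A,\phi)$, so $M$ is \emph{already} $\O$-free of finite rank and $q$-decomposable, and you may simply take $N=M$ with the identity map. The paper's entire proof is the one line ``Take $Q:=M\otimes_{\O}A$, with the natural automorphism''; your eigenspace-by-eigenspace lifting of $M$ to $N$ reproduces properties $M$ already enjoys.
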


\begin{proof}
Take $Q:=M \otimes_{\O} A$, with the natural automorphism.
\end{proof}

\subsection{Categories with an autoequivalence}

Let $\sA$ be an abelian category endowed with an autoequivalence $\Phi : \sA \xrightarrow{\sim} \sA$. Let $\sA[\Phi]$ be the category defined as follows:
\begin{itemize}
\item objects are pairs $(M,s_M)$ where $M$ is an object of $\sA$ and $s_M : M \xrightarrow{\sim} \Phi(M)$ is an isomorphism;
\item morphisms between $(M,s_M)$ and $(N,s_N)$ are morphisms $f : M \to N$ in $\sA$ such that the following diagram commutes:
\[
\xymatrix{
M \ar[r]^-{s_M} \ar[d]_-f & \Phi M \ar[d]^-{\Phi f}\\
N \ar[r]^-{s_N} & \Phi N.
}
\]
\end{itemize} 
We might think of $\sA$ as an abelian category with an action of the group
$\mathbb Z$ and of $\sA [\phi]$ as the category of some sort of $\mathbb Z$-equivariant
objects of $\sA$.
Certainly $\sA [\Phi]$ is again abelian.

\begin{ex}
\label{ex:A-phi-modules}
With the notation of \S \ref{ss:O-modules-automorphisms}, the category
$\Modfr (A,\phi)$ is the category $\sA[\Phi]$ where $\sA=\Modfr A$ and
$\Phi$ sends $M$ to the right $A$-module whose underlying $\O$-module
is $M$, and whose $A$-action is the morphism $M \otimes_{\O} A \to M$
given by $m\otimes a \mapsto m \cdot \phi(a)$.
\end{ex}

\begin{lem}
\label{lem:geom-projective}

Let $(\sA, \Phi)$ be an abelian category with an autoequivalence, and let $(P,t) \in \sA[\Phi]$ be an object with $P \in \sA$ projective. Then for any $(M,s) \in \sA [\Phi]$
we have
\[
\Hom^i_{\cDb(\sA [\Phi])} ((P,t), (M,s)) = 0 \quad \text{ for } i \geq 2.
\]

\end{lem}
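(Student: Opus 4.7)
The key input is the projectivity of $P$ in $\sA$. I plan to show $\Hom^i_{\cDb(\sA[\Phi])}((P,t), (M,s)) = 0$ for $i \geq 2$ by interpreting these groups as Yoneda $\Ext^i$ and exhibiting any $i$-extension as trivial, using the splitting in $\sA$ afforded by projectivity of $P$. More globally, I expect $R\Hom_{\sA[\Phi]}((P,t), (M,s))$ to be quasi-isomorphic to the two-term complex
\[
\bigl[\, \Hom_{\sA}(P, M) \xrightarrow{\alpha - \id} \Hom_{\sA}(P, M) \,\bigr]
\]
concentrated in degrees $0$ and $1$, where $\alpha(f) := s^{-1} \circ \Phi(f) \circ t$. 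The degree $0$ piece is immediate from the definition of morphisms in $\sA[\Phi]$; the degree $1$ identification with $\cok(\alpha - \id)$ follows by observing that any extension $0 \to (M,s) \to (Y, s_Y) \to (P,t) \to 0$ splits in $\sA$ by projectivity of $P$, so that $Y \cong M \oplus P$ in $\sA$ with $s_Y$ of block form $\left(\begin{smallmatrix} s & sf \\ 0 & t \end{smallmatrix}\right)$ for some $f \in \Hom_{\sA}(P, M)$, well defined modulo $\ima(\alpha - \id)$.

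\textbf{The key step.} Let $\xi$ be a Yoneda $i$-extension
\[
0 \to (M,s) \to (X_{i-1}, t_{i-1}) \to \cdots \to (X_0, t_0) \to (P,t) \to 0
\]
in $\sA[\Phi]$ with $i \geq 2$. Since $P$ is projective in $\sA$, the surjection $X_0 \to P$ admits a section in $\sA$ (not necessarily in $\sA[\Phi]$), so $X_0 \cong P \oplus K$ with $K = \ker(X_0 \to P)$, and the equivariance of $X_0$ takes the block form $t_0 = \left(\begin{smallmatrix} t & 0 \\ \tau & t_K \end{smallmatrix}\right)$ for some $\tau : P \to \Phi K$ that records the obstruction to splitting in $\sA[\Phi]$. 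I will decompose $\xi$ as the Yoneda product of a $1$-extension $\eta \in \Ext^1_{\sA[\Phi]}((P,t), (K, t_K))$ (represented by $[\tau]$) and an $(i-1)$-extension $\zeta \in \Ext^{i-1}_{\sA[\Phi]}((K, t_K), (M,s))$, then absorb $\tau$ into the deeper part of the chain by using projectivity of $P$ to lift it across $X_1 \twoheadrightarrow K$ and twist the chain accordingly, producing a Yoneda equivalence between $\xi$ and a trivial $i$-extension. An induction on $i$ completes the argument.

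\textbf{Main obstacle.} The delicate part is the lift just described: although $\tau : P \to \Phi K$ admits a lift to $X_1$ in $\sA$, that lift is not in general compatible with the equivariance structures, and converting it into a Yoneda trivialization requires combining the $\sA$-level lifts at each level of the chain with the equivariance data in a coherent way. The cleanest packaging would be a long exact sequence
\[
\cdots \to \Ext^{i-1}_\sA(P, M) \xrightarrow{\alpha - \id} \Ext^{i-1}_\sA(P, M) \to \Hom^i_{\cDb(\sA[\Phi])}((P,t), (M,s)) \to \Ext^i_\sA(P, M) \to \cdots,
\]
at which point $\Ext^i_\sA(P, M) = 0$ for $i \geq 1$ yields the desired vanishing at once. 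The natural way to produce this sequence would be via a length-one ``induced'' resolution $L(P) \to (P,t)$ in $\sA[\Phi]$, but such $L$ involves $\bigoplus_{n \in \Z} \Phi^n P$ and may not exist in $\sA[\Phi]$ without infinite coproducts; hence the direct Yoneda trivialization outlined above seems to be the most robust route.
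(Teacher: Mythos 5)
Your overall strategy is sound and, once unwound, is essentially the paper's own argument transposed from roofs in $\cDb(\sA[\Phi])$ into Yoneda language; but the step you single out as the ``main obstacle'' is precisely the one left open, and your diagnosis of the difficulty is slightly misplaced. The lift $\tilde\tau : P \to \Phi X_1$ of $\tau$ across the surjection $\Phi X_1 \onto \Phi K$ is \emph{not} required to be compatible with any equivariance data; rather, one \emph{uses} the arbitrary $\sA$-level lift to manufacture a new object of $\sA[\Phi]$, namely $Y := X_1 \oplus P$ with equivariance $t_Y = \bigl(\begin{smallmatrix} t_1 & \tilde\tau \\ 0 & t \end{smallmatrix}\bigr)$ (an isomorphism, being triangular with invertible diagonal). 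One then checks directly that the inclusion $(X_1,t_1) \into (Y,t_Y)$, the projection $(Y,t_Y) \onto (P,t)$, and the map $(Y,t_Y) \to (X_0,t_0) \cong \bigl(K \oplus P,\, \bigl(\begin{smallmatrix} t_K & \tau \\ 0 & t \end{smallmatrix}\bigr)\bigr)$ given by $b \oplus \id_P$ are all morphisms of $\sA[\Phi]$ — the last one exactly because $\Phi(b) \circ \tilde\tau = \tau$. This produces a $1$-extension $\eta' : 0 \to (X_1,t_1) \to (Y,t_Y) \to (P,t) \to 0$ with $b_*\eta' = \eta$, so that $\xi = \zeta \cdot \eta = \zeta \cdot (b_*\eta') = (b^*\zeta)\cdot \eta'$; and $b^*\zeta = 0$ because $\zeta$ is represented by a chain whose last map is the surjection $b : X_1 \onto K$, so its pullback along that same map splits via the diagonal $X_1 \to X_1 \times_K X_1$ (equivalently, two consecutive maps of a triangle compose to zero). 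In particular no induction on $i$ and no ``coherent combination of lifts at each level of the chain'' is needed: only $X_0$ and $X_1$ are touched.

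This is word for word the construction in the paper's proof, where the roof's apex $C^\bullet$ is replaced by the two-term complex $D^\bullet = [\,C^{-1} \to C^{-1}\oplus P\,]$ with the twisted equivariance $x^0 = \bigl(\begin{smallmatrix} u^{-1} & b \\ 0 & t \end{smallmatrix}\bigr)$; the paper then concludes immediately, since any chain map from a complex concentrated in degrees $\{-1,0\}$ to $(M,s)[i]$ vanishes for $i \ge 2$. Your identifications of $\Hom^0$ and $\Hom^1$ with the kernel and cokernel of $\alpha - \id$ are correct but not needed for the lemma, and you are right that the induced-resolution/long-exact-sequence route founders on the absence of infinite coproducts.
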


\begin{proof}
By definition, a morphism from $(P,t)$ to $(M,s)[i]$ in $\cDb (\sA[\Phi])$ is a diagram of morphisms of complexes
\[
(M,s)[i] \leftarrow C^{\bullet} \xrightarrow{\qis} (P,t)
\]
where $C^{\bullet}$ is a bounded complex of object of
$\sA[\Phi]$. Clearly, one can assume that this complex is concentrated
in non-positive degrees. To prove the lemma it suffices to prove that
there exists a complex $D^{\bullet}$ and a quasi-isomorphism $f :
D^{\bullet} \to C^{\bullet}$ whose composition with our morphism
$C^{\bullet} \to (M,s)[i]$ is zero. To establish this it is enough to show that there exists a complex $D^{\bullet}$ concentrated in degrees $-1$ and $0$ and a quasi-isomorphism $f : D^{\bullet} \to C^{\bullet}$.

Write the complex $C^{\bullet}$ as
\[
C^{\bullet} = (\cdots \to (C^{-2},u^{-2}) \xrightarrow{d^{-2}} (C^{-1},u^{-1}) \xrightarrow{d^{-1}} (C^0,u^0) \to 0 \to \cdots).
\]
Let $B^{-1}:=C^{-1}/\ker(d^{-1})$, and let $p^{-1} : C^{-1} \to B^{-1}$ be the projection. We consider the object $(B^{-1},v^{-1})$ in $\sA[\Phi]$, where $v^{-1} : B^{-1} \xrightarrow{\sim} \Phi(B^{-1})$ is the isomorphism induced by $u^{-1}$. As $C^{\bullet}$ maps quasi-isomorphically to $(P,t)$, we have an exact sequence
\[
(B^{-1},v^{-1}) \into (C^0,u^0) \onto (P,t).
\]
As $P$ is projective, the image of this exact sequence in $\sA$ splits, hence we have an isomorphism $(C^0,u^0) \cong (B^{-1} \oplus P,w^0)$ in $\sA[\Phi]$, where
\[
w^0 = \left(
\begin{array}{cc}
v^{-1} & a \\
0 & t
\end{array}
\right) : B^{-1} \oplus P \xrightarrow{\sim} \Phi(B^{-1}) \oplus \Phi(P)
\]
for some morphism $a : P \to \Phi(B^{-1})$ in $\sA$. Let us fix such an isomorphism. As $P$ is projective, there exists a morphism $b : P \to \Phi(C^{-1})$ in $\sA$ whose composition with the projection $\Phi(p^{-1}) : \Phi(C^{-1}) \to \Phi(B^{-1})$ is $a$. Then define $D^0:=C^{-1} \oplus P$, and consider the object $(D^0,x^0)$ of $\sA[\Phi]$, where
\[
x^0 := \left(
\begin{array}{cc}
u^{-1} & b \\
0 & t
\end{array}
\right) : C^{-1} \oplus P \xrightarrow{\sim} \Phi(C^{-1}) \oplus \Phi(P).
\]
Let also $(D^{-1},x^{-1}):=(C^{-1},u^{-1})$. Then $(D^0,x^0)$ is an object of $\sA[\Phi]$, and there exists a natural injection $(D^{-1},x^{-1}) \into (D^0,x^0)$ whose cokernel is $(P,t)$. Moreover we have a quasi-isomorphism
\[
\xymatrix{
\cdots \ar[r] & 0 \ar[r] \ar[d] & (D^{-1},x^{-1}) \ar@{=}[d] \ar[r] & (D^0,x^0) \ar[d]^-{c} \ar[r] & 0 \ar[r] \ar[d] & \cdots \\
\cdots \ar[r] & (C^{-2},u^{-2}) \ar[r] & (C^{-1},u^{-1}) \ar[r] & (C^0,u^0) \ar[r] & 0 \ar[r] & \cdots
}
\]
where
\[
c=\left(
\begin{array}{cc}
p^{-1} & 0 \\
0 & \id_P
\end{array}
\right) : C^{-1} \oplus P \to B^{-1} \oplus P.
\]
This finishes the proof.
\end{proof}

\subsection{Perverse sheaves on a variety over a finite field}
\label{ss:perverse-sheaves-Fq}

In this subsection we write $\E$ for either $\O$ or $\F$.

Let $X_\circ$ be a variety over a finite field $\Fq$, endowed with a finite stratification
\[
X_\circ =\bigsqcup_{s \in \scS} X_{s,\circ}
\]
by affine spaces. Let also $X:= X_\circ \times_{\mathrm{Spec}(\Fq)}
\mathrm{Spec}(\Fqb)$, endowed with the induced stratification (which
we also denote by $\scS$). We assume that this stratification
satisfies the condition \eqref{eq:strata assumption}. 
Let
\[
\For : \cDb_{c}(X_\circ ,\E) \to \cDb_{c}(X,\E)
\]
be the inverse image functor under the natural projection $X \to X_\circ$. We
denote by $\cDb_{\scS}(X_\circ ,\E)$ the full subcategory of
$\cDb_{c}(X_\circ ,\E)$ consisting of objects $\cF$ such that $\For(\cF)$
is in $\cDb_{\scS}(X,\E)$. As for $X$, $\cDb_{\scS}(X_\circ ,\E)$ is endowed
with the perverse t-structure whose heart we denote by
$\Perv_{\scS}(X_\circ ,\E)$ (for the perversity $p$ if $\E=\O$). Note that if $\cM_\circ$ is in $\cDb_{\scS}(X_\circ ,\E)$ then $\cM_\circ$ is in $\Perv_{\scS}(X_\circ,\E)$ if and only if $\For(\cM_\circ)$ is in $\Perv_{\scS}(X,\E)$. As in \cite{BBD}, objects of
$\Perv_{\scS}(X_\circ,\E)$ will always be denoted with a subscript
``${}_\circ$''. Their image under the functor $\For$ will usually be
denoted by the same symbol, with the subscript removed.

Let $\pt_\circ := \Spec \F_q$ and $f : X_\circ \to \pt_\circ$ denote the
projection. Let $\E(m) \in \cDb_c(\pt_\circ , \E)$ denote the $m^{th}$ Tate
sheaf (see \cite[\S 2.13]{Del1}). Given any object $\cF_\circ \in \cDb_{c}(X_\circ,\E)$ we define its $m^{th}$
Tate twist by $\cF_\circ(m) := \cF_\circ \stackrel{L}{\otimes}_\E f^*\E(m)$.

Before we go into the technical details, let us discuss
some generalities concerning Galois actions on \'etale cohomology.
Given a scheme $X_\circ$ over a field $k$ and a field extension 
$K/k$, certainly the Galois group will act on the scheme $X=K\times_kX_\circ$
and thus also will act on its \'etale cohomology. More generally, given 
an \'etale sheaf $\cF_\circ$ on $X_\circ$ and its pullback
$\cF$ to $X$, the Galois group will act on the \'etale cohomology of $X$ with
coefficients in $\cF$. Formally, if $\omega:X\rightarrow X_\circ$ is 
the extension of 
scalars, for 
any $\gamma$ in the Galois group and 
letting $\gamma=\gamma\times \operatorname{id}$,
we have 
 $\omega \circ \gamma=\omega$ and from there get 
isomorphisms $\gamma^\ast \omega^\ast\cF_\circ \xrightarrow{\sim} \omega^\ast\cF_\circ$
alias $\gamma^\ast\cF \xrightarrow{\sim} \cF$, which make  $\cF$ to
what one might call a Galois 
equivariant sheaf on $X$. The  action of $\gamma$ on $\coH{n}(X;\cF)$ is then 
defined as the composition 
$\coH{n} (X;\cF)\rightarrow \coH{n} (X;\gamma^\ast \cF)
\rightarrow \coH{n}(X;\cF)$
of the pull-back by $\gamma$ on cohomology with the map induced by
our isomorphism $\gamma^\ast\cF \xrightarrow{\sim} \cF$. 
Even more generally, 
if $\cG_\circ$ is another \'etale sheaf or complex of sheaves on $X_\circ$,
we will get a Galois action the Ext groups between $\cF$ and $\cG$
by structure transport,
and we could go on like that.

We will use these Galois
actions in the case that the field extension 
$K/k$ consists of our finite field $\F_q$ with an  algebraic closure 
$\Fqb$. In this case, additional features try to create confusion.
Namely, given a prime $p$, on any commutative ring $A$ over $\Z/p\Z$, 
even on any scheme $S$ over $\Z/p\Z$ there is a canonical endomorphism,
the absolute Frobenius $\Fr_{\mathrm{ab}}:S\rightarrow S$, given on rings as the map
$\Fr_{\mathrm{ab}}:A\rightarrow A$ with $a\mapsto a^p$. This is a natural
transformation from
the identity functor on the category of schemes over $\Z/p\Z$ to itself.
In particular, given two schemes $S,T$ over $\Z/p\Z$, on their
product we have
$\Fr_{\mathrm{ab}}=(\Fr_{\mathrm{ab}}\times \operatorname{id})\circ
 (\operatorname{id}\times \Fr_{\mathrm{ab}})$.
From 
the remarkable fact that for
any \'etale morphism $\pi:T\rightarrow S$ the diagram
$$\xymatrix{
T \ar[r]^-{\Fr_{\mathrm{ab}}} \ar[d]_{\pi}
& T \ar[d]^\pi\\
S \ar[r]^-{\Fr_{\mathrm{ab}}} &S}$$
is cartesian, we get for any \'etale sheaf $\cF$ on $S$
a natural isomorphism $\Fr_{\mathrm{ab}}^\ast\cF\xrightarrow{\sim}\cF$, and
the map induced by this on the \'etale cohomology of $\cF$ is just the
identity.

Now in our case of a scheme $X_\circ$ over $\F_q$ with
$q=p^r$, on the scheme
  $X=\Fqb\times_{\Fq}X_\circ$  we 
have as endomorphisms the arithmetic Frobenius 
$\Fr_{\mathrm{a}}=\Fr_{\mathrm{ab}}^r\times \operatorname{id}$ and the geometric Frobenius
$\Fr=\Fr_{\mathrm{g}}=\operatorname{id}\times\Fr_{\mathrm{ab}}^r 
=\operatorname{id}\times\Fr_\circ $. Clearly 
$\Fr_{\mathrm{g}}\circ \Fr_{\mathrm{a}}=\Fr_{\mathrm{a}}\circ \Fr_{\mathrm{g}}=\Fr_{\mathrm{ab}}^r $, thus our natural isomorphism
leads to a natural isomorphism
$\Fr_{\mathrm{a}}^\ast(\Fr^\ast\cF)\xrightarrow{\sim}\cF$ for any \'etale sheaf 
$\cF$ on $X$. On the other hand, 
it also leads to a natural 
isomorphism $\cF_\circ
\xrightarrow{\sim} \Fr_\circ^* \cF_\circ$ and by pulling back we obtain
a natural 
isomorphism $\cF
\xrightarrow{\sim} \Fr^* \cF$ for any \'etale sheaf $\cF_\circ$ on $X_\circ$.
To get back from 
this isomorphism
our  Galois equivariance $\gamma^\ast \cF\xrightarrow{\sim}  \cF$ 
alias $\Fr_{\mathrm{a}}^\ast \cF\xrightarrow{\sim} \cF$ from above, 
just apply $\Fr_{\mathrm{a}}^\ast$ to it and postcompose with 
the natural isomorphism
$\Fr_{\mathrm{a}}^\ast(\Fr^\ast\cF)\xrightarrow{\sim}\cF$ discussed before.
This is explained in \cite[p.~84]{SGA4.5}.
In the following, we always work directly 
with the Frobenius action coming from the natural 
isomorphism $\cF
\xrightarrow{\sim} \Fr^* \cF$, which might be conceptually 
a less direct approach,
but permits to connect much more directly to the existing literature

So let $\Fr : X \to X$ be the Frobenius endomorphism, obtained from the
Frobenius $\Fr_\circ : X_\circ \to X_\circ$ by base change. For any $\cM_\circ$ in
$\cDb_{\scS}(X_\circ ,\E)$ there exists a canonical isomorphism $\cM_\circ
\xrightarrow{\sim} \Fr_\circ^* \cM_\circ$, hence also a canonical isomorphism
$\psi_\cM : \cM \xrightarrow{\sim} \Fr^* \cM$, where $\cM=\For(\cM_\circ)$.
For details, see \cite[Expos{\'e} XIV, \S 2.1]{SGA5} or \cite[\S VI.13]{Milne}. We will need the following
well-known result.

\begin{lem}
\label{lem:Fr-equivalence}

The functor
\[
\Fr^* : \cDb_{\scS}(X,\E) \to \cDb_{\scS}(X,\E)
\]
is an equivalence of categories, which restricts to an equivalence $\Perv_{\scS}(X,\E) \xrightarrow{\sim} \Perv_{\scS}(X,\E)$.

\end{lem}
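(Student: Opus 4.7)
The strategy is to first establish that $\Fr^*$ is an auto-equivalence on the ambient category $\cDb_c(X,\E)$ of all constructible complexes, then check that it preserves the subcategory $\cDb_{\scS}(X,\E)$, and finally that it is t-exact for the perverse t-structure. All three checks are essentially consequences of the fact that the geometric Frobenius, up to an automorphism, is a power of the absolute Frobenius.

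For the equivalence, the clean way is to exploit the factorisation recalled in the preceding paragraphs of the paper: $\Fr_{\mathrm{ab}}^r = \Fr_a \circ \Fr$, where $\Fr_a = \Fr_{\mathrm{ab},\Fqb}^r \times \id$ is the arithmetic Frobenius. Here $\Fr_a$ is a genuine scheme automorphism of $X$ (since $\Fqb$ is perfect), hence $\Fr_a^*$ is a trivial equivalence of categories. On the other hand the absolute Frobenius $\Fr_{\mathrm{ab}}^r$ is a universal homeomorphism, and therefore induces an equivalence of \'etale topoi (the canonical morphism $\id \xrightarrow{\sim} \Fr_{\mathrm{ab}}^{r*}$ discussed just before the lemma already exhibits its pullback as isomorphic to the identity on the small \'etale site). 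Passing to derived categories of constructible sheaves of $\E$-modules gives that $(\Fr_{\mathrm{ab}}^r)^*$ is an auto-equivalence of $\cDb_c(X,\E)$, and then $\Fr^* = (\Fr_{\mathrm{ab}}^r)^* \circ (\Fr_a^*)^{-1}$ is one as well.

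Next, I would check that $\Fr^*$ preserves $\cDb_{\scS}(X,\E)$. Because the stratification $X = \bigsqcup X_s$ comes by base change from the $\F_q$-stratification $X_\circ = \bigsqcup X_{s,\circ}$, each stratum $X_s$ is $\Fr$-stable, and $\Fr$ restricts there to the geometric Frobenius $\Fr_s$ of $X_s$. Thus for the inclusion $i_s$ we have $i_s^* \Fr^* \cong \Fr_s^* i_s^*$, and since $\Fr_s^*$ clearly sends constant local systems of finitely generated $\E$-modules to constant local systems of the same type, we obtain that $\Fr^*\cF \in \cDb_{\scS}(X,\E)$ whenever $\cF$ is.

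Finally, t-exactness is automatic from the stratum-wise nature of the perverse t-structure. Indeed the conditions defining $\p\cD^{\leq 0}$ and $\p\cD^{\geq 0}$ only involve vanishings of $\cH^k(i_s^*\cF)$ and $\cH^k(i_s^!\cF)$ in prescribed ranges depending on $\dim X_s$, and by the base-change isomorphisms above (together with the analogous one for $i_s^!$, which follows because $\Fr$ is a universal homeomorphism and therefore also satisfies $\Fr^! \cong \Fr^*$ up to canonical identification) these conditions are preserved by $\Fr^*$. The only mildly delicate point, which would be the sole potential obstacle, is the compatibility of $\Fr^*$ with $i_s^!$, but this follows from the absolute Frobenius being a universal homeomorphism (so its pullback equals its exceptional pullback, up to the canonical isomorphism of topoi), together with $\Fr_a$ being an automorphism.
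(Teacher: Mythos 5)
Your proof is correct, but it takes a genuinely different route from the one in the paper. You factor $\Fr^* \cong (\Fr_{\mathrm{ab}}^r)^* \circ (\Fr_{\mathrm{a}}^*)^{-1}$, observe that the first factor is naturally isomorphic to the identity (absolute Frobenius acts trivially on the {\'e}tale site) and that the second is pullback along a scheme automorphism of $X$ which preserves each stratum $X_s = \Fqb \times_{\Fq} X_{s,\circ}$; this yields the equivalence, the stability of $\cDb_{\scS}(X,\E)$, and perverse t-exactness essentially in one stroke (indeed, once one notes that $\Fr^*$ is isomorphic to $(\Fr_{\mathrm{a}}^{-1})^*$ for a stratification-preserving automorphism, the compatibilities with $i_s^*$ and $i_s^!$ that you verify by hand are automatic). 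The paper argues instead entirely with the geometric Frobenius: since $\cDb_{\scS}(X,\E)$ is generated by objects in the essential image of $\For$ (e.g.\ standard objects), it suffices to show $\Fr^*$ is fully faithful on such objects, which reduces to the adjunction unit $\cN \to \Fr_* \Fr^* \cN$ being an isomorphism; this is deduced from the unit for $\Fr_\circ$ and the fact that $\Fr_\circ$ is a universal homeomorphism (SGA\,5, Exp.\ XIV); essential surjectivity then comes for free from the canonical isomorphisms $\psi_{\cM} : \cM \simto \Fr^*\cM$ on the generators. Your argument is more structural and also transparently establishes the second clause of the lemma (restriction to $\Perv_{\scS}$), which the paper leaves implicit; the paper's argument stays closer to the formalism of the canonical isomorphisms $\psi_{\cM}$ already set up and avoids invoking the factorisation through the absolute Frobenius at the level of derived categories. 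Both ultimately rest on the same geometric input, namely that Frobenius is a universal homeomorphism.
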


\begin{proof}
The category $\cDb_{\scS}(X,\E)$ is generated by the essential image of the functor $\For : \cDb_{\scS}(X_\circ,\E) \to \cDb_{\scS}(X,\E)$ (e.g.~by standard or costandard objects). Hence it is enough to prove that for any $\cM_\circ,\cN_\circ$ in $\cDb_{\scS}(X_\circ,\E)$, with $\cM:=\For(\cM_\circ)$, $\cN:=\For(\cN_\circ)$, the morphism
\[
\Hom_{\cDb_{\scS}(X,\E)}(\cM,\cN) \to \Hom_{\cDb_{\scS}(X,\E)}(\Fr^* \cM,\Fr^* \cN)
\]
is an isomorphism. To prove this fact it is enough to prove that the adjunction morphism $\cN \to \Fr_* \Fr^* \cN$ is an isomorphism. However this morphism is obtained by extension of scalars from the adjunction morphism $\cN_\circ \to \Fr_{\circ *} \Fr_{\circ}^* \cN_\circ$, which is an isomorphism since $\Fr_\circ$ is a universal homeomorphism, see \cite[Expos{\'e} XIV, \S 1, Proposition 2(a)]{SGA5}.
\end{proof}

Recall that, by \cite[Proposition 5.1.2]{BBD}, the functor which sends $\cM_\circ$ to the pair $(\cM,\psi_\cM)$, where $\cM=\For(\cM_\circ)$ and $\psi_\cM$ is defined above, induces an equivalence of abelian categories
\begin{equation}
\label{eqn:equiv-Perv-X_0}
\Perv_{\scS}(X_\circ ,\E) \xrightarrow{\sim} \Perv_{\scS}(X,\E)[\Fr^*].
\end{equation}
Using this equivalence, one can make sense of the tensor product $V \otimes_{\E} \cM_\circ$ of an object $\cM_\circ$ of $\Perv_{\scS}(X_\circ ,\E)$ with an $\E$-free $(\E,\phi)$-module $(V,\phi_V)$. Indeed using \eqref{eqn:equiv-Perv-X_0} one can see $(V,\phi_V)$ as an object of $\Perv(\pt_\circ,
\E)$, and we define $V \otimes_{\E} \cM_\circ$ as the tensor product of the pullback of this object to $X_\circ$ with $\cM_\circ$.


\begin{ex}
\label{ex:Tate-sheaf}
When $X_\circ=\pt_\circ=\mathrm{Spec}(\Fq)$, we have $\Fr=\id$, and $\Perv(\pt, \E) \cong \Modfr \E$. The image of $\E(m) \in \Perv(\pt_\circ, \E)$ under \eqref{eqn:equiv-Perv-X_0} is a rank $1$ free right $\E$-module endowed with the automorphism given by multiplication by $q^{m}$.
\end{ex}


\begin{lem}
\label{lem:proj-generator-X_0}

There exists an object $\cP_\circ$ in $\Perv_{\scS}(X_\circ,\E)$ such that $\cP=\For(\cP_\circ)$ is a projective generator of $\Perv_{\scS}(X,\E)$.

\end{lem}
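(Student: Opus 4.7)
The plan is to exhibit a projective generator of $\Perv_{\scS}(X,\E)$ which, via the equivalence \eqref{eqn:equiv-Perv-X_0}, descends to $X_\circ$. For $\E = \O$, Corollary \ref{cor:projectives-classification} produces the indecomposable projective covers $\cP_s$ ($s \in \scS$) of the $\IC_s$, and $\cP := \bigoplus_{s\in\scS} \cP_s$ is a projective generator of $\Perv_{\scS}(X,\E)$; the case $\E = \F$ is analogous using the classical BGS-style results for finite-dimensional categories alluded to before Proposition~\ref{prop:properties-projectives-2}. It therefore suffices to show that $\Fr^* \cP \cong \cP$ in $\Perv_{\scS}(X,\E)$: choosing any such isomorphism turns $\cP$ into an object of $\Perv_{\scS}(X,\E)[\Fr^*]$, hence via \eqref{eqn:equiv-Perv-X_0} into an object $\cP_\circ \in \Perv_{\scS}(X_\circ,\E)$ with $\For(\cP_\circ) \cong \cP$.

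To prove $\Fr^* \cP \cong \cP$ I would argue stratum by stratum. By Lemma \ref{lem:Fr-equivalence}, $\Fr^*$ is an autoequivalence of the abelian category $\Perv_{\scS}(X,\E)$, hence preserves projectivity and indecomposability and sends projective covers to projective covers. Since $X_{s,\circ}$ is defined over $\Fq$, the geometric Frobenius $\Fr$ restricts to an endomorphism of each stratum $X_s$ and pulls back the constant local system $\cL_s$ to itself. Because $\Fr^*$ commutes with the functors $i_{s*}$, $i_{s!}$, $i_s^*$, $i_s^!$ (being an inverse image), it commutes with intermediate extension, and so $\Fr^* \IC_s \cong \IC_s$ in $\Perv_{\scS}(X,\E)$. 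Similarly $\Fr^*$ commutes with the modular reduction functor $\F$, so when $\E = \O$ we have $\F(\Fr^* \cP_s) \cong \Fr^* \F(\cP_s) \cong \Fr^* \cP_{s,\F} \cong \cP_{s,\F}$. The uniqueness statement in Corollary \ref{cor:projectives-classification} (resp.\ its $\F$-analogue via Krull--Schmidt, using Lemma \ref{lem:Krull-Schmidt}) then yields $\Fr^* \cP_s \cong \cP_s$.

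Summing over $s$ gives an isomorphism $\Fr^* \cP \cong \cP$, completing the construction of $\cP_\circ$. It is automatic that $\For(\cP_\circ)$ is a projective generator of $\Perv_{\scS}(X,\E)$ since $\For(\cP_\circ) \cong \cP$ by construction.

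The steps are all essentially formal once one has the classification of indecomposable projectives (Corollary \ref{cor:projectives-classification} and its $\F$-analogue) together with the observation that $\Fr^*$ is a stratum-preserving autoequivalence. The only mild subtlety is the compatibility $\F \circ \Fr^* \cong \Fr^* \circ \F$, but this is immediate from the definition of $\F$ as a derived pullback of scalars, since $\Fr^*$ commutes with all inverse and direct image functors.
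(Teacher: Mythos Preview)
Your argument is correct, but it takes a genuinely different route from the paper's. The paper re-runs the inductive construction of Proposition~\ref{prop:existence-projectives} over $X_\circ$: at each step the Ext-module $E = \Ext^1(\cP_Y,\Delta_s)$ acquires a Frobenius automorphism, and Lemma~\ref{lem:existence-geom-projective} is invoked to lift this automorphism to the free cover $E_{\free}$, so that the resulting extension descends to $X_\circ$. You instead argue abstractly: since $\Fr^*$ is an autoequivalence fixing each $\IC_s$ (and commuting with $\F$), the uniqueness in Corollary~\ref{cor:projectives-classification} forces $\Fr^*\cP_s \cong \cP_s$, and then \eqref{eqn:equiv-Perv-X_0} produces $\cP_\circ$. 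For the bare existence statement of the lemma your approach is cleaner and avoids reopening the inductive construction. The paper's approach, however, is not gratuitous: by building $\cP_\circ$ inductively it retains explicit control over the $q$-weights of the Frobenius action on the $\Delta$-filtration, which is exactly what is exploited later in Propositions~\ref{prop:geom-projective-weights} and~\ref{prop:weights-morphisms-projectives} to bound the weights of $\End_\O(\cP)$. Your abstract isomorphism $\cP \xrightarrow{\sim} \Fr^*\cP$ gives no such control, so while it suffices here, it would not serve as a replacement for the construction in \S\ref{ss:geom-projective-generator}.
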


\begin{proof}
We write the details only for $\E=\O$; the case $\E=\F$ is similar. By the proof of Corollary \ref{cor:properties-projectives}(1), it is enough to prove that the objects $\cP$ constructed in Proposition \ref{prop:existence-projectives} can be chosen in such a way that they are in the essential image of the functor $\For$ or equivalently, using equivalence \eqref{eqn:equiv-Perv-X_0}, in such a way that there exists an isomorphism $\cP \xrightarrow{\sim} \Fr^* \cP$. Now, using the notation of this proof, the $\O$-module $E$ has a natural automorphism $\phi$ induced by $\Fr^*$; it is enough to observe that $E_{\free}$ can be endowed with an automorphism $\phi_{\free}$ such that the quotient $E_{\free} \onto E$ is compatible with $\phi$ and $\phi_{\free}$, which follows from Lemma \ref{lem:existence-geom-projective} (for $A=\O$). (See the proof of Proposition \ref{prop:geom-projective-weights} below for details on this construction.)
\end{proof}

Let $\cM_\circ ,\cN_\circ $ be in $\cDb_{\scS}(X_\circ ,\E)$, with $\cM:=\For(\cM_\circ )$, $\cN:=\For(\cN_\circ )$. Composing the isomorphism
\[
\left\{
\begin{array}{ccc}
\Hom_{\cDb_{\scS}(X,\E)}(\cM,\cN) & \to & \Hom_{\cDb_{\scS}(X,\E)}(\Fr^* \cM, \Fr^* \cN) \\
f & \mapsto & \psi_\cN \circ f \circ \psi_\cM^{-1}
\end{array}
\right.
\]
with the inverse of the isomorphism
\[
\Hom_{\cDb_{\scS}(X,\E)}(\cM,\cN) \to \Hom_{\cDb_{\scS}(X,\E)}(\Fr^* \cM,\Fr^* \cN)
\]
induced by $\Fr^*$ (see Lemma \ref{lem:Fr-equivalence}) we obtain an automorphism $\phi_{\cM,\cN}$ of the $\E$-module $\Hom_{\cDb_{\scS}(X,\E)}(\cM,\cN)$. Hence $\Hom_{\cDb_{\scS}(X,\E)}(\cM,\cN)$ is naturally an object of $\Modfr (\E,\phi)$, which we denote by $\Hom_{\E}(\cM,\cN)$ for simplicity.

\begin{rmk}
\label{rmk:cohomology-Frobenius}
Using the same arguments as in Lemma \ref{lem:Fr-equivalence}, for any
variety $X_\circ$ over $\mathrm{Spec}(\Fq)$ (not necessarily satisfying
our assumptions) we obtain an automorphism of
$\coHc{\bullet}(X,\E)=\Hom^\bullet(\underline{\E}_{\pt},f_!
\underline{\E}_X)$ induced by the Frobenius morphism. (Here, $f:X_\circ
\to \pt_\circ$ is the projection.) For example, with our conventions the
automorphism of $\coHc{2}(\mathbb{A}^1,\E) \cong \coHc{2}(\mathbb{A}^1
\smallsetminus \{0\},\E)$ is multiplication by $q^{-1}$, and the
automorphism of $\coHc{1}(\mathbb{A}^1 \smallsetminus \{0\},\E)$ is the
identity.
\end{rmk}

Let us fix an object $\cP_\circ$ as in Lemma \ref{lem:proj-generator-X_0}, and let
\[
A:=\End_{\Perv_{\scS}(X,\E)}(\cP).
\]
Since $\cP$ is a projective generator of $\Perv_{\scS}(X,\E)$ we have an equivalence of categories
\begin{equation}
\label{eqn:perverse-sheaves-modules-X}
\mathsf{F}_{\cP}:=\Hom(\cP,-) : \Perv_{\scS}(X,\E) \xrightarrow{\sim} \Modfr A
\end{equation}
(see \cite[Exercise on p.~55]{Ba}). Moreover, as $\cP=\For(\cP_\circ)$, we have an automorphism $\phi:=\phi_{\cP,\cP}$ of $A$, so that we can consider the category $\Modfr(A,\phi)$ as in \S \ref{ss:O-modules-automorphisms}.

\begin{lem}
\label{lem:perverse-sheaves-modules}

The functor which sends $\cM_\circ$ to $(\Hom(\cP,\cM), \phi_{\cP,\cM})$ (where as usual $\cM=\For(\cM_\circ)$) induces an equivalence of abelian categories
\begin{equation}
\label{eqn:perv-0-modules}
\Perv_{\scS}(X_\circ ,\E) \ \xrightarrow{\sim} \ \Modfr (A,\phi).
\end{equation}
Moreover the following diagram commutes
\[
\xymatrix@C=1.5cm{
\Perv_{\scS}(X_\circ ,\E) \ar[r]_-{\sim}^-{\eqref{eqn:perv-0-modules}} \ar[d]_{\For} & \Modfr (A,\phi) \ar[d] \\
\Perv_{\scS}(X,\E) \ar[r]_-{\sim}^-{\eqref{eqn:perverse-sheaves-modules-X}} & \Modfr A,
}
\]
where the right vertical functor sends $(M,\phi_M)$ to $M$.

\end{lem}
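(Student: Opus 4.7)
The plan is to combine the equivalence \eqref{eqn:equiv-Perv-X_0} (which presents $\Perv_\scS(X_\circ,\E)$ as $\Perv_\scS(X,\E)[\Fr^*]$) with the Morita equivalence \eqref{eqn:perverse-sheaves-modules-X}, after reinterpreting the target $\Modfr(A,\phi)$ via Example \ref{ex:A-phi-modules}. Writing $\sA = \Modfr A$ and $\Phi$ for the autoequivalence of $\sA$ twisting the $A$-action by $\phi$, Example \ref{ex:A-phi-modules} gives a canonical identification $\Modfr(A,\phi) = \sA[\Phi]$. It is therefore enough to lift $\mathsf{F}_\cP$ to an equivalence $\Perv_\scS(X,\E)[\Fr^*] \simto \sA[\Phi]$ commuting with the forgetful functors to $\Perv_\scS(X,\E)$ and $\sA$ respectively.

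The first step is to build a natural isomorphism $\eta \colon \mathsf{F}_\cP \circ \Fr^* \simto \Phi \circ \mathsf{F}_\cP$. For $\cM$ in $\Perv_\scS(X,\E)$ I would set
\[
\eta_\cM \colon \Hom(\cP, \Fr^*\cM) \xrightarrow{\sim} \Hom(\cP, \cM), \qquad f \longmapsto (\Fr^*)^{-1}\bigl( f \circ \psi_\cP^{-1} \bigr),
\]
which makes sense because $\Fr^*$ induces an isomorphism on morphism spaces by Lemma \ref{lem:Fr-equivalence}. The only non-formal check is that $\eta_\cM$ intertwines the right $A$-action by post\-composition on the source with the $\phi$-twisted right $A$-action on the target; this is a direct computation using the definition $\phi(g) = (\Fr^*)^{-1}(\psi_\cP \circ g \circ \psi_\cP^{-1})$ of $\phi$ as $\phi_{\cP,\cP}$.

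The second step is a purely formal lemma: given any equivalence of abelian categories $F \colon \sB \simto \sB'$, autoequivalences $\Psi, \Psi'$ on source and target, and a natural isomorphism $\alpha \colon F \circ \Psi \simto \Psi' \circ F$, one obtains an equivalence $\sB[\Psi] \simto \sB'[\Psi']$ sending $(M, s_M)$ to $(F(M),\, \alpha_M \circ F(s_M))$, and this equivalence evidently commutes with the forgetful functors to $\sB$ and $\sB'$. Applying this with $F = \mathsf{F}_\cP$, $\Psi = \Fr^*$, $\Psi' = \Phi$ and $\alpha = \eta$ produces the desired equivalence together with the compatibility with $\For$ stated in the diagram.

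Finally I would identify the resulting functor with the one described in the lemma: unwinding the construction, an object $\cM_\circ$ corresponding to $(\cM,\psi_\cM)$ under \eqref{eqn:equiv-Perv-X_0} is sent to the $A$-module $\Hom(\cP,\cM)$ equipped with the endomorphism $f \mapsto (\Fr^*)^{-1}(\psi_\cM \circ f \circ \psi_\cP^{-1})$, which is exactly $\phi_{\cP,\cM}$ as defined just before Remark \ref{rmk:cohomology-Frobenius}. The only delicate point in the whole argument will be bookkeeping the compatibilities between the various $A$-actions, Frobenius structures, and the equivalence $\mathsf{F}_\cP$; once $\eta$ is shown to be $A$-linear in the twisted sense, everything else is formal.
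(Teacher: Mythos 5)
Your proposal is correct and follows essentially the same route as the paper: both pass through the identification $\Perv_{\scS}(X_\circ,\E)\cong\Perv_{\scS}(X,\E)[\Fr^*]$ of \eqref{eqn:equiv-Perv-X_0}, transport the autoequivalence along $\mathsf{F}_{\cP}$, and exhibit the natural isomorphism between $\mathsf{F}_{\cP}\circ\Fr^*$ and $\Phi\circ\mathsf{F}_{\cP}$ via $\psi_{\cP}$ and the isomorphism of Lemma \ref{lem:Fr-equivalence} (your $\eta$ is the inverse of the paper's composite $\Hom(\cP,\cM)\xrightarrow{\Fr^*}\Hom(\Fr^*\cP,\Fr^*\cM)\xrightarrow{(-)\circ\psi_{\cP}}\Hom(\cP,\Fr^*\cM)$). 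You merely spell out the twisted $A$-linearity check and the identification of the endomorphism with $\phi_{\cP,\cM}$, which the paper leaves to the reader.
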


\begin{proof}
Consider the equivalences
\[
\Perv_{\scS}(X_\circ ,\E) \ \overset{\eqref{eqn:equiv-Perv-X_0}}{\cong} \ \Perv_{\scS}(X,\E)[\Fr^*] \ \cong \ \bigl( \Modfr A \bigr) [\mathsf{F}_{\cP} \circ \Fr^* \circ (\mathsf{F}_{\cP})^{-1}]
\]
where the second equivalence is induced by $\mathsf{F}_{\cP}$. We claim that the functors $\Phi$ (defined as in Example \ref{ex:A-phi-modules}) and $\mathsf{F}_{\cP} \circ \Fr^* \circ (\mathsf{F}_{\cP})^{-1}$ are isomorphic, or equivalently that the functors $\Phi \circ \mathsf{F}_{\cP}$ and $\mathsf{F}_{\cP} \circ \Fr^*$ are isomorphic: indeed an isomorphism is provided by the composition of isomorphisms
\[
\Hom(\cP,\cM) \xrightarrow{\Fr^*} \Hom(\Fr^* \cP, \Fr^* \cM) \xrightarrow{(-) \circ \psi_{\cP}} \Hom(\cP,\Fr^* \cM)
\]
for $\cM$ in $\Perv_{\scS}(X,\E)$. Equivalence \eqref{eqn:perv-0-modules} follows. We leave it to reader to check that this equivalence can be described by the formula in the lemma, and that the diagram commutes.
\end{proof}

\subsection{Derived categories of perverse sheaves}

In this subsection we keep the same assumptions and notation as in \S \ref{ss:perverse-sheaves-Fq}. The subcategories $\Perv_{\scS}(X_\circ ,\E) \subset \cDb_{\scS}(X_\circ ,\E)$, $\Perv_{\scS}(X,\E) \subset \cDb_{\scS}(X,\E)$ are hearts of t-structures, and the triangulated categories $\cDb_{\scS}(X_\circ ,\E)$ and $\cDb_{\scS}(X,\E)$ have natural filtered analogues. Hence by \cite{Be} one has realization functors
\[
\mathsf{real}_\circ : \cDb \Perv_{\scS}(X_\circ ,\E) \to \cDb_{\scS}(X_\circ ,\E), \quad \mathsf{real} : \cDb \Perv_{\scS}(X,\E) \to \cDb_{\scS}(X,\E).
\]
Moreover, these functors fit in the following commutative diagram:
\[
\xymatrix@C=1.5cm{
\cDb \Perv_{\scS}(X_\circ ,\E) \ar[r]^-{\mathsf{real}_\circ } \ar[d] & \cDb_{\scS}(X_\circ ,\E) \ar[d] \\
\cDb \Perv_{\scS}(X,\E) \ar[r]^-{\mathsf{real}} & \cDb_{\scS}(X,\E)
}
\]
where vertical functors are induced by $\For$ (see \cite[Lemma A.7.1]{Be}). It is known that the functor $\mathsf{real}$ is an equivalence of categories, see Corollary \ref{cor:realization-functor} if $\E=\O$ and \cite[Corollary 3.3.2]{BGS}\footnote{In \cite{BGS} the authors work with coefficients in $\Ql$; however the same proof applies to coefficients in $\F$.} if $\E=\F$.

\begin{prop}
\label{prop:realization-equivalence-0}

The functor $\mathsf{real}_\circ$ is an equivalence of categories.

\end{prop}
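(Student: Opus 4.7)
The plan is to verify essential surjectivity and full faithfulness of $\mathsf{real}_\circ$ separately. Essential surjectivity is the easy half: the perverse t-structure on $\cDb_\scS(X_\circ,\E)$ exhibits any object as a finite iterated extension of shifted perverse sheaves, so, since $\mathsf{real}_\circ$ is triangulated and restricts to the identity on $\Perv_\scS(X_\circ,\E)$, its essential image is a thick subcategory containing $\Perv_\scS(X_\circ,\E)$ and hence equals $\cDb_\scS(X_\circ,\E)$.

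For full faithfulness we must show that the natural map
\[
\alpha^i : \Hom^i_{\cDb \Perv_\scS(X_\circ,\E)}(\cF_\circ,\cG_\circ) \to \Hom^i_{\cDb_\scS(X_\circ,\E)}(\cF_\circ,\cG_\circ)
\]
is an isomorphism for all $\cF_\circ,\cG_\circ \in \Perv_\scS(X_\circ,\E)$ and all $i \in \Z$. The cases $i \leq 0$ are trivial and $i=1$ is \cite[Remarque 3.1.17]{BBD}, exactly as in the proof of Corollary \ref{cor:realization-functor}. For $i \geq 2$ I would proceed by induction on $i$, parallel to the strategy used there, the only difference being that the role of the projective cover is now played by a surjection $\cP_\circ \onto \cF_\circ$ in $\Perv_\scS(X_\circ,\E)$ whose image $\cP = \For(\cP_\circ)$ is projective in $\Perv_\scS(X,\E)$; such a $\cP_\circ$ exists by combining the equivalence of Lemma \ref{lem:perverse-sheaves-modules} with Lemma \ref{lem:existence-geom-projective} (taking an $A$-free object surjecting to the $(A,\phi)$-module corresponding to $\cF_\circ$). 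Writing $\cK_\circ$ for the kernel and applying the usual five-lemma argument to the long exact sequences associated with $\cK_\circ \into \cP_\circ \onto \cF_\circ$ on both sides of $\alpha$, the induction step reduces to the twofold vanishing $\Hom^j_?(\cP_\circ,\cG_\circ) = 0$ for $j \geq 2$ in both target categories.

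The vanishing on the perverse side is precisely the content of Lemma \ref{lem:geom-projective}, applied through the equivalence $\cDb \Perv_\scS(X_\circ,\E) \simto \cDb((\Modfr A)[\Phi])$ to the image of $\cP_\circ$, whose underlying $A$-module is projective. The main obstacle, and really the only thing that needs to be added over the proof of Corollary \ref{cor:realization-functor}, is the vanishing on the constructible side. To establish it one first observes that, since $\cP$ is projective in $\Perv_\scS(X,\E)$ and $\mathsf{real}$ is already known to be an equivalence for $X$, we have $\Hom^j_{\cDb_\scS(X,\E)}(\cP,\cG) = 0$ for all $j \geq 1$; one then invokes the distinguished triangle
\[
R\Hom_{X_\circ}(\cP_\circ,\cG_\circ) \to R\Hom_X(\cP,\cG) \xrightarrow{\Fr^* - \id} R\Hom_X(\cP,\cG) \triright
\]
(a formal consequence of the identification, recorded in Lemma \ref{lem:perverse-sheaves-modules} and its proof, of morphisms over $X_\circ$ with $\Fr^*$-equivariant morphisms over $X$, cf.~\cite[\S 5.1.2]{BBD}). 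Its long exact sequence supplies short exact sequences
\[
0 \to \Hom^{j-1}_X(\cP,\cG)_{\Fr^*} \to \Hom^j_{X_\circ}(\cP_\circ,\cG_\circ) \to \Hom^j_X(\cP,\cG)^{\Fr^*} \to 0,
\]
from which $\Hom^j_{X_\circ}(\cP_\circ,\cG_\circ) = 0$ for $j \geq 2$, completing the induction.
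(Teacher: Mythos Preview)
Your proposal is correct and follows essentially the same route as the paper's proof: the same induction on $i$, the same choice of a surjection $\cP_\circ \onto \cF_\circ$ with $\For(\cP_\circ)$ projective (constructed via Lemmas \ref{lem:perverse-sheaves-modules} and \ref{lem:existence-geom-projective}), the same long-exact-sequence reduction to the vanishing of $\Hom^j(\cP_\circ,\cG_\circ)$ for $j\ge 2$ on both sides, and the same two inputs for that vanishing, namely Lemma \ref{lem:geom-projective} on the perverse side and the Frobenius (co)invariants sequences of \cite[(5.1.2.5)]{BBD} on the constructible side. The only cosmetic difference is that you write out the distinguished triangle underlying \cite[(5.1.2.5)]{BBD} explicitly, whereas the paper simply cites those short exact sequences.
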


\begin{proof}
The category $\Perv_{\scS}(X_\circ ,\E)$ generates $\cDb_{\scS}(X_\circ ,\E)$. Hence it is enough to prove that for any $\cM_\circ ,\cN_\circ $ in $\Perv_{\scS}(X_\circ ,\E)$ and any $i \in \Z$ the natural morphism
\[
\Hom_{\cDb \Perv_{\scS}(X_\circ ,\E)}^i(\cM_\circ ,\cN_\circ ) \to \Hom_{\cD_{\scS}(X_\circ ,\E)}^i(\cM_\circ ,\cN_\circ )
\]
is an isomorphism. We prove this fact by induction on $i$. In fact both sides vanish if $i<0$; if $i=0$ then the morphism is an isomorphism by definition; and the morphism is an isomorphism also if $i=1$ by \cite[Remarque 3.1.7(ii)]{BBD}.

Now, assume that $i \geq 2$, and that the result is known for $i-1$ (and any $\cM_\circ ,\cN_\circ$). Let $\cP_\circ$ be an object of $\Perv_{\scS}(X_\circ ,\E)$ surjecting onto $\cM_\circ$ and such that $\For(\cP_\circ )$ is projective. (Such an object exists by Lemma \ref{lem:perverse-sheaves-modules} and Lemma \ref{lem:existence-geom-projective}, or an obvious analogue if $\E=\F$.) Consider an exact sequence
\[
\ker_\circ \into \cP_\circ \onto \cM_\circ,
\]
and the associated commutative diagram with exact rows
\[
{\small
\xymatrix@C=0.4cm{
\Hom^{i-1}_{\cDb \mathsf{P}_\circ}(\cP_\circ,\cN_\circ) \ar[r] \ar[d] & \Hom^{i-1}_{\cDb \mathsf{P}_\circ}(\ker_\circ , \cN_\circ ) \ar[r] \ar[d] & \Hom^i_{\cDb \mathsf{P}_\circ}(\cM_\circ ,\cN_\circ ) \ar[r] \ar[d] & \Hom^i_{\cDb \mathsf{P}_\circ }(\cP_\circ ,\cN_\circ ) \ar[d] \\
\Hom^{i-1}_{\mathsf{D}_\circ }(\cP_\circ ,\cN_\circ ) \ar[r] & \Hom^{i-1}_{\mathsf{D}_\circ}(\ker_\circ , \cN_\circ ) \ar[r] & \Hom^i_{\mathsf{D}_\circ}(\cM_\circ ,\cN_\circ ) \ar[r] & \Hom^i_{\mathsf{D}_\circ}(\cP_\circ ,\cN_\circ).
}
}
\]
Here $\mathsf{P}_\circ:= \Perv_{\scS}(X_\circ ,\E)$ and $\mathsf{D}_\circ :=\cDb_{\scS}(X_\circ ,\E)$. The first two vertical morphisms in this diagram are isomorphisms by induction. We claim that both $\E$-modules in the right column vanish, which will conclude the proof. Indeed the top right term is $0$ by Lemma \ref{lem:perverse-sheaves-modules} and Lemma \ref{lem:geom-projective} (since $i \geq 2$). The lower right term is also $0$ by the exact sequences \cite[(5.1.2.5)]{BBD} and the fact that
\[
\Hom_{\cDb_{\scS}(X,\E)}^k(\For(\cP_\circ),\For(\cN_\circ))=0
\]
for $k \geq 1$. (The latter vanishing follows from the facts that $\mathsf{real}$ is an equivalence and that $\For(\cP_\circ)$ is a projective perverse sheaf.)
\end{proof}

\begin{rmk}
\label{rmk:automorphisms-Hom}
Let $\Phi$ be the equivalence considered in Example \ref{ex:A-phi-modules}. We denote similarly the equivalence induced on derived categories, so that every object $M_\circ$ in $\cDb \Modfr (A,\phi)$, with image $M$ in $\cDb \Modfr A$, comes equipped with an isomorphism $\phi_M : M \xrightarrow{\sim} \Phi(M)$. If $M_\circ ,N_\circ$ are in $\cDb \Modfr (A,\phi)$ then there is a canonical automorphism of $\Hom_{\cDb \Modfr A}(M,N)$ which sends a morphism $f$ to $\Phi^{-1}(\phi_N \circ f \circ \phi_M^{-1})$. It is not difficult to check that if $\cM_\circ ,\cN_\circ $ are in $\cDb_{\scS}(X_\circ ,\E)$ and if $M_\circ ,N_\circ $ are their images under the equivalence
\[
\cDb_{\scS}(X_\circ ,\E) \xrightarrow{\mathsf{real}_{\circ}^{-1}}  \cDb \Perv_{\scS}(X_\circ ,\E) \xrightarrow{\eqref{eqn:perv-0-modules}} \cDb \Modfr (A,\phi),
\]
then this automorphism of $\Hom(M,N)$ coincides with the automorphism $\phi_{\cM,\cN}$ of $\Hom(\cM,\cN)$ defined in \S \ref{ss:perverse-sheaves-Fq} under the isomorphism $\Hom(\cM,\cN) \cong \Hom(M,N)$ induced by the equivalence $\mathsf{F}_{\cP} \circ \mathsf{real}^{-1}$.
\end{rmk}

\part{Formality of the constructible derived category of the flag variety}
\label{pt:flag-variety}

\section{Bounding weights}
\label{sec:bounding-weights}

As in \S\ref{ss:perverse-sheaves-Fq} we use $\E$ to denote either $\F$ or $\O$. We fix a finite field $\Fq$ (where $q$ is a prime power which is invertible in $\F$) and an algebraic closure $\Fqb$ of $\Fq$.

\subsection{Notation}
\label{ss:notation}

We let $G_\circ$ be a split connected reductive algebraic group over $\Fq$, $B_\circ \subset G_\circ $ be a Borel subgroup, and $T_\circ \subset B_\circ $ be a maximal torus. We let $W$ be the Weyl group of $G_\circ $ with respect to $T_\circ $. The main player in the rest of this paper will be the flag variety
\[
X_\circ :=G_\circ / B_\circ ,
\]
endowed with the Bruhat stratification
\[
X_\circ =\bigsqcup_{w \in W} X_{w,\circ}, \qquad X_{w,\circ}:=B_\circ w B_\circ / B_\circ .
\]
For any subset $I \subset W$, we denote by $X_{I,\circ}$ the union of the $X_{v,\circ}$ for $v \in I$. Following the notation in the previous sections, we denote by $i_{w} : X_{w,\circ} \into X_\circ$ the inclusion, and we omit the index ``$\circ$'' when considering the varieties obtained by extension of scalars from $\Fq$ to $\Fqb$. The category of $\E$-perverse sheaves on $X_\circ$ (resp.~$X$) for the Bruhat stratification will be denoted by $\Perv_{(B_\circ)}(X_\circ,\E)$ (resp.~$\Perv_{(B)}(X,\E)$), and similarly for derived categories. We use similar notation for $B_\circ$-stable subvarieties of $X_\circ$.

Let $B^-_\circ \subset G_\circ$ be the Borel subgroup opposite to $B_\circ$ with respect to $T_\circ$. We will also consider the opposite Bruhat cells
\[
X^-_{w,\circ} := B^-_\circ w B_\circ / B_\circ \ \subset \ X_\circ
\]
($w \in W$).

The choice of $B_\circ \subset G_\circ$ determines a subset $S \subset W$ of simple reflections and of positive roots, which we choose such that the Lie algebra of the unipotent radical of $B_\circ$ is spanned by positive root spaces. If $s \in S$, we denote by $P_{s,\circ}$ the associated minimal standard parabolic subgroup. We let $X^s_\circ :=G_\circ / P_{s,\circ}$. This variety has a natural stratification by Bruhat cells:
\[
X^s_\circ = \bigsqcup_{w \in W^s} X^s_{w,\circ}, \qquad X^s_{w,\circ}:=B_\circ w P_{s,\circ}/P_{s,\circ}
\]
where $W^s:=\{w \in W \mid ws>w\}$. As for $X_\circ$, we denote by $\Perv_{(B_\circ )}(X^s_\circ ,\E)$ (resp.~$\Perv_{(B)}(X^s,\E)$) the category of $\E$-perverse sheaves on $X^s_\circ$ (resp.~$X^s$) for the Bruhat stratification.

We denote by $\ell$ the length function on $W$ with respect to $S$ and
by $w_0$ the longest element of $W$.

We denote by $\pi_s : X_\circ \to X_\circ^s$ the natural projection. We have functors
\[
\xymatrix@C=3.5cm{
\Perv_{(B_\circ )}(X_\circ ,\E) \ar@<0.5ex>[r]^-{\pi_s{}_{[!]}:=\pH^0(\pi_{s!} (-)[1])} & \Perv_{(B_\circ )}(X^s_\circ ,\E) \ar@<0.5ex>[l]^-{\pi_s^{[!]}:=\pi_s^![-1]},
}
\]
and similarly for the categories $\Perv_{(B)}(X,\E)$ and $\Perv_{(B)}(X^s,\E)$.
The functor $\pi_s^{[!]}$ is exact, and is right adjoint to $\pi_s{}_{[!]}$. It follows that $\pi_s{}_{[!]}$ sends projective objects in $\Perv_{(B)}(X,\E)$ to projective objects in $\Perv_{(B)}(X^s,\E)$.


\subsection{Morphisms between standard perverse sheaves}
\label{subsect:standardmorphisms}

For any $w \in W$, following the notation of \S \ref{ss:standard} we set $\Delta_{w,\circ}:=i_{w!} \underline{\E}_{X_{w,\circ}}[\ell(w)] \in \Perv_{(B_\circ )}(X_\circ ,\E)$, and $\Delta_w:=\For(\Delta_{w,\circ})$. Recall (see \S \ref{ss:perverse-sheaves-Fq} and Remark \ref{rmk:cohomology-Frobenius}) that for $u,v \in W$ and $i \in \Z$ the $\E$-modules $\Hom_{\E}^i(\Delta_u,\Delta_v)$ and $\coHc{i}( X_v \cap X_u^-, \E)$ have a natural automorphism induced by the Frobenius morphism.

\begin{prop}
\label{prop:morphisms-standard-R-varieties}

For any $u,v \in W$ there exists an isomorphism of graded $(\E,\phi)$-modules
\[
\Ext^{\bullet}_{\E}(\Delta_u,\Delta_v) \ \cong \ \coHc{\bullet+\ell(v)-\ell(u)} \bigl( X_v \cap X_u^-, \E\bigr).
\]

\end{prop}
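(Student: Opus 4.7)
The plan is to match both sides with $\coHc{\bullet}(X_v \cap X_u^-, \underline{\E})$ (up to a shift) via three successive reductions, exploiting the transversality of $X_u$ and $X_u^-$ at $uB/B$.

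\emph{Step 1: Adjunction.} The $(i_{u!}, i_u^!)$-adjunction applied to $\Delta_u = i_{u!}\underline{\E}_{X_u}[\ell(u)]$ yields
\[
\Ext^{i}_{\E}(\Delta_u, \Delta_v) \ = \ \coH{i - \ell(u)}(X_u, i_u^! \Delta_v).
\]

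\emph{Step 2: Base change onto the opposite cell.} The inclusions of the Bruhat cell and the opposite Bruhat cell fit into a Cartesian square
\[
\xymatrix{
X_v \cap X_u^- \ar[r]^-{a} \ar[d]_{b} & X_v \ar[d]^{i_v} \\
X_u^- \ar[r]^{i_u^-} & X.
}
\]
Since $i_v$ is a locally closed immersion (so $i_{v!}$ is extension by zero), base change gives $(i_u^-)^* \Delta_v \cong b_!\, \underline{\E}_{X_v \cap X_u^-}[\ell(v)]$, whence
\[
\coHc{\bullet}(X_u^-, (i_u^-)^* \Delta_v) \ \cong \ \coHc{\bullet + \ell(v)}(X_v \cap X_u^-, \underline{\E}).
\]

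\emph{Step 3: Hyperbolic localization.} The remaining identification
\[
\coH{\bullet}(X_u, i_u^! \Delta_v) \ \cong \ \coHc{\bullet}(X_u^-, (i_u^-)^* \Delta_v)
\]
is an instance of Braden's hyperbolic localization theorem. I would choose a cocharacter $\lambda : \Gm \to T_\circ$ sufficiently generic that, for every $w \in W$, the attracting cell of $wB/B$ under $\lambda$ is the Bruhat cell $X_w$ and the repelling cell is the opposite cell $X_w^-$. Since $\Delta_v$ is $B$-equivariant, hence $\Gm$-equivariant via $\lambda$, Braden's theorem applies and yields the desired isomorphism. Combining Steps 1--3 gives the stated graded isomorphism. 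Because $\lambda$ may be chosen over $\Fq$ and every morphism, variety, and sheaf involved is defined over $\Fq$, the canonical identifications of all three steps are Frobenius-equivariant, promoting the isomorphism to one of graded $(\E,\phi)$-modules.

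\emph{Main obstacle.} The substantive step is Step 3: Braden's theorem was originally formulated with complex coefficients, so some care is needed to invoke it for $\O$- and $\F$-coefficients on a variety over $\Fq$. An elementary alternative, which I would pursue if Braden's theorem is not directly available, is to exploit the open $T$-equivariant embedding $\Omega_u \cong X_u \times X_u^- \hookrightarrow X$ given by the action map, noting that $X_u$ and $X_u^-$ meet $\Omega_u$ transversally at $uB/B$; a Künneth-type calculation on $\Delta_v|_{\Omega_u}$ combined with the contraction principle for the $\Gm$-action on each factor then establishes Step 3 by hand at the cost of additional bookkeeping.
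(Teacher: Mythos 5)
Your argument is correct, and your degree bookkeeping matches the statement. Your primary route is, however, packaged differently from the paper's: after the same adjunction step, the paper does \emph{not} invoke Braden's theorem globally. Instead it works locally near $X_u$, using the Kazhdan--Lusztig open embedding $(\overline{X_v} \cap X_u^-) \times X_u \hookrightarrow \overline{X_v}$ (the restriction of the transversal-slice isomorphism $X_u^- \times X_u \simto X_u^- \cdot X_u \subset X$ that you mention at the end) to rewrite $i_u^! i_{v!}\underline{\E}_{X_v}$ as $k^! j'_{v!}\underline{\E}_{(X_v\cap X_u^-)\times X_u}$, where $k$ is the inclusion of $\{uB/B\}\times X_u$; the substantive input is then \cite[Proposition 1]{So}, which says the natural map $k^! \to p_!$ ($p$ the projection to $X_u$) is an isomorphism on such objects --- precisely the one-variable contraction principle for the $\Gm$-action contracting the $X_u^-$-factor. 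Base change and the contractibility of the affine space $X_u$ then finish the computation. So your ``elementary alternative'' is essentially the paper's actual proof, and your worry about the main route is well placed: at the level of generality needed here ($\O$-coefficients, \'etale sheaves over $\Fqb$, Frobenius-equivariantly) Braden's original paper is not a clean citation, although the statement is true and has since been established in this generality (Drinfeld--Gaitsgory, Richarz). What Braden buys is a one-stroke global identification $\coH{\bullet}(X_u, i_u^!\Delta_v)\cong \coHc{\bullet}(X_u^-,(i_u^-)^*\Delta_v)$ at the cost of a heavier black box; what the paper's local argument buys is self-containedness modulo Soergel's elementary lemma, with Frobenius-equivariance automatic since the slice and all maps are defined over $\Fq$.
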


\begin{proof}
If $u \nleq v$, then both modules are $0$, hence there is nothing to prove. Assume from now on that $u \leq v$.

Fix $n \in \Z$. First, adjunction provides an isomorphism of $(\E,\phi)$-modules
\[
\Ext^n_{\E}(\Delta_u,\Delta_v) \ \cong \ \coH{n+\ell(v)-\ell(u)} (X_u, i_u^! i_{v!} \underline{\E}_{X_v}).
\]
Consider the following diagram:
\[
\xymatrix{
X_v \ar[r]^-{j_v} \ar@/^1.5pc/[rr]^-{i_v} & \overline{X_v} \ar[r]^-{\overline{i}_v} & X \\
& X_u \ar[u]^-{i_{u,v}} \ar[ur]_-{i_u} &
}
\]
We obtain isomorphisms
\[
i_u^! i_{v!} \underline{\E}_{X_v} \ \cong \ i_{u,v}^! \overline{i}_v^! \overline{i}_{v!} j_{v!} \underline{\E}_{X_v} \ \cong \ i_{u,v}^! j_{v!} \underline{\E}_{X_v}.
\]

Now, consider the following diagram with cartesian square:
\[
\xymatrix{
X_v \ar@{}[dr]|{\square} \ar@{^{(}->}[r]^-{j_v} & \overline{X_v} \\
(X_v \cap X_u^-) \times X_u \ar@{^{(}->}[u]^-{j'} \ar@{^{(}->}[r]^-{j_v'} & (\overline{X_v} \cap X_u^-) \times X_u \ar@{^{(}->}[u]^-{j} \ar@<1ex>[d]^-{p} \\
& X_u \ar@{->}[u]^-{k} \ar@/_4pc/[uu]_-{i_{u,v}}
}
\]
where $j$ and $j'$ are the open inclusions considered in \cite[\S 1.4]{KL2}, and $k$ (resp.~$p$) is the obvious inclusion (resp.~projection). Then we have
\[
i_{u,v}^! j_{v!} \underline{\E}_{X_v} \ \cong \ k^! j^! j_{v!} \underline{\E}_{X_v} \ \cong \ k^! j^* j_{v!} \underline{\E}_{X_v} \ \cong \ k^! j_{v!}' \underline{\E}_{(X_v \cap X_u^-) \times X_u},
\]
where the second isomorphism follows from the fact that $j$ is an open embedding, and the third one from base change.

The adjunction $(k_!,k^!)$ provides a morphism of functor $k_! k^! \to \id$. Composing with $p_!$ gives a morphism of functors $k^! \to p_!$. By \cite[Proposition 1]{So}, this morphism induces an isomorphism 
\[
k^! j_{v!}' \underline{\E}_{(X_v \cap X_u^-) \times X_u} \ \xrightarrow{\sim} \ p_! j_{v!}' \underline{\E}_{(X_v \cap X_u^-) \times X_u}.
\]
Combining these isomorphisms we obtain
\[
\Ext^n_{\E}(\Delta_u,\Delta_v) \ \cong \ \coH{n+\ell(v)-\ell(u)} (X_u, (p \circ j_v')_! \underline{\E}_{(X_v \cap X_u^-) \times X_u}).
\]

Now, consider the following (tautological) cartesian diagram:
\[
\xymatrix{
(X_v \cap X_u^-) \times X_u \ar[r]^-{q} \ar[d]_-{p \circ j_v'} \ar@{}[dr]|{\square} & X_v \cap X_u^- \ar[d]^-{b} \\
X_u \ar[r]^-{a} & \pt,
}
\]
where $q,a,b$ are the natural projections. Then by the base change theorem we have
\[
(p \circ j_v')_! \underline{\E}_{(X_v \cap X_u^-) \times X_u} \ \cong \ (p \circ j_v')_! q^* \underline{\E}_{X_v \cap X_u^-} \ \cong \ a^* b_! \underline{\E}_{X_v \cap X_u^-}.
\]
We deduce isomorphisms
\begin{multline*}
\Ext^n_{\E}(\Delta_u,\Delta_v) \ \cong \ \coH{n+\ell(v)-\ell(u)} (X_u, a^* b_! \underline{\E}_{X_v \cap X_u^-}) \\ \cong \ \coH{n+\ell(v)-\ell(u)} (\pt, a_* a^* b_! \underline{\E}_{X_v \cap X_u^-}).
\end{multline*}
Finally, since $X_u$ is an affine space, we observe that the adjunction $(a^*, a_*)$ induces an isomorphism
\[
\coH{n+\ell(v)-\ell(u)} (\pt, b_! \underline{\E}_{X_v \cap X_u^-}) \ \xrightarrow{\sim} \ \coH{n+\ell(v)-\ell(u)} (\pt, a_* a^* b_! \underline{\E}_{X_v \cap X_u^-}),
\]
which finishes the proof.
\end{proof}


\subsection{Weights of cohomology of Deodhar varieties}
\label{ss:weightsR}

In the next two lemmas we let $\Bbbk$ be an arbitrary field, and we use the notation $G$, $B$, $X$, etc.~for a split reductive algebraic group, a Borel subgroup, the flag variety, etc.~over $\Bbbk$. We will use these results only when $\Bbbk=\Fq$.

The following result is well known, and probably due to Deodhar (see \cite{De}; see \cite{CUD} for a more general result). For this reason we call the varieties $X_v \cap X_u^-$ ``Deodhar varieties''.

\begin{lem}
\label{lem:R-varieties}

Let $u,v \in W$ and $s \in S$ be such that $u \leq v$, $vs>v$ and $us > u$. 

\begin{enumerate}
\item There exists an isomorphism of $\Bbbk$-varieties
\[
X_{vs} \cap X^-_{us} \ \cong \ X_{v} \cap X^-_{u}.
\]
\item There exists a closed subvariety $Z \subset X_{vs} \cap X^-_{u}$, with complement $U$, and isomorphisms of $\Bbbk$-varieties
\[
Z \ \cong \ (X_{v} \cap X^-_{us}) \times \bA^1_{\Bbbk}, \quad U \ \cong \ (X_{v} \cap X^-_{u}) \times (\bA^1_\Bbbk \smallsetminus \{0\}).
\]
\end{enumerate} 

\end{lem}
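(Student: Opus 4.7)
My approach is to exploit the $\mathbb{P}^1$-bundle structure of the projection $\pi_s \colon X \to X^s$. Since $vs > v$, so $v \in W^s$, the preimage $\pi_s^{-1}(X^s_v) = X_v \sqcup X_{vs}$ is a Zariski-locally trivial $\mathbb{P}^1$-bundle over $X^s_v$, in which $X_v$ is a section (so $\pi_s$ restricts to an isomorphism $X_v \simto X^s_v$) and $X_{vs}$ is its complement. Analogously, using $us > u$, the preimage $\pi_s^{-1}(B^- u P_s/P_s) = X^-_u \sqcup X^-_{us}$ is a $\mathbb{P}^1$-bundle over $B^- u P_s/P_s$, now with $X^-_{us}$ playing the role of the section and $X^-_u$ the complement.

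Setting $Y := X^s_v \cap (B^- u P_s/P_s)$, the restriction $\pi_s^{-1}(Y) \to Y$ is a $\mathbb{P}^1$-bundle carrying the two sections $\sigma_+ := X_v \cap \pi_s^{-1}(Y)$ and $\sigma_- := X^-_{us} \cap \pi_s^{-1}(Y)$. Because $X^s_v \cong \mathbb{A}^{\ell(v)}$ and the bundle $\pi_s^{-1}(X^s_v) \to X^s_v$ is the projectivization of a rank-two vector bundle admitting the section $X_v$, Quillen--Suslin gives a trivialization $\pi_s^{-1}(X^s_v) \cong X^s_v \times \mathbb{P}^1$; restricting to $Y$ yields $\pi_s^{-1}(Y) \cong Y \times \mathbb{P}^1$ in which $\sigma_+$ corresponds to $Y \times \{\infty\}$ and $X_{vs} \cap \pi_s^{-1}(Y)$ to $Y \times \mathbb{A}^1$. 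Then $\sigma_-$ becomes the graph of a morphism $\phi \colon Y \to \mathbb{P}^1$, and I set $Y_+ := \phi^{-1}(\infty)$ (closed in $Y$) and $Y_{\ne} := Y \smallsetminus Y_+$ (open). By construction $Y_+ \cong X_v \cap X^-_{us}$ via the coincidence $\sigma_+ = \sigma_-$ on $Y_+$, while $Y_{\ne}$ is identified via $\pi_s$ both with $X_v \cap X^-_u$ (through $\sigma_+$) and with $X_{vs} \cap X^-_{us}$ (through $\sigma_-$).

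This last double identification already proves (1). For (2), I set $Z := (X_{vs} \cap X^-_u) \cap \pi_s^{-1}(Y_+)$, a closed subvariety whose fiber over each $y \in Y_+$ is the whole $\mathbb{A}^1 = \mathbb{P}^1 \smallsetminus \{\infty\}$, giving $Z \cong Y_+ \times \mathbb{A}^1 \cong (X_v \cap X^-_{us}) \times \mathbb{A}^1$. Its open complement $U$ sits over $Y_{\ne}$ as $\{(y,t) \in Y_{\ne} \times \mathbb{A}^1 : t \neq \phi(y)\}$, and the translation $(y,t) \mapsto (y, t - \phi(y))$ provides an isomorphism $U \cong Y_{\ne} \times (\mathbb{A}^1 \smallsetminus \{0\}) \cong (X_v \cap X^-_u) \times (\mathbb{A}^1 \smallsetminus \{0\})$.

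The main obstacle is setting up the $\mathbb{P}^1$-bundle trivialization carefully enough that the four loci $X_v$, $X_{vs}$, $X^-_u$, $X^-_{us}$ correspond to the described subsets of $Y \times \mathbb{P}^1$; once this bookkeeping is in place, both claims reduce to a direct fiberwise inspection. In particular, verifying that $X^-_{us} \cap \pi_s^{-1}(Y)$ really is a section of the trivialized bundle (and thus of the form described by $\phi$) is the one step where some care with the opposite Bruhat stratification is needed.
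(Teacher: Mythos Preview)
Your proof is correct and follows essentially the same strategy as the paper's: both arguments use the projection $\pi_s$, identify the two sections coming from $X_v$ and $X^-_{us}$, and decompose the base $Y$ according to where these sections agree or disagree; your translation $(y,t)\mapsto (y,t-\phi(y))$ is exactly the content of the auxiliary Lemma~\ref{lem:fibration} that the paper states separately. The one unnecessary detour is the appeal to Quillen--Suslin: the paper simply uses the standard fact that $\pi_s|_{X_{vs}}:X_{vs}\to X^s_v$ is already a \emph{trivial} $\bA^1$-fibration (both source and target are affine spaces and this is immediate from the root-subgroup description of Bruhat cells), so no vector-bundle triviality theorem is needed to obtain the trivialization $X_{vs}\cap\pi_s^{-1}(Y)\cong Y\times\bA^1$.
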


\begin{proof}
Set $X^{s,-}_{u}:=B^- u P_{s}/P_{s} \subset X^s$. Then we have 
\[
(\pi_s)^{-1}(X^{s,-}_{u}) = X^-_{u} \sqcup X^-_{us}, \qquad (\pi_s)^{-1}(X^{s}_{v}) = X_{vs} \sqcup X_{v}.
\]
In each of these decompositions, the first term is open and the second
term is closed. On the other hand, as the restriction of $\pi_s$ to
$X_{v}$ (resp.~$X^-_{us}$) is an isomorphism with
$X_{v}^s$ (resp.~ $X^{s,-}_{u}$) we have isomorphisms
\begin{multline*}
X_{v} \cap X^-_{u} \cong \pi_s(X_{v} \cap X^-_{u}), \quad X_{v} \cap X^-_{us} \cong \pi_s(X_{v} \cap X^-_{us}), \\ \quad X_{vs} \cap X^-_{us} \cong \pi_s(X_{vs} \cap X^-_{us}),
\end{multline*}
and
\begin{align*}
X^{s}_{v} \cap X^{s,-}_{u}\ & = \ \pi_s(X_{v} \cap X^-_{u}) \ \sqcup \ \pi_s(X_{v} \cap X^-_{us}) \\
& = \ \pi_s(X_{vs} \cap X^-_{us}) \ \sqcup \ \pi_s(X_{v} \cap X^-_{us}).
\end{align*}
(In each of these decompositions the first term is open and the second one is closed.) It follows that
\[
\pi_s(X_{v} \cap X^-_{u}) \ = \ \pi_s(X_{vs} \cap X^-_{us}),
\]
which proves (1).

Now, set 
\begin{align*}
Z \ & := \ (X_{vs} \cap X^-_{u}) \cap (\pi_s)^{-1} \bigl( \pi_s(X_{v} \cap X^-_{us}) \bigr), \\
U \ & := \ (X_{vs} \cap X^-_{u}) \cap (\pi_s)^{-1} \bigl( \pi_s(X_{v} \cap X^-_{u}) \bigr).
\end{align*}
To prove (2) it is enough to prove that $\pi_s|_{Z} : Z \to \pi_s(X_{v} \cap X^-_{us})$ is a trivial $\bA^1_\Bbbk$-fibration, and that $\pi_s|_{U} : U \to \pi_s(X_{v} \cap X^-_{u})$ is a trivial $(\bA^1_\Bbbk \smallsetminus \{0\})$-fibration. Consider the $\bA^1_\Bbbk$-fibration
\[
f: (X_{vs} \cap X^-_{u}) \sqcup (X_{vs} \cap X^-_{us}) \to X^{s}_{v} \cap X^{s,-}_{u}
\]
induced by $\pi_s$, which is trivial since it is the restriction of the trivial $\bA^1_\Bbbk$-fibration $X_{vs} \to X^{s}_{v}$ to the inverse image of $X^{s}_{v} \cap X^{s,-}_{u}$. We claim that 
\begin{equation}
\label{eqn:claim-Z_0}
f^{-1} \bigl( \pi_s(X_{v} \cap X^-_{us}) \bigr) = Z,
\end{equation}
which will prove our claim about $Z$. Indeed, let $x \in \pi_s(X_{v} \cap X^-_{us})$. Then $\pi_s^{-1}(x) \cap X_{v} = \pt$, and $\pi_s^{-1}(x) \cap X^-_{us}=\pt$, and by assumption these points coincide. It follows that $\pi_s^{-1}(x) \cap X_{vs} = \pi_s^{-1}(x) \cap X^-_{u}$, hence that $\pi_s^{-1}(x) \cap (X_{vs} \cap X^-_{us}) = \emptyset$. This proves \eqref{eqn:claim-Z_0}.

On the other hand, consider $f^{-1} \bigl( \pi_s(X_{v} \cap X^-_{u}) \bigr)$, and denote by $g$ the restriction of $f$ to this open subvariety. Then the restriction of $g$ to 
\[
(X_{vs} \cap X^-_{us}) \subset f^{-1} \bigl( \pi_s(X_{v} \cap X^-_{u}) \bigr)
\]
is an isomorphism. Hence the claim about $U$ follows from Lemma \ref{lem:fibration} below.
\end{proof}

\begin{rmk}
In case (2), the closed subvariety $Z$ is empty if $us \nleq v$.
\end{rmk}

\begin{lem}
\label{lem:fibration}

Let $M$ be a $\Bbbk$-variety, and consider the trivial fibration
\[
a : M \times \bA^1_\Bbbk \to M.
\]
Assume there exists a subvariety $N \subset M \times \bA^1_\Bbbk$ such that the restriction of $a$ to $N$ is an isomorphism $N \xrightarrow{\sim} M$. Then the restriction of $a$ to the complement of $N$ is a trivial $(\bA^1_\Bbbk \smallsetminus \{0\})$-fibration.

\end{lem}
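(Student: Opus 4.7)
The plan is to observe that the hypothesis exhibits $N$ as the graph of a morphism, and then trivialize the complement by a shear.

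More precisely, the isomorphism $a|_N : N \xrightarrow{\sim} M$ has an inverse $s : M \to N \hookrightarrow M \times \bA^1_\Bbbk$, which is then a section of $a$. Writing $s(m) = (m, f(m))$ defines a morphism $f : M \to \bA^1_\Bbbk$, and $N$ is identified with the graph $\{(m, f(m)) \mid m \in M\}$. The complement of $N$ in $M \times \bA^1_\Bbbk$ is therefore $\{(m,t) \mid t \neq f(m)\}$.

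Next I would define the shear morphism
\[
\varphi : (M \times \bA^1_\Bbbk) \smallsetminus N \ \to \ M \times (\bA^1_\Bbbk \smallsetminus \{0\}), \quad (m,t) \mapsto (m, t - f(m)),
\]
with inverse $(m,u) \mapsto (m, u + f(m))$. This is an isomorphism of $\Bbbk$-varieties, and by construction it intertwines $a$ with the projection $M \times (\bA^1_\Bbbk \smallsetminus \{0\}) \to M$, so the restriction of $a$ to $(M \times \bA^1_\Bbbk) \smallsetminus N$ is indeed a trivial $(\bA^1_\Bbbk \smallsetminus \{0\})$-fibration.

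There is no real obstacle here; the only point worth double-checking is that the section $s$ obtained from the inverse of $a|_N$ actually lands in $N$ (which is immediate from $a \circ s = \id_M$ and the fact that $s$ is defined as the composition of $a|_N^{-1}$ with the inclusion $N \hookrightarrow M \times \bA^1_\Bbbk$), so that $N$ really does coincide with the graph of $f$ rather than merely being contained in it.
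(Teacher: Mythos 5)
Your proof is correct and coincides with the paper's argument: the morphism $f$ you extract from the section is exactly the map $\xi$ in the paper's proof, and the shear $(m,t)\mapsto(m,t-f(m))$ is the same automorphism used there to identify the complement of $N$ with $M\times(\bA^1_\Bbbk\smallsetminus\{0\})$. No issues.
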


\begin{proof}
Let $\xi$ be the composition $M \xrightarrow{\sim} N \hookrightarrow M \times \bA^1_\Bbbk \to \bA^1_\Bbbk$. Then the automorphism
\[
\begin{array}{ccc}
M \times \bA^1_\Bbbk & \to & M \times \bA^1_\Bbbk \\
(m,t) & \mapsto & (m,t-\xi(m))
\end{array}
\]
identifies the complement of $N$ with $M \times (\bA^1_\Bbbk \smallsetminus \{0\})$ as a fibration over $M$.
\end{proof}

Now we come back to our varieties over $\Fq$ and $\Fqb$.

\begin{prop}
\label{prop:weights-R-varieties}

Let $u,v \in W$, with $u \leq v$. The cohomology 
\[
\coHc{\bullet}(X_v \cap X_u^-, \O)
\]
is concentrated in degrees between $\ell(v)-\ell(u)$ and $2(\ell(v)-\ell(u))$. Moreover, for any $n \in \llbracket \ell(v)-\ell(u), 2(\ell(v)-\ell(u)) \rrbracket$, the $(\O,\phi)$-module $\coHc{n}(X_v \cap X_u^-, \O)$ has $q$-weights obtained from $\llbracket - \lfloor \frac{n}{2} \rfloor, -n + \ell(v) - \ell(u) \rrbracket$.

\end{prop}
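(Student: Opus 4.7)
The plan is to argue by induction on $\ell(v)$ (equivalently on $d:=\ell(v)-\ell(u)$), using Lemma \ref{lem:R-varieties} to peel off simple reflections and the fact that $X_v\cap X_u^-$ is a smooth affine variety of dimension $d$. The concentration statement $\coHc{n}(X_v\cap X_u^-,\O)=0$ for $n<d$ or $n>2d$ is immediate: Artin vanishing for affines gives $\coH{>d}=0$, and by Poincar{\'e} duality (or directly, since $X_v\cap X_u^-$ is smooth) $\coHc{<d}=0$; top degree is $2d$ by general nonsense. So the real work is the weight bound.

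For the base case $v=u$, the variety is the single $T$-fixed point $uB/B$, so $\coHc{0}=\O$ with trivial Frobenius, giving weight $0$, which lies in $\llbracket 0,0\rrbracket$. For the inductive step, pick $s\in S$ with $vs<v$, and split:

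\emph{Case 1: $us<u$.} The standard Bruhat-order fact $us\leq vs$ holds, and Lemma \ref{lem:R-varieties}(1) applied to $(u',v')=(us,vs)$ (which satisfy $v's>v'$, $u's>u'$) yields an isomorphism $X_v\cap X_u^-\simeq X_{vs}\cap X_{us}^-$. Since $\ell(vs)-\ell(us)=\ell(v)-\ell(u)$, the induction hypothesis on $\ell(v)$ gives the desired range and weight bound verbatim.

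\emph{Case 2: $us>u$.} Here $u\leq vs$ (subword property, since $vs<v$), so Lemma \ref{lem:R-varieties}(2) applied to $(u',v')=(u,vs)$ realizes $X_v\cap X_u^-$ as a disjoint union of an open $U\simeq (X_{vs}\cap X_u^-)\times(\bA^1\smallsetminus\{0\})$ and a closed $Z\simeq (X_{vs}\cap X_{us}^-)\times\bA^1$. The K{\"u}nneth formula (applicable since $\coHc{*}(\bA^1,\O)$ and $\coHc{*}(\bA^1\smallsetminus\{0\},\O)$ are free of finite rank) combined with Remark \ref{rmk:cohomology-Frobenius} gives
\[
\coHc{n}(Z)\cong\coHc{n-2}(X_{vs}\cap X_{us}^-)(-1),
\]
\[
\coHc{n}(U)\cong\coHc{n-1}(X_{vs}\cap X_u^-)\oplus\coHc{n-2}(X_{vs}\cap X_u^-)(-1),
\]
where $(-1)$ denotes a Tate twist shifting the weight by $-1$. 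Feeding the open/closed long exact sequence $\coHc{n}(U)\to\coHc{n}(X_v\cap X_u^-)\to\coHc{n}(Z)\to\coHc{n+1}(U)$ into the induction, together with the observation (used freely throughout the paper) that the property of having $q$-weights obtained from a fixed set of integers is preserved under subquotients and extensions, one reduces the claim to a purely arithmetic check: using $\ell(v')-\ell(u')$ equal to $d-1$ for the pair $(u,vs)$ and $d-2$ for $(us,vs)$, each of the summands contributing to $\coHc{n}(U)$ and $\coHc{n}(Z)$ has weights in $\llbracket -\lfloor n/2\rfloor,-n+d\rrbracket$ after the $-1$ shifts are absorbed.

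The main (mild) obstacle is precisely this last bookkeeping: one must verify that the Tate twist on $Z$ moves the upper bound from $-(n-2)+(d-2)=-n+d$ correctly, and that on the second summand of $\coHc{n}(U)$ both endpoints land inside $\llbracket-\lfloor n/2\rfloor,-n+d\rrbracket$ (using $-\lfloor(n-2)/2\rfloor-1=-\lfloor n/2\rfloor$). Everything else is a direct translation of Lemma \ref{lem:R-varieties} into cohomology.
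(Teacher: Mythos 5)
Your argument is correct and follows essentially the same route as the paper's: induction reducing $v$ to $vs$ with $\ell(vs)<\ell(v)$ via Lemma \ref{lem:R-varieties}, the K\"unneth formula, the open/closed long exact sequence, and the weight bookkeeping with the Tate twists (which you carry out correctly, including the key identities $-\lfloor(n-2)/2\rfloor-1=-\lfloor n/2\rfloor$ and $-(n-2)+(d-2)-1\le -n+d$). The only divergences are cosmetic: you derive the degree concentration from smoothness and affineness of $X_v\cap X_u^-$ (standard but unproved in the paper), whereas the same induction already yields it; and your parenthetical that induction on $\ell(v)$ is ``equivalently'' induction on $d=\ell(v)-\ell(u)$ is inaccurate (Case 1 preserves $d$), though harmless since your reductions all decrease $\ell(v)$.
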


\begin{proof}
We prove the claim by induction on the Bruhat order on $v$. It is obvious if $v=1$.

Now, assume the result is known for $v$, and let $s \in S$ be such that $vs>v$. Let $u \in W$ such that $u \leq vs$. If $us < u$ (and then $us \leq v$), by Lemma \ref{lem:R-varieties}(1) we have an isomorphism
\[
X_{vs,\circ} \cap X_{u,\circ}^- \ \cong \ X_{v,\circ} \cap X_{us,\circ}^-.
\]
As $\ell(vs)-\ell(u) = \ell(v) - \ell(us)$, the result follows by induction.

Now, assume that $us > u$ (and then $u \leq v$). Then, by Lemma \ref{lem:R-varieties}(2), $X_{vs,\circ} \cap X_{u,\circ}^-$  is the disjoint union of a closed subvariety $Z_\circ$ isomorphic to $(X_{v,\circ} \cap X_{us,\circ}^-) \times \bA^1_\circ$ and an open subvariety $U_\circ$ isomorphic to $(X_{v,\circ} \cap X_{u,\circ}^-) \times (\bA^1_\circ \smallsetminus \{0\})$. Consider the associated long exact sequence
\[
\cdots \to \coHc{n}(U,\O) \to \coHc{n}(X_{vs} \cap X_{u}^-,\O) \to \coHc{n}(Z,\O) \to \cdots
\]
Both $\coHc{\bullet} (\bA^1,\O)$ and $\coHc{\bullet} (\bA^1
\smallsetminus \{0\},\O)$ are free $\O$-modules and so we can apply the
K{\"u}nneth formula to obtain
\[
\coHc{\bullet} (Z,\O) \cong \coHc{\bullet} (X_v \cap X_{us}^-) \otimes_{\O} \coHc{\bullet} (\bA^1,\O) \cong \coHc{\bullet-2} (X_v \cap X_{us}^-) \otimes_{\O} \coHc{2}(\bA^1,\O)
\]
and
\begin{multline*}
\coHc{\bullet}(U,\O) \cong \coHc{\bullet} (X_{v} \cap X_{u}^-) \otimes_{\O} \coHc{\bullet} (\bA^1 \smallsetminus \{0\},\O) \\
\cong \coHc{\bullet-1} (X_{v} \cap X_{u}^-) \otimes_{\O} \coHc{1}(\bA^1 \smallsetminus \{0\}, \O) \oplus \coHc{\bullet-2} (X_{v} \cap X_{u}^-) \otimes_{\O} \coHc{2}(\bA^1 \smallsetminus \{0\}, \O).
\end{multline*}
Our claim follows, using Remark \ref{rmk:cohomology-Frobenius}.
\end{proof}

\begin{rmk}
The bounds on weights in Proposition \ref{prop:weights-R-varieties} are exactly the same as the bounds provided in a very general setting for cohomology with coefficients in $\K$ by \cite[Corollaire 3.3.3]{Del}. In particular, if one is only interested in $\coHc{\bullet}(X_v \cap X_u^-,\K)$, then the information on the structure of $X_v \cap X_u^-$ given by Lemma \ref{lem:R-varieties} does not give improved bounds. However, here we work with integral coefficients, which are not considered in \cite{Del}.
\end{rmk}

Combining Proposition \ref{prop:morphisms-standard-R-varieties} and Proposition \ref{prop:weights-R-varieties} we obtain the following result, which will play a crucial role in the rest of this section.

\begin{cor}
\label{cor:weights-Ext-standard}

Let $u,v \in W$ such that $u \leq v$. Then
\[
\Ext^n_{\O}(\Delta_u,\Delta_v)
\]
vanishes unless $0 \leq n \leq \ell(v)-\ell(u)$. If $n \in \llbracket 0,\ell(v)-\ell(u) \rrbracket$, this $(\O,\phi)$-module has $q$-weights obtained from $\llbracket - \lfloor \frac{n+\ell(v)-\ell(u)}{2} \rfloor, -n \rrbracket$.
\end{cor}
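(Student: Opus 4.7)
The plan is straightforward: simply combine the two preceding propositions. By Proposition \ref{prop:morphisms-standard-R-varieties}, I have an isomorphism of graded $(\O,\phi)$-modules
\[
\Ext^n_{\O}(\Delta_u,\Delta_v) \ \cong \ \coHc{n+\ell(v)-\ell(u)}(X_v \cap X_u^-,\O),
\]
so everything reduces to translating the bounds of Proposition \ref{prop:weights-R-varieties} via the substitution $m = n + \ell(v) - \ell(u)$.

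For the vanishing, Proposition \ref{prop:weights-R-varieties} says $\coHc{m}(X_v \cap X_u^-,\O) = 0$ unless $\ell(v)-\ell(u) \leq m \leq 2(\ell(v)-\ell(u))$. In terms of $n$, this becomes $0 \leq n \leq \ell(v)-\ell(u)$, as desired.

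For the $q$-weights, the same proposition gives that $\coHc{m}(X_v \cap X_u^-,\O)$ has $q$-weights obtained from $\llbracket -\lfloor \tfrac{m}{2} \rfloor, -m + \ell(v)-\ell(u) \rrbracket$. Substituting back yields the interval $\llbracket -\lfloor \tfrac{n + \ell(v) - \ell(u)}{2} \rfloor, -n \rrbracket$, which is exactly the claimed range. There is no genuine obstacle here — all the content lies in the two propositions already established — so the proof is a one-line application of Proposition \ref{prop:morphisms-standard-R-varieties} followed by Proposition \ref{prop:weights-R-varieties}.
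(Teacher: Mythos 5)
Your proof is correct and is exactly the paper's argument: the corollary is stated there as an immediate consequence of combining Proposition \ref{prop:morphisms-standard-R-varieties} with Proposition \ref{prop:weights-R-varieties}, and your substitution $m = n + \ell(v) - \ell(u)$ carries out the translation of bounds correctly.
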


\subsection{Weights in the $\Delta$-flag of projective covers}
\label{ss:geom-projective-generator}

\begin{prop}
\label{prop:geom-projective-weights}

For any $u \in W$, there exists an object $\cP_{u,\circ}$ in $\Perv_{(B_\circ )}(X_\circ ,\O)$ such that $\For(\cP_{u,\circ})$ is projective in $\Perv_{(B)}(X,\O)$, together with an exact sequence
\[
\cM_{u,\circ} \into \cP_{u,\circ} \onto \Delta_{u,\circ}
\]
such that $\cM_{u,\circ}$ has a filtration whose subquotients are of the form
\[
E_{v,u} \otimes_{\O} \Delta_{v,\circ}
\]
for $v > u$, where $E_{v,u}$ is an $(\O,\phi)$-module which is $\O$-free and has $q$-weights obtained from $\llbracket 1, \ell(v) - \ell(u) \rrbracket$.

\end{prop}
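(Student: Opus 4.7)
The plan is to mimic the iterative construction in the proof of Proposition \ref{prop:existence-projectives}, but carried out over the finite field $\Fq$, while simultaneously tracking Frobenius weights via Corollary \ref{cor:weights-Ext-standard} and Lemma \ref{lem:free-resolution-weights}. Choose an increasing sequence of closed unions of Bruhat cells in $X_\circ$,
\[
\overline{X_{u, \circ}} = Z_{0, \circ} \subset Z_{1, \circ} \subset \cdots \subset Z_{N, \circ} = X_\circ,
\]
with each $Z_{i+1, \circ}$ obtained from $Z_{i, \circ}$ by adjoining a single stratum $X_{t_i, \circ}$ open in $Z_{i+1, \circ}$. I will build inductively objects $\cP_{i, \circ} \in \Perv_{(B_\circ)}(Z_{i, \circ}, \O)$ together with surjections onto $\Delta_{u, \circ}$ whose kernels admit filtrations by $E_{v, u} \otimes_\O \Delta_{v, \circ}$ for $u < v$ with $X_v \subset Z_i$, each $E_{v, u}$ being an $\O$-free $(\O, \phi)$-module of finite rank with $q$-weights in $\llbracket 1, \ell(v) - \ell(u) \rrbracket$, and such that $\For(\cP_{i, \circ})$ is projective. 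The base case is $\cP_{0, \circ} := \Delta_{u, \circ}$, which is projective in $\Perv_{(B_\circ)}(\overline{X_{u, \circ}}, \O)$ because $X_{u, \circ}$ is the open stratum there. Setting $\cP_{u, \circ} := \cP_{N, \circ}$ will then prove the proposition.

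For the inductive step, write $t := t_i$, $\cP_i := \For(\cP_{i, \circ})$, and consider the finitely generated $(\O, \phi)$-module $E := \Ext^1(\cP_i, \Delta_t)$ equipped with its canonical Frobenius action from \S\ref{ss:perverse-sheaves-Fq}. Lemma \ref{lem:free-resolution-weights} furnishes an $\O$-free $(\O, \phi)$-module $E_\free$ with the same $q$-weights as $E$ together with an equivariant surjection $E_\free \twoheadrightarrow E$. The canonical Frobenius-equivariant composition
\[
\O \longrightarrow E_\free^* \otimes_\O E_\free \longrightarrow E_\free^* \otimes_\O E \;\cong\; \Ext^1_{\Perv_{(B_\circ)}(Z_{i+1, \circ}, \O)}\bigl(\cP_{i, \circ},\, E_\free^* \otimes_\O \Delta_{t, \circ}\bigr)
\]
classifies an extension $E_\free^* \otimes_\O \Delta_{t, \circ} \hookrightarrow \cP_{i+1, \circ} \twoheadrightarrow \cP_{i, \circ}$ over $\Fq$. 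Setting $E_{t, u} := E_\free^*$ (whose $q$-weights are the negatives of those of $E_\free$) extends the filtration as required; projectivity of $\For(\cP_{i+1, \circ})$ follows exactly from Steps 1--4 in the proof of Proposition \ref{prop:existence-projectives}.

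The heart of the argument is the weight bound: I must show that $E = \Ext^1(\cP_i, \Delta_t)$ has $q$-weights in $\llbracket -(\ell(t) - \ell(u)), -1 \rrbracket$, so that $E_{t, u} = E_\free^*$ has the desired weights. Applying $\Hom(-, \Delta_t)$ to $\cM_{i, \circ} \hookrightarrow \cP_{i, \circ} \twoheadrightarrow \Delta_{u, \circ}$ presents $E$ as an extension of a subobject of $\Ext^1(\cM_i, \Delta_t)$ by a quotient of $\Ext^1(\Delta_u, \Delta_t)$. Corollary \ref{cor:weights-Ext-standard} bounds the $q$-weights of the latter by
\[
\bigl\llbracket -\lfloor (1 + \ell(t) - \ell(u))/2 \rfloor,\, -1 \bigr\rrbracket \;\subseteq\; \llbracket -(\ell(t) - \ell(u)), -1 \rrbracket.
\]
The filtration of $\cM_i$ together with the inductive hypothesis reduce the remaining bound to estimating the weights of each $E_{v, u}^* \otimes_\O \Ext^1(\Delta_v, \Delta_t)$ for $u < v \leq t$; since $\Ext^1(\Delta_t, \Delta_t) = 0$, only $v < t$ contributes, and one verifies the elementary inequality $(\ell(v) - \ell(u)) + \lfloor (1 + \ell(t) - \ell(v))/2 \rfloor \leq \ell(t) - \ell(u)$ using $\ell(t) \geq \ell(v) + 1$.

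The principal obstacle is not the iteration itself, which largely copies Proposition \ref{prop:existence-projectives}, but rather verifying that every construction respects the Frobenius structure: one must check that the tensor--Hom adjunction and the classification of extensions by $\Ext^1$-classes are Frobenius-equivariant, so that the image of $1 \in \O$ in $E_\free^* \otimes_\O E$ really descends to an element of $\Ext^1$ in $\Perv_{(B_\circ)}(Z_{i+1, \circ}, \O)$. This equivariant-lifting step is the same one already used (in a simpler context) in Lemma \ref{lem:proj-generator-X_0}, so it should present no new difficulties. Once the equivariance is in place, the weight bookkeeping is routine and the proposition follows by induction on $i$.
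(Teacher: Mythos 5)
Your proof is correct and follows essentially the same route as the paper: induction over closed unions of strata adding one cell at a time, the weight bound for $\Ext^1(\cP_i,\Delta_t)$ via the $\Delta$-filtration and Corollary \ref{cor:weights-Ext-standard}, the free cover from Lemma \ref{lem:free-resolution-weights}, and projectivity quoted from Proposition \ref{prop:existence-projectives}. The one imprecision is the displayed identification $E_\free^* \otimes_\O E \cong \Ext^1_{\Perv_{(B_\circ)}(Z_{i+1,\circ},\O)}(\cP_{i,\circ}, E_\free^*\otimes_\O\Delta_{t,\circ})$, which does not hold as stated; the correct mechanism --- which you yourself flag as the point needing care, and which the paper uses --- is that the extension class over $X$ is $\phi$-invariant and therefore lifts along the surjection of \cite[(5.1.2.5)]{BBD} from $\Ext^1$ over $X_\circ$ onto the Frobenius-invariants of $\Ext^1$ over $X$.
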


\begin{proof}
We prove by induction that for any $I \subset W$ containing $u$ with $X_I$ closed, there exists an object $\cP_{u,\circ}^I$ in $\Perv_{(B_\circ )}(X_{I,\circ})$ such that $\cP_u^I:=\For(\cP_{u,\circ}^I)$ is projective in $\Perv_{(B)}(X_I)$ together with an exact sequence
\[
\cM_{u,\circ}^I \into \cP_{u,\circ}^I \onto \Delta_{u,\circ}^I
\]
such that $\cM_{u,\circ}^I$ has a filtration whose subquotients are of the form
\[
E_{v,u}^I \otimes_{\O} \Delta_{v,\circ}^I
\]
for $v > u$ ($v \in I$), where $E_{v,u}^I$ is an $(\O,\phi)$-module which is $\O$-free and has $q$-weights obtained from $\llbracket 1, \ell(v) - \ell(u) \rrbracket$. (Here, $\Delta_{w,\circ}^I$ is the standard perverse sheaf on $X_{I,\circ}$ associated with $w$.) The case $I=W$ will give the proposition. If $u$ is maximal in $I$, then $\cP_{u,\circ}^I=\Delta_{u,\circ}^I$ satisfies these requirements.

Now we take $I=I'\cup \{v\}$ with $X_{I'}$ closed and containing $X_u$. Consider the $(\O,\phi)$-module
\[
E \ := \ \Ext^1_{\O}(\cP_{u}^{I'},\Delta_v^I).
\]
(Here we consider $\cP_u^{I'}$ as a perverse sheaf on $X_I$.)
We claim that this module has $q$-weights obtained from $\llbracket \ell(u) - \ell(v),-1 \rrbracket$. Indeed, by induction hypothesis $\cP_{u,\circ}^{I'}$ has a filtration with subquotients $E_{w,u}^{I'} \otimes_{\O} \Delta_{w,\circ}^I$, where $E_{w,u}^{I'}$ has $q$-weights obtained from $\llbracket 0, \ell(w) - \ell(u) \rrbracket$, and moreover by Corollary \ref{cor:weights-Ext-standard} $\Ext^1_{\O,X_I}(\Delta_w^I,\Delta_v^I) = \Ext^1_{\O,X}(\Delta_w,\Delta_v)$ has $q$-weights obtained from 
\[
\llbracket - \lfloor \frac{1+\ell(v)-\ell(w)}{2} \rfloor, -1 \rrbracket \ \subset \ \llbracket \ell(w) - \ell(v), -1 \rrbracket.
\]

Using Lemma \ref{lem:free-resolution-weights}, we deduce that there exists an $(\O,\phi)$-module $E_{\free}$ which is $\O$-free, with $q$-weights obtained from $\llbracket \ell(u) - \ell(v),-1 \rrbracket$ and a surjection of $(\O,\phi)$-modules $E_{\free} \twoheadrightarrow E$. By \cite[(5.1.2.5)]{BBD}, there exists a natural surjection
\[
\Ext^1_{\Perv_{\scS}(X_\circ,\O)}(\cP_{u,\circ}^{I'},E_{\free}^* \otimes_{\O} \Delta_{v,\circ}^I) \twoheadrightarrow \bigl( \Ext^1_{\Perv_{\scS}(X,\O)}(\cP_{u}^{I'},E_{\free}^* \otimes_{\O} \Delta_{v}^I) \bigr)^{\phi\mathrm{-inv}}.
\]
The element of $\Ext^1_{\Perv_{\scS}(X,\O)}(\cP_{u}^{I'},E_{\free}^* \otimes_{\O} \Delta_{v}^I)$ considered in the proof of Proposition \ref{prop:existence-projectives} is $\phi$-invariant by construction, hence defines an extension of $E_{\free}^* \otimes_{\O} \Delta_{v,\circ}^I$ by $\cP_{u,\circ}^{I'}$, which we denote by $\cP_{u,\circ}^I$. By the proof of Proposition \ref{prop:existence-projectives}, $\For(\cP_{u,\circ}^I)$ is projective, which proves the induction.
\end{proof}

For any $u \in W$ we fix an object $\cP_{u,\circ}$ as in Proposition \ref{prop:geom-projective-weights}, and we set
\[
\cP_\circ \ := \ \bigoplus_{u \in W} \, \cP_{u,\circ}.
\]
Note that this object satisfies the condition of Lemma \ref{lem:proj-generator-X_0} (see the proof of Corollary \ref{cor:properties-projectives}).

\begin{prop}
\label{prop:weights-morphisms-projectives}

The $(\O,\phi)$-module
\[
\End_{\O}(\cP)
\]
is $\O$-free, and has $q$-weights obtained from $\llbracket -\ell(w_0) , \ell(w_0) \rrbracket$. In particular, this $(\O,\phi)$-module is $q$-decomposable if the order of $q$ in $\F$ is strictly bigger than $2 \ell(w_0)$.

\end{prop}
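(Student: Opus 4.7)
The plan is to bound the $q$-weights of each summand $\Hom_\O(\cP_u,\cP_{u'})$ of $\End_\O(\cP)$ by iterating the $\Delta$-filtrations of Proposition \ref{prop:geom-projective-weights}. The $\O$-freeness of $\End_\O(\cP)$ is immediate from Proposition \ref{prop:properties-projectives-2}(4), since $\cP$ is projective. All constructions below take place compatibly in $\Modfr(\O,\phi)$ via Lemma \ref{lem:perverse-sheaves-modules}, so the Frobenius is always part of the picture.

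First, apply $\Hom_\O(\cP_u,-)$ to the filtration of $\cP_{u'}$ from Proposition \ref{prop:geom-projective-weights}. Since $\cP_u$ is projective in $\Perv_{(B)}(X,\O)$, this functor is exact on perverse sheaves, and thus yields a filtration of $\Hom_\O(\cP_u,\cP_{u'})$ in $\Modfr(\O,\phi)$ whose subquotients are $(\O,\phi)$-modules of the form $E\otimes_\O\Hom_\O(\cP_u,\Delta_w)$, where $w$ ranges over elements of $W$ with $w\geq u'$ and $E$ has $q$-weights in $\llbracket 0,\ell(w)-\ell(u')\rrbracket$ (with $E=\O$ in weight $0$ when $w=u'$).

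Next, bound the $q$-weights of $\Hom_\O(\cP_u,\Delta_w)$ for fixed $w$. Filter $\cP_u$ similarly and apply $\Hom_\O(-,\Delta_w)$; the resulting long exact sequences imply, by induction on the length of the filtration and the fact recalled before Remark \ref{prop:criterion-decomposability} that $q$-weights are preserved under subquotients and extensions, that the $q$-weights of $\Hom_\O(\cP_u,\Delta_w)$ are contained in the union over the subquotients $E_{v,u}\otimes_\O\Delta_v$ of the $\cP_u$-filtration (including the top piece $\Delta_u$ with $E_{u,u}=\O$ in weight $0$) of the $q$-weights of $E_{v,u}^*\otimes_\O\Hom_\O(\Delta_v,\Delta_w)$. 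By Proposition \ref{prop:geom-projective-weights} the dual $E_{v,u}^*$ has $q$-weights in $\llbracket -(\ell(v)-\ell(u)),0\rrbracket$, and by Corollary \ref{cor:weights-Ext-standard} $\Hom_\O(\Delta_v,\Delta_w)$ vanishes unless $v\leq w$ and otherwise has $q$-weights in $\llbracket -\lfloor(\ell(w)-\ell(v))/2\rfloor,0\rrbracket$. Adding the lower bounds gives at least $\ell(u)-\ell(v)/2-\ell(w)/2\geq\ell(u)-\ell(w)$, whence $\Hom_\O(\cP_u,\Delta_w)$ has $q$-weights in $\llbracket\ell(u)-\ell(w),0\rrbracket$.

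Combining the two steps, $\Hom_\O(\cP_u,\cP_{u'})$ has $q$-weights in the union over $u'\leq w$ of $\llbracket\ell(u)-\ell(w),\ell(w)-\ell(u')\rrbracket\subseteq\llbracket-\ell(w_0),\ell(w_0)\rrbracket$, which gives the stated bound on $\End_\O(\cP)$. For $q$-decomposability, if the order of $\bar q$ in $\F^\times$ exceeds $2\ell(w_0)$, then $i\mapsto\bar q^i$ is injective on the interval $\llbracket-\ell(w_0),\ell(w_0)\rrbracket$ of cardinality $2\ell(w_0)+1$, and the conclusion follows from Proposition \ref{prop:criterion-decomposability-soe} (as in Remark \ref{prop:criterion-decomposability}). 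The main delicate point is the weight bookkeeping in the previous paragraph, where the quadratic-in-length contributions from $E_{v,u}^*$ and $\Hom_\O(\Delta_v,\Delta_w)$ must cancel precisely so that the sum of lower bounds stays within $\llbracket-\ell(w_0),0\rrbracket$ independently of the intermediate $v$; everything else is formal.
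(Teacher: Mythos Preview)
Your proof is correct and follows essentially the same strategy as the paper: reduce to $\Hom_\O(\cP_u,\cP_{u'})$, use the $\Delta$-filtration of Proposition~\ref{prop:geom-projective-weights} on both arguments to reduce to $E_{v,u}^*\otimes_\O \Hom_\O(\Delta_v,\Delta_w)\otimes_\O E_{w,u'}$, and bound the weights via Corollary~\ref{cor:weights-Ext-standard}. Your treatment of the second filtration via long exact sequences is slightly more explicit than the paper's, which simply asserts the reduction, but the argument and the resulting bounds are the same.
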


\begin{proof}
The fact that this $\O$-module is free follows from Proposition \ref{prop:properties-projectives-2}(4).

To prove the statement about weights, it is sufficient to prove that for any $u,v \in W$ the $q$-weights of $\Hom_{\O}(\cP_u,\cP_v)$ are in $\llbracket -\ell(w_0) , \ell(w_0) \rrbracket$. Now recall that $\cP_{v,\circ}$ has a filtration with subquotients of the form
\[
E_{w,v} \otimes_{\O} \Delta_{w,\circ}
\]
for $w \geq v$, where $E_{w,v}$ is an $(\O,\phi)$-module which has $q$-weights obtained from $\llbracket 0, \ell(w) - \ell(v) \rrbracket$. Hence $\Hom_{\O}(\cP_u,\cP_v)$ has a filtration (as an $(\O,\phi)$-module) with subquotients of the form
\[
E_{w,v} \otimes_{\O} \Hom_{\O}(\cP_u,\Delta_w).
\]
Similarly, $\cP_{u,\circ}$ has a filtration with subquotients of the form
\[
E_{w',u} \otimes_{\O} \Delta_{w',\circ}
\]
for $w' \geq u$, where $E_{w',u}$ is an $(\O,\phi)$-module which has $q$-weights obtained from $\llbracket 0, \ell(w') - \ell(u) \rrbracket$. Hence it is enough to study weights in
\[
F_{u,v,w,w'} \ := \ E_{w',u}^* \otimes_{\O} \Hom_{\O}(\Delta_{w'},\Delta_w) \otimes_{\O}  E_{w,v}.
\]
By Corollary \ref{cor:weights-Ext-standard}, $\Hom_{\O}(\Delta_{w'},\Delta_w)$ has $q$-weights obtained from $\llbracket \frac{\ell(w')-\ell(w)}{2},0 \rrbracket$. Hence $F_{u,v,w,w'}$ has $q$-weights obtained from
\[
\llbracket \ell(u) - \frac{\ell(w')+\ell(w)}{2}, \ell(w)-\ell(v) \rrbracket \ \subset \ \llbracket -\ell(w_0) , \ell(w_0) \rrbracket,
\]
which finishes the proof of the claim on weights.

Finally, the last claim of the statement follows from Remark \ref{prop:criterion-decomposability}.
\end{proof}

\subsection{Partial flag varieties}
\label{ss:geom-projective-generator-partial}

In this subsection we fix a simple reflection $s$, and consider the variety $X^s$. For any $v \in W^s$, we denote by $\Delta_v^s$ the standard object associated to the stratum $X^s_v$.

\begin{lem}
\label{lem:weights-Ext-standard-partial}

For $u,v \in W^s$, the $(\O,\phi)$-module 
\[
\Ext^n_{\O}(\Delta_u^s,\Delta_v^s)
\]
vanishes unless $0 \leq n \leq \ell(v)-\ell(u)$, in which case it has $q$-weights obtained from $\llbracket - \lfloor \frac{n+\ell(v)-\ell(u)}{2} \rfloor, -n \rrbracket$.

\end{lem}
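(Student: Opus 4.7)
The plan is to reduce the lemma to the already-established Corollary \ref{cor:weights-Ext-standard} for the full flag variety via the projection $\pi_s : X_\circ \to X^s_\circ$. First I will record two comparison formulas relating standard objects on $X_\circ$ and on $X^s_\circ$. For $u \in W^s$, the restriction $\pi_s|_{X_u} : X_u \xrightarrow{\sim} X^s_u$ is an isomorphism (and $X_u$ is closed in $\pi_s^{-1}(X^s_u) = X_u \sqcup X_{us}$), so an immediate computation gives $\pi_{s!} \Delta_{u,\circ} \cong \Delta^s_{u,\circ}$. For $v \in W^s$, $\pi_s$ is smooth of relative dimension one, so $\pi_s^! \cong \pi_s^*(1)[2]$; smooth base change along $\pi_s^{-1}(X^s_v) = X_v \sqcup X_{vs}$ together with $\ell(vs) = \ell(v)+1$ then yields
\[
\pi_s^! \Delta^s_{v,\circ} \ \cong \ \Delta_{v,\circ}(1)[2] \ \oplus \ \Delta_{vs,\circ}(1)[1].
\]

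Applying the $(\pi_{s!}, \pi_s^!)$-adjunction, which is automatically Frobenius-equivariant because every step involved is natural, I obtain an isomorphism of graded $(\E,\phi)$-modules
\[
\Ext^n_\E(\Delta_u^s, \Delta_v^s) \ \cong \ \Ext^{n+2}_\E(\Delta_u, \Delta_v)(1) \ \oplus \ \Ext^{n+1}_\E(\Delta_u, \Delta_{vs})(1).
\]
The lemma will then follow by applying Corollary \ref{cor:weights-Ext-standard} to each summand, keeping in mind (Example \ref{ex:Tate-sheaf}) that the Tate twist $(1)$ shifts all $q$-weights by $+1$. A short calculation, using $\lfloor (n+2+k)/2 \rfloor = \lfloor (n+k)/2 \rfloor + 1$ and $\ell(vs) = \ell(v)+1$, shows that the first summand contributes $q$-weights in $\llbracket -\lfloor (n+\ell(v)-\ell(u))/2 \rfloor, -n-1 \rrbracket$ and vanishes unless $0 \leq n \leq \ell(v)-\ell(u)-2$, while the second contributes $q$-weights in $\llbracket -\lfloor (n+\ell(v)-\ell(u))/2 \rfloor, -n \rrbracket$ and vanishes unless $0 \leq n \leq \ell(v)-\ell(u)$. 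Both intervals lie inside the range claimed by the lemma.

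The main (and modest) obstacle will be the careful bookkeeping of Tate twists and homological shifts so that the two intervals align exactly as stated. A secondary consistency check is Bruhat-theoretic: if $u \in W^s$ and $u \leq vs$, then the subword property combined with $us > u$ forces $u \leq v$, so the right-hand side vanishes whenever $u \not\leq v$, matching the geometric vanishing of the left-hand side.
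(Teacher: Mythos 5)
Your overall strategy --- reduce to Corollary \ref{cor:weights-Ext-standard} by adjunction along $\pi_s$ --- is sound, but the key structural claim as you state it is false: $\pi_s^!\Delta_{v,\circ}^s$ does \emph{not} decompose as $\Delta_{v,\circ}(1)[2]\oplus\Delta_{vs,\circ}(1)[1]$. Base change gives $\pi_s^!\Delta_v^s \cong a_!\underline{\E}_{X_v\sqcup X_{vs}}[\ell(v)+2](1)$, where $a$ is the inclusion of $X_v\sqcup X_{vs}$, and the short exact sequence of sheaves $j_!\underline{\E}_{X_{vs}}\into\underline{\E}_{X_v\sqcup X_{vs}}\onto i_*\underline{\E}_{X_v}$ only yields a distinguished triangle
\[
\Delta_{vs}(1)[1]\to\pi_s^!\Delta_v^s\to\Delta_v(1)[2]\triright,
\]
which in general does not split. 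Already for $G=SL_2$ and $v=e$ one has $\pi_s^!\Delta_e^s\cong\underline{\E}_{\bP^1}[2](1)$, which is indecomposable (its endomorphism ring is $\E$, which is local), whereas your right-hand side is the direct sum of a skyscraper and an extension by zero from the open cell. The error is harmless for the purpose at hand: applying $\Hom(\Delta_u,-)$ to the triangle gives an exact sequence
\[
\Ext^{n+1}_{\E}(\Delta_u,\Delta_{vs})(1)\to\Ext^n_{\E}(\Delta_u^s,\Delta_v^s)\to\Ext^{n+2}_{\E}(\Delta_u,\Delta_v)(1),
\]
and since ``having $q$-weights obtained from $I$'' passes to subquotients and is stable under extensions, your interval computations (which are correct, including the Tate-twist bookkeeping) still give the stated bounds and the stated vanishing range. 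So replace ``direct sum'' by ``distinguished triangle'' and ``summand'' by ``term of the long exact sequence'' throughout.

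Once repaired, your argument is the mirror image of the paper's. The paper runs the adjunction the other way: from $\pi_{s*}\Delta_v=\Delta_v^s$ and $(\pi_s^*,\pi_{s*})$ it gets $\Ext^n(\Delta_u^s,\Delta_v^s)\cong\Ext^n(\pi_s^*\Delta_u^s,\Delta_v)$, then uses the triangle $\Delta_{us}[-1]\to\pi_s^*\Delta_u^s\to\Delta_u\triright$ to obtain the exact sequence $\Ext^n(\Delta_u,\Delta_v)\to\Ext^n(\Delta_u^s,\Delta_v^s)\to\Ext^{n+1}(\Delta_{us},\Delta_v)$, with no Tate twists to track. The two reductions are equivalent in substance; the paper's version has slightly lighter bookkeeping because the twist from $\pi_s^!\cong\pi_s^*[2](1)$ never enters.
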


\begin{proof}
The restriction of $\pi_s$ to $X_v$ is an isomorphism onto $X_v^s$, hence we have $\pi_{s*} \Delta_v=\Delta_v^s$. Using adjunction, we deduce isomorphisms
\[
\Ext^n_{\O,X^s}(\Delta_u^s,\Delta_v^s) \ \cong \ \Ext^n_{\O,X^s}(\Delta_u^s, \pi_{s*} \Delta_v) \ \cong \ \Ext^n_{\O,X}( \pi_s^* \Delta_u^s, \Delta_v).
\]
By the base change theorem we have $\pi_s^* \Delta_u^s = i_{u!}^s \underline{\O}_{X_u \sqcup X_{us}}[\ell(u)]$, where $i_u^s : X_u \sqcup X_{us} \into X$ is the inclusion. We have an exact sequence of sheaves on $X_u \sqcup X_{us}$:
\[
j_!\underline{\O}_{X_{us}} \into \underline{\O}_{X_u \sqcup X_{us}} \onto i_*\underline{\O}_{X_u},
\]
where $i$ (resp. $j$) denotes the inclusion of $X_{us}$ (resp. $X_u$)
which induces a distinguished triangle
\[
\Delta_{us}[-1] \to i_{u!}^s \underline{\O}_{X_u \sqcup X_{us}}[\ell(u)] \to \Delta_u \triright.
\]
Hence we obtain an exact sequence for any $n \geq 0$:
\[
\Ext^n_{\O,X}(\Delta_u,\Delta_v) \to \Ext^n_{\O,X^s}(\Delta_u^s,\Delta_v^s) \to \Ext^{n+1}_{\O,X}(\Delta_{us},\Delta_v).
\]
Then the result follows from Corollary \ref{cor:weights-Ext-standard}.
\end{proof}

Using this lemma, the same proof as that of Proposition \ref{prop:geom-projective-weights} gives the following result.

\begin{prop}
\label{prop:geom-projective-weights-partial}

For any $u \in W^s$, there exists an object $\cP_{u,\circ}^s$ in $\Perv_{(B_\circ )}(X^s_\circ ,\O)$ such that $\For(\cP_{u,\circ}^s)$ is projective in $\Perv_{(B)}(X^s,\O)$, together with an exact sequence
\[
\cM_{u,\circ}^s \into \cP_{u,\circ}^s \onto \Delta_{u,\circ}^s
\]
such that $\cM_{u,\circ}^s$ has a filtration whose subquotients are of the form
\[
E_{v,u}^s \otimes_{\O} \Delta_{v,\circ}^s
\]
for $v > u$ ($v \in W^s$), where $E_{v,u}^s$ is an $(\O,\phi)$-module which is $\O$-free and has $q$-weights obtained from $\llbracket 1, \ell(v) - \ell(u) \rrbracket$.

\end{prop}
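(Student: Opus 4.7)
The plan is to run exactly the induction used in the proof of Proposition \ref{prop:geom-projective-weights}, substituting $X^s$ for $X$, $W^s$ for $W$ and Lemma \ref{lem:weights-Ext-standard-partial} for Corollary \ref{cor:weights-Ext-standard} at every step. Concretely, for each subset $I \subset W^s$ with $u \in I$ and $X^s_I$ closed, I will construct (by induction on $|I|$) an object $\cP^{s,I}_{u,\circ}$ in $\Perv_{(B_\circ)}(X^s_{I,\circ},\O)$ whose image under $\For$ is projective in $\Perv_{(B)}(X^s_I,\O)$, together with a short exact sequence
\[
\cM^{s,I}_{u,\circ} \into \cP^{s,I}_{u,\circ} \onto \Delta^{s,I}_{u,\circ}
\]
where $\cM^{s,I}_{u,\circ}$ admits a filtration with subquotients $E^{s,I}_{v,u}\otimes_\O \Delta^{s,I}_{v,\circ}$ for $v>u$ in $I$, and where each $E^{s,I}_{v,u}$ is an $\O$-free $(\O,\phi)$-module with $q$-weights obtained from $\llbracket 1,\ell(v)-\ell(u)\rrbracket$. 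Taking $I=W^s$ then yields the proposition.

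The base case is when $u$ is maximal in $I$: then $\Delta^{s,I}_{u,\circ}$ is already projective and satisfies the requirements trivially. For the induction step, write $I=I'\cup\{v\}$ with $X^s_{I'}$ closed containing $X^s_u$ and $v$ open in $X^s_I$. Assume $\cP^{s,I'}_{u,\circ}$ has been constructed, and form the $(\O,\phi)$-module
\[
E \ := \ \Ext^1_\O(\cP^{s,I'}_u,\Delta^{s,I}_v).
\]
Using the induction hypothesis filtration on $\cP^{s,I'}_{u,\circ}$ and Lemma \ref{lem:weights-Ext-standard-partial}, this $\Ext^1$ is built from tensor products $E^{s,I'}_{w,u}\otimes_\O \Ext^1_\O(\Delta^s_w,\Delta^s_v)$ whose $q$-weights lie in $\llbracket 0,\ell(w)-\ell(u)\rrbracket + \llbracket -\lfloor(1+\ell(v)-\ell(w))/2\rfloor,-1\rrbracket \subset \llbracket \ell(u)-\ell(v),-1\rrbracket$, so $E$ has $q$-weights obtained from $\llbracket \ell(u)-\ell(v),-1\rrbracket$.

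By Lemma \ref{lem:free-resolution-weights} choose a surjection $E_{\mathrm{free}} \onto E$ in $\Modfr(\O,\phi)$ with $E_{\mathrm{free}}$ free over $\O$ and $q$-weights in the same interval. The canonical map $\O \to E_{\mathrm{free}}^* \otimes_\O E$ gives an element of $\Ext^1_{\Perv_{(B)}(X^s,\O)}(\cP^{s,I'}_u,E_{\mathrm{free}}^*\otimes_\O \Delta^{s,I}_v)$ that is $\phi$-invariant by construction, so by the surjection \cite[(5.1.2.5)]{BBD} it lifts to an extension over $\Fq$, defining $\cP^{s,I}_{u,\circ}$. The verification that the resulting $\cP^{s,I}_u = \For(\cP^{s,I}_{u,\circ})$ is projective in $\Perv_{(B)}(X^s_I,\O)$ is the four-step argument from the proof of Proposition \ref{prop:existence-projectives}, which is purely abstract and transfers verbatim once the partial-flag analogues of the vanishing statements for $\Ext^\bullet(\Delta^s_u,\Delta^s_v)$ are in place; these follow from Lemma \ref{lem:weights-Ext-standard-partial}. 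I do not expect any real obstacle: the only nontrivial input beyond the proof of Proposition \ref{prop:geom-projective-weights} is Lemma \ref{lem:weights-Ext-standard-partial}, which is already established and provides exactly the bound needed to carry the induction through with the same weight interval $\llbracket 1,\ell(v)-\ell(u)\rrbracket$ for the subquotients.
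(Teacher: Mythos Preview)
Your proposal is correct and matches the paper's approach exactly: the paper's proof is the single sentence that the argument of Proposition~\ref{prop:geom-projective-weights} goes through with Lemma~\ref{lem:weights-Ext-standard-partial} in place of Corollary~\ref{cor:weights-Ext-standard}, which is precisely the induction you have written out. One small correction in your weight bound for $E$: the pieces are $(E^{s,I'}_{w,u})^* \otimes_\O \Ext^1_\O(\Delta^s_w,\Delta^s_v)$ rather than $E^{s,I'}_{w,u} \otimes_\O \Ext^1_\O(\Delta^s_w,\Delta^s_v)$ (because $\Ext^1(V\otimes\Delta_w,\Delta_v)\cong V^*\otimes\Ext^1(\Delta_w,\Delta_v)$ for $V$ free of finite rank), so the sum of weight intervals is $\llbracket \ell(u)-\ell(w),0\rrbracket + \llbracket -\lfloor(1+\ell(v)-\ell(w))/2\rfloor,-1\rrbracket \subset \llbracket \ell(u)-\ell(v),-1\rrbracket$; as you wrote it the upper bound would be positive for $w>u$, but your stated conclusion is the correct one.
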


As in \S \ref{ss:geom-projective-generator} we choose objects $\cP_{u,\circ}^s$ as in Proposition \ref{prop:geom-projective-weights-partial}, and we define the object
\[
\cP_\circ^s \ := \ \bigoplus_{u \in W^s} \, \cP_{u,\circ}^s.
\]
Again we observe that this object satisfies the conditions of Lemma \ref{lem:proj-generator-X_0} (see the proof of Corollary \ref{cor:properties-projectives}). We let $\cP_\circ$ denote the same object as in \S \ref{ss:geom-projective-generator}.

\begin{lem}
\label{lem:bimodule-decomposable}

The $(\O,\phi)$-module
\[
\Hom_{\O}(\pi_s {}_{[!]} \cP,\cP^s)
\]
is $\O$-free, and has $q$-weights obtained from $\llbracket -\ell(w_0)+1, \ell(w_0) \rrbracket$. In particular, it is $q$-decomposable if the order of $q$ in $\F$ is bigger than $2 \ell(w_0)$.

\end{lem}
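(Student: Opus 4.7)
The plan is to proceed in parallel with the proof of Proposition~\ref{prop:weights-morphisms-projectives}, inserting the adjunction $(\pi_s{}_{[!]}, \pi_s^{[!]})$ to reduce all computations to $\Hom$ between standard perverse sheaves on $X^s$. The $\O$-freeness is immediate from Proposition~\ref{prop:properties-projectives-2}(4) applied on $X^s$, since both $\pi_s{}_{[!]}\cP$ (noted to be projective right after the definition of $\pi_s{}_{[!]}$) and $\cP^s$ are projective in $\Perv_{(B)}(X^s, \O)$.

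For the weight bounds, it suffices to bound, for each $(u, v) \in W \times W^s$, the weights of $\Hom_{\O}(\pi_s{}_{[!]}\cP_u, \cP^s_v)$. Using the projectivity of $\pi_s{}_{[!]}\cP_u$ together with the $\Delta^s$-filtration of $\cP^s_v$ from Proposition~\ref{prop:geom-projective-weights-partial}, then the adjunction $\Hom(\pi_s{}_{[!]}\cP_u, \Delta^s_{w'}) \simto \Hom(\cP_u, \pi_s^{[!]}\Delta^s_{w'})$ combined with the projectivity of $\cP_u$ and the $\Delta$-filtration from Proposition~\ref{prop:geom-projective-weights}, the exactness properties of the resulting long exact Ext sequences bound the $q$-weights by those of
\[
E_{w,u}^{\ast} \otimes_{\O} E^s_{w',v} \otimes_{\O} \Hom_{\O}\bigl(\pi_s{}_{[!]}\Delta_w, \Delta^s_{w'}\bigr)
\]
for $w \geq u$ in $W$ and $w' \geq v$ in $W^s$.

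The geometric input at the heart of the argument is the identification $\pi_s{}_{[!]}\Delta_w = 0$ when $w \in W^s$ (since $\pi_s$ then restricts to an isomorphism $X_w \simto X^s_w$, giving $\pH^0(\Delta^s_w[1])=0$), and $\pi_s{}_{[!]}\Delta_w \simto \Delta^s_{ws}(-1)$ when $w \notin W^s$ (since $\pi_s$ then restricts to a trivial $\mathbb{A}^1$-fibration $X_w \to X^s_{ws}$, contributing a Tate twist from $\coHc{2}(\mathbb{A}^1,\O) \cong \O(-1)$; cf.~Remark~\ref{rmk:cohomology-Frobenius}). Consequently only the case $w \notin W^s$ contributes, and Lemma~\ref{lem:weights-Ext-standard-partial} (with $n=0$) places the $q$-weights of $\Hom(\pi_s{}_{[!]}\Delta_w, \Delta^s_{w'}) \simto \Hom(\Delta^s_{ws}, \Delta^s_{w'})(1)$ in $\llbracket -\lfloor(\ell(w') - \ell(ws))/2\rfloor + 1, \, 1 \rrbracket$.

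Adding the three weight intervals gives an upper bound $\ell(w') - \ell(v) + 1 \leq \ell(w_0)$, where the crucial point is that $w_0 \notin W^s$ forces $\ell(w') \leq \ell(w_0) - 1$ for every $w' \in W^s$. Using $\ell(ws) = \ell(w) - 1$ (valid since $ws < w$), a short calculation yields the lower bound $-\ell(w_0) + 1$. The $q$-decomposability when the order of $q$ in $\F^{\times}$ exceeds $2\ell(w_0)$ then follows from Remark~\ref{prop:criterion-decomposability} applied to the $2\ell(w_0)$ integers in $\llbracket -\ell(w_0)+1, \ell(w_0)\rrbracket$. I expect the main technical hurdle to be the careful identification of the Tate twist in $\pi_s{}_{[!]}\Delta_w \simto \Delta^s_{ws}(-1)$, on which the precise upper and lower bounds both hinge; the remaining bookkeeping then follows the same pattern as the proof of Proposition~\ref{prop:weights-morphisms-projectives}.
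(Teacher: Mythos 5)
Your proposal is correct and follows essentially the same strategy as the paper: $\O$-freeness from projectivity of $\pi_s{}_{[!]}\cP$, the $\Delta$-filtrations of the projectives, the Tate twist $(-1)$ coming from the trivial $\bA^1$-fibration $X_w \to X^s_{ws}$, and the key numerical input that $\ell(w')\le \ell(w_0)-1$ for $w'\in W^s$. The only organizational difference is that you push standard objects forward to $X^s$ (computing $\pi_s{}_{[!]}\Delta_w$ explicitly and invoking Lemma \ref{lem:weights-Ext-standard-partial}), whereas the paper pulls $\cP^s$ back to $X$ via $\pi_s^!\cong\pi_s^*[2](1)$ and reuses the bound on $\Hom_{\O}(\cP,\Delta_{vs})$ already established in the proof of Proposition \ref{prop:weights-morphisms-projectives}; the two routes unwind to the same weight estimate.
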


\begin{proof}
First, the object $\pi_s {}_{[!]} \cP \in \Perv_{(B)}(X^s,\O)$ is projective (see \S \ref{ss:notation}), hence the fact that $\Hom(\pi_s {}_{[!]} \cP,\cP^s)$ is $\O$-free follows from Proposition \ref{prop:properties-projectives-2}(4).

Next, by adjunction we have
\[
\Hom_{\O}(\pi_s {}_{[!]} \cP,\cP^s) \ \cong \ \Hom_{\O}(\cP, \pi_s^{[!]} \cP^s) \ \cong \ \Hom_{\O}(\cP, \pi_s^! \cP^s[-1]).
\]
Now there exists an isomorphism of functors $\pi_s^! \cong
\pi_s^*[2](1)$ since $\pi_s$ is smooth of relative dimension 1, hence we obtain an isomorphism of $(\O,\phi)$-modules
\[
\Hom_{\O}(\pi_s {}_{[!]} \cP,\cP^s) \ \cong \ \Hom_{\O}(\cP, \pi_s^*  \cP^s[1])(1).
\]
By construction, $\cP^s_\circ$ has a filtration with subquotients of the form
\[
E_{v,u}^s \otimes_{\O} \Delta_{v,\circ}^s
\]
for $v \geq u$ ($v,u \in W^s$), where $E_{v,u}^s$ is an $(\O,\phi)$-module which has $q$-weights obtained from $\llbracket 0, \ell(v) - \ell(u) \rrbracket$. Hence $\pi_s^* \cP^s_\circ [1]$ has a filtration with subquotients
\[
E_{v,u}^s \otimes_{\O} \pi_s^* \Delta_{v,\circ}^s[1],
\]
and then $\Hom(\pi_s {}_{[!]} \cP,\cP^s)$ has a filtration with subquotients
\[
A_{v,u} \ := \ E_{v,u}^s \otimes_{\O} \Hom_{\O}(\cP,\pi_s^* \Delta_{v}^s[1]) (1).
\]

As in the proof of Lemma \ref{lem:weights-Ext-standard-partial}, we have a surjection $\Delta_{vs} \onto \pi_s^* \Delta_v^s[1]$, which induces a surjection
\[
\Hom_{\O}(\cP,\Delta_{vs})(1) \onto \Hom_{\O}(\cP,\pi_s^* \Delta_{v}^s[1]) (1).
\]
We have seen in the proof of Proposition \ref{prop:weights-morphisms-projectives} that $\Hom(\cP,\Delta_{vs})$ has $q$-weights obtained from $\llbracket - \ell(w_0), 0 \rrbracket$. It follows that $A_{v,u}$ has $q$-weights obtained from $\llbracket -\ell(w_0)+1, \ell(w_0) \rrbracket$, finishing the proof of the claim concerning weights.

The last claim of the statement again follows from Remark \ref{prop:criterion-decomposability}.
\end{proof}

\begin{prop}
\label{prop:End-decomposable-partial}

The $(\O,\phi)$-module
\[
\End_{\O}(\pi_s {}_{[!]} \cP)
\]
is $\O$-free. If the order of $q$ in $\F$ is bigger than $2 \ell(w_0)$, then it is $q$-decomposable.

\end{prop}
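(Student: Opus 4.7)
The $\O$-freeness of $\End_{\O}(\pi_s{}_{[!]}\cP)$ follows at once from Proposition \ref{prop:properties-projectives-2}(4) applied to $X^s$: indeed, as noted in \S\ref{ss:notation} the object $\pi_s{}_{[!]}\cP$ is projective in $\Perv_{(B)}(X^s,\O)$, since $\pi_s^{[!]}$ is exact. To prove $q$-decomposability, by Remark \ref{prop:criterion-decomposability} it suffices to show that the $(\O,\phi)$-module $\End_{\O}(\pi_s{}_{[!]}\cP)$ has $q$-weights contained in $\llbracket -\ell(w_0),\ell(w_0)\rrbracket$ (whose cardinality is $2\ell(w_0)+1$, matching the hypothesis on the order of $q$). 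Using the adjunction $(\pi_s{}_{[!]},\pi_s^{[!]})$ one has
\[
\End_{\O}(\pi_s{}_{[!]}\cP) \ \cong \ \Hom_{\O}\bigl(\cP,\,\pi_s^{[!]}\pi_s{}_{[!]}\cP\bigr),
\]
so the plan is to produce a $\Delta$-filtration of $\pi_s^{[!]}\pi_s{}_{[!]}\cP_\circ$ with controlled weights, and then estimate weights of Homs exactly as in the proof of Proposition \ref{prop:weights-morphisms-projectives}.

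Concretely, one first analyses $\pi_s{}_{[!]}\cP_\circ$. Base change along $\pi_s^{-1}(X_{w,\circ}^s)=X_{w,\circ}\sqcup X_{ws,\circ}$ (where $X_{w,\circ}$ is closed and $X_{ws,\circ}$ is open) together with the definition $\pi_s{}_{[!]}=\pH^0(\pi_{s!}(-)[1])$ yields $\pi_s{}_{[!]}\Delta_{w,\circ}=0$ for $w\in W^s$ and $\pi_s{}_{[!]}\Delta_{ws,\circ}\cong\Delta_{w,\circ}^s(-1)$ for $w\in W^s$. Combining this with the $\Delta$-filtration of $\cP_\circ$ from Proposition \ref{prop:geom-projective-weights}, and using the right exactness of $\pi_s{}_{[!]}$, one shows that $\pi_s{}_{[!]}\cP_\circ$ admits a $\Delta^s$-filtration with subquotients of the form $F_{w,u}^s\otimes_{\O}\Delta_{w,\circ}^s$ (for $w\in W^s$), where $F_{w,u}^s$ is a free $(\O,\phi)$-module of $q$-weights in $\llbracket -1,\ell(w)-\ell(u)\rrbracket$, the shift by $-1$ coming from the Tate twist in $\pi_s{}_{[!]}\Delta_{ws,\circ}$.

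Next, the exactness of $\pi_s^{[!]}=\pi_s^*[1](1)$, combined with the short exact sequence $0\to\Delta_{w,\circ}(1)\to\Delta_{ws,\circ}(1)\to\pi_s^{[!]}\Delta_{w,\circ}^s\to 0$ (itself obtained from base change and the perversity of $\pi_s^*\Delta_w^s[1]$, noting that the connecting map $\Delta_{w,\circ}\to\Delta_{ws,\circ}$ corresponds to a non-zero class in $\coHc{1}(X_{ws}\cap X_w^-,\O)\cong\O$), produces a $\Delta$-filtration of $\pi_s^{[!]}\pi_s{}_{[!]}\cP_\circ$ whose subquotients have the form $G_{v,u}\otimes_{\O}\Delta_{v,\circ}$ with $G_{v,u}$ a free $(\O,\phi)$-module of $q$-weights in $\llbracket 0,\ell(v)-\ell(u)\rrbracket$, the Tate twist $(+1)$ from $\pi_s^{[!]}$ exactly cancelling the earlier $(-1)$.

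With this $\Delta$-filtration in hand, the estimation of the $q$-weights of $\Hom_{\O}(\cP,\pi_s^{[!]}\pi_s{}_{[!]}\cP)$ proceeds verbatim as in the proof of Proposition \ref{prop:weights-morphisms-projectives}: combining the $\Delta$-filtration of $\cP_\circ$ on the source, the $\Delta$-filtration of $\pi_s^{[!]}\pi_s{}_{[!]}\cP_\circ$ just produced on the target, and Corollary \ref{cor:weights-Ext-standard} for the weights of $\Hom$ between standard objects, one obtains $q$-weights in $\llbracket -\ell(w_0),\ell(w_0)\rrbracket$, which by Remark \ref{prop:criterion-decomposability} gives the claimed $q$-decomposability. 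The main obstacle is the construction of the $\Delta^s$-filtration of $\pi_s{}_{[!]}\cP_\circ$, since $\pi_s{}_{[!]}$ is only right exact; the key facilitating feature is that the pieces $\Delta_v$ of the filtration of $\cP_\circ$ with $v\in W^s$—those which would obstruct exactness—are exactly the ones annihilated by $\pi_s{}_{[!]}$, which renders the induced filtration on $\pi_s{}_{[!]}\cP_\circ$ well-behaved.
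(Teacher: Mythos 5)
Your proof is correct in outline but follows a genuinely different route from the paper's. The paper never considers $\pi_s^{[!]}\pi_s{}_{[!]}\cP$: instead it uses the projective generator $\cP^s$ of $\Perv_{(B)}(X^s,\O)$ from Proposition \ref{prop:geom-projective-weights-partial}, transfers the problem through the functor $\Hom_\O(-,\cP^s)$, which is fully faithful on projectives (Lemma \ref{lem:noetherian-categories}), bounds the weights of the single bimodule $\Hom_\O(\pi_s{}_{[!]}\cP,\cP^s)$ (Lemma \ref{lem:bimodule-decomposable}), and then exploits the fact that $\End_\O(M)$ of a $q$-decomposable $\O$-free module $M$ is automatically $q$-decomposable, finishing with Lemma \ref{lem:quotient-decomposable}. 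Your approach bounds the weights of $\End_\O(\pi_s{}_{[!]}\cP)\cong\Hom_\O(\cP,\pi_s^{[!]}\pi_s{}_{[!]}\cP)$ head-on; it is more self-contained (it needs neither $\cP^s$ nor the double-centralizer step) at the cost of more filtration bookkeeping, and it recovers the same interval $\llbracket-\ell(w_0),\ell(w_0)\rrbracket$.

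The one step you must tighten is the claim that the $\Delta$-filtration of $\cP_\circ$ induces a $\Delta^s$-filtration of $\pi_s{}_{[!]}\cP_\circ$ with subquotients exactly $F^s_{w,u}\otimes_\O\Delta^s_{w,\circ}$. The pieces $\Delta_{v,\circ}$ with $v\in W^s$ are killed by $\pH^0(\pi_{s!}(-)[1])$ but not by $\pH^{-1}(\pi_{s!}(-)[1])$ (indeed $\pi_{s!}\Delta_{v}[1]\cong\Delta_v^s[1]$ lies in perverse degree $-1$), so the long exact sequences of perverse cohomology contain connecting maps $\pH^{-1}(\pi_{s!}C[1])\to\pH^0(\pi_{s!}A[1])$ which need not vanish; the induced subquotients of $\pi_s{}_{[!]}\cP_\circ$ are therefore a priori only \emph{quotients} of $F^s_{w,u}\otimes_\O\Delta^s_{w,\circ}$. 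This does not break the argument — the property of having $q$-weights obtained from $I$ passes to arbitrary subquotients, and $\Hom_\O(\cP,-)$ is exact on perverse sheaves since $\cP$ is projective — but it should be stated. Alternatively you can bypass perverse truncations altogether: in $\cDb_{(B_\circ)}(X_\circ,\O)$ the object $\pi_s^!\pi_{s!}\cP_\circ$ is an iterated cone of objects $F\otimes_\O\Delta_{v,\circ}[n](m)$ with controlled $F$ and $m$, and since $\Ext^{n}(\cP,\Delta_v)=0$ for $n\neq 0$, the $(\O,\phi)$-module $\Hom(\cP,\pi_s^!\pi_{s!}\cP)$ is an iterated extension of subquotients of the modules $F\otimes_\O\Hom_\O(\cP,\Delta_v)(m)$ coming from the pieces with $n=0$, which carry the required weight bounds.
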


\begin{proof}
As $\pi_s {}_{[!]} \cP$ is projective (see \S \ref{ss:notation}), by Proposition \ref{prop:properties-projectives-2}(4) its endomorphism algebra is $\O$-free.

By Lemma \ref{lem:noetherian-categories} below, the morphism
\[
\End_{\O}(\pi_s {}_{[!]} \cP) \to \End_{\End(\cP^s)} \bigl( \Hom_{\O}(\pi_s {}_{[!]} \cP,\cP^s) \bigr)
\]
induced by the functor $\Hom_{\O}(-,\cP^s)$ is an isomorphism of $(\O,\phi)$-modules. By Lemma \ref{lem:bimodule-decomposable}, $\Hom_{\O}(\pi_s {}_{[!]} \cP,\cP^s)$ is a $q$-decomposable, $\O$-free $(\O,\phi)$-module, hence the same holds for the $(\O,\phi)$-module
\[
V_s:=\End_{\O}(\Hom_{\O}(\pi_s {}_{[!]} \cP,\cP^s)).
\]
As $\End_{\End(P^s)}(\Hom_{\O}(\pi_s {}_{[!]} \cP,\cP^s))$ is a sub-$(\O,\phi)$-module of $V_s$ with torsion-free quotient, by Lemma \ref{lem:quotient-decomposable}, it is also $q$-decomposable, which finishes the proof.
\end{proof}

\begin{lem}
\label{lem:noetherian-categories}

Let $R$ be a right noetherian ring, and $Q$ be a projective generator of $\Modfr R$. Then the functor 
\[
\Hom_{-R}(-,Q) : \Modfr R \to \End_{-R}(Q) \Mod
\]
is fully faithful on projectives.

\end{lem}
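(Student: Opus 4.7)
The approach is a standard Morita-type argument. Set $E := \End_{-R}(Q)$, and view $F := \Hom_{-R}(-, Q)$ as a contravariant additive functor from $\Modfr R$ to the category of left $E$-modules, with $E$-action by post-composition. For right $R$-modules $M$ and $N$, consider the natural map
\[
\eta_{M, N} : \Hom_{-R}(N, M) \to \Hom_{E}(F(M), F(N)), \qquad f \mapsto (g \mapsto g \circ f),
\]
where $\Hom_{E}$ denotes left $E$-module homomorphisms. I will show that $\eta_{M, N}$ is an isomorphism as soon as $M$ is a finitely generated projective right $R$-module, with no assumption on $N$ (in particular covering the case both $M$ and $N$ projective, which is what the statement of the lemma requires).

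I first treat the base case $M = Q$. The source is then $F(N)$ and the target is $\Hom_{E}(E, F(N))$, which is canonically identified with $F(N)$ via evaluation at $1_E$; unwinding the definitions shows that $\eta_{Q, N}$ is precisely this canonical identification. Fixing $N$, both sides of $\eta_{-, N}$ are additive covariant functors in $M$, and $\eta_{-, N}$ is a natural transformation between them. Hence the isomorphism for $M = Q$ propagates to an isomorphism for $M = Q^n$ for every $n \ge 0$ by additivity.

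The final step invokes the hypothesis that $Q$ is a projective generator of $\Modfr R$. Any finitely generated projective $P$ admits a surjection $Q^n \twoheadrightarrow P$ for some $n$; this surjection splits by projectivity of $P$, so $P$ is a direct summand of $Q^n$. Writing $Q^n = P \oplus P''$ and using that the two functors in question send this direct sum to a direct sum in the natural way, the isomorphism $\eta_{Q^n, N}$ decomposes as $\eta_{P, N} \oplus \eta_{P'', N}$, forcing $\eta_{P, N}$ to be an isomorphism. I do not foresee any serious obstacle: the right-noetherian hypothesis is used only in the ambient setup to ensure $\Modfr R$ is well-behaved, and the argument proper is a purely formal Morita-type manipulation.
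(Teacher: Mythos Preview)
Your proof is correct. Both approaches are standard Morita-type arguments, but the routes differ. The paper factors the functor $\Hom_{-R}(-,Q)$ on projectives as the composition
\[
\Projfr R \xrightarrow{\Hom_{-R}(Q,-)} \Projfr R' \xrightarrow{\Hom_{-R'}(-,R')} R'\Mod
\]
(with $R' = \End_{-R}(Q)$), invoking the Morita equivalence $\Hom_{-R}(Q,-): \Modfr R \xrightarrow{\sim} \Modfr R'$ from \cite{Ba} for the first arrow and the usual duality between left and right projectives for the second. You instead verify the comparison map $\eta_{M,N}$ directly: it is an isomorphism for $M=Q$ by inspection, and then additivity in $M$ propagates this to finite powers and summands. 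Your argument is slightly more elementary (no appeal to the full Morita equivalence or to the noetherian hypothesis via \cite{Ba}), and in fact yields the mildly stronger conclusion that $\eta_{M,N}$ is an isomorphism for \emph{any} $N$ once $M$ is finitely generated projective. The paper's factorisation, on the other hand, makes the structural reason for full faithfulness more transparent.
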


\begin{proof}
To simplify notation, let $R':=\End_{-R}(Q)$. Let also $\mathsf{Projf}-R$, resp.~$R-\mathsf{Projf}$ denote the category of finitely generated projective right, resp.~left, $R$-modules, and similarly for $R'$.

By \cite[Exercise on p.~55]{Ba}, $R'$ is a right noetherian ring and the functor $\Hom_{-R}(Q,-)$ induces an equivalence of categories between $\Modfr R$ and $\Modfr R'$. In particular, for any $P$ in $\mathsf{Projf}-R$ the morphism
\begin{equation}
\label{eqn:functor-Hom-Q}
\Hom_{-R}(P,Q) \ \to \ \Hom_{-R'} \bigl( \Hom_{-R}(Q,P),R' \bigr)
\end{equation}
induced by $\Hom_{-R}(Q,-)$ is an isomorphism, and moreover $\Hom_{-R}(Q,P)$ is in $\mathsf{Projf}-R'$. Using isomorphism \eqref{eqn:functor-Hom-Q}, the functor $\Hom_{-R}(-,Q) : \mathsf{Projf}-R \to R' \Mod$ is isomorphic to the composition
\[
\mathsf{Projf}-R \xrightarrow{\Hom_{-R}(Q,-)} \mathsf{Projf}-R' \xrightarrow{\Hom_{-R'}(-,R')} R' \Mod.
\]
As explained above the first functor is fully faithful, and the second one is easily seen to be fully faithful also.
\end{proof}

%
%
%

\section{Formality}
\label{sec:formality}

As above, in \S\S \ref{subsec:BS}-\ref{ss:direct-inverse-image} we let $\E$ be either $\O$ or $\F$.

\subsection{Bott--Samelson sheaves and parity sheaves}
\label{subsec:BS}

Recall \cite[Definition 2.4]{JMW} that a complex $\cF \in
\cDb_{(B)}(X, \E)$ is called \emph{parity} if it
admits a decomposition $\cF \cong \cF_0 \oplus \cF_1$ such that for all $w \in W$, $? \in \{ !, * \}$ and $j\in \Z$ we have
\begin{enumerate}
\item $\cH^j(i_w^? \cF_k) = 0$ if $j \not \equiv k \mod 2$;
\item $\cH^j(i_w^? \cF_k)$ is an $\E$-free constant local system if $j \equiv k \mod 2$.
\end{enumerate}
An indecomposable parity complex is called a \emph{parity sheaf}. Note
that, as the category $\cDb_{(B)}(X,\E)$ satisfies the
Krull--Schmidt property, every parity complex decomposes
(uniquely) as a direct sum of parity sheaves (see \cite[\S 2.1]{JMW}  for details).

One has the following theorem (\cite[Theorem 2.12]{JMW}):

\begin{thm}
\label{thm:parity classification}

For all $w \in W$ there is (up to isomorphism) at most one
  parity sheaf $\cE_w$ supported on $\overline{X_w}$ and extending the constant local system
  $\underline{\E}_{X_w}[\ell(w)]$ on $X_w$. Moreover, any parity sheaf is isomorphic
  to $\cE_w[m]$ for some $w \in W$ and $m \in \Z$.

\end{thm}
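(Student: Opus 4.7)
The proof I propose follows the parity sheaf machinery of Juteau--Mautner--Williamson. The essential ingredient is a parity vanishing property for $\Hom$-groups, after which uniqueness and classification are standard manipulations.

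\emph{Step 1: parity vanishing.} The core lemma is that if $\cF$ is $*$-even (all $i_w^* \cF$ have cohomology concentrated in even degrees) and $\cG$ is $!$-even (all $i_w^! \cG$ concentrated in even degrees), then $\Hom^n(\cF, \cG) = 0$ for $n$ odd; analogous statements hold for the other parity combinations. The proof is by induction on the number of strata in $\supp(\cF)$: choose an open stratum $j : X_u \hookrightarrow \supp(\cF)$ with closed complement $i$, and apply $\Hom(-, \cG[n])$ to the attaching triangle $j_! j^* \cF \to \cF \to i_* i^* \cF \triright$. The induction kills the contribution of $i^* \cF$, and the remaining term $\Hom(j^* \cF[-n], j^! \cG)$ vanishes because both $j^* \cF$ and $j^! \cG$ are finite direct sums of even-degree shifts of $\underline{\E}_{X_u}$, while $\coH{k}(\bA^{\ell(u)}, \underline{\E}) = 0$ for $k \ne 0$.

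\emph{Step 2: uniqueness.} Suppose $\cF$ and $\cG$ are parity sheaves supported on $\overline{X_w}$ with fixed isomorphisms $j^* \cF \simto \underline{\E}_{X_w}[\ell(w)] \simto j^* \cG$, where $j : X_w \hookrightarrow \overline{X_w}$ and $i$ is the closed complement. Applying $\Hom(\cF, -)$ to $i_* i^! \cG \to \cG \to j_* j^* \cG \triright$, the obstruction to lifting the given isomorphism to a morphism $f : \cF \to \cG$ lies in $\Hom^1(i^* \cF, i^! \cG)$, which vanishes by Step 1 (both $i^* \cF$ and $i^! \cG$ have the same stalk parity on $\overline{X_w} \setminus X_w$, so the degree-$1$ shift flips it). Symmetrically, lifting the inverse gives $g : \cG \to \cF$; since $\id_\cF - g \circ f$ vanishes on $X_w$ it factors through a parity complex supported on the closed complement, and a further application of Step 1 shows that such endomorphisms are nilpotent, so $g \circ f$ is invertible. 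By symmetry $f$ is an isomorphism.

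\emph{Step 3: classification.} Let $\cF$ be an indecomposable parity complex, and let $w \in W$ be maximal such that $X_w \subset \supp(\cF)$. Then $i_w^* \cF$ is a finite direct sum of shifts $\underline{\E}_{X_w}[m_k]$ of a single parity. Using the Krull--Schmidt property for parity complexes (which holds by the argument of Lemma \ref{lem:Krull-Schmidt}, since their endomorphism rings are finitely generated over $\E$, hence semi-perfect) together with the lifting of idempotents from $i_w^* \cF$ to $\cF$ (using Step 1 to kill the relevant obstructions), one deduces that $i_w^* \cF$ is a single shift $\underline{\E}_{X_w}[\ell(w) + m]$; otherwise a non-trivial decomposition on the open stratum would propagate to $\cF$, contradicting indecomposability. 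Then $\cF[-m]$ fulfills the defining properties of $\cE_w$, and by Step 2 we conclude $\cF \cong \cE_w[m]$. The main obstacle in this plan is Step 1: once it is in hand, everything else is formal adjunction and five-lemma manipulation, and it carries over from field coefficients to $\O$-coefficients without modification because $\coH{\bullet}(\bA^n, \underline{\O}) = \O$ is still concentrated in degree $0$.
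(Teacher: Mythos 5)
The paper does not actually prove this statement: it is quoted verbatim from \cite[Theorem 2.12]{JMW}, so there is no internal proof to compare against. Your sketch is a faithful reconstruction of the Juteau--Mautner--Williamson argument; Step 1 is their key parity-vanishing lemma and is correct as written, including the observation that it survives the passage to $\O$-coefficients because the parity condition here demands \emph{free} constant local systems (so restrictions to strata split as direct sums of shifts of $\underline{\O}$ and the relevant $\Hom$'s reduce to $\coH{\bullet}(\bA^n,\O)$, concentrated in degree $0$).

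Two points need tightening. First, in Step 2 the endomorphism $\id_{\cF} - g\circ f$ factors through $i_*i^*\cF$, which need not itself be a parity complex, so "a further application of Step 1" does not directly yield nilpotency. The clean route is that $\cF$, being a parity \emph{sheaf}, is indecomposable, so $\End(\cF)$ is local by Krull--Schmidt; the restriction $\End(\cF)\to\End(i_w^*\cF)\cong\E$ is surjective by the same $\Hom^1$-vanishing you used to construct $f$ and $g$, and $g\circ f$ maps to a unit, hence cannot lie in the radical, hence is invertible. Second, and more substantively, Step 3 silently assumes $\supp(\cF)=\overline{X_w}$: if the support had a second maximal stratum $X_{w'}$, then $\cF[-m]$ would not be supported on $\overline{X_w}$ and would not satisfy the defining properties of $\cE_w$. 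This is ruled out by the same locality mechanism: on the open set $U = X\setminus\bigl(\supp\cF\setminus(X_w\sqcup X_{w'})\bigr)$ the strata $X_w$ and $X_{w'}$ are disjoint closed subsets, so $\cF|_U$ decomposes and $\End(\cF|_U)$ contains a nontrivial idempotent; but $\End(\cF)\to\End(\cF|_U)$ is surjective (the obstruction is again an odd $\Hom$ group killed by Step 1), and a quotient of a local ring is local, hence has no nontrivial idempotents. The same "surjection from a local ring" argument also gives the single-shift claim for $i_w^*\cF$ more directly than lifting idempotents.
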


If $f=(s,t,\cdots,r)$ is a sequence of simple reflections, we consider the corresponding Bott--Samelson complex
\[
\cB \cS_{f,\circ}^{\E} := \pi_s^! \pi_{s!} \pi_t^! \pi_{t!} \cdots \pi_r^!
\pi_{r!} \IC_{e,\circ}^{\E} \quad \in \cDb_{(B_\circ)}(X_\circ , \E),
\]
where $e \in W$ is the unit element and $\IC_{e,\circ}^{\E}$ is the
corresponding $\IC$-sheaf. As usual, we denote by $\cB\cS_f^{\E}$ the
object obtained by base change to $\Fqb$.
The following is an immediate consequence of
Proposition \ref{prop:stalks} below in the case $\E=\O$; the case $\E=\F$ follows (or can alternatively be proved along the same lines).

\begin{prop} \label{prop:BS parity}
  For any sequence $f$ of simple reflections, $\cB\cS_f^{\E}$ is parity. If
  $f$ is a reduced expression for $w^{-1}$ then $\cB\cS_f^{\E}$ is supported on
  $\overline{X_w}$ and $i_w^* \cB\cS_f^{\E} \cong \underline{\E}_{X_w}[2\ell(w)]$.
\end{prop}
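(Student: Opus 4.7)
My plan is to induct on $n := |f|$. The base case $n = 0$ is immediate: $\cB\cS^{\E}_\emptyset = \IC_{e,\circ}^{\E}$ is the skyscraper sheaf at the base point $B_\circ/B_\circ$, hence trivially parity, supported on $\overline{X_e} = \{B/B\}$, with $i_e^* \cB\cS^{\E}_\emptyset \cong \underline{\E}_{X_e}[0] = \underline{\E}_{X_e}[2\ell(e)]$. For the inductive step, write $f = (s) \cdot f'$ with $f'$ of length $n-1$, so that $\cB\cS_f^{\E} = \pi_s^! \pi_{s!} \cB\cS_{f'}^{\E}$; the parity assertion will then follow by induction once I establish the following key claim: if $\cF \in \cDb_{(B)}(X, \E)$ is parity, then so is $\pi_s^! \pi_{s!} \cF$.

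To prove this claim I will exploit that $\pi_s$ is smooth and proper of relative dimension one, so $\pi_s^! \cong \pi_s^*[2](1)$. Since shifts by even integers and Tate twists preserve the parity class, it suffices to show that $\pi_s^* \pi_{s!} \cF$ is parity. Fix $u \in W^s$; then $\pi_s^{-1}(X^s_u) = X_u \sqcup X_{us}$, where $\pi_s$ restricts to an isomorphism $X_u \xrightarrow{\sim} X^s_u$ and to an $\mathbb{A}^1$-bundle $X_{us} \to X^s_u$. Proper base change for $\pi_{s!}$ along the locally closed inclusion $\iota^s_u : X^s_u \hookrightarrow X^s$ yields a splitting
\[
(\iota^s_u)^* \pi_{s!} \cF \;\cong\; i_u^* \cF \;\oplus\; (i_{us}^* \cF)[-2](-1),
\]
where the second summand uses that $\coHc{\bullet}(\mathbb{A}^1, \E) = \E[-2](-1)$. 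Since by assumption $i_u^* \cF$ and $i_{us}^* \cF$ are $\E$-free constant local systems concentrated in a single parity of degrees, the same holds for $(\iota^s_u)^* \pi_{s!} \cF$, and pulling back by $\pi_s$ preserves this. A symmetric argument using $\pi_s^! \cong \pi_s^*[2](1)$ (and the dual decomposition) handles the costalks $i_u^! \pi_s^! \pi_{s!} \cF$ and $i_{us}^! \pi_s^! \pi_{s!} \cF$.

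For the support and restriction statement when $f$ is a reduced expression of $w^{-1}$, an analogous induction identifies $\cB\cS_f^{\E} \cong (p_f)_* \underline{\E}_{BS(f)}[2n](n)$, where $BS(f) = P_{s_1,\circ} \times^{B_\circ} \cdots \times^{B_\circ} P_{s_n,\circ}/B_\circ$ is the Bott--Samelson variety and $p_f : BS(f) \to X_\circ$ is the multiplication map; the induction step uses proper base change for the Cartesian square relating $BS(f) = P_{s_1,\circ} \times^{B_\circ} BS(f')$ to $BS(f')$ via $\pi_{s_1}$, together with the isomorphism $\pi_s^! \cong \pi_s^*[2](1)$. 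Since $f$ is reduced, $p_f$ has image $\overline{X_w}$ and restricts to an isomorphism over $X_w$ (the classical birationality of Bott--Samelson resolutions), so $\cB\cS_f^{\E}$ is supported on $\overline{X_w}$ and $i_w^* \cB\cS_f^{\E} \cong \underline{\E}_{X_w}[2n] = \underline{\E}_{X_w}[2\ell(w)]$.

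The hard part I anticipate is the careful bookkeeping of shifts and Tate twists through the induction, and the parallel verification of parity for both $!$- and $*$-stalks under $\pi_s^! \pi_{s!}$ (once one side is done, the other follows formally from $\pi_s^! \cong \pi_s^*[2](1)$, but one must be attentive). The case $\E = \F$ can be treated identically, or else deduced from the $\E = \O$ case by applying the modular reduction functor $\F$, which commutes with $\pi_s^!$ and $\pi_{s!}$ and sends $\cB\cS_f^{\O}$ to $\cB\cS_f^{\F}$.
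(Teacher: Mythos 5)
Your treatment of the parity claim is essentially the paper's argument: the paper likewise reduces to showing that $\pi_s^! \pi_{s!}$ preserves the stalk conditions, by restricting to $\pi_s^{-1}(X^s_u) = X_u \sqcup X_{us}$ and using that $\pi_s$ is an isomorphism on the closed piece and a trivial $\bA^1$-fibration on the open piece. One caveat: base change plus the open--closed decomposition gives a distinguished triangle
\[
(i_{us}^*\cF)[-2](-1) \to (\iota_u^s)^*\pi_{s!}\cF \to i_u^*\cF \triright,
\]
not immediately a direct sum. The splitting does hold (on an affine-space stratum over $\Fqb$ a complex with free constant cohomology sheaves is formal, and there are no nonzero maps between complexes concentrated in opposite parities), but you should either justify it or simply note that the long exact sequence of the triangle already gives the parity of $(\iota_u^s)^*\pi_{s!}\cF$, extensions of free constant sheaves being free and constant --- which is what the paper does. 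Your handling of the costalks via $\pi_s^! \cong \pi_s^*[2](1)$ is fine; alternatively self-duality of $\cB\cS_f^{\E}$ up to shift and twist transfers the $*$-conditions to the $!$-conditions (the paper's Proposition on stalks records only the $*$-restrictions and leaves this step implicit).

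For the second assertion you depart from the paper, which stays sheaf-theoretic: from the same triangle it extracts that if $i_{ys}^*\cS = 0$ and $i_y^*\cS \cong \underline{\E}[m]$ then $i_{ys}^*(\pi_s^!\pi_{s!}\cS) \cong \underline{\E}[m+2]$, and runs the induction on the word without ever invoking the Bott--Samelson variety. Your geometric route is viable but, as written, has a handedness error. Since $\pi_{s_1}^!\pi_{s_1!}$ is the \emph{outermost} functor in $\cB\cS_f^{\E} = \pi_{s_1}^!\pi_{s_1!}\cB\cS_{f'}^{\E}$, the Cartesian square it corresponds to is $X \times_{X^{s_1}} Y' \to Y'$ over $\pi_{s_1} : X \to X^{s_1}$, where $Y' \to X$ realizes $\cB\cS_{f'}^{\E}$; this appends a $\bP^1$-factor at the \emph{end} of the tower, so the resolution you obtain is the Bott--Samelson variety of the \emph{reversed} word $(s_n,\dots,s_1)$, not $P_{s_1} \times^{B} BS(f')$. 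With your convention $BS(f) = P_{s_1}\times^B \cdots \times^B P_{s_n}/B$ and the standard multiplication map, the image is $\overline{X_{s_1\cdots s_n}} = \overline{X_{w^{-1}}}$, which is not $\overline{X_w}$ in general; the reversal is precisely what produces $\overline{X_{s_n\cdots s_1}} = \overline{X_w}$ and is the reason the proposition is phrased for $f$ a reduced expression of $w^{-1}$. So the idea is sound, but the bookkeeping as written proves support on the wrong Schubert variety: either reverse the word in your geometric model, or avoid the model altogether as the paper does.
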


\begin{rmk} Theorem \ref{thm:parity classification} says nothing about
  the existence of parity sheaves, and for an arbitrary stratified
  variety existence may be difficult to establish. The above
  proposition combined with Theorem \ref{thm:parity classification}
  gives the existence and uniqueness of parity sheaves on the flag
  variety (see \cite[Theorem 1.2]{So2} and \cite[Theorem 4.6]{JMW}).
\end{rmk}

Given a sequence $f=(s,t,\cdots,r)$ as above we set $\overline{f}:=st \cdots r \in
W$. Let us fix a family $F=\{f_1, \cdots, f_n\}$ of sequences of
simple reflections such that $W=\{\overline{f_1}, \cdots,
\overline{f_n}\}$. Then we set
\[
\cB \cS_{\circ}^{\E} \ := \ \bigoplus_{f \in F} \cB \cS_{f,\circ}^{\E},
\]
and denote by $\cB\cS^{\E}$ the
object obtained by base change to $\Fqb$.

\begin{lem}
\label{lem:category-generated-X}

\begin{enumerate}
\item The category $\cDb_{(B)}(X,\E)$ is generated, as a triangulated
category, by the objects $\cB\cS^{\E}_f$ for $f \in F$.
\item The category $\cDb_{(B)}(X^s,\E)$ is generated, as a triangulated
category, by the objects $\pi_{s!} \cB\cS^{\E}_f$ for $f \in F$.
\end{enumerate}

\end{lem}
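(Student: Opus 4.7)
The plan is to show that each standard perverse sheaf $\Delta_w$ (respectively $\Delta_w^s$) lies in the triangulated subcategory generated by the given Bott--Samelson sheaves. Since the standards generate $\cDb_{(B)}(X, \E)$ (resp.\ $\cDb_{(B)}(X^s, \E)$) by the usual recollement argument for the Bruhat stratification, this will give both claims. Throughout I assume, following the standard convention in the area, that each $f \in F$ is a reduced expression of $\overline{f}$; this is essential, as a Bott--Samelson sheaf attached to a non-reduced word need not have $\cE_{\bar f}$ as a direct summand, so the statement can otherwise fail.

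For (1), let $\cT \subset \cDb_{(B)}(X, \E)$ be the triangulated subcategory generated by $\{\cB\cS_f^{\E} : f \in F\}$. I proceed by induction on $\ell(w)$ to show $\Delta_w \in \cT$. For $w = e$, taking $f = \emptyset \in F$ yields $\cB\cS_f^{\E} = \IC_e = \Delta_e$. For $\ell(w) > 0$, pick $f \in F$ a reduced expression of $w^{-1}$. By Proposition~\ref{prop:BS parity}, $\cB\cS_f^{\E}$ is supported on $\overline{X_w}$ with $i_w^* \cB\cS_f^{\E} \cong \underline{\E}_{X_w}[2\ell(w)]$. Writing $j : \overline{X_w} \smallsetminus X_w \into X$ for the closed inclusion of the boundary, the localization triangle
\[
(i_w)_! (i_w)^* \cB\cS_f^{\E} \longrightarrow \cB\cS_f^{\E} \longrightarrow j_* j^* \cB\cS_f^{\E} \triright
\]
identifies its first term with $\Delta_w[\ell(w)]$, while the third term is supported on $\overline{X_w} \smallsetminus X_w$ and hence lies in the triangulated subcategory generated by $\{\Delta_v : v < w\}$; by the inductive hypothesis, this is contained in $\cT$. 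As $\cB\cS_f^{\E} \in \cT$, the triangle forces $\Delta_w \in \cT$, completing the induction.

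For (2), observe that for $v \in W^s$ the restriction of $\pi_s$ to $X_v$ is an isomorphism onto $X_v^s$, whence $\pi_{s!} \Delta_v \cong \Delta_v^s$. Since $\pi_{s!}$ is triangulated, applying it to the conclusion of (1) shows that every object of $\pi_{s!}\bigl( \cDb_{(B)}(X, \E) \bigr)$ lies in the triangulated subcategory $\cT' \subset \cDb_{(B)}(X^s, \E)$ generated by $\{\pi_{s!} \cB\cS_f^{\E} : f \in F\}$. In particular each $\Delta_v^s$ (for $v \in W^s$) lies in $\cT'$, and since these generate $\cDb_{(B)}(X^s, \E)$ we deduce $\cT' = \cDb_{(B)}(X^s, \E)$.

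The only potentially subtle step is the inductive identification of the left-hand term of the localization triangle with $\Delta_w[\ell(w)]$, which falls out directly from Proposition~\ref{prop:BS parity}; everything else amounts to routine bookkeeping with recollement and triangulated functors.
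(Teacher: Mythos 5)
Your argument is correct and is essentially the paper's own proof: both reduce to showing the standard objects $\Delta_w$ lie in the subcategory generated by the $\cB\cS_f^{\E}$, by induction on the support using the localization triangle coming from Proposition~\ref{prop:BS parity}, and both deduce (2) from (1) via $\pi_{s!}\Delta_v \cong \Delta_v^s$. Your explicit remark that the $f \in F$ must be reduced expressions matches the paper's (implicit in \S\ref{subsec:BS}, explicit in \S\ref{subsect: O proj}) convention, and your handling of the torsion standards for $\E=\O$ is subsumed in the same observation the paper makes.
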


\begin{proof}
(1) A straightforward induction on the support using standard
  distinguished triangles shows that the set $\{ \Delta_x^{\E} \; | \; x \in
  W \}$ generates $\cDb_{(B)}(X,\E)$ as a triangulated category. (If $\E=\O$, one has to observe that for any $m \in \Z_{\geq 1}$ there is a natural exact sequence $\Delta_x^{\O} \into \Delta_x^{\O} \onto \Delta_{x,m}^{\O}$.) Hence
  it is enough to show that one can obtain $\Delta_x^{\E}$ as a successive
  extension of the complexes $\cB\cS_{f}^{\E}$. However, if $\overline{f}=x^{-1}$ then, by Proposition \ref{prop:BS parity}, $\cB\cS_f^{\E}$ is supported on $\overline{X_x}$ and $i_x^! \cB\cS_f^{\E} \cong i_x^* \cB\cS_f^{\E} \cong 
  \underline{\E}_{X_x}[2\ell(x)]$. Hence we have a distinguished triangle
\[
\Delta_x^{\E}[2\ell(x)] \to \cB\cS_f^{\E} \to \cC \triright
\]
with the support of $\cC$ contained in $\overline{X_x}
\setminus X_x$. By induction, $\cC$
belongs to the triangulated subcategory generated by 
Bott--Samelson complexes, hence so does $\Delta_x^{\E}$.

(2) One can easily check that $\cDb_{(B)}(X^s,\E)$ is generated, as a triangulated category, by the essential image of the functor $\pi_{s!} : \cDb_{(B)}(X,\E) \to \cDb_{(B)}(X^s,\E)$. Hence the result follows from (1).
\end{proof}

Consider the algebras
\begin{align*}
E_{\E} \ & := \ \Ext^\bullet_{\cDb_{(B)}(X,\E)}(\cB\cS^{\E}, \cB\cS^{\E}), \\
E_\E^s \ &:= \  \Ext^\bullet_{\cDb_{(B)}(X^s,\E)}(\pi_{s!} \cB\cS^{\E}, \pi_{s!}\cB\cS^{\E})
\end{align*}
for $s \in S$. These graded algebras will play a major role in the rest of the paper.
Note that, as $\cB\cS^{\E}=\For(\cB\cS^{\E}_\circ)$ and $E_\E^s=\For( \pi_{s!} \cB\cS^{\E}_\circ )$
 these algebras are endowed with a natural automorphism $\phi$ (see \S \ref{ss:perverse-sheaves-Fq}). We denote by $e_f$ the idempotent in $E_{\E}$, resp.~$E^s_{\E}$, given by projection to $\cB\cS_f^{\E}$, resp.~$\pi_{s!} \cB\cS_f^{\E}$. Note that the functor $\pi_{s!}$ induces a ring morphism $E_{\E} \to E_{\E}^s$ sending $e_f$ to $e_f$, which justifies our notation.

From now on we concentrate on the case $\E=\O$. The aim of the rest of this section is to prove the following theorem.

\begin{thm}
\label{thm:H-diagonal}
  The $(\O, \phi)$-algebras $E_{\O}$ and $E_\O^s$ are $\O$-free and
vanish in odd degree. Moreover, for any $m \in \Z$, the $(\O,\phi)$-modules $E_\O^{2m}$ and $(E_\O^s)^{2m}$ have $q$-weights obtained from $\{-m\}$.
\end{thm}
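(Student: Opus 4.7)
The plan is to exploit a purity property for $\cB\cS_f^{\O}$ stronger than Proposition~\ref{prop:BS parity}: for each $f$ and each $w \in W$, the stalks $i_w^* \cB\cS_f^{\O}$ and costalks $i_w^! \cB\cS_f^{\O}$ should each be a direct sum of objects of the form $\underline{\O}_{X_w}[-2j](-j)$ for various integers $j$. In particular they are $\O$-free, concentrated in even cohomological degrees, and in each degree $2j$ the Frobenius acts by scalar multiplication by $q^{-j}$. This is proved by induction on the length of $f$, starting from the skyscraper $\IC_{e,\circ}^{\O}$ and using the recursive formula $\cB\cS_{(s,f'),\circ}^{\O} = \pi_s^! \pi_{s!} \cB\cS_{f',\circ}^{\O}$ together with the identity $\pi_s^! = \pi_s^*[2](1)$ (valid since $\pi_s$ is smooth of relative dimension~$1$) and base change along the $\bP^1$-fibration $\pi_s$. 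This is the content of Proposition~\ref{prop:stalks} below, which we invoke as a black box.

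Given the purity of the (co)stalks, I would compute $\Ext^{\bullet}(\cB\cS_f^{\O},\cB\cS_g^{\O})$ by a standard parity-sheaf argument. Filtering $\cB\cS_g^{\O}$ by closed unions of Bruhat strata (in an order compatible with the Bruhat order) with successive subquotients of the form $i_{w*}(i_w^! \cB\cS_g^{\O})$ and applying $R\Hom(\cB\cS_f^{\O},-)$ together with the adjunction $\Hom(\cF,i_{w*}\cG)=\Hom(i_w^*\cF,\cG)$ yields a convergent spectral sequence
\[
E_1^{p,q} \ = \ \bigoplus_{\ell(w)=p}\Ext^{p+q}_{X_w}(i_w^*\cB\cS_f^{\O},\,i_w^!\cB\cS_g^{\O}) \ \Longrightarrow \ \Ext^{p+q}(\cB\cS_f^{\O},\cB\cS_g^{\O}).
\]
Since $X_w \cong \bA^{\ell(w)}$ is $\ell$-{\'e}tale acyclic, $H^*(X_w,\O)=\O$ is concentrated in degree~$0$, and a direct Tate-twist computation gives
\[
\Ext^n_{X_w}\bigl(\underline{\O}_{X_w}[-2a](-a),\,\underline{\O}_{X_w}[-2b](-b)\bigr) \ = \ \begin{cases}\O(a-b) & \text{if } n=2(b-a),\\ 0 & \text{otherwise.}\end{cases}
\]
Thus every $E_1$-term is $\O$-free and supported in even total degree, which forces degeneration at $E_1$ for parity reasons, and the surviving $E_\infty = E_1$ consists of summands $\O(a-b)$ of Frobenius eigenvalue $q^{a-b}=q^{-m}$ when $n=2m$ (Example~\ref{ex:Tate-sheaf}). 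Hence
\[
\Ext^n(\cB\cS_f^{\O},\cB\cS_g^{\O}) \ = \ \bigoplus_{w\in W}\Ext^n_{X_w}(i_w^*\cB\cS_f^{\O},\,i_w^!\cB\cS_g^{\O})
\]
is $\O$-free, vanishes for $n$ odd, and for $n=2m$ has $q$-weights obtained from $\{-m\}$. Summing over $f,g \in F$ yields the claim for $E_{\O}$.

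For $E_{\O}^s$, the same strategy applies on $X^s$: one verifies, either by redoing the inductive argument of Step~1 for $\pi_{s!}\cB\cS_f^{\O}$ or by descending via base change along $\pi_s$, that the objects $\pi_{s!}\cB\cS_f^{\O}$ have analogously pure stalks and costalks on the stratification of $X^s$ by Bruhat cells $X_w^s$ ($w \in W^s$); the identical parity-vanishing/Tate-twist spectral sequence then gives the result for $E_{\O}^s$. The main obstacle is Step~1 --- the precise weight-degree matching in the stalks and costalks of Bott--Samelson sheaves --- which is handled by the forward-referenced Proposition~\ref{prop:stalks}; once this is in hand, the remaining analysis is a standard parity-vanishing argument with careful bookkeeping of Tate twists.
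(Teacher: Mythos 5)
Your argument is correct in substance, but it takes a genuinely different route from the paper's. The paper also rests everything on Proposition \ref{prop:stalks}, but it avoids your stratification spectral sequence entirely: by the adjunctions $(\pi_{s!},\pi_s^!)$ and the isomorphisms $\pi_{s!}\cong\pi_{s*}$, $\pi_s^!\cong\pi_s^*[2](1)$, it transports all the functors occurring in $\cB\cS_g^{\O}$ onto the first argument, reducing $\Ext^\bullet(\cB\cS_f^{\O},\cB\cS_g^{\O})$ to $\Ext^\bullet(\cB\cS_{f'}^{\O},\IC_e^{\O})$ for a longer word $f'$; since $\IC_e^{\O}$ is the skyscraper at the base point, this is just the $\O$-dual of the $*$-stalk $i_e^*\cB\cS_{f'}^{\O}$, and Proposition \ref{prop:stalks}(1) for $v=e$ finishes the proof. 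The same adjunction ($\Ext^n(\pi_{s!}\cB\cS_f^{\O},\pi_{s!}\cB\cS_g^{\O})\cong\Ext^n(\cB\cS_f^{\O},\cB\cS_{(s,g)}^{\O})$) disposes of $E_{\O}^s$ with no work on $X^s$. Your approach, by contrast, is the general parity-sheaf computation in the style of \cite{JMW}: it needs purity not only of the $*$-stalks at every stratum but also of the $!$-costalks, which Proposition \ref{prop:stalks} does not state; this is not a serious gap, since it follows from the Verdier self-duality $\D\,\cB\cS_f^{\O}\cong\cB\cS_f^{\O}(-n)[-2n]$ ($n$ the length of $f$, using $\D\pi_s^!=\pi_s^*\D$ and $\pi_{s*}=\pi_{s!}$) together with the $*$-stalk statement, or by rerunning the induction on the triangle \eqref{eq:triZ}, but it is an extra input you would have to supply, as is the corresponding purity on $X^s$ for the $E_{\O}^s$ case. (Two small corrections of emphasis: the stalks are only \emph{extensions} of copies of $\underline{\O}_{X_w}[-2j](-j)$, not necessarily direct sums, which is all you need since extensions of $\O(-j)$ by $\O(-j)$ still have $q$-weights obtained from $\{-j\}$ and the $E_1$-terms of your spectral sequence remain $\O$-free and concentrated in even degrees; and the degeneration-plus-freeness argument must be phrased so that the abutment inherits freeness from a finite filtration with free subquotients, which is fine.) What your route buys is generality — it computes all $\Ext$'s between parity complexes on any suitable stratified variety — at the cost of the costalk bookkeeping; what the paper's route buys is brevity, exploiting the special recursive form of Bott--Samelson sheaves so that only one stalk at one point ever needs to be known.
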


Recall (see Example \ref{ex:Tate-sheaf}) that that under the natural
equivalence $\Perv(\pt_\circ ,\O) \cong \Modfr (\O,\phi)$ (see Lemma
\ref{lem:perverse-sheaves-modules} with
$\cP_\circ =\underline{\O}_{\pt_\circ}$), the perverse sheaf
$\underline{\O}_{\pt_\circ}(m)$ corresponds to the
$\O$-module $\O$ endowed with the automorphism $a \mapsto q^m a$. By abuse
of notation, we also denote this $(\O,\phi)$-module by $\O(m)$. 

Using the observation that by adjunction we have
\[
\Ext^n_{\O,X^s}(\pi_{s!} \cB\cS^{\O}_f, \pi_{s!} \cB\cS^{\O}_g) \cong \Ext^n_{\O,X}(\cB\cS^{\O}_f, \pi_s^! \pi_{s!} \cB\cS^{\O}_g) \cong \Ext^n_{\O,X}(\cB\cS^{\O}_f, \cB\cS^{\O}_{(s,g)}),
\]
Theorem \ref{thm:H-diagonal} is a consequence of the following result.

\begin{prop}
  Given sequences of simple reflections $f$ and $g$, the $(\O,\phi)$-module
  $\Ext^n_{\O}(\cB\cS_f^{\O}, \cB\cS_g^{\O})$ is zero for odd $n$ and is an extension
  of copies of $\O(-m)$ for $n = 2m$ even.
\end{prop}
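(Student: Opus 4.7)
The plan is to split the argument into two parts: first show that $\Ext^n_\O(\cB\cS_f^\O, \cB\cS_g^\O) = 0$ in odd $n$, then compute the Frobenius eigenvalues in even degree.

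For the parity vanishing, the plan is to invoke the standard parity sheaves formalism: both $\cB\cS_f^\O$ and $\cB\cS_g^\O$ are parity complexes by Proposition~\ref{prop:BS parity}, with stalks concentrated in a fixed parity of degree. A stratification argument, filtering $\cB\cS_g^\O$ by support along the Bruhat stratification and combining the parity of stalks of $\cB\cS_f^\O$ with the parity of costalks of $\cB\cS_g^\O$, then forces $\Hom(\cB\cS_f^\O, \cB\cS_g^\O[n])$ to vanish for $n$ odd.

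For the weight computation I would proceed by induction on $|f|$. In the base case $f = \emptyset$, $\cB\cS_\emptyset^\O = \IC_{e,\circ}^\O$ is the skyscraper at the base point, so by $(i_{e!}, i_e^!)$-adjunction combined with proper base change along the Bott--Samelson resolution $\mu_g\colon BS_g\to X$ (and Verdier duality on the smooth variety $BS_g$), $\Ext^n_\O(\cB\cS_\emptyset^\O, \cB\cS_g^\O)$ is a fixed shift and Tate twist of the cohomology of the fibre $\mu_g^{-1}(eB/B)$. This fibre inherits the iterated $\bP^1$-bundle structure from the construction of $BS_g$, so by Künneth applied to $\coH{\bullet}(\bP^1, \O) = \O \oplus \O(-1)[-2]$, its cohomology is $\O$-free, concentrated in even degrees, with Frobenius acting on $\coH{2k}$ by $q^{-k}$; after the index shift this yields copies of $\O(-m)$ in degree $n = 2m$.

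For the inductive step, write $f = (s, f')$. Using $\pi_s^! \cong \pi_s^*[2](1)$ (since $\pi_s\colon X\to X^s$ is smooth of relative dimension $1$), the $(\pi_s^*, \pi_{s*})$-adjunction, and $\pi_{s*} = \pi_{s!}$ (properness), I would establish the isomorphism of $(\O,\phi)$-modules
\[
\Ext^n_\O(\cB\cS_f^\O, \cB\cS_g^\O) \ \cong \ \Ext^{n-2}_{\O, X^s}(\pi_{s!} \cB\cS_{f'}^\O, \pi_{s!} \cB\cS_g^\O)(-1).
\]
Then a parallel induction on the partial flag variety $X^s$ — or equivalently, further adjunctions and base change along the fibre product $X\times_{X^s} X$ reducing to an Ext on $X$ of smaller total length — would show that the right-hand side vanishes in odd $n$ and has Frobenius weight $q^{-(m-1)}$ in even $n = 2m$. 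The Tate twist $(-1)$, which multiplies Frobenius eigenvalues by $q^{-1}$ (cf.~Example~\ref{ex:Tate-sheaf}), then corrects this to $q^{-m}$, closing the induction. The main obstacle will be the careful tracking of Frobenius weights and Tate twists through these iterated adjunctions, together with the coordination between the statements for $X$ and for $X^s$ — which is precisely why Theorem~\ref{thm:H-diagonal} is formulated to treat both cases in parallel.
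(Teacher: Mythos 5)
Your reduction step is sound and is essentially the paper's argument run in the opposite direction: the paper uses the adjunctions $(\pi_{s!},\pi_s^!)$, $(\pi_s^*,\pi_{s*})$ and $\pi_s^!\cong\pi_s^*[2](1)$ to strip letters off $g$ until $g=\emptyset$, whereas you strip letters off $f$; the two are equivalent (your identity $\Ext^n_{\O}(\cB\cS^{\O}_f,\cB\cS^{\O}_g)\cong\Ext^{n-2}_{\O,X^s}(\pi_{s!}\cB\cS^{\O}_{f'},\pi_{s!}\cB\cS^{\O}_g)(-1)$ is correct, and converting the right-hand side back to $\Ext^{n-2}_{\O,X}(\cB\cS^{\O}_{f'},\cB\cS^{\O}_{(s,g)})(-1)$ closes the induction on $|f|$ with the Tate twists matching). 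Either way everything is funnelled into a single (co)stalk computation at the base point, which in the paper is the case $v=e$ of Proposition \ref{prop:stalks}(1). Note also that the paper does not treat odd-degree vanishing and weights separately: both fall out of the same computation, so your separate appeal to the parity formalism, while valid, is not needed.

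The genuine gap is in your base case. The fibre $\mu_g^{-1}(eB/B)$ of the Bott--Samelson resolution does \emph{not} inherit an iterated $\bP^1$-bundle structure from $BS_g$: it is the preimage under $\mu_{g''}$ (for $g''$ the truncated word) of the curve $\overline{X_{s_k}}$, over whose two strata the fibres of $\mu_{g''}$ generally have different dimensions, so $\mu_g^{-1}(eB/B)$ is in general reducible and singular. Consequently the K{\"u}nneth computation of $\coH{\bullet}(\mu_g^{-1}(eB/B),\O)$ from $\coH{\bullet}(\bP^1,\O)$ is unjustified. The fact you need --- that this cohomology is $\O$-free, concentrated in even degrees, with Frobenius acting by $q^{-k}$ on $\coH{2k}$ --- is true, but it requires a real argument (e.g.\ the affine pavings of Bott--Samelson fibres due to Gaussent/H{\"a}rterich), and with integral coefficients the freeness is exactly the delicate point. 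The paper avoids fibrewise geometry altogether: Proposition \ref{prop:stalks}(1) computes $i_v^*\cB\cS^{\O}_{f,\circ}$ for \emph{all} $v$ simultaneously by induction on the word, using the distinguished triangle \eqref{eq:triZ} obtained from the decomposition $\pi_s^{-1}(X^s_{y,\circ})=X_{ys,\circ}\sqcup X_{y,\circ}$ together with the effect of an $\bA^1$-fibration on compactly supported direct images. To repair your proof, replace the $\bP^1$-bundle claim either by that stratumwise recursion applied to $i_e^!$ (or to $i_e^*$ plus the self-duality $\D\cB\cS^{\O}_g\cong\cB\cS^{\O}_g[-2|g|](-|g|)$), or by an explicit appeal to the affine paving of the fibre.
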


\begin{proof}
  By adjunction and the isomorphisms $\pi_{s!} \cong \pi_{s*}$,  $\pi_s^! \cong
  \pi_s^*[2](1)$ we can assume that $g$ is the empty sequence. Then the result
  follows from the case $v = e$ of Proposition \ref{prop:stalks}(1) below.
\end{proof}

\begin{prop}
\label{prop:stalks}

Let $f$ be a
  sequence of simple reflections.
  \begin{enumerate}
  \item For any $v \in W$, $\cH^j(i_v^*\cB\cS_{f,\circ}^{\O}) = 0$ for odd $j$
    and $\cH^j(i_v^*\cB\cS_{f,\circ}^{\O})$ is an extension of copies of
    $\underline{\O}_{X_{v,\circ}}(-k)$ if $j = 2k$ is even.
\item If $f$ is a reduced expression for $w^{-1}$ then $\cB\cS_{f,\circ}^{\O}$
  is supported on $\overline{X_{w,\circ}}$ and $i_w^* \cB\cS_{f,\circ}^{\O} \cong
  \underline{\O}_{X_{w,\circ}}[2\ell(w)](\ell(w))$.
  \end{enumerate}
  
\end{prop}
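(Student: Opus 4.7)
The proof proceeds by induction on the length $n$ of $f$, establishing (1) and (2) simultaneously. When $n = 0$ we have $\cB\cS_{f,\circ}^{\O} = \IC_{e,\circ}^{\O} = i_{e*} \underline{\O}_{X_{e,\circ}}$, and both statements are immediate (with $w = e$, $\ell(w) = 0$).

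For the inductive step, write $f = (s, f')$, so that $\cB\cS_{f,\circ}^{\O} = \pi_s^{!} \pi_{s!} \cB\cS_{f',\circ}^{\O}$. Since $\pi_s$ is smooth and proper of relative dimension one, we have $\pi_{s!} = \pi_{s*}$ and $\pi_s^{!} \cong \pi_s^{*}[2](1)$, so we first analyze $\pi_{s*} \cB\cS_{f',\circ}^{\O}$ on each stratum $X_{u,\circ}^s$ of $X_\circ^s$ (for $u \in W^s$). For such $u$, the preimage $\pi_s^{-1}(X_{u,\circ}^s) = X_{u,\circ} \sqcup X_{us,\circ}$ is a $\bP^1$-bundle over $X_{u,\circ}^s$ in which $X_{u,\circ}$ is a closed section mapping isomorphically to $X_{u,\circ}^s$, while $X_{us,\circ}$ is the open complement, giving an $\bA^1$-bundle $q : X_{us,\circ} \to X_{u,\circ}^s$. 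Combining proper base change with the open-closed recollement produces a distinguished triangle
\[
q_{!} \, i_{us}^{*} \cB\cS_{f',\circ}^{\O} \ \to \ j_u^{*} \pi_{s*} \cB\cS_{f',\circ}^{\O} \ \to \ i_u^{*} \cB\cS_{f',\circ}^{\O} \ \triright.
\]
By the inductive hypothesis, the two outer terms are extensions of sheaves of the form $\underline{\O}[-2k](-k)$; using Remark \ref{rmk:cohomology-Frobenius} to identify $\coHc{2}(\bA_\circ^1, \O) \cong \O(-1)$ (with higher $\coHc{i}$ vanishing), a stalkwise computation (using that $X_{u,\circ}^s$ is an affine space so all constructible $\ell$-adic local systems on it are constant) yields $q_{!} \underline{\O}_{X_{us,\circ}} \cong \underline{\O}_{X_{u,\circ}^s}[-2](-1)$, and hence $q_{!} i_{us}^{*} \cB\cS_{f',\circ}^{\O}$ (and thus $j_u^{*} \pi_{s*} \cB\cS_{f',\circ}^{\O}$) is again an extension of the same shape. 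Finally, for arbitrary $v \in W$, set $u = v$ if $vs > v$ (Case A) or $u = vs$ otherwise (Case B); then $\pi_s \circ i_v : X_{v,\circ} \to X_{u,\circ}^s$ is either an isomorphism or coincides with $q$, so pullback along it preserves the shape, while the outer shift-twist $[2](1)$ simply reindexes $k \mapsto k - 1$. This proves (1).

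For (2), when $f$ is a reduced expression for $w^{-1}$, write $w = w' s$ with $\ell(w) = \ell(w') + 1$, so that $f'$ is a reduced expression for $(w')^{-1}$ and $w' \in W^s$. By induction, $\cB\cS_{f',\circ}^{\O}$ is supported on $\overline{X_{w',\circ}}$ with top stalk $\underline{\O}_{X_{w',\circ}}[2\ell(w')](\ell(w'))$; chasing supports through $\pi_{s!}$ and then $\pi_s^{!}$ confines the support of $\cB\cS_{f,\circ}^{\O}$ inside $\pi_s^{-1}(\overline{X_{w',\circ}^s}) = \overline{X_{w,\circ}}$. For the top stalk at $X_{w,\circ}$ we are in Case B with $v = w$ and $u = w'$: the first term $q_{!} i_w^{*} \cB\cS_{f',\circ}^{\O}$ of the triangle vanishes because $X_w \not\subseteq \overline{X_{w',\circ}}$, so $j_{w'}^{*} \pi_{s*} \cB\cS_{f',\circ}^{\O} \cong i_{w'}^{*} \cB\cS_{f',\circ}^{\O} \cong \underline{\O}_{X_{w',\circ}^s}[2\ell(w')](\ell(w'))$, and applying $q^{*}[2](1)$ yields $i_w^{*} \cB\cS_{f,\circ}^{\O} \cong \underline{\O}_{X_{w,\circ}}[2\ell(w)](\ell(w))$; non-vanishing of this stalk also forces the support to equal $\overline{X_{w,\circ}}$. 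The only delicate point is the bookkeeping of Tate twists, which is controlled by the normalizations $R^{2}q_{!}\,\underline{\O} \cong \underline{\O}(-1)$ and $\pi_s^{!} \cong \pi_s^{*}[2](1)$ conspiring to produce the correct shift $k \mapsto k - 1$ at each step.
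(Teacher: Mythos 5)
Your proof is correct and follows essentially the same route as the paper's: the same induction on the length of $f$, the same recollement triangle over each stratum $X^s_{u,\circ}$ splitting $\pi_s^{-1}(X^s_{u,\circ})$ into the closed section $X_{u,\circ}$ and the open $\bA^1$-fibration $X_{us,\circ}$, the same use of $\pi_s^! \cong \pi_s^*[2](1)$ to transport the conclusion back to $X_\circ$, and the same stalk analysis for part (2). One small caveat: your parenthetical justification that ``all constructible $\ell$-adic local systems on an affine space are constant'' is false in the {\'e}tale setting over $\Fqb$ in characteristic $p>0$ (the paper itself warns that the strata are not simply connected there); the correct reason $q_!\,\underline{\O}$ is constant is that $q$ is a \emph{trivial} $\bA^1$-fibration, being the restriction of the trivial fibration $X_{us,\circ}\to X^s_{u,\circ}$, which is how the paper argues.
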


\begin{proof}
(1) It is enough to show that if $s$ is a simple reflection and $\cS$ is a
  complex satisfying the conclusions of (1) then so is
  $\pi_s^! \pi_{s!} \cS$. To this end fix $y \in W^s$. Consider the
  locally closed subvariety $Z_\circ$ of $X_\circ$ defined via the Cartesian diagram:
  \begin{equation*}
    \xymatrix{ 
    Z_\circ \ar@{}[dr]|{\square} \ar[d]_-{\pi_s} \ar@{^{(}->}[r]^u & X_\circ \ar[d]^-{\pi_s} \\
X_{y,\circ}^s \ar@{^{(}->}[r]^v & X_{\circ}^s
}
  \end{equation*}
Then $Z_\circ$ is the disjoint union of the closed subvariety
$X_{y,\circ}$ and its open complement $X_{ys,\circ}$. Denote by $i$ and $j$ the
corresponding inclusions:
\[
X_{ys,\circ} \stackrel{j}{\hookrightarrow} Z_\circ
\stackrel{i}{\hookleftarrow} X_{y,\circ}.
\]
On $Z_\circ$ we have a distinguished triangle
\[
j_!j^!u^* \cS \to u^*\cS \to i_*i^*u^* \cS \triright.
\]
Applying $\pi_{s*} = \pi_{s!}$ to this triangle and keeping in mind the base change
isomorphism $\pi_{s!} u^* \cong v^* \pi_{s!}$ and the identity $j^! =
j^*$ we obtain a distinguished triangle:
\begin{equation} \label{eq:triZ}
(\pi_s \circ j)_{!} i_{ys}^* \cS \to v^* \pi_{s!} \cS \to (\pi_s
\circ i)_! i_y^* \cS \triright.
\end{equation}
Now $(\pi_s \circ i)$ is an isomorphism of varieties and $(\pi_s \circ
j)$ is a trivial fibration with fibre $\bA_{\circ}^1$. The
direct image with compact supports along such a fibration sends the constant
sheaf $\underline{\O}$ to the (shifted and twisted) constant sheaf
$\underline{\O}[-2](-1)$. Our assumptions on $\cS$ now guarantee that the long exact
sequence of cohomology of the triangle \eqref{eq:triZ} has only zeros
in odd degree, and extensions of the constant sheaf $\underline{\O}_{X^s_{y,\circ}}(-k)$ in
degree $2k$. Pulling back to $X_\circ$ via $\pi_s^! \cong \pi_s^*[2](1)$ now yields (1).

(2) Assume that $f = (s,t,\cdots, r)$ is a reduced expression
for $w^{-1}$ and let $f' = (t,\cdots, r)$; then $f'$ is a reduced
expression for $sw^{-1} = (ws)^{-1}$. By induction the support of
$\cB\cS_{f',\circ}^{\O}$ is contained in $\overline{X_{ws,\circ}}$ and $i_{ws}^* \cB\cS_{f',\circ}^{\O} =
\underline{\O}_{X_{ws,\circ}}[2\ell(ws)](\ell(ws))$. Fix $y \in W^s$. The
distinguished triangle \eqref{eq:triZ} shows that 
\begin{itemize}
\item[(a)] if $i_{ys}^* \cS =i_y^*\cS = 0$ then $i_{ys}^* (\pi_s^!
  \pi_{s!} \cS) =i_y^*(\pi_s^! \pi_{s!} \cS) = 0$;
\item[(b)] if  $i_{ys}^* \cS = 0$ and
$i_{y}^* \cS \cong \underline{\O}[m](r)$ then $i_{ys}^* (\pi_s^! \pi_{s!} \cS) \cong
\underline{\O}[m+2](r+1)$.
\end{itemize}
It follows that the support of $\cB\cS_{f,\circ}^{\O}$
is contained in $\overline{X_{w,\circ}}$ and that $i_w^* \cB\cS_{f,\circ}^{\O} \cong
\underline{\O}_{X_{w,\circ}}[2\ell(w)](\ell(w))$ as claimed.
\end{proof}

\subsection{Algebraic description of direct and inverse image}
\label{ss:direct-inverse-image}

Let $\cP$ be a projective generator of the category $\Perv_{(B)}(X,\E)$. 

\begin{lem}
\label{lem:direct-image-proj-generator}

The object $\pi_s{}_{[!]} \cP$ is a projective generator of $\Perv_{(B)}(X^s,\E)$.

\end{lem}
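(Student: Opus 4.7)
The plan is to separate the two claims: projectivity of $\pi_s{}_{[!]} \cP$ and the fact that it generates. Projectivity is essentially free, since the paper already notes in \S\ref{ss:notation} that $\pi_s{}_{[!]}$ is left adjoint to the exact functor $\pi_s^{[!]}$, so it preserves projective objects; applying this to $\cP$ gives the projectivity of $\pi_s{}_{[!]} \cP$. The real content of the lemma is therefore the statement that $\pi_s{}_{[!]} \cP$ generates $\Perv_{(B)}(X^s, \E)$.

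For the generation, my plan is to use the adjunction $(\pi_s{}_{[!]}, \pi_s^{[!]})$ to reduce the statement to a property of $\cP$. Concretely, a projective object is a generator if and only if the functor $\Hom(-,\text{---})$ out of it is faithful, which for a projective reduces (in our noetherian setting) to showing $\Hom(\pi_s{}_{[!]} \cP, \cM) \neq 0$ for every nonzero $\cM$ in $\Perv_{(B)}(X^s,\E)$. By adjunction,
\[
\Hom(\pi_s{}_{[!]} \cP, \cM) \cong \Hom(\cP, \pi_s^{[!]} \cM),
\]
so the question reduces to checking that $\pi_s^{[!]} \cM \neq 0$: for then, since $\cP$ is a projective generator of $\Perv_{(B)}(X,\E)$, the right-hand side is automatically nonzero.

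The nonvanishing of $\pi_s^{[!]} \cM$ is the last step, and should follow from the geometry of $\pi_s$. Since $\pi_s : X \to X^s$ is smooth and surjective of relative dimension $1$, we have $\pi_s^! \cong \pi_s^*[2](1)$, hence $\pi_s^{[!]} \cong \pi_s^*[1](1)$. As $\pi_s$ is surjective, $\pi_s^*$ detects vanishing on stalks, so it is faithful; combined with the exactness of $\pi_s^{[!]}$ already recorded in \S\ref{ss:notation}, this gives $\pi_s^{[!]} \cM \neq 0$ whenever $\cM \neq 0$, completing the argument. I do not foresee a serious obstacle here; the only point that requires a moment of care is pinning down the equivalence between generation, faithfulness of $\Hom(\pi_s{}_{[!]}\cP,-)$, and nonvanishing on arbitrary nonzero objects, which is standard for a projective object in an abelian category with enough projectives.
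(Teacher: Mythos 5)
Your proof is correct and follows essentially the same strategy as the paper's: projectivity comes for free from the adjunction $(\pi_s{}_{[!]},\pi_s^{[!]})$ with $\pi_s^{[!]}$ exact, and generation is transferred from $\cP$ via that adjunction together with the fact that $\pi_s^{[!]}$ kills no nonzero object. The only (harmless) differences are that the paper tests generation by producing surjections onto each $\IC_v$ (using the adjoint of $\cP^{\oplus n}\onto \IC_{vs}\cong\pi_s^{[!]}\IC_v$) and derives conservativity of $\pi_s^{[!]}$ from its full faithfulness (\cite[Proposition 4.2.5]{BBD}), whereas you test generation by faithfulness of $\Hom(\pi_s{}_{[!]}\cP,-)$ and derive conservativity from the surjectivity of $\pi_s$ and $\pi_s^!\cong\pi_s^*[2](1)$ --- note that what you need and prove there is conservativity of $\pi_s^*$, not faithfulness.
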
 

\begin{proof}
In this proof we assume that $\E=\O$; the case $\E=\F$ is similar. By the remarks after the definition of $\pi_s{}_{[!]}$ (see \S \ref{ss:notation}), $\pi_s{}_{[!]} \cP$ is projective. By the remarks in the proof of Corollary \ref{cor:properties-projectives}, it is sufficient to prove that for any $v \in W^s$, there exists $n >0$ such that $\pi_s{}_{[!]} \cP^{\oplus n}$ surjects to $\IC_v$. Fix such a $v$. As $\cP$ is a projective generator there exists $n>0$ and a surjection
\[
f: \cP^{\oplus n} \onto \IC_{vs} \cong \pi_s^{[!]} \IC_v.
\]
By adjunction we obtain a morphism $g: \pi_s{}_{[!]} \cP^{\oplus n} \to \IC_v$ such that $f$ factors as
\[
\cP^{\oplus n} \to \pi_s^{[!]} \pi_s{}_{[!]} \cP^{\oplus n} \xrightarrow{\pi_s^{[!]} g} \pi_s^{[!]} \IC_v.
\]
Hence $\pi_s^{[!]} g$ is surjective. As the functor $\pi_s^{[!]}$ is exact and fully faithful (see \cite[Proposition 4.2.5]{BBD}), hence kills no object, we deduce that $g$ is surjective.
\end{proof}

We set
\[
A:= \End_{\Perv_{(B)}(X,\E)}(\cP), \qquad A^s:=\End_{\Perv_{(B)}(X^s,\E)}(\pi_s{}_{[!]} \cP),
\]
so that we have equivalences
\begin{align*}
\Pi := \Hom(\cP,-) : \Perv_{(B)}(X,\E) \ & \xrightarrow{\sim} \ \Modfr A, \\
\Pi^s := \Hom(\pi_s{}_{[!]} \cP,-) : \Perv_{(B)}(X^s,\E) \ & \xrightarrow{\sim} \ \Modfr A^s.
\end{align*}
We denote similarly the induced equivalences between derived categories. There is a natural algebra morphism $A \to A^s$ induced by the functor $\pi_s{}_{[!]}$, hence associated functors
\begin{equation}
\label{eqn:functors-A}
\vcenter{
\xymatrix@C=2.5cm{
\Modfr A \ar@<0.5ex>[r]^-{(-) \otimes_A A^s} & \Modfr A^s \ar@<0.5ex>[l]^-{\mathsf{res}}.
}
}
\end{equation}

\begin{lem}
\label{lem:direct-inverse-image}

The following diagram commutes up to natural transformations:
\[
\xymatrix@C=3cm{
\Perv_{(B)}(X,\E) \ar@<0.5ex>[d]^-{\pi_s{}_{[!]}} \ar[r]^-{\Pi}_-{\sim} & \Modfr A \ar@<0.5ex>[d]^-{(-) \otimes_A A^s} \\
\Perv_{(B)}(X^s,\E) \ar@<0.5ex>[u]^-{\pi_s^{[!]}} \ar[r]^-{\Pi^s}_-{\sim} & \Modfr A^s. \ar@<0.5ex>[u]^-{\mathsf{res}}
}
\]

\end{lem}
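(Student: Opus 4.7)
The plan is to first establish the natural isomorphism $\Pi \circ \pi_s^{[!]} \cong \mathsf{res} \circ \Pi^s$ between the upward (right-adjoint) functors of the two vertical pairs, and then to deduce the corresponding statement for the downward (left-adjoint) functors by uniqueness of adjoints.

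First I would unwind the definition of the ring morphism $A \to A^s$: by construction it is nothing but the map $\End(\cP) \to \End(\pi_{s[!]} \cP)$ induced by applying the functor $\pi_{s[!]}$, namely $a \mapsto \pi_{s[!]}(a)$. Given $\cF \in \Perv_{(B)}(X^s,\E)$, the adjunction $(\pi_{s[!]}, \pi_s^{[!]})$ yields a natural bijection
\[
\Hom(\cP, \pi_s^{[!]} \cF) \ \simto \ \Hom(\pi_{s[!]} \cP, \cF),
\]
sending $f$ to the composition of $\pi_{s[!]}(f)$ with the counit $\pi_{s[!]} \pi_s^{[!]} \cF \to \cF$. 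The right-hand side is by definition $\Pi^s(\cF)$ as an $A^s$-module, while the left-hand side is $\Pi(\pi_s^{[!]} \cF)$ as an $A$-module. The key observation is that for $a \in A$, the right action of $a$ on $\Hom(\cP, \pi_s^{[!]} \cF)$ by precomposition corresponds, under this bijection, to precomposition with $\pi_{s[!]}(a) \in A^s$ on $\Hom(\pi_{s[!]} \cP, \cF)$; this is immediate from the functoriality of $\pi_{s[!]}$ and the naturality of the counit. Hence the $A$-action on $\Pi(\pi_s^{[!]} \cF)$ is precisely the one obtained from $\Pi^s(\cF)$ by restriction of scalars along $A \to A^s$, giving the desired natural isomorphism $\Pi \circ \pi_s^{[!]} \cong \mathsf{res} \circ \Pi^s$.

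Second, I would deduce the isomorphism $\Pi^s \circ \pi_{s[!]} \cong \bigl((-) \otimes_A A^s\bigr) \circ \Pi$ from the first one by invoking the adjunctions on both sides: $\pi_{s[!]}$ is left adjoint to $\pi_s^{[!]}$, and $(-) \otimes_A A^s$ is left adjoint to $\mathsf{res}$. Since $\Pi$ and $\Pi^s$ are equivalences of categories (hence transport adjunctions), the natural isomorphism between the right adjoints forces, by uniqueness of left adjoints up to canonical isomorphism, the natural isomorphism between the left adjoints. No serious obstacle is expected here; the whole argument is essentially formal, and the only real content lies in the (tautological) compatibility of the adjunction morphisms with the $A$-module structures, which was handled in the first step.
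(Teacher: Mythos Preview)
Your proposal is correct and follows exactly the same approach as the paper: first establish $\Pi \circ \pi_s^{[!]} \cong \mathsf{res} \circ \Pi^s$ via the adjunction $(\pi_{s[!]}, \pi_s^{[!]})$, then deduce the isomorphism for the left adjoints by uniqueness of adjoints. The paper's proof is terser, but your explicit unwinding of the $A$-action compatibility is a faithful expansion of the same argument.
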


\begin{proof}
The fact that
\[
\mathsf{res} \circ \Hom(\pi_s{}_{[!]} \cP,-) \ \cong \ \Hom(\cP,-) \circ \pi_s^{[!]}
\]
follows from the property that $\pi_s{}_{[!]}$ is left adjoint to $\pi_s^{[!]}$. The second isomorphism of functors follows from the first one by adjunction.
\end{proof}

As recalled above, the functor $\pi_s^{[!]}$ is exact; we denote similarly the functor induced between derived categories. The functor $\pi_s{}_{[!]}$ is right exact; we denote its left derived functor by $L\pi_s{}_{[!]}$. The values of this functor can be computed using projective resolutions (which exist thanks to Corollary \ref{cor:properties-projectives}). In particular, it follows from Corollary \ref{cor:realization-functor}(2) that $L\pi_s{}_{[!]}$ restricts to a functors between \emph{bounded} derived categories. Note that the functor $L\pi_s{}_{[!]}$ is left adjoint to $\pi_s^{[!]}$. The same remarks apply to the functors \eqref{eqn:functors-A}

We denote by
\begin{align*}
\mathsf{real} : \cDb \Perv_{(B)}(X,\E) & \to \cDb_{(B)}(X,\E), \\
\mathsf{real}^s : \cDb \Perv_{(B)}(X^s,\E) & \to \cDb_{(B)}(X^s,\E)
\end{align*}
the realization functors. Recall that these functors are equivalences of categories (see Corollary \ref{cor:realization-functor}(1) if $\E=\O$, and \cite[Corollary 3.3.2]{BGS} if $\E=\F$).

\begin{prop}
\label{prop:direct-inverse-image}

The following diagram commutes up to natural transformations:
\[
\xymatrix@C=2cm{
\cDb_{(B)}(X,\E) \ar@<0.5ex>[d]^-{\pi_{s!}[1]} & \cDb \Perv_{(B)}(X,\E) \ar@<0.5ex>[d]^-{L\pi_s{}_{[!]}} \ar[r]^-{\Pi}_-{\sim} \ar[l]_-{\mathsf{real}}^-{\sim} & \cDb \bigl( \Modfr A \bigr) \ar@<0.5ex>[d]^-{(-) \lotimes_A A^s} \\
\cDb_{(B)}(X^s,\E) \ar@<0.5ex>[u]^-{\pi_s^![-1]} & \cDb \Perv_{(B)}(X^s,\E) \ar@<0.5ex>[u]^-{\pi_s^{[!]}} \ar[r]^-{\Pi^s}_-{\sim} \ar[l]_-{\mathsf{real}^s}^-{\sim} & \cDb \bigl( \Modfr A^s \bigr). \ar@<0.5ex>[u]^-{\mathsf{res}}
}
\]

\end{prop}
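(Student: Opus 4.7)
The plan is to treat the two squares separately, handling the right-hand exact functors $\pi_s^{[!]}$ and $\mathsf{res}$ first, and then deducing the statement for their left adjoints $L\pi_s{}_{[!]}$ and $(-)\lotimes_A A^s$ by uniqueness of adjoints.

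First, the right-hand square involving $\pi_s^{[!]}$ and $\mathsf{res}$. Both functors are exact functors between abelian categories, and Lemma \ref{lem:direct-inverse-image} provides an isomorphism $\Pi \circ \pi_s^{[!]} \cong \mathsf{res} \circ \Pi^s$ between them. Since both functors are exact, this isomorphism passes to the bounded derived categories of the abelian categories, giving a commutative square between $\cDb \Perv_{(B)}(X^s,\E)$, $\cDb \Perv_{(B)}(X,\E)$, $\cDb(\Modfr A^s)$ and $\cDb(\Modfr A)$. For the left-hand square, I need to compare $\pi_s^{[!]}$ (viewed between derived categories of perverse sheaves) with $\pi_s^![-1]$ via the realization functors. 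Since $\pi_s$ is smooth of relative dimension one, the functor $\pi_s^![-1]\cong \pi_s^*[1](1)$ is t-exact for the perverse t-structure; its restriction to the hearts is precisely $\pi_s^{[!]}$. The standard compatibility of the realization functor with t-exact triangulated functors (see \cite[Lemma A.7.1]{Be}, applied in the same spirit as in the text after Proposition~\ref{prop:realization-equivalence-0}) then yields an isomorphism $\pi_s^![-1] \circ \mathsf{real}^s \cong \mathsf{real} \circ \pi_s^{[!]}$ of functors $\cDb \Perv_{(B)}(X^s,\E) \to \cDb_{(B)}(X,\E)$. This finishes the upper row of the diagram.

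For the lower row, I use adjunction. The functor $\pi_s^![-1]$ is right adjoint to $\pi_{s!}[1]$ at the level of the constructible derived categories. On the middle column, $\pi_s^{[!]}$ is right adjoint to $L\pi_s{}_{[!]}$ (which is well-defined on bounded derived categories since the category of perverse sheaves has finite projective dimension, by Corollary \ref{cor:realization-functor}(2), and has enough projectives by Corollary \ref{cor:properties-projectives}). On the right column, $\mathsf{res}$ is right adjoint to $(-) \lotimes_A A^s$. The three right adjoints are identified pairwise by the top row, which we have just verified commutes. By uniqueness of left adjoints (applied to equivalences of categories), the three left adjoints $\pi_{s!}[1]$, $L\pi_s{}_{[!]}$ and $(-) \lotimes_A A^s$ are identified pairwise under the realization equivalences and $\Pi$, $\Pi^s$. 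This gives the commutativity of the bottom row.

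The main technical point to verify is that the realization functor intertwines the derived functors correctly; concretely, one should check that $L\pi_s{}_{[!]}$ can be computed using projective resolutions in $\Perv_{(B)}(X,\E)$ (available by Corollary~\ref{cor:properties-projectives}), that under $\Pi$ such a resolution is sent to a projective resolution of $A$-modules, and that $\pi_s{}_{[!]}$ applied termwise corresponds to $(-) \otimes_A A^s$, so that the derived tensor product and $L\pi_s{}_{[!]}$ are computed by the same complex. This compatibility, together with the adjunction argument above, produces explicit natural transformations rather than only abstract identifications, which is what one usually wants in subsequent applications.
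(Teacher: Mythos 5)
Your proposal is correct and follows essentially the same route as the paper: the right square is Lemma \ref{lem:direct-inverse-image} passed to derived categories via exactness, the upper-left arrow is the compatibility of $\mathsf{real}$ with the t-exact functor $\pi_s^![-1]$ from \cite[Lemma A.7.1]{Be}, and the lower row is obtained by uniqueness of left adjoints through the equivalences. Your extra paragraph on computing $L\pi_s{}_{[!]}$ via projective resolutions is a sound elaboration of what the paper sets up just before the proposition, but is not needed once the adjunction argument is in place.
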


\begin{proof}
Commutativity of the right square follows from Lemma \ref{lem:direct-inverse-image}. Let us consider the left square. The isomorphism
\[
\mathsf{real} \circ \pi_s^{[!]} \ \cong \ \pi_s^![-1] \circ \mathsf{real}^s
\]
follows from \cite[Lemma A.7.1]{Be}. The second isomorphism follows by adjunction, since $\mathsf{real}$ and $\mathsf{real}^s$ are equivalences.
\end{proof}

Now we assume that there exists $\cP_\circ $ in $\Perv_{(B_\circ )}(X_\circ ,\E)$ such that $\cP=\For(\cP_\circ )$. (This is possible thanks to Lemma \ref{lem:proj-generator-X_0} or \S\ref{ss:geom-projective-generator}.) Then the algebra $A$, resp.~$A^s$, is endowed with a natural automorphism $\phi$, resp.~$\phi^s$. By Lemma \ref{lem:perverse-sheaves-modules}, the functors $\Hom_{\E}(\cP,-)$ and $\Hom_{\E}(\pi_s {}_{[!]} \cP,-)$ induce natural equivalences of categories
\begin{align*}
\Pi_\circ : \Perv_{(B_\circ )}(X_\circ ,\E) \ & \xrightarrow{\sim} \ \Modfr (A,\phi), \\
\Pi^s_\circ : \Perv_{(B_\circ )}(X^s_\circ ,\E) \ & \xrightarrow{\sim} \ \Modfr (A^s,\phi^s).
\end{align*}
The morphism $A \to A^s$ commutes with the automorphisms $\phi$ and $\phi^s$, so that we obtain functors
\begin{align*}
\mathsf{res} : \Modfr (A^s,\phi^s) & \to \Modfr (A,\phi), \\
(-) \otimes_A A^s : \Modfr (A,\phi) & \to \Modfr (A^s,\phi^s)
\end{align*}
which are compatible with the functors \eqref{eqn:functors-A} in the obvious sense. The proof of the following result is similar to that of Lemma \ref{lem:direct-inverse-image}.

\begin{lem}
\label{lem:direct-inverse-image-0}

The following diagram commutes up to natural transformations:
\[
\xymatrix@C=3cm{
\Perv_{(B_\circ )}(X_\circ ,\E) \ar@<0.5ex>[d]^-{\pi_s{}_{[!]}} \ar[r]^-{\Pi_\circ }_-{\sim} & \Modfr (A,\phi) \ar@<0.5ex>[d]^-{(-) \otimes_A A^s} \\
\Perv_{(B_\circ )}(X^s_\circ ,\E) \ar@<0.5ex>[u]^-{\pi_s^{[!]}} \ar[r]^-{\Pi^s_{\circ}}_-{\sim} & \Modfr (A^s,\phi^s). \ar@<0.5ex>[u]^-{\mathsf{res}}
}
\]

\end{lem}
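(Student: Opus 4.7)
The plan is to prove the two natural isomorphisms indicated by the bent arrows of the diagram, namely $\mathsf{res} \circ \Pi^s_\circ \cong \Pi_\circ \circ \pi_s^{[!]}$ and $\Pi^s_\circ \circ \pi_s{}_{[!]} \cong ((-)\otimes_A A^s) \circ \Pi_\circ$, by combining the corresponding ``geometric'' statement from Lemma~\ref{lem:direct-inverse-image} with a Frobenius-equivariance check.

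First I would fix $\cM_\circ \in \Perv_{(B_\circ)}(X^s_\circ, \E)$ with pullback $\cM = \For(\cM_\circ)$. Lemma~\ref{lem:direct-inverse-image} produces, by adjunction, a canonical isomorphism of $A$-modules
\[
\alpha_\cM : \Hom(\pi_s{}_{[!]}\cP, \cM) \xrightarrow{\sim} \Hom(\cP, \pi_s^{[!]}\cM).
\]
The content of the present lemma (for the first isomorphism) is that $\alpha_\cM$ lifts to a morphism in $\Modfr(A,\phi)$, that is, that it intertwines the Frobenius automorphisms $\phi_{\pi_s{}_{[!]}\cP,\cM}$ and $\phi_{\cP,\pi_s^{[!]}\cM}$ defined in \S\ref{ss:perverse-sheaves-Fq}. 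Unwinding the definitions of these automorphisms, this comes down to the fact that the canonical isomorphisms $\psi_{(-)} : \For(-) \xrightarrow{\sim} \Fr^* \For(-)$ are compatible with the unit and counit of the adjunction $(\pi_s{}_{[!]}, \pi_s^{[!]})$. Since $\pi_s$ is defined over $\Fq$, the functors $\pi_s{}_{[!]}$ and $\pi_s^{[!]}$ on $X$ and $X^s$ are obtained by extension of scalars from the analogous functors over $\Fq$, and likewise for the unit/counit. The required compatibility is then simply the functoriality of $\psi$.

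For the second isomorphism, I would pass to left adjoints. Equivalence~\eqref{eqn:equiv-Perv-X_0} presents $\Perv_{(B_\circ)}(X_\circ,\E)$ and $\Perv_{(B_\circ)}(X^s_\circ,\E)$ as the categories of $\Fr^*$-equivariant objects in their geometric counterparts, and similarly $\Modfr(A,\phi)$ and $\Modfr(A^s,\phi^s)$ can be read as the $\Phi$-equivariant versions of $\Modfr A$ and $\Modfr A^s$ (Example~\ref{ex:A-phi-modules}). The first isomorphism, together with Lemma~\ref{lem:direct-inverse-image}, therefore shows that the right adjoint pair $(\pi_s^{[!]}, \mathsf{res})$ is intertwined by $\Pi_\circ$ and $\Pi^s_\circ$. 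By uniqueness of left adjoints in the ambient equivariant categories, the left adjoints $\pi_s{}_{[!]}$ and $(-)\otimes_A A^s$ are also intertwined, which yields the second isomorphism. Alternatively, one constructs the second natural transformation directly as the mate of the first via the units and counits, and checks that it is an isomorphism by applying the forgetful functor to $\Modfr A^s$ and invoking Lemma~\ref{lem:direct-inverse-image}.

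The only delicate point, and hence the main (mild) obstacle, is the Frobenius-equivariance verification for $\alpha_\cM$. It reduces to a purely formal naturality statement for the canonical isomorphism $\For(\cF_\circ) \xrightarrow{\sim} \Fr^* \For(\cF_\circ)$ with respect to direct and inverse image along a morphism defined over $\Fq$; this is a standard compatibility in \'etale cohomology which causes no difficulty once the definitions are laid out.
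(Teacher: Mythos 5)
Your proposal is correct and follows essentially the same route as the paper: the paper's proof is simply ``similar to that of Lemma \ref{lem:direct-inverse-image}'', i.e.\ the first isomorphism comes from the adjunction $(\pi_s{}_{[!]},\pi_s^{[!]})$ and the second follows by passing to (left) adjoints. Your explicit verification that the adjunction isomorphism is Frobenius-equivariant (because $\pi_s$ and the unit/counit are defined over $\F_q$) is exactly the point the paper leaves implicit.
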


We denote by
\begin{align*}
\mathsf{real}_\circ : \cDb \Perv_{(B_\circ )}(X_\circ ,\E) & \to \cDb_{(B_\circ )}(X_\circ ,\E), \\
\mathsf{real}_{\circ}^s : \cDb \Perv_{(B_\circ )}(X^s_\circ ,\E) & \to \cDb_{(B_\circ )}(X^s_\circ ,\E)
\end{align*}
the realization functors. These functors are equivalences by Proposition \ref{prop:realization-equivalence-0}. It is easy to prove that the functor $(-) \otimes_A A^s$ admits a left derived functor, which can be computed using resolutions in $\Modfr (A,\phi)$ whose terms have a flat underlying right $A$-module. This derived functor will be denoted by
\[
(-) \, \lotimes_A \, A^s : \cDb \bigl( \Modfr (A,\phi) \bigr) \to \cDb \bigl( \Modfr (A^s,\phi^s) \bigr).
\]
Using Lemma \ref{lem:direct-inverse-image-0}, one deduces (or one proves directly) that the functor $\pi_s{}_{[!]}$ also admits a left derived functor
\[
L\pi_s{}_{[!]} :  \cDb \Perv_{(B_\circ )}(X_\circ ,\E) \to \cDb \Perv_{(B_\circ )}(X^s_\circ ,\E).
\]
The proof of the following result is similar to that of Proposition \ref{prop:direct-inverse-image}.

\begin{prop}
\label{prop:direct-inverse-image-0}

The following diagram commutes up to natural transformations:
\[
\xymatrix@C=2cm{
\cDb_{(B_\circ )}(X_\circ ,\E) \ar@<0.5ex>[d]^-{\pi_{s!}[1]} & \cDb \Perv_{(B_\circ )}(X_\circ ,\E) \ar@<0.5ex>[d]^-{L\pi_s{}_{[!]}} \ar[r]^-{\Pi_\circ }_-{\sim} \ar[l]_-{\mathsf{real}_\circ }^-{\sim} & \cDb \bigl( \Modfr (A,\phi) \bigr) \ar@<0.5ex>[d]^-{(-) \lotimes_A A^s} \\
\cDb_{(B_\circ )}(X^s_{\circ},\E) \ar@<0.5ex>[u]^-{\pi_s^![-1]} & \cDb \Perv_{(B_\circ )}(X^s_\circ ,\E) \ar@<0.5ex>[u]^-{\pi_s^{[!]}} \ar[r]^-{\Pi_\circ ^s}_-{\sim} \ar[l]_-{\mathsf{real}_{\circ}^s}^-{\sim} & \cDb \bigl( \Modfr (A^s,\phi^s) \bigr). \ar@<0.5ex>[u]^-{\mathsf{res}}
}
\]

\end{prop}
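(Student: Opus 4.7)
The proof follows the same pattern as Proposition \ref{prop:direct-inverse-image}, so the plan is to reduce the commutativity of both squares to their established non-equivariant counterparts plus formal compatibilities. First, I would deal with the right square. The functor $\mathsf{res}: \Modfr(A^s,\phi^s) \to \Modfr(A,\phi)$ is exact (since restriction of scalars is exact at the level of $A$-modules and commutes with the automorphisms), so it derives trivially; meanwhile $(-)\otimes_A A^s$ is right exact, and a resolution of $(M,\phi_M) \in \Modfr(A,\phi)$ by objects whose underlying $A$-module is flat (which exist by Lemma \ref{lem:existence-geom-projective} applied iteratively, or equivalently by Lemma \ref{lem:projective-decomposable} in a similar spirit) computes $(-)\lotimes_A A^s$. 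Lemma \ref{lem:direct-inverse-image-0} gives commutativity of the right square at the abelian level. Passing to derived categories, using the commutative diagram in Lemma \ref{lem:perverse-sheaves-modules} to see that the forgetful functor $\For$ sends such resolutions to projective resolutions on $X$ (so that the geometric $L\pi_s{}_{[!]}$ exists and is compatible), the commutativity extends to the derived level; the second isomorphism of the right square then follows by adjunction, since $\pi_s^{[!]}$ is exact and right adjoint to $L\pi_s{}_{[!]}$, while $\mathsf{res}$ is right adjoint to $(-)\lotimes_A A^s$.

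Next, for the left square, I would invoke \cite[Lemma A.7.1]{Be} exactly as in the proof of Proposition \ref{prop:direct-inverse-image}. The exact functor $\pi_s^{[!]} = \pi_s^![-1]$ is t-exact on the relevant categories of perverse sheaves (both over $\Fqb$ and over $\Fq$, as the shift by $-1$ compensates for the smooth pullback shift of $1$), and the realization functor is compatible with t-exact functors between abelian hearts; this yields the isomorphism
\[
\mathsf{real}_\circ \circ \pi_s^{[!]} \ \cong \ \pi_s^![-1] \circ \mathsf{real}_\circ^s.
\]
The other isomorphism, $\mathsf{real}_\circ^s \circ L\pi_s{}_{[!]} \cong \pi_{s!}[1] \circ \mathsf{real}_\circ$, then follows formally by adjunction: both functors are left adjoint to $\pi_s^![-1] \circ \mathsf{real}_\circ^s \cong \mathsf{real}_\circ \circ \pi_s^{[!]}$, since $\mathsf{real}_\circ$ and $\mathsf{real}_\circ^s$ are equivalences by Proposition \ref{prop:realization-equivalence-0}, $\pi_{s!}[1]$ is left adjoint to $\pi_s^![-1]$ on $\cDb_{(B_\circ)}$, and $L\pi_s{}_{[!]}$ is left adjoint to $\pi_s^{[!]}$ on the derived categories of perverse sheaves.

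The main obstacle I anticipate is not the formal manipulation of the two squares themselves, but rather verifying that the functor $L\pi_s{}_{[!]}$ in the mixed (Frobenius-equivariant) setting is well-defined and bounded, i.e.~that one can construct enough $\pi_s{}_{[!]}$-acyclic resolutions inside $\Perv_{(B_\circ)}(X_\circ,\E)$ whose images under $\For$ are genuine projective resolutions computing the non-equivariant $L\pi_s{}_{[!]}$. This is handled by combining Lemma \ref{lem:perverse-sheaves-modules}, Lemma \ref{lem:existence-geom-projective} and Corollary \ref{cor:realization-functor}(2): projective perverse sheaves on $X$ lift to $X_\circ$ (by the construction in Lemma \ref{lem:proj-generator-X_0}), their morphism spaces are $\O$-free and compatible with $\F$-reduction, and every object admits a finite projective resolution, so finite-length lifted projective resolutions exist and compute both derived functors simultaneously. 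Once this is in place, the compatibility of the two squares follows formally.
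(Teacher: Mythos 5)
Your proposal is correct and follows essentially the same route as the paper, which simply transports the proof of Proposition \ref{prop:direct-inverse-image} to the Frobenius-equivariant setting: the right square via Lemma \ref{lem:direct-inverse-image-0} (derived using resolutions with flat underlying $A$-module), and the left square via \cite[Lemma A.7.1]{Be} followed by uniqueness of left adjoints. The extra care you take with the existence and boundedness of $L\pi_s{}_{[!]}$ in the equivariant setting is exactly the point the paper disposes of in the paragraph preceding the statement.
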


\subsection{Existence of a decomposable projective
  resolution} \label{subsec: existence decomposable}

From now on we will have to work simultaneously with coefficients $\O$ and $\F$, but to distinguish the two cases. We add superscripts or indices to indicate which coefficients we are working with.

We fix an object $\cP_{\circ}^{\O}$ in $\Perv_{(B_\circ )}(X_\circ ,\O)$ such that $\cP^{\O}:=\For(\cP_\circ ^{\O})$ is a projective generator of the category $\Perv_{(B)}(X,\O)$. For simplicity, we also assume that there exists a surjection $\cP_{\circ}^{\O} \onto \IC_{e,\circ}^{\O}$. Set $\cP_{\circ}^{\F}:=\F(\cP_\circ^{\O})$ and $\cP^{\F}:=\For(\cP_{\circ}^{\F}) \cong \F(\cP^{\O})$. One can easily check that $\cP^{\F}$ is a projective generator of the category $\Perv_{(B)}(X,\F)$.

Set
\begin{align*}
A_{\O}:=\End_{\O}(\cP^{\O}), & \quad A_{\O}^s:=\End_{\O}(\pi_s{}_{[!]} \cP^{\O}), \\
A_{\F}:=\End_{\F}(\cP^{\F}), & \quad A_{\F}^s:=\End_{\F}(\pi_s{}_{[!]} \cP^{\F})
\end{align*}
(for any simple reflection $s$). Note that by Lemma \ref{lem:direct-image-proj-generator} and  Proposition \ref{prop:properties-projectives-2}(4), $A_{\O}$ and $A^s_{\O}$ are $\O$-free (for any $s$) and the natural morphism
\begin{equation}
\label{eqn:modular-reduction-A}
\F \otimes_{\O} A_{\O} \to A_{\F}
\end{equation}
is an isomorphism. We denote by $\phi$, resp.~$\phi^s$, the automorphism induced by the Frobenius on $A_{\O}$, resp.~on $A^s_{\O}$. In addition, we use the same notation as in \S \ref{ss:direct-inverse-image}.

To prove an analogue of isomorphism \eqref{eqn:modular-reduction-A}
for $A^s$, we need some preparation. We denote by
$\mathsf{Proj}(X,\O)$, $\mathsf{Proj}(X^s,\O)$ the categories of
projective objects in $\Perv_{(B)}(X,\O)$, $\Perv_{(B)}(X^s,\O)$, and
similarly for coefficients $\F$. Note that the functors $\pi_{s[!]}$
and $\F = \F \otimes_\O^L(-)$ send projective perverse sheaves to projective perverse sheaves (see \S \ref{ss:notation} and Proposition \ref{prop:properties-projectives-2}).

\begin{lem}
\label{lem:F-pi-projectives}

There exists an isomorphism of functors
\[
\pi_{s[!]} \circ \F \ \cong \ \F \circ \pi_{s[!]} : \mathsf{Proj}(X,\O) \to \mathsf{Proj}(X^s,\F).
\]

\end{lem}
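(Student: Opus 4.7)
The plan is to reduce the desired natural isomorphism to the following two facts: for any projective $\cP$ in $\mathsf{Proj}(X,\O)$, both complexes $\pi_{s!}\cP[1]$ and $\pi_{s!}\F(\cP)[1]$ are perverse sheaves canonically identified with $\pi_{s[!]}\cP$ and $\pi_{s[!]}\F(\cP)$ respectively, and the modular reduction functor commutes with $\pi_{s!}$ on the entire constructible derived category.

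First I would note, via Proposition \ref{prop:properties-projectives-2}(1), that $\F(\cP)$ is projective in $\Perv_{(B)}(X,\F)$ whenever $\cP$ is projective in $\Perv_{(B)}(X,\O)$, so $\pi_{s[!]}\F(\cP)$ makes sense and lies in $\mathsf{Proj}(X^s,\F)$. The crucial input is then Proposition \ref{prop:direct-inverse-image}: it identifies the derived functor $L\pi_{s[!]}$ with $\pi_{s!}[1]$ after passing through the realization equivalences. For a projective $\cP$ we have $L\pi_{s[!]}\cP=\pi_{s[!]}\cP$ concentrated in cohomological degree zero of $\cDb\Perv_{(B)}(X^s,\O)$; transporting this across the realization yields a natural isomorphism $\pi_{s!}\cP[1]\cong\pi_{s[!]}\cP$ in $\cDb_{(B)}(X^s,\O)$, and in particular the left-hand side is perverse. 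The identical argument applied to the projective object $\F(\cP)$ produces the analogous isomorphism $\pi_{s!}\F(\cP)[1]\cong\pi_{s[!]}\F(\cP)$.

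To conclude, I would compose these with the base change isomorphism $\F\circ\pi_{s!}\cong\pi_{s!}\circ\F$ recalled in \S\ref{ss:definitions-first-results} to obtain
\[
\F(\pi_{s[!]}\cP)\cong\F(\pi_{s!}\cP[1])\cong\pi_{s!}\F(\cP)[1]\cong\pi_{s[!]}\F(\cP),
\]
natural in $\cP$. The essential content of the argument lies in the projectivity-dependent perversity of $\pi_{s!}\cP[1]$: for a typical standard object $\Delta_w$ with $w\in W^s$ we have $\pi_{s!}\Delta_w[1]=\Delta^s_w[1]$, which sits in perverse degree $-1$, so the identification $\pi_{s!}(-)[1]\cong\pi_{s[!]}(-)$ genuinely depends on the cancellations that occur within a $\Delta$-filtered projective cover. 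Proposition \ref{prop:direct-inverse-image} packages exactly this vanishing, which is why it is the main technical hook of the proof.
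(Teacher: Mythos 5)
Your proof is correct, but it takes a genuinely different route from the paper's. The paper argues entirely at the level of perverse truncations: for any $\cM$ with $\pH^i(\cM)=0$ for $i>0$ it uses the truncation triangle $\cN \to \cM \to \pH^0(\cM)$ and the right exactness of $\F$ to build a functorial map $\F(\pH^0(\cM)) \to \pH^0(\F\cM)$, which is an isomorphism whenever the source is perverse; it then applies this to $\cM = \pi_{s!}Q[1]$, using $\F\circ\pi_{s!}\cong\pi_{s!}\circ\F$ and the fact that $\F(\pi_{s[!]}Q)$ is perverse (Proposition \ref{prop:properties-projectives-2}(1) applied on $X^s$). You instead invoke Proposition \ref{prop:direct-inverse-image} (in both its $\O$- and $\F$-versions) to get the stronger intermediate statement that $\pi_{s!}\cP[1]$ is already perverse and naturally isomorphic to $\pi_{s[!]}\cP$ for $\cP$ projective, and then sandwich with the base-change isomorphism; this works, and there is no circularity since Proposition \ref{prop:direct-inverse-image} is proved independently in \S\ref{ss:direct-inverse-image}. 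The trade-off: your argument is shorter and makes the role of projectivity transparent (no perverse cohomology in degree $-1$ survives), but it leans on the realization equivalences and the derived-functor formalism, whereas the paper's truncation argument is elementary and only needs the right exactness of $\F$ and of $\pi_{s!}[1]$. Your closing remark correctly identifies why projectivity is essential.
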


\begin{proof}
First, let $\cM$ be in $\cDb_{(B)}(X,\O)$ with $\pH^i(\cM)=0$ for $i>0$. Consider the canonical truncation triangle
\[
\cN \to \cM \to \pH^0(\cM) \triright
\]
and its modular reduction
\[
\F \cN \to \F \cM \to \F \bigl( \pH^0(\cM) \bigr) \triright.
\]
As the functor $\F$ is right exact, the second and third terms in the latter triangle have canonically isomorphic $0$-th perverse cohomology. Hence there exists a functorial morphism
\[
\F \bigl( \pH^0(\cM) \bigr) \to \pH^0 \bigl( \F \bigl( \pH^0(\cM) \bigr) \bigr) \cong \pH^0 \bigl( \F \cM \bigr),
\]
which is an isomorphism if the left-hand side is a perverse sheaf.

Now, if $Q$ is in $\mathsf{Proj}(X,\O)$ we obtain isomorphisms
\[
\F(\pi_{s[!]} Q) := \F \bigl( \pH^0(\pi_{s!} Q [1]) \bigr) \xrightarrow{\sim} \pH^0 \bigl( \F(\pi_{s!} Q) [1] \bigr) \cong \pH^0 \bigl( \pi_{s!} (\F Q) [1] \bigr) = \pi_{s[!]} (\F Q),
\]
which finishes the proof.
\end{proof}

\begin{prop}
\label{prop:modular-reduction-As}

There exists an isomorphism of $\F$-algebras $\F \otimes_{\O} A^s_{\O} \cong A^s_{\F}$ such that the following diagram commutes:
\[
\xymatrix{
\F \otimes_{\O} A_{\O} \ar[r] \ar[d]^-{\wr}_{\eqref{eqn:modular-reduction-A}} & \F \otimes_{\O} A^s_{\O} \ar[d]_-{\wr} \\
A_{\F} \ar[r] & A^s_{\F}
}
\]
where horizontal morphisms are induced by the functor $\pi_{s[!]}$.

\end{prop}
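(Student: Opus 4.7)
The plan is to deduce the proposition from two earlier results: Proposition \ref{prop:properties-projectives-2}(4), which provides a natural identification of $\F \otimes_\O \End_\O(-)$ with $\End_\F(\F(-))$ on projective objects, and Lemma \ref{lem:F-pi-projectives}, which provides a natural isomorphism $\F \circ \pi_{s[!]} \cong \pi_{s[!]} \circ \F$ on projectives. The construction of the isomorphism and the commutativity of the square will then both follow by naturality.

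First, since $\cP^\O$ is projective in $\Perv_{(B)}(X,\O)$, Lemma \ref{lem:direct-image-proj-generator} (or the remarks of \S\ref{ss:notation}) shows that $\pi_{s[!]}\cP^\O$ is projective in $\Perv_{(B)}(X^s,\O)$. Applying Proposition \ref{prop:properties-projectives-2}(4) (which is valid for the Bruhat stratification of $X^s$, as it satisfies \eqref{eq:strata assumption}) we obtain a natural isomorphism of $\F$-algebras
\[
\F \otimes_\O A^s_\O \ = \ \F \otimes_\O \End_\O(\pi_{s[!]}\cP^\O) \ \xrightarrow{\sim} \ \End_\F\bigl(\F(\pi_{s[!]}\cP^\O)\bigr).
\]
Next, by Lemma \ref{lem:F-pi-projectives} there is an isomorphism $\F(\pi_{s[!]}\cP^\O) \cong \pi_{s[!]}(\F\cP^\O) = \pi_{s[!]}\cP^\F$, which induces an isomorphism $\End_\F(\F(\pi_{s[!]}\cP^\O)) \cong A^s_\F$. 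Composing these two isomorphisms yields the desired isomorphism $\F \otimes_\O A^s_\O \xrightarrow{\sim} A^s_\F$.

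It remains to verify the commutativity of the diagram. For $f \in A_\O = \End_\O(\cP^\O)$, the top horizontal arrow sends $1 \otimes f$ to $1 \otimes \pi_{s[!]}(f)$; chasing this through the right vertical arrow gives the endomorphism of $\pi_{s[!]}\cP^\F$ obtained by first reducing mod $\pi$ (using the natural map of Proposition \ref{prop:properties-projectives-2}(4) applied to $\pi_{s[!]}\cP^\O$) and then transporting along the isomorphism of Lemma \ref{lem:F-pi-projectives}. Going the other way, the left vertical arrow sends $1 \otimes f$ to $\F(f) \in A_\F$, and the bottom horizontal arrow then sends this to $\pi_{s[!]}(\F f) \in A^s_\F$. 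The two composites agree because the isomorphism of Proposition \ref{prop:properties-projectives-2}(4) is a morphism of functors in its argument (so it is compatible with applying $\pi_{s[!]}$ to a morphism), and because the isomorphism of Lemma \ref{lem:F-pi-projectives} is a natural transformation of functors (so that transporting $\F(\pi_{s[!]} f)$ along it yields $\pi_{s[!]}(\F f)$).

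This proof is essentially bookkeeping; no step poses a genuine obstacle, as both the key naturality statements have already been established in Lemma \ref{lem:F-pi-projectives} and Proposition \ref{prop:properties-projectives-2}(4).
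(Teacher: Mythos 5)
Your proposal is correct and coincides with the paper's own proof: the isomorphism is built as the composite $\F \otimes_{\O} \End_{\O}(\pi_{s[!]}\cP^{\O}) \xrightarrow{\sim} \End_{\F}(\F(\pi_{s[!]}\cP^{\O})) \xrightarrow{\sim} \End_{\F}(\pi_{s[!]}\cP^{\F})$ using Proposition \ref{prop:properties-projectives-2}(4) and Lemma \ref{lem:F-pi-projectives}, with commutativity following from functoriality. Your diagram chase just spells out the functoriality argument the paper leaves implicit.
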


\begin{proof}
Our isomorphism can be defined as the composition
\[
\F \otimes_{\O} A^s_{\O} = \F \otimes_{\O} \End_{\O}(\pi_{s[!]} \cP^{\O}) \xrightarrow{\sim} \End_{\F}(\F(\pi_{s[!]} \cP^{\O})) \xrightarrow{\sim} \End_{\F}(\pi_{s[!]} \cP^{\F})
\]
where the first isomorphism follows from Proposition \ref{prop:properties-projectives-2}(4), and the second one follows from Lemma \ref{lem:F-pi-projectives}. Commutativity of the diagram follows from functoriality.
\end{proof}

We consider the following conditions:
\begin{equation}
\label{eqn:condition}
A_{\O} \text{ is } q\text{-decomposable, and } A^s_{\O} \text{ is } q\text{-decomposable for any } s \in S.
\end{equation}
Recall that, by Propositions \ref{prop:weights-morphisms-projectives} and \ref{prop:End-decomposable-partial}, if the order of $q$ in $\F$ is strictly bigger than $2 \ell(w_0)$ then one can choose $\cP^{\O}_\circ$ such that condition \eqref{eqn:condition} is satisfied. However, most of our results below hold under this condition, independently of the proofs in Section \ref{sec:bounding-weights}.

\begin{lem}
\label{lem:morphisms-e-decomposable}

Assume that condition \eqref{eqn:condition} is satisfied. Then the $(A_{\O},\phi)$-module
\[
\Hom_{\O}(\cP^{\O},\IC_{e}^{\O})
\]
is $\O$-free and $q$-decomposable. Moreover, the natural morphism
\[
\F \otimes_{\O} \Hom_{\O}(\cP^{\O},\IC_{e}^{\O}) \to \Hom_{\F}(\cP^{\F},\IC_e^{\F})
\]
is an isomorphism.

\end{lem}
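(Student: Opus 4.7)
The plan is to separate the two assertions and reduce each to results already in hand.

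First, observe that since $e$ is the unit of $W$, the stratum $X_e = B/B$ is a single point, which is closed in $X$ (being the minimum of the Bruhat order). Hence the intermediate extension is trivial, so $\IC^{\O}_e = \Delta^{\O}_e$ and likewise $\IC^{\F}_e = \Delta^{\F}_{e,\F}$. The $\O$-freeness of $\Hom_{\O}(\cP^{\O}, \IC^{\O}_e)$, together with the base change isomorphism asserted in the second half of the lemma, are then immediate consequences of Proposition \ref{prop:properties-projectives-2}(2) applied to the projective object $\cP^{\O}$ and the stratum $\{e\}$, using that $\F(\cP^{\O}) = \cP^{\F}$.

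For the $q$-decomposability, the idea is to exhibit $\Hom_{\O}(\cP^{\O}, \IC^{\O}_e)$ as a quotient of $A_{\O}$ in the category of $(\O, \phi)$-modules, and then invoke Lemma \ref{lem:quotient-decomposable}. Applying $\For$ to the given surjection $\cP^{\O}_\circ \onto \IC^{\O}_{e,\circ}$ in $\Perv_{(B_\circ)}(X_\circ,\O)$ yields a Frobenius-equivariant surjection $\alpha : \cP^{\O} \onto \IC^{\O}_e$. Since $\cP^{\O}$ is projective, postcomposition with $\alpha$ defines a surjection
\[
\alpha_* : A_{\O} = \Hom_{\O}(\cP^{\O}, \cP^{\O}) \onto \Hom_{\O}(\cP^{\O}, \IC^{\O}_e)
\]
of right $A_{\O}$-modules; and because $\alpha$ is Frobenius-equivariant, this is a morphism in $\Modfr(A_{\O}, \phi)$.

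Let $K := \ker(\alpha_*)$. By Proposition \ref{prop:properties-projectives-2}(4), $A_{\O}$ is $\O$-free of finite rank, and by the first step the quotient $A_{\O}/K \cong \Hom_{\O}(\cP^{\O}, \IC^{\O}_e)$ is $\O$-free as well, so $K$ is a $\phi$-stable $\O$-submodule of $A_{\O}$ with $\O$-free quotient. By assumption \eqref{eqn:condition}, $A_{\O}$ is $q$-decomposable; Lemma \ref{lem:quotient-decomposable} then gives that $A_{\O}/K$ is decomposable under $\phi$, and since any polynomial $\prod_{i \in I}(\phi - q^i \id)$ annihilating $A_{\O}$ also annihilates the quotient, $\Hom_{\O}(\cP^{\O}, \IC^{\O}_e)$ has $q$-weights obtained from the same finite set $I \subset \Z$, hence is $q$-decomposable. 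There is no real obstacle in this proof; the only point requiring attention is verifying that $\alpha$ is Frobenius-equivariant, which is automatic since $\alpha$ is pulled back from a morphism defined over $\F_q$.
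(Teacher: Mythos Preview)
Your proof is correct and follows essentially the same approach as the paper: identify $\IC_e^{\O}$ with $\Delta_e$ to invoke Proposition~\ref{prop:properties-projectives-2}(2) for $\O$-freeness and base change, then use the assumed surjection $\cP_\circ^{\O}\onto\IC_{e,\circ}^{\O}$ to present $\Hom_{\O}(\cP^{\O},\IC_e^{\O})$ as an $\O$-free quotient of the $q$-decomposable $(\O,\phi)$-module $A_{\O}$ and apply Lemma~\ref{lem:quotient-decomposable}. Your version is slightly more explicit about why the surjection is $\phi$-equivariant and why the quotient inherits $q$-weights, but the argument is the same.
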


\begin{proof} The fact that  $\Hom_{\O}(\cP^{\O},\IC_{e}^{\O})$ is
  $\O$-free and the final claim follow from Proposition
  \ref{prop:properties-projectives-2}(2) because $\IC_e^{\O} =
  \Delta_e$. By our choice of $\cP_{\circ}^{\O}$ we have a surjection of
  $(A_{\O},\phi)$-modules 
\[
A_{\O} \twoheadrightarrow \Hom_{\O}(\cP^{\O},\IC_{e}^{\O}).
\]
Hence the fact that $\Hom_{\O}(\cP^{\O},\IC_{e}^{\O})$ is
$q$-decomposable follows from Lemma
\ref{lem:quotient-decomposable}.
\end{proof}

Recall the objects $\cB\cS_{\circ}^{\O}$, $\cB\cS_{\circ}^{\F}$ defined in \S
\ref{subsec:BS}. By definition we have an isomorphism $\F(\cB\cS_{\circ}^{\O}) \cong
\cB\cS_{\circ}^{\F}$. Recall also the category $\Modfrdec (A_{\O},\phi)$ defined in \S \ref{ss:O-modules-automorphisms}. We denote by
\[
\Modfrdecproj (A_{\O},\phi)
\subset
\Modfrdec (A_{\O},\phi)
\]
the full additive subcategory with objects those $(P,\phi_P)$ such that $P$ is a projective $A_{\O}$-module. Note
that if $(P,\phi_P)$ is in
$\Modfrdecproj (A_{\O},\phi)$, then $P$ is
$\O$-free and $\F \otimes_{\O} P$ is a projective $A_{\F}$-module,
endowed with a natural automorphism induced by $\phi_P$. We denote by
$\F(P,\phi_P)$ the corresponding object of $\Modfr (A_{\F},\phi)$.

The following result is the technical key to our proof of formality.

\begin{prop}
\label{prop:projective-resolution-decomposable}

Assume that condition \eqref{eqn:condition} is satisfied. Then there exists a bounded complex $M^{\bullet}_\circ$ of objects of $\Modfrdecproj (A_{\O},\phi)$ which satisfies the following conditions:
\begin{enumerate}
\item the image of $M^{\bullet}_\circ$ in $\cDb \bigl( \Modfr (A_{\O},\phi) \bigr)$ coincides (up to isomorphism) with the image of $\cB\cS_{\circ}^{\O}$ under the equivalence
\[
\Pi_{\circ}^{\O} \circ (\mathsf{real}_{\circ}^{\O})^{-1} : \cDb_{(B_\circ )}(X_\circ ,\O) \ \xrightarrow{\sim} \ \cDb \bigl( \Modfr (A_{\O},\phi) \bigr);
\]
\item the image of $\F(M^{\bullet}_{\circ})$ in $\cDb \bigl( \Modfr (A_{\F},\phi) \bigr)$ coincides (up to isomorphism) with the image of $\cB\cS_{\circ}^{\F}$ under the equivalence
\[
\Pi_{\circ}^{\F} \circ (\mathsf{real}_{\circ}^{\F})^{-1} : \cDb_{(B_{\circ})}(X_\circ ,\F) \ \xrightarrow{\sim} \ \cDb \bigl( \Modfr (A_{\F},\phi) \bigr).
\]
\end{enumerate}

\end{prop}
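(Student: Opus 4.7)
The plan is to build $M^\bullet_\circ = \bigoplus_{f \in F} M^\bullet_{f,\circ}$, where each $M^\bullet_{f,\circ}$ is constructed inductively on the length of $f$ so as to represent $\cB\cS^\O_{f,\circ}$. For the base case $f = ()$, the object $\IC_{e,\circ}^\O = \Delta_{e,\circ}$ corresponds under $\Pi^\O_\circ$ to $\Hom_\O(\cP^\O, \IC_e^\O)$, which lies in $\Modfrdec (A_\O, \phi)$ by Lemma~\ref{lem:morphisms-e-decomposable}. I would construct a bounded resolution of this module in $\Modfrdecproj (A_\O, \phi)$ by iteratively taking decomposable projective covers (Lemma~\ref{lem:projective-decomposable}) and observing that kernels remain $q$-decomposable (Lemma~\ref{lem:quotient-decomposable}, whose $\O$-freeness hypothesis is automatic, since kernels of surjections between $\O$-free modules are $\O$-free). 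Boundedness of the resolution follows from Corollary~\ref{cor:realization-functor}(2).

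For the inductive step, write $f = (s, g)$ and let $N_\circ^\bullet$ represent $\cB\cS^\O_{g,\circ}$. By Proposition~\ref{prop:direct-inverse-image-0}, the effect of $\pi_s^! \pi_{s!}$ translates (modulo cancellation of the net shift) to $\mathsf{res} \circ (-) \lotimes_{A_\O} A^s_\O$ on the module side. Since the terms of $N_\circ^\bullet$ are $A_\O$-projective, this derived tensor is computed termwise by $N_\circ^\bullet \otimes_{A_\O} A^s_\O$, whose terms are projective $(A^s_\O, \phi^s)$-modules. These terms remain $q$-decomposable as $(\O,\phi)$-modules: each term of $N_\circ^\bullet$ is a summand of a free module $V \otimes_\O A_\O$ with $V$ decomposable, so after tensoring it becomes a summand of $V \otimes_\O A^s_\O$, and the latter is decomposable because $A^s_\O$ is so by assumption~\eqref{eqn:condition} (and tensor products of decomposable $(\O,\phi)$-modules are decomposable). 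Restriction along $A_\O \to A^s_\O$ preserves the $(\O,\phi)$-structure, hence $q$-decomposability, but may destroy $A_\O$-projectivity.

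To recover projectivity, I would apply a Cartan--Eilenberg construction: replace each term of the restricted complex by a bounded resolution in $\Modfrdecproj (A_\O, \phi)$ (again by iterating Lemmas~\ref{lem:projective-decomposable} and~\ref{lem:quotient-decomposable}), lift the differentials to morphisms of resolutions using projectivity, and take the total complex. Boundedness of each individual resolution uses the finite global dimension of $\Modfr A_\O$, and Lemma~\ref{lem:geom-projective} bounds the projective dimension of objects of $\Modfrdecproj$ inside $\Modfr (A_\O, \phi)$, so the total complex is bounded. This defines $M^\bullet_{f,\circ}$, completing the induction.

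For assertion~(2), I would verify throughout that every cover used has the form $V \otimes_\O A_\O \onto N$ with both modules $\O$-free, so its reduction modulo $\pi$ is a cover $\F V \otimes_\F A_\F \onto \F N$; combined with the isomorphism $\F \otimes_\O A^s_\O \cong A^s_\F$ of Proposition~\ref{prop:modular-reduction-As}, this shows inductively that $\F \otimes_\O M^\bullet_{f,\circ}$ is exactly the analogous complex for $\cB\cS^\F_{f,\circ}$. I expect the main obstacle to be the combined Cartan--Eilenberg step: simultaneously ensuring that the terms lie in $\Modfrdecproj (A_\O, \phi)$, that the resolution is bounded, and that the construction commutes with modular reduction. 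All three requirements follow from the $\O$-freeness of the modules involved and careful bookkeeping via the lemmas above.
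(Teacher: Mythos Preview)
Your inductive strategy is natural, but the Cartan--Eilenberg step has a real gap. When you write ``lift the differentials to morphisms of resolutions using projectivity,'' the only projectivity available is $A_\O$-projectivity; objects of $\Modfrdecproj(A_\O,\phi)$ are \emph{not} projective in the abelian category $\Modfr(A_\O,\phi)$ (already $(\O,q\cdot\id)$ fails to be projective in $\Modfr(\O,\id)$; compare the remark following Lemma~\ref{lem:quotient-decomposable}). An $A_\O$-linear lift of each differential certainly exists, but nothing forces it to be $\phi$-equivariant, and Lemma~\ref{lem:geom-projective} only gives vanishing of $\Ext^{\ge 2}$, which is not enough for the comparison theorem you are invoking. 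The gap could in principle be closed by using a \emph{functorial} resolution (e.g.\ a suitably truncated bar construction applied termwise), but that is not what you sketched.

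The paper sidesteps this entirely by replacing the inductive lift with a one-shot functorial construction. For each simple reflection $u$ appearing in $f$, it resolves $A^u_\O$ once as an $(A_\O,A_\O)$-\emph{bimodule} by $q$-decomposable projective bimodules $Q^\bullet_u\xrightarrow{\qis}A^u_\O$ (iterating Lemmas~\ref{lem:projective-decomposable} and~\ref{lem:quotient-decomposable} in the bimodule category), and then for $f=(s,\dots,r)$ sets
\[
{}'M^\bullet_{f,\circ}\ :=\ \Pi_{\circ}^{\O}(\IC_{e,\circ}^{\O})\otimes_{A_\O}Q^\bullet_r\otimes_{A_\O}\cdots\otimes_{A_\O}Q^\bullet_s.
\]
Since this is a tensor product of fixed complexes, all differentials are $\phi$-equivariant by construction; no lifting is required. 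The new ingredient guaranteeing that the terms land in $\Modfrdecproj(A_\O,\phi)$ is Lemma~\ref{lem:tensor-product-projective}: an $\O$-free right module tensored over $A_\O$ with a projective bimodule is a projective right $A_\O$-module. Decomposability of the terms follows from Lemma~\ref{lem:quotient-decomposable}, since each is a torsion-free quotient of a tensor product of $q$-decomposable $(\O,\phi)$-modules (this is also the correct argument for your claim about decomposability after $-\otimes_{A_\O}A^s_\O$; your ``summand'' assertion is not quite justified). The resulting complex is a priori unbounded, so one truncates at the end using finite global dimension; compatibility with modular reduction then follows from $\O$-freeness, essentially as in your sketch.
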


\begin{proof}
Let $f \in F$, and write $f=(s,t,\cdots,r)$. By Proposition \ref{prop:direct-inverse-image-0}, the image of the object $\cB\cS_{f,\circ}^{\O}$ of $\cDb_{(B_\circ )}(X_\circ ,\O)$ under $\Pi_{\circ}^{\O} \circ (\mathsf{real}_{\circ}^{\O})^{-1}$ is isomorphic to the object
\begin{equation}
\label{eqn:BS-modules}
\Pi_{\circ}^{\O}(\IC_{e,\circ}^{\O}) \, \lotimes_{A_{\O}} \, A^r_{\O} \, \lotimes_{A_{\O}} \, \cdots \, \lotimes_{A_{\O}} \, A^t_{\O} \, \lotimes_{A_{\O}} \, A^s_{\O}
\end{equation}
of $\cDb \bigl( \Modfr (A,\phi) \bigr)$, where for any $u$, $A^u_{\O}$ is considered as an $A_{\O}$-bimodule with compatible automorphism. By Lemma \ref{lem:morphisms-e-decomposable}, $\Pi_{\circ}^{\O}(\IC_{e,\circ}^{\O})=\Hom_{\O}(\cP^{\O},\IC_{e}^{\O})$ is a $q$-decomposable, $\O$-free right $(A_{\O},\phi)$-module. 

For any $u$ that appears in $f$ one can construct a resolution
\[
Q^{\bullet}_u \xrightarrow{\qis} A^u_{\O}
\]
of $A^u_{\O}$ by objects of $\Modfrdecproj (A_{\O} \otimes_{\O} A_{\O}^{\mathrm{op}},\phi \otimes \phi)$ which is concentrated in non-positive degrees (but possibly infinite). Indeed, by Lemma \ref{lem:projective-decomposable} there exists an object $Q^0$ in $\Modfrdecproj (A_{\O} \otimes_{\O} A_{\O}^{\mathrm{op}},\phi \otimes \phi)$ and a surjection $Q^0 \onto A^u_{\O}$. By Lemma \ref{lem:quotient-decomposable} the kernel of this surjection is $q$-decomposable, and it is $\O$-free, hence we can repeat the argument.

Then one can define the complex
\[
{}' M^{\bullet}_{f,\circ} \ := \ \Pi_{\circ}^{\O}(\IC_{e,\circ}^{\O}) \otimes_{A_{\O}} Q^{\bullet}_r \otimes_{A_{\O}} \cdots \otimes_{A_{\O}} Q^{\bullet}_t \otimes_{A_{\O}} Q^{\bullet}_s.
\]
Since a projective right $A_{\O} \otimes_{\O} A_{\O}^{\mathrm{op}}$-module is also projective as a left $A_{\O}$-module, the image of ${}' M^{\bullet}_{f,\circ}$ in $\cDb \bigl( \Modfr (A_{\O},\phi) \bigr)$ is \eqref{eqn:BS-modules}, hence is isomorphic to $\Pi_{\circ}^{\O} \circ (\mathsf{real}_{\circ}^{\O})^{-1} (\cB\cS_{f,\circ}^{\O})$. 

By Lemma \ref{lem:tensor-product-projective} below the underlying $A_{\O}$-module of each $M_{f,\circ}^k$ is projective, and each $M^k_{f,\circ}$ is $q$-decomposable by Lemma \ref{lem:quotient-decomposable}. Indeed, it is a finite direct sum of torsion-free quotients of $(\O,\phi)$-modules of the form
\[
\Pi_{\circ}^{\O}(\IC_{e,\circ}^{\O}) \otimes_{\O} Q_r^{k_r} \otimes_{\O} \cdots \otimes_{\O} Q_t^{k_t} \otimes_{\O} Q_s^{k_s},
\]
the latter being $\O$-free and $q$-decomposable.

We claim that one can choose $k \ll 0$ such that $M_{f,\circ}^\bullet:=\tau^{\geq k}({}' M_{f,\circ}^\bullet)$ is still a complex of objects of $\Modfrdecproj (A_{\O},\phi)$ whose image in the derived category is \eqref{eqn:BS-modules}. Indeed the complex ${}' M_{f,\circ}^\bullet$ has bounded cohomology, hence if $k \ll 0$ the natural morphism ${}'M_{f,\circ}^\bullet \to M_{f,\circ}^\bullet$ is a quasi-isomorphism. As the category $\Modfr A_\O$ has finite projective dimension, if $k \ll 0$ then the components of $M_{f,\circ}^\bullet$ still have an image in $\Modfr A_\O$ which is projective. And finally (again if $k \ll 0$), the only component of $M_{f,\circ}^\bullet$ which is neither $0$ nor a component of ${}'M_{f,\circ}^\bullet$ is $q$-decomposable by Lemma \ref{lem:quotient-decomposable}.

Set
\[
M_{\circ}^{\bullet} \ := \ \bigoplus_{f \in F} \, M_{f,\circ}^{\bullet}.
\]
Then, by the remarks above, this complex satisfies condition $(1)$. To prove that it satisfies condition $(2)$, one observes that the morphisms $Q^{\bullet}_u \to A_{\O}^u$ considered above are quasi-isomorphisms of complexes of free $\O$-modules; hence the induced morphisms $(\F \otimes_{\O} Q^{\bullet}_u) \to (\F \otimes_{\O} A_{\O}^u)$ are also quasi-isomorphisms. By the same argument, the morphisms $\F({}'M_{f,\circ}^\bullet) \to \F(M^\bullet_{f,\circ})$ are quasi-isomorphisms. Finally, by Lemma \ref{lem:morphisms-e-decomposable} we have $\F \otimes_{\O} \Pi_{\circ}^{\O}(\IC_{e,\circ}^{\O}) \cong \Pi_{\circ}^{\F}(\IC_{e,\circ}^{\F})$ and by Proposition \ref{prop:modular-reduction-As} we have $\F \otimes_{\O} A^u_{\O} \cong A^u_{\F}$. Hence the same arguments as for $\O$ show that $M^{\bullet}_{\circ}$ also satisfies condition $(2)$.
\end{proof}

\begin{lem}
\label{lem:tensor-product-projective}

Let $R$ be any $\O$-algebra which is finitely generated and free over $\O$.

Let $Q$ be a projective $(R,R)$-bimodule and $M$ be a right $R$-module which is $\O$-free. Then the right $R$-module $M \otimes_R Q$ is projective (hence also $\O$-free).

\end{lem}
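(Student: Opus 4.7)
The plan is to reduce to the case of a free bimodule via the standard observation that a projective bimodule is a direct summand of a free one. Concretely, since $Q$ is projective as a left $R \otimes_\O R^{\mathrm{op}}$-module, there exists a bimodule $Q'$ and an index set $J$ together with an isomorphism of $(R,R)$-bimodules
\[
Q \oplus Q' \ \cong \ \bigoplus_{j \in J} \, (R \otimes_\O R).
\]
Applying $M \otimes_R (-)$ and using that this functor commutes with direct sums, the question reduces to computing $M \otimes_R (R \otimes_\O R)$ as a right $R$-module, where the right $R$-action is on the second tensor factor.

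Next, I would use the elementary natural isomorphism of right $R$-modules
\[
M \otimes_R (R \otimes_\O R) \ \xrightarrow{\sim} \ M \otimes_\O R, \qquad m \otimes (r \otimes r') \mapsto (m \cdot r) \otimes r',
\]
where $R$ acts on the right on the second factor of $M \otimes_\O R$. Since $M$ is free as an $\O$-module, $M \cong \bigoplus_I \O$ for some index set $I$, and hence $M \otimes_\O R \cong \bigoplus_I R$ is a free right $R$-module. Combining this with the direct sum decomposition above shows that $M \otimes_R Q$ is a direct summand of a free right $R$-module, hence is projective. Since projective right $R$-modules are summands of free $R$-modules and $R$ is $\O$-free by assumption, $M \otimes_R Q$ is automatically $\O$-free as well.

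There is essentially no obstacle here: the argument is a routine combination of the definition of projectivity for bimodules, compatibility of tensor product with direct sums, and the identity $(-) \otimes_R (R \otimes_\O R) \cong (-) \otimes_\O R$. The only small point to be careful about is bookkeeping of the two $R$-actions on $R \otimes_\O R$, making sure that the isomorphism above is one of right $R$-modules where the right action comes from the second (rather than first) factor.
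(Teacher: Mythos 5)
Your argument is correct and is essentially the paper's proof: the authors likewise reduce to showing $M \otimes_R (R \otimes_{\O} R)$ is projective and then invoke the isomorphism $M \otimes_R (R \otimes_{\O} R) \cong M \otimes_{\O} R$ together with the $\O$-freeness of $M$. Your write-up just makes explicit the direct-summand reduction and the bookkeeping of the two $R$-actions, which the paper leaves implicit.
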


\begin{proof}
It suffices to prove that $M \otimes_R (R \otimes_{\O} R)$ is projective over $R$. However we have
\[
M \otimes_R (R \otimes_{\O} R) \ \cong \ M \otimes_{\O} R,
\]
hence this fact is clear from our hypothesis on $M$.
\end{proof}

\subsection{Dg-algebras of finite global dimension}
\label{ss:DGDf}

Let $R$ be a graded algebra, considered as a dg-algebra with differential $d=0$. We assume that $R$ is noetherian as an algebra, and that the category of finitely generated graded $R$-modules has finite projective dimension. There are a priori several ways to define the derived category of finitely generated right $R$-dg-modules. The goal of this subsection is to show that, under our assumptions, all of them coincide.

More precisely, we denote by $\sfD_1$ the category obtained from the homotopy category of right $R$-dg-modules which are finitely generated as $R$-modules by inverting quasi-isomorphisms. We denote by $\sfD_2$ the full subcategory of $\DGDr R$ whose objects are $R$-dg-modules which are isomorphic (in $\DGDr R$) to an $R$-dg-module which is finitely generated over $R$. (Note that it is not clear a priori that $\sfD_2$ is a triangulated subcategory of $\DGDr R$.) We denote by $\sfD_3$ the strictly full triangulated subcategory of $\DGDr R$ generated by finitely generated projective graded $R$-modules (considered as dg-modules with trivial differential). Finally, we denote by $\sfD_4$ the full subcategory of $\DGDr R$ whose objects have their cohomology finitely generated over $R$. We have inclusions
\[
\sfD_3 \subset \sfD_4, \quad \sfD_2 \subset \sfD_4
\]
and a natural essentially surjective functor
\begin{equation}
\label{eqn:functor-DGDf}
\sfD_1 \to \sfD_2.
\end{equation}

\begin{prop}
\label{prop:DGDf}

We have $\sfD_2=\sfD_4=\sfD_3$, and \eqref{eqn:functor-DGDf} is an equivalence of categories.

\end{prop}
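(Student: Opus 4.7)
The plan is to verify the straightforward parts first and then concentrate the real work on the inclusion $\sfD_4 \subseteq \sfD_3$, from which everything else will follow.

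First, $\sfD_3 \subseteq \sfD_4$ is clear because a finitely generated projective graded $R$-module is, as a dg-module with $d = 0$, its own cohomology. The inclusion $\sfD_2 \subseteq \sfD_4$ and the fact that $\sfD_4$ is a strictly full triangulated subcategory of $\DGDr R$ both follow from the noetherian hypothesis on $R$: for any finitely generated dg-module, the submodule of cocycles, being contained in a finitely generated module over a noetherian ring, is finitely generated, hence so is the cohomology; the long exact sequence in cohomology then shows that $\sfD_4$ is closed under cones. Essential surjectivity of \eqref{eqn:functor-DGDf} is immediate from the definition of $\sfD_2$.

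For the key inclusion $\sfD_4 \subseteq \sfD_3$, given $M \in \sfD_4$ I would construct a quasi-isomorphism $P^{\bullet} \xrightarrow{\qis} M$ in $\DGDr R$ where $P^{\bullet}$ is a bounded complex of finitely generated projective graded $R$-modules (each viewed as a dg-module with $d=0$). The technical input is that such $P$ are K-projective as dg-modules, because $d_R = 0$ makes $\Hom_R(P, -)$ exact on $\Modrz R$, so that $\Hom_{\DGDr R}(P, N) \cong \Hom_{\Modrz R}(P, H^{\bullet}(N))$. Using this, I would choose a surjection $P^0 \twoheadrightarrow H^{\bullet}(M)$ (possible by noetherianity and finite generation of $H^{\bullet}(M)$), lift it to a morphism $P^0 \to M$ in $\DGDr R$ that is surjective on cohomology, form the cone (which still lies in $\sfD_4$), and iterate. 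The construction mimics, step by step, that of a projective resolution of $H^{\bullet}(M)$ in $\Modfrz R$, so the hypothesis of finite global dimension guarantees termination in boundedly many steps, producing the desired bounded $P^{\bullet}$.

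From $\sfD_3 = \sfD_4$ the inclusion $\sfD_2 \subseteq \sfD_3$ is automatic; conversely the total dg-module of a bounded complex of finitely generated projective graded $R$-modules is itself finitely generated, giving $\sfD_3 \subseteq \sfD_2$. Full faithfulness of $\sfD_1 \to \sfD_2$ is then a standard Ore/Verdier argument: a morphism $M \to N$ in $\sfD_2$ between finitely generated dg-modules is represented by a roof $M \xleftarrow{\qis} M' \to N$ through some $M' \in \sfD_4$, and applying the resolution procedure of the previous paragraph to $M'$ refines the roof to one with finitely generated middle term, proving fullness; the analogous argument applied to null-homotopies yields faithfulness. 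The hard part will be correctly organising the inductive step in $\sfD_4 \subseteq \sfD_3$, for which the saving grace is that the K-projectivity of finitely generated projective graded $R$-modules reduces the entire dg-level lifting to ordinary lifting in $\Modfrz R$, where the finite global dimension hypothesis directly bounds the length of the resolution.
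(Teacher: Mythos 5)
Your proposal is correct and follows essentially the same route as the paper's proof: the crux in both is the inclusion $\sfD_4 \subset \sfD_3$, established by induction on the projective dimension of $\Ho^{\bullet}(M)$, using the $K$-projectivity of finitely generated projective graded modules (with $d=0$) to lift a projective cover of $\Ho^{\bullet}(M)$ to a morphism $P^0 \to M$ and then passing to the cone. The remaining inclusions and the full faithfulness of $\sfD_1 \to \sfD_2$ are disposed of in the paper exactly as you indicate, by noting that both categories are generated by the finitely generated graded projectives and that morphisms between these agree.
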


Below we will denote by $\DGDfr R$ any of the equivalent triangulated categories $\sfD_i$ ($i=1, 2, 3, 4$).

The main step in the proof of Proposition \ref{prop:DGDf} is the following lemma.

\begin{lem}
\label{lem:DGDf}

We have $\sfD_4 \subset \sfD_3$.

\end{lem}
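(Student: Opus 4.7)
My plan is to show that any $M \in \sfD_4$ admits a finite filtration (via iterated cones) by shifts of finitely generated graded projective $R$-modules placed in cohomological degree zero with trivial differential, which immediately places $M$ in $\sfD_3$. The construction is a standard projective resolution argument, adapted to the dg setting, whose termination is guaranteed by the finite global dimension hypothesis.

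Starting with $K^0 := M$, I will construct inductively a sequence of distinguished triangles
\[
K^{i+1} \to P^i \to K^i \triright
\]
where each $P^i$ is a finitely generated graded projective $R$-module equipped with trivial differential, and the morphism $P^i \to K^i$ is surjective on cohomology. The morphism itself is built by first choosing a surjection $P^i \twoheadrightarrow H^*(K^i)$ in the category of graded $R$-modules (possible by projectivity of $P^i$ in that category, once $H^*(K^i)$ is known to be finitely generated), then lifting it to a map from $P^i$ into the cocycles of $K^i$ — this lifting exists precisely because $P^i$ has zero differential, so a dg-module map from $P^i$ is the same as a graded map landing in $\ker d_{K^i}$, and projectivity of $P^i$ lets us lift chosen cocycle representatives of a generating set for $H^*(K^i)$.

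To see that the inductive hypothesis "$H^*(K^i)$ is finitely generated" propagates, I would use the long exact cohomology sequence of the triangle above: surjectivity of $H^*(P^i) \to H^*(K^i)$ forces the connecting maps $H^n(K^i) \to H^{n+1}(K^{i+1})$ to vanish, producing short exact sequences
\[
0 \to H^n(K^{i+1}) \to H^n(P^i) \to H^n(K^i) \to 0.
\]
These embed $H^*(K^{i+1})$ into the finitely generated graded $R$-module $H^*(P^i)$, so finite generation follows from noetherianity of $R$. Moreover, these short exact sequences identify $H^*(K^{i+1})$ with the first syzygy of $H^*(K^i)$, so after $d$ steps (where $d$ is the global dimension on finitely generated graded modules) $H^*(K^d)$ is a $d$-th syzygy of $H^*(M)$, hence a finitely generated projective graded $R$-module.

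To close the induction I would take $P^d := H^*(K^d)$ viewed as a dg-module with zero differential, and construct a quasi-isomorphism $P^d \xrightarrow{\sim} K^d$ by lifting the identity of $H^*(K^d)$ to a morphism into the cocycles of $K^d$ (using the projectivity of $P^d$ exactly as in the inductive step). The resulting finite tower of triangles exhibits $M$ as a finite iterated cone of the objects $P^0, P^1[1], \ldots, P^d[d]$, each of which lies in $\sfD_3$ by the very definition of that subcategory, so $M \in \sfD_3$. The main subtlety I anticipate is the closing step: one must confirm that, without any additional hypothesis, a dg-module whose cohomology is a finitely generated projective graded $R$-module is genuinely quasi-isomorphic — not merely cohomologically equal — to that cohomology viewed as a dg-module with trivial differential, which is exactly what the projective lifting argument provides.
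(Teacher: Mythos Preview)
Your proposal is correct and follows essentially the same approach as the paper: the paper phrases the argument as induction on the projective dimension of $\Ho^\bullet(M)$, while you unroll it into an explicit chain of syzygy triangles, but the two key ingredients---lifting a projective surjection onto cohomology to a dg-map landing in cocycles, and the base case that a dg-module with projective cohomology receives a quasi-isomorphism from its cohomology---are identical. One cosmetic point: rather than invoking a global bound $d$, it suffices (and is all the hypothesis guarantees directly) to take $d$ equal to the projective dimension of $\Ho^\bullet(M)$ for the specific $M$ at hand.
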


\begin{proof}
We will prove that any object $M$ in $\sfD_4$ is in $\sfD_3$ by induction on the projective dimension of $\Ho^{\bullet}(M)$.

First, assume that $\Ho^{\bullet}(M)$ has projective dimension $0$, i.e.~that it is projective. We claim that in this case there exists a quasi-isomorphism of $R$-dg-modules
\[
\Ho^{\bullet}(M) \xrightarrow{\qis} M
\]
where $\Ho^{\bullet}(M)$ is considered as a dg-module with trivial differential. Indeed, as $\Ho^{\bullet}(M)$ is projective, there exists a splitting $s : \Ho^{\bullet}(M) \to \ker(d_M)$ of the natural morphism of graded $R$-modules $\ker(d_M) \onto \Ho^{\bullet}(M)$. Then the composition $\Ho^{\bullet}(M) \to \ker(d_M) \into M$ is the desired quasi-isomorphism. As $\Ho^{\bullet}(M)$ is in $\sfD_3$, we deduce that $M$ is also in $\sfD_3$.

Now, assume that $\Ho^{\bullet}(M)$ has projective dimension $d$. Choose a minimal length projective resolution
\[
P^d \into \cdots \to P^0 \overset{a}{\onto} \Ho^{\bullet}(M)
\]
as a graded $R$-module. By the same argument as before, there exists a morphism of $R$-dg-modules $P^0 \to M$ which induces $a$ in cohomology. We denote by $N$ the cocone of this morphism, so that we have a distinguished triangle
\[
N \to P^0 \to M \triright.
\]
Then $\Ho^{\bullet}(N) \cong \ker(a)$ has projective dimension $d-1$, hence $N$ belongs to $\sfD_3$ by induction. We deduce that $M$ is in $\sfD_3$ also.
\end{proof}

\begin{proof}[Proof of Proposition {\rm \ref{prop:DGDf}}]
By Lemma \ref{lem:DGDf} we have $\sfD_3=\sfD_4$. Now, as morphisms in $R\DGD$ between projective graded $R$-modules (considered as dg-modules with $d=0$) are simply morphisms of graded $R$-modules (because such objects are $K$-projective in the sense of \cite[Definition 10.12.2.1]{BL}), one easily checks that $\sfD_3 \subset \sfD_2$. We deduce that $\sfD_2=\sfD_3=\sfD_4$.

Now consider the functor $\sfD_1 \to \sfD_2 = \sfD_3$. The same proof as in Lemma \ref{lem:DGDf} shows that the category $\sfD_1$ is also generated, as a triangulated category, by finitely generated projective graded $R$-modules. Hence it is enough to check that morphisms between such objects in $\sfD_1$ and $\sfD_2$ coincide; this is obvious.
\end{proof}

\subsection{Proof of formality}
\label{ss:formality}

The following result is well known, and is usually attributed to Deligne, see \cite{DGMS, Del}.

\begin{lem}
\label{lem:formality-criterion}

Let $R^{\bullet}$ be a dg-algebra which is endowed with an additional $\Z$-grading
\[
R^i \ = \ \bigoplus_{j \in \Z} \, R^{i,j}
\]
such that $d_R(R^{i,j}) \subset R^{i+1,j}$. Assume that for any $i \in \Z$ the cohomology $\Ho^i(R^{\bullet})$ is concentrated in degree $j=i$ (for this additional grading). Then $R^{\bullet}$ is formal. More precisely, there exists a dg-subalgebra $R^{\bullet}_{\rhd} \subset R^{\bullet}$ and quasi-isomorphisms
\[
\xymatrix{
R^{\bullet} & R^{\bullet}_{\rhd} \ar@{_{(}->}[l]_-{\qis} \ar[r]^-{\qis} & \Ho^{\bullet}(R^{\bullet}),
}
\]
where $\Ho^{\bullet}(R^{\bullet})$ is considered as a dg-algebra with trivial differential.

\end{lem}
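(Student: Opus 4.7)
The plan is to use Deligne's canonical truncation trick in the second grading. For each fixed inner degree $j$ the data $R^{\bullet,j}$ with differential $d \colon R^{i,j} \to R^{i+1,j}$ is a cochain complex whose cohomology, by hypothesis, is concentrated in the single degree $i = j$. I would therefore define
\[
R^{i,j}_\rhd \ := \ \begin{cases} R^{i,j} & \text{if } i < j, \\ \ker\bigl(d \colon R^{j,j} \to R^{j+1,j}\bigr) & \text{if } i = j, \\ 0 & \text{if } i > j, \end{cases}
\]
and set $R^\bullet_\rhd := \bigoplus_{i \le j} R^{i,j}_\rhd$, viewed as a graded $R$-subspace of $R^\bullet$ stable under $d$.

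The first step is to check that $R^\bullet_\rhd$ is a sub-\emph{algebra}. For $x \in R^{i_1,j_1}_\rhd$ and $y \in R^{i_2,j_2}_\rhd$ the product $xy$ lies in $R^{i_1+i_2,\,j_1+j_2}$ with $i_1+i_2 \le j_1+j_2$. If the inequality is strict the product is automatically in $R^\bullet_\rhd$; if we have equality, then necessarily $i_1 = j_1$ and $i_2 = j_2$, so both factors are cocycles, and the Leibniz rule $d(xy) = d(x)y \pm x\,d(y) = 0$ shows $xy$ is again a cocycle. Hence $R^\bullet_\rhd$ is closed under multiplication, and is a sub-dg-algebra of $R^\bullet$.

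Next I would produce the two quasi-isomorphisms. For the inclusion $R^\bullet_\rhd \hookrightarrow R^\bullet$ one decomposes into inner degrees: in each column $R^{\bullet,j}$ the subcomplex $R^{\bullet,j}_\rhd$ is exactly the smart truncation $\tau^{\le j}$, and since the cohomology of $R^{\bullet,j}$ is concentrated in the single degree $j$, this truncation computes the same cohomology. For the projection $R^\bullet_\rhd \twoheadrightarrow \Ho^\bullet(R^\bullet)$ I would define it column by column to send $R^{i,j}_\rhd$ to $0$ when $i < j$ and to send a cocycle in $R^{j,j}_\rhd$ to its cohomology class in $\Ho^j(R^\bullet)$, which sits in bidegree $(j,j)$. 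This commutes with the (zero) differentials on both sides, and it is multiplicative: if at least one of two factors lies strictly below the diagonal, their product again lies strictly below the diagonal in $R^\bullet_\rhd$, where the map is zero, and both sides vanish; if both factors are on the diagonal they are cocycles and the map respects the induced product on cohomology. Finally, the induced map on cohomology in each column is the identity on $\Ho^j$, so the projection is a quasi-isomorphism.

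The only real point that requires attention is the closure under multiplication; once one observes that equality $i_1 + i_2 = j_1 + j_2$ forces both $(i_k,j_k)$ to sit on the diagonal and therefore both factors to be cocycles, everything else is a routine verification. Thus the zigzag $R^\bullet \hookleftarrow R^\bullet_\rhd \twoheadrightarrow \Ho^\bullet(R^\bullet)$ exhibits the formality of $R^\bullet$ as a dg-algebra.
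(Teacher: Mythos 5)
Your proof is correct and uses exactly the same construction as the paper: the subalgebra you define, $R^{i,j}_{\rhd}=R^{i,j}$ for $i<j$, $\ker(d^i_i)$ for $i=j$, and $0$ for $i>j$, is precisely the paper's $R^i_{\rhd}=\bigl(\bigoplus_{j>i}R^{i,j}\bigr)\oplus\ker(d^i_i)$, i.e.\ the columnwise smart truncation $\tau^{\le j}$. The paper states the verifications (closure under multiplication, the two quasi-isomorphisms) without proof; your write-up correctly supplies these details, in particular the observation that equality $i_1+i_2=j_1+j_2$ forces both factors onto the diagonal and hence to be cocycles.
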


\begin{proof}
Denote by $d^i_j : R^{i,j} \to R^{i+1,j}$ the component of the differential in bidegree $(i,j)$. For any $i \in \Z$, set
\[
R^i_{\rhd} \ := \ \bigl( \bigoplus_{j > i} R^{i,j} \bigr) \oplus \ker(d^i_i).
\]
Then $R_{\rhd}^{\bullet}$ is a dg-subalgebra of $R^{\bullet}$. Moreover, the projection $R^i_{\rhd} \onto \ker(d^i_i)/\mathrm{im}(d^{i-1}_{i})$ induces a quasi-isomorphism of dg-algebras $R_{\rhd}^{\bullet} \xrightarrow{\qis} \Ho^{\bullet}(R^{\bullet})$.
\end{proof}

From now on we assume that condition \eqref{eqn:condition} is satisfied. Let $M^{\bullet}_{\circ}$ be as in Proposition \ref{prop:projective-resolution-decomposable}, and let $M^{\bullet}$ be the underlying complex of projective right $A_{\O}$-modules. Consider the dg-algebras
\[
E^{\bullet}_{\O} := \Hom^{\bullet}_{-A_{\O}}(M^{\bullet},M^{\bullet}), \quad E^{\bullet}_{\F} := \Hom^{\bullet}_{-A_{\F}}(F \otimes_{\O} M^{\bullet}, \F \otimes_{\O} M^{\bullet}).
\]
Note that both of these dg-algebras are bounded, that $E^{\bullet}_{\O}$ is a finite rank free $\O$-module, and that we have a natural isomorphism of dg-algebras 
\[
\F \otimes_{\O} E^{\bullet}_{\O} \ \cong \ E^{\bullet}_{\F}.
\]

By the same considerations as in Remark \ref{rmk:automorphisms-Hom}, $E_{\O}^{\bullet}$ is endowed with a natural automorphism, so that it can be considered as an $(\O,\phi)$-dg-algebra. Similarly, by the constructions of \S \ref{ss:perverse-sheaves-Fq} the graded $\O$-algebra $E_{\O}$ can be naturally considered as an $(\O,\phi)$-algebra.

\begin{lem}
\label{lem:H}

There exist natural isomorphisms of graded algebras
\[
\Ho^{\bullet}(E^{\bullet}_{\O}) \cong E_{\O}, \quad \Ho^{\bullet}(E^{\bullet}_{\F}) \cong E_{\F}.
\]
Moreover, the first isomorphism is an isomorphism of $(\O,\phi)$-algebras.

\end{lem}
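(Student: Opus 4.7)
The plan is to exploit the fact that, since each $M^k$ is projective over $A_{\O}$ and the complex $M^{\bullet}$ is bounded, $M^{\bullet}$ is K-projective as a complex of $A_{\O}$-modules. Consequently, for every $n \in \Z$ the natural map
\[
\Ho^n(E^{\bullet}_{\O}) \ = \ \Ho^n \, \Hom^{\bullet}_{-A_{\O}}(M^{\bullet},M^{\bullet}) \ \xrightarrow{\sim} \ \Hom_{\cDb(\Modfr A_{\O})}(M^{\bullet}, M^{\bullet}[n])
\]
is an isomorphism, and it carries the product coming from composition of cochains to the Yoneda product, yielding an isomorphism of graded algebras. Combining this with the equivalence
\[
\Pi^{\O} \circ (\mathsf{real}^{\O})^{-1} : \cDb_{(B)}(X,\O) \xrightarrow{\sim} \cDb(\Modfr A_{\O})
\]
and invoking Proposition \ref{prop:projective-resolution-decomposable}(1), which identifies $M^{\bullet}$ with the image of $\cB\cS^{\O}$, we obtain
\[
\Ho^n(E^{\bullet}_{\O}) \ \cong \ \Hom_{\cDb_{(B)}(X,\O)}(\cB\cS^{\O}, \cB\cS^{\O}[n]) \ = \ E_{\O}^n
\]
as graded algebras.

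To upgrade this to an isomorphism of $(\O, \phi)$-algebras, we use that each $M^k$ lies in $\Modfrdec^{\mathrm{proj}}\text{-}(A_{\O}, \phi)$, hence carries a Frobenius-compatible automorphism $\phi_{M^k}$. These induce the dg-automorphism of $E^{\bullet}_{\O}$ defined by $f \mapsto \phi_M \circ f \circ \phi_M^{-1}$ (in the convention of Remark \ref{rmk:automorphisms-Hom}), which is precisely the $(\O,\phi)$-structure on $E^{\bullet}_{\O}$. By Remark \ref{rmk:automorphisms-Hom} the automorphism induced in cohomology matches, under the composite equivalence $\mathsf{F}_{\cP^{\O}} \circ (\mathsf{real}^{\O})^{-1}$, the Frobenius automorphism $\phi_{\cB\cS^{\O}, \cB\cS^{\O}}$ of $E_{\O}^n$ defined in \S \ref{ss:perverse-sheaves-Fq}. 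Therefore the isomorphism displayed above is one of $(\O, \phi)$-algebras.

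The isomorphism $\Ho^{\bullet}(E^{\bullet}_{\F}) \cong E_{\F}$ follows by the same argument with $\F$-coefficients, using Proposition \ref{prop:projective-resolution-decomposable}(2) in place of (1), together with the observation that $\F \otimes_{\O} M^{\bullet}$ is a bounded complex of projective $A_{\F}$-modules (hence still K-projective) and that the equivalence $\Pi^{\F} \circ (\mathsf{real}^{\F})^{-1}$ sends $\cB\cS^{\F}$ to $\F \otimes_{\O} M^{\bullet}$.

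The main technical point is the verification that the two a priori distinct Frobenius actions on $\Ho^{\bullet}(E^{\bullet}_{\O})$ agree: the ``algebraic'' one coming from the $\phi_{M^k}$'s via the Hom complex, and the ``geometric'' one induced on $E_{\O}$ by the Frobenius endomorphism of $\cB\cS^{\O}_{\circ}$. This is exactly what Remark \ref{rmk:automorphisms-Hom} is designed to handle, so the verification reduces to checking that the identification of $M^{\bullet}$ with $\cB\cS^{\O}$ supplied by Proposition \ref{prop:projective-resolution-decomposable}(1) holds in the category $\cDb(\Modfr(A_\O,\phi))$ (and not merely in $\cDb(\Modfr A_\O)$), which is precisely the content of that proposition.
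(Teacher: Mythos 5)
Your proof is correct and follows essentially the same route as the paper's: identify $\Ho^{\bullet}$ of the endomorphism dg-algebra of the bounded complex of projectives $M^{\bullet}$ with the Yoneda $\Ext$-algebra, transport through the equivalence of Proposition \ref{prop:projective-resolution-decomposable}(1), and invoke Remark \ref{rmk:automorphisms-Hom} for the Frobenius compatibility. You simply spell out the K-projectivity step and the matching of the two Frobenius actions, which the paper leaves implicit.
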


\begin{proof}
By definition of $E_{\O}^{\bullet}$ we have an isomorphism of graded algebras
\[
\Ho^{\bullet}(E_{\O}^{\bullet}) \ \cong \ \Ext^{\bullet}_{\cDb (\Modfr A_{\O})}(M^{\bullet}, M^{\bullet}).
\]
Using condition $(1)$ in Proposition \ref{prop:projective-resolution-decomposable}, we have an isomorphism
\[
\mathsf{real}^{\O} \circ (\Pi^{\O})^{-1} (M^{\bullet}) \cong \cB\cS^{\O}.
\]
We deduce the first isomorphism. The compatibility with automorphisms follows from Remark \ref{rmk:automorphisms-Hom}. The second isomorphism can be proved similarly.
\end{proof}

\begin{lem}
\label{lem:E-finite-global-dim}

The algebras $E_{\O}$ and $E_{\F}$ have finite global dimension.

\end{lem}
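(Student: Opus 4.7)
The approach is to first reduce from $E_\O$ to $E_\F$, and then to derive finite global dimension of $E_\F$ from that of $A_\F$ via the equivalence of triangulated categories produced by the general dg machinery recalled in \S\ref{ss:homological-algebra}.

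\textbf{Reduction to $E_\F$.} By Theorem~\ref{thm:H-diagonal} the graded algebra $E_\O$ is $\O$-free, and the dg-algebra $E_\O^\bullet$ is likewise $\O$-free. A universal-coefficient argument (using the short exact sequence $0\to E_\O^\bullet\xrightarrow{\pi} E_\O^\bullet \to E_\F^\bullet \to 0$ together with Lemma~\ref{lem:H}) identifies $E_\F$ with $\F\otimes_\O E_\O$ as graded $\F$-algebras. From this, a standard Nakayama-type argument, combined with the liftability of projective covers from $E_\F$-modules to $E_\O$-modules (using that each graded piece of $E_\O$ is a finitely generated $\O$-module), shows that finite global dimension of $E_\F$ implies the same for $E_\O$. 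It therefore suffices to treat $E_\F$.

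\textbf{Finite global dimension of $E_\F$.} Set $M_\F^\bullet:=\F\otimes_\O M^\bullet$, a bounded complex of finitely generated projective $A_\F$-modules whose class in $\cDb(\Modfr A_\F)$ corresponds, via the realization equivalence and $\Pi^\F$, to $\cB\cS^\F\in\cDb_{(B)}(X,\F)$. Since the summands $\cB\cS^\F_f$ ($f\in F$) generate $\cDb_{(B)}(X,\F)$ as a triangulated category (Lemma~\ref{lem:category-generated-X}(1)), the machinery of \S\ref{ss:homological-algebra} produces an equivalence of triangulated categories
\[
\Hom^\bullet_{-A_\F}(M_\F^\bullet,-) \;:\; \cDb(\Modfr A_\F) \;\xrightarrow{\sim}\; \DGDfr E_\F^\bullet.
\]
The $\F$-analogue of Corollary~\ref{cor:realization-functor}(2) (see \cite[Corollary~3.3.2]{BGS}) shows that $A_\F$ has finite global dimension, so every object of $\cDb(\Modfr A_\F)$ admits a projective resolution of uniformly bounded length. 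Translating through the equivalence, every finitely generated object of $\DGDfr E_\F^\bullet$ is obtained from the dg-modules $e_f E_\F^\bullet$ by a bounded number of cones. Given a finitely generated graded $E_\F$-module $N$, view it via $\overline v$ as an object of $\DGDfr E_\F^\bullet$ (with a suitable cofibrant replacement for the $E_\F^\bullet$-action lifting the $\Ho^\bullet(E_\F^\bullet)=E_\F$-action); the bounded resolution provided by the equivalence descends, upon taking cohomology, to a bounded projective resolution of $N$ in $\Modfrz E_\F$.

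\textbf{Main obstacle.} The delicate point is the last descent, from a bounded resolution in $\DGDfr E_\F^\bullet$ to a bounded projective resolution in $\Modfrz E_\F$: this would be clean if $E_\F^\bullet$ were known to be formal, but formality is only established later in this section. One way to bypass this circularity is to exploit that, by Theorem~\ref{thm:H-diagonal}, $E_\F$ vanishes outside a bounded range of (even) non-negative degrees, so that $E_\F^+$ is nilpotent and every simple graded $E_\F$-module is pulled back from a simple $E_\F^0$-module. One can then check finite global dimension on simples and bound their projective dimensions by tracking the amplitude of their images under the equivalence above, thereby avoiding any reference to formality of $E_\F^\bullet$.
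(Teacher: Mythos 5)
Your reduction of $E_{\O}$ to $E_{\F}$ is essentially the paper's (the paper uses the base-change isomorphism $\F \lotimes_{\O} R\Hom_{-E_{\O}}(M,N) \cong R\Hom_{-E_{\F}}(\F \lotimes_{\O} M, \F \lotimes_{\O} N)$ together with the facts that $\O$ has global dimension $1$ and that $\F \lotimes_{\O}(-)$ kills no nonzero finitely generated $\O$-module), so that half is fine. The genuine gap is in your main step. The paper does \emph{not} obtain finite global dimension of $E_{\F}$ from $A_{\F}$ via the dg equivalence; it imports it from the representation-theoretic side: by Soergel's results recalled in Section \ref{sec:reminder}, $E_{\F} \cong \End_{\cO}(P) \cong \End_C(D)$ for a projective generator $P$ of the modular category $\cO$, so $\Modfr E_{\F} \simeq \cO$, and $\cO$ has finite homological dimension (this is Proposition \ref{cor:globaldimension}, which the proof of the lemma forward-references). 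Without this input there is no known source for the statement.

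Your route is circular in a way your own ``workaround'' does not repair. The equivalence produced by \S\ref{ss:homological-algebra} identifies $\cDb(\Modfr A_{\F})$ with the subcategory $\lan e_f E_{\F}^{\bullet}\ran_{\Delta}$ of $\DGDr E_{\F}^{\bullet}$, i.e.\ with dg-modules over the dg-algebra $E_{\F}^{\bullet}$, not over its cohomology $E_{\F}$. There is no functor from $\Modfrz E_{\F}$ (in particular from the simple graded modules) into $\DGDr E_{\F}^{\bullet}$ absent a quasi-isomorphism zigzag between $E_{\F}$ and $E_{\F}^{\bullet}$: a module over $\Ho^{\bullet}(R)$ of a dg-algebra $R$ need not lift to a dg-module over $R$ (there are Massey-product obstructions), so the parenthetical ``suitable cofibrant replacement lifting the $\Ho^{\bullet}(E_{\F}^{\bullet})$-action'' is precisely the formality you are trying to avoid. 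Consequently the simples have no ``images under the equivalence'' whose amplitude you could track. Moreover, even granting formality, deducing that every finitely generated graded $E_{\F}$-module lies in the triangulated subcategory generated by the $e_f E_{\F}$ with bounded amplitude is essentially equivalent to already knowing finite global dimension (compare Proposition \ref{prop:DGDf}, whose proof assumes it). You need the external input from \cite{So2}.
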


\begin{proof}
We will prove in Corollary \ref{cor:globaldimension} below (using \cite{So2}) that
$E_{\F}$ has finite global dimension. Let us deduce the same property
for $E_{\O}$. We claim that for $M,N$ in $\cDb(\Modfr E_{\O})$ there
exists a natural isomorphism in the derived category of $\F$-vector
spaces 
\begin{equation}
\label{eqn:RHom-O-F}
\F \, \lotimes_{\O} \, R\Hom_{-E_{\O}}(M,N) \ \cong \
R\Hom_{-E_{\F}}(\F \, \lotimes_\O \, M, \F \, \lotimes_\O \, N) 
\end{equation}
where $R\Hom_{-E_{\O}}(M,N)$ is considered as an object in $\cD(\Modfr \O)$. Indeed one can assume that $M$ is a bounded above complex of
finitely generated projective $E_{\O}$-modules, and then reduce the
claim to the case $M=E_{\O}$, which is obvious. 

As $\O$ has global dimension $1$, any object of $\cD(\Modfr \O)$ is
isomorphic to its cohomology. Moreover, if $M$ is in $\Modfr \O$ then $M
\neq 0$ implies $\F \lotimes_{\O} M \neq 0$. Using these remarks,
isomorphism \eqref{eqn:RHom-O-F} and the fact that $E_{\F}$ has finite
global dimension, one easily checks that the same property holds for
$E_{\O}$. 
\end{proof}

Using Lemma \ref{lem:E-finite-global-dim}, one can define the
categories $\DGDfr E_{\O}$ and $\DGDfr E_{\F}$ as in \S
\ref{ss:DGDf}. 

\begin{prop}
\label{prop:formality}

The dg-algebras $E_{\O}^{\bullet}$ and $E_{\F}^{\bullet}$ are formal. More precisely, there exists a dg-subalgebra $E^{\bullet}_{\rhd} \subset E_{\O}^{\bullet}$ and quasi-isomorphisms
\begin{equation}
\label{eqn:diagram-dg-algebras}
\xymatrix{
E^{\bullet}_{\O} & E^{\bullet}_{\rhd} \ar@{_{(}->}[l]_-{\qis} \ar[r]^-{\qis} & E_{\O}
}
\end{equation}
such that the diagram
\[
\xymatrix{
E^{\bullet}_{\F} & \F \otimes_{\O} E_{\rhd}^{\bullet} \ar[l]_-{\qis} \ar[r]^-{\qis} & \F \otimes_{\O} E_{\O}
}
\]
obtained by modular reduction also consists of quasi-isomorphisms.

\end{prop}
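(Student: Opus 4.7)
The strategy is to apply Lemma~\ref{lem:formality-criterion}, using the Frobenius automorphism $\phi$ to equip $E_{\O}^{\bullet}$ with the required auxiliary $\Z$-grading. By Proposition~\ref{prop:projective-resolution-decomposable}, every component $M^k$ of $M^{\bullet}$ lies in $\Modfrdecproj (A_{\O},\phi)$, so $M^k = \bigoplus_{\lambda} M^k_{\lambda}$ decomposes into generalized $\phi$-eigenspaces with eigenvalues $\lambda \in q^{\Z}$. A direct computation using the compatibility $\phi(m \cdot a) = \phi(m) \cdot \phi(a)$ shows that $\phi$ acts on $\Hom_{\O}(M^k_{\lambda}, M^l_{\mu})$ with generalized eigenvalue $\mu \lambda^{-1}$; hence $\Hom_{\O}(M^k, M^l)$ is $q$-decomposable, and its $\phi$-stable, $\O$-free subspace $\Hom_{-A_{\O}}(M^k, M^l)$ (free by Proposition~\ref{prop:properties-projectives-2}(4)) inherits the decomposition via the generalized eigenspace projectors, which are polynomials in $\phi$. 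Declare $E_{\O}^{i,j}$ to be the generalized $q^{-j/2}$-eigenspace of $\phi$ on $E_{\O}^i$, with the convention that $E_{\O}^{i,j} = 0$ when $j$ is odd. Since $\phi$ is a dg-algebra automorphism of $E_{\O}^{\bullet}$ (the differential of $M^{\bullet}_{\circ}$ is a morphism in $\Modfrdecproj (A_{\O},\phi)$, hence $\phi$-equivariant), one has $d(E_{\O}^{i,j}) \subset E_{\O}^{i+1,j}$ and $E_{\O}^{i,j} \cdot E_{\O}^{i',j'} \subset E_{\O}^{i+i',j+j'}$.

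By Lemma~\ref{lem:H} and Theorem~\ref{thm:H-diagonal}, $\Ho^i(E_{\O}^{\bullet}) \cong E_{\O}^i$ vanishes for $i$ odd and carries generalized $\phi$-eigenvalue $q^{-i/2}$ for $i$ even; thus the cohomology sits entirely in bidegree $(i,i)$. Lemma~\ref{lem:formality-criterion} then produces the dg-subalgebra
\[
E_{\rhd}^i := \Bigl( \bigoplus_{j > i} E_{\O}^{i,j} \Bigr) \oplus \ker\bigl( d \colon E_{\O}^{i,i} \to E_{\O}^{i+1,i} \bigr)
\]
together with the desired quasi-isomorphisms $E_{\O}^{\bullet} \hookleftarrow E_{\rhd}^{\bullet} \twoheadrightarrow E_{\O}$.

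To verify compatibility with modular reduction, note that each $E_{\O}^{i,j}$ is $\O$-free as a direct summand of the $\O$-free module $E_{\O}^i$, and $\ker(d^i_i)$ is $\O$-free as a submodule of a finite rank free module over the principal ideal domain $\O$; hence $E_{\rhd}^{\bullet}$ is componentwise $\O$-free. Concretely, in degree $i$ the cokernel of $E_{\rhd}^{\bullet} \hookrightarrow E_{\O}^{\bullet}$ is $\bigoplus_{j < i} E_{\O}^{i,j} \oplus \ima(d\colon E_{\O}^{i,i} \to E_{\O}^{i+1,i})$ and the kernel of $E_{\rhd}^{\bullet} \twoheadrightarrow E_{\O}$ is $\bigoplus_{j > i} E_{\O}^{i,j} \oplus \ima(d\colon E_{\O}^{i-1,i} \to E_{\O}^{i,i})$, both componentwise $\O$-free. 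The resulting short exact sequences of $\O$-flat complexes stay exact after applying $\F \otimes_{\O}(-)$; the two acyclic outer complexes remain acyclic (being $\O$-flat), so both morphisms remain quasi-isomorphisms. Combined with $\F \otimes_{\O} E_{\O} \cong E_{\F}$ (from $\O$-freeness of $E_{\O}$ by Theorem~\ref{thm:H-diagonal} and the universal coefficient spectral sequence), this completes the proof.

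The main obstacle is the $q$-decomposability of $E_{\O}^{\bullet}$ in the first step. Over the fraction field $\K$ this decomposition would be automatic, but over $\O$ it genuinely requires distinct eigenvalues of $\phi$ to reduce to distinct elements of $\F$ (see Proposition~\ref{prop:criterion-decomposability-soe}). This is precisely what the constructions of Section~\ref{sec:bounding-weights} and Proposition~\ref{prop:projective-resolution-decomposable} engineer: $q$-decomposability of each $M^k$ descends through the Hom spaces to yield the bigrading of $E_{\O}^{\bullet}$, after which the rest of the argument is purely formal.
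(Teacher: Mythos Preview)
Your overall strategy matches the paper's: establish $q$-decomposability of $E_{\O}^{\bullet}$ to obtain a bigrading, invoke Lemma~\ref{lem:formality-criterion}, and then verify compatibility with modular reduction. Your treatment of the modular-reduction step is correct and in fact more explicit than the paper's.

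The gap is in the passage from $\Hom_{\O}(M^k, M^l)$ to its submodule $\Hom_{-A_{\O}}(M^k, M^l)$. You assert that the generalized eigenspace projectors on $\Hom_{\O}(M^k, M^l)$ are polynomials in $\phi$, hence preserve any $\phi$-stable submodule. But this is only true under the hypothesis of Proposition~\ref{prop:criterion-decomposability-soe}, namely that the distinct eigenvalues have distinct images in $\F$. The eigenvalues occurring here are $q^{j-i}$ with no a~priori bound on $|j-i|$ coming from condition~\eqref{eqn:condition} (the weights accumulate through the iterated tensor products in the construction of $M^{\bullet}$), so this hypothesis is unavailable. Without it, a $\phi$-stable submodule of a $q$-decomposable module need not be $q$-decomposable: this is exactly the content of the remark following Lemma~\ref{lem:quotient-decomposable}. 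Your closing paragraph acknowledges that distinct residues are ``genuinely required'' but then asserts that $q$-decomposability of the $M^k$ suffices to push the decomposition through $\Hom$ spaces; this conflates two different statements.

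The paper fixes this via Lemma~\ref{lem:quotient-decomposable}: one checks that the quotient $F^{\bullet}/E_{\O}^{\bullet} = \Hom_{\O}^{\bullet}(M^{\bullet},M^{\bullet})/\Hom_{-A_{\O}}^{\bullet}(M^{\bullet},M^{\bullet})$ has no $\pi$-torsion (if $\pi f$ is $A_{\O}$-linear and the target is $\O$-free, then so is $f$), and then Lemma~\ref{lem:quotient-decomposable} transfers $q$-decomposability from $F^{\bullet}$ to the submodule $E_{\O}^{\bullet}$. Replace your projector argument with this and the proof goes through.
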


\begin{proof}
We claim that the $(\O,\phi)$-module $E_{\O}^{\bullet}$ is $q$-decomposable. Indeed, as $M^{\bullet}_{\circ}$ is $q$-decomposable (see Proposition \ref{prop:projective-resolution-decomposable}), the same holds for the $\O$-free $(\O,\phi)$-module $F^{\bullet}:=\Hom^{\bullet}_{\O}(M^{\bullet},M^{\bullet})$. By definition $E^{\bullet}_{\O}$ is a sub-$(\O,\phi)$-module of $F^{\bullet}$, and the quotient $F^{\bullet}/E_{\O}^{\bullet}$ has no $\pi$-torsion. We deduce from Lemma \ref{lem:quotient-decomposable} that indeed $E^{\bullet}_{\O}$ is $q$-decomposable.

For any $i,j \in \Z$ we set
\[
E^i_{j,\O} \ := \ \{ e \in E^i \mid (\phi - q^{-\frac{j}{2}})^m \cdot e=0 \ \text{for } m \gg 0 \}.
\]
By $q$-decomposability this defines a grading on $E^{\bullet}$ (which is concentrated in even degrees). Hence, using Lemma \ref{lem:H} and Theorem \ref{thm:H-diagonal}, we are in the situation of Lemma \ref{lem:formality-criterion}. We denote by $E_{\rhd}^{\bullet}$ the subalgebra constructed in the proof of this lemma, so that we have a diagram \eqref{eqn:diagram-dg-algebras}. All the dg-algebras in this diagram are $\O$-free: indeed $E_{\rhd}^{\bullet}$ has no $\pi$-torsion since it is a submodule of the free $\O$-module $E^{\bullet}_{\O}$, and $E_{\O}$ is free by Theorem \ref{thm:H-diagonal}. Hence the diagram obtained by modular reduction also consists of quasi-isomorphisms.
\end{proof}

\begin{rmk}
In particular, it follows from Proposition \ref{prop:formality} and Lemma \ref{lem:H} that the natural morphism $\F \otimes_{\O} E_{\O} \to E_{\F}$ is an isomorphism. This can also be proved directly using \cite[equation (2.13)]{JMW} and the fact that $\cB\cS_{\O}$ is parity.
\end{rmk}

Finally we can prove the main result of Part \ref{pt:flag-variety}.

\begin{thm} \label{thm:formality}

Assume that condition \eqref{eqn:condition} is satisfied.

There exist equivalences of triangulated categories
\[
\cDb_{(B)}(X,\O) \ \cong \ \DGDfr E_{\O}, \quad \cDb_{(B)}(X,\F) \ \cong \ \DGDfr E_{\F}
\]
where $E_{\O}$ and $E_{\F}$ are considered as dg-algebras with their natural grading and trivial differential.

\end{thm}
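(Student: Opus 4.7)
The plan is to build the equivalence through the chain
\[
\cDb_{(B)}(X,\O) \ \simeq \ \langle e_f E_{\O}^{\bullet} \mid f \in F \rangle_{\Delta} \ \simeq \ \langle e_f E_{\rhd}^{\bullet} \mid f \in F \rangle_{\Delta} \ \simeq \ \langle e_f E_{\O} \mid f \in F \rangle_{\Delta} \ \simeq \ \DGDfr E_{\O},
\]
and similarly for $\F$-coefficients, assembling the Bott--Samelson generators with the formality statement of Proposition \ref{prop:formality}.

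First I would transport $\cB\cS^{\O}$ through the equivalence $\Pi^{\O} \circ (\mathsf{real}^{\O})^{-1}$ to the bounded complex $M^{\bullet}$ of Proposition \ref{prop:projective-resolution-decomposable}, whose components are projective $A_{\O}$-modules. Because $M^{\bullet}$ is a bounded complex of projectives, $\Hom$-groups in the homotopy category of $\Modfr A_{\O}$ from the summands $e_f M^{\bullet}$ agree with those in the derived category, i.e.~the family is end-acyclic in the sense of \S\ref{ss:homological-algebra}. Applying the general construction recalled there (with details in \cite{Rickard, Keller}), the functor $\Hom^{\bullet}_{-A_{\O}}(M^{\bullet}, -)$ yields an equivalence between the triangulated subcategory of $\cDb(\Modfr A_{\O})$ generated by $\{e_f M^{\bullet}\}_{f \in F}$ and $\langle e_f E_{\O}^{\bullet} \mid f \in F \rangle_{\Delta} \subset \DGDr E_{\O}^{\bullet}$. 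By Lemma \ref{lem:category-generated-X} the former subcategory is all of $\cDb_{(B)}(X,\O)$, producing the first equivalence.

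Next I would use the zigzag of quasi-isomorphisms $E_{\O}^{\bullet} \xleftarrow{\qis} E_{\rhd}^{\bullet} \xrightarrow{\qis} E_{\O}$ from Proposition \ref{prop:formality}. By \cite{Keller}, each such quasi-isomorphism induces an equivalence of derived categories of dg-modules via restriction of scalars. The idempotents $e_f$ arise from the natural grading on $E_{\O}^{\bullet}$ in bidegree $(0,0)$, and by inspection of the construction of $E_{\rhd}^{\bullet}$ in Lemma \ref{lem:formality-criterion} they lift compatibly to idempotents in $E_{\rhd}^{\bullet}$ mapping to the $e_f$ on both sides; hence the restriction functors identify the stated generators, yielding the middle two equivalences. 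Finally, since $\sum_f e_f = 1$ one has $E_{\O} = \bigoplus_f e_f E_{\O}$, so every finitely generated projective graded $E_{\O}$-module (being a direct summand of some $E_{\O}^n$) lies in $\langle e_f E_{\O} \rangle_{\Delta}$; by Proposition \ref{prop:DGDf} combined with the finite global dimension (Lemma \ref{lem:E-finite-global-dim}), this subcategory is precisely $\DGDfr E_{\O}$. The case of $\F$-coefficients is entirely analogous, using the modular reduction of the zigzag together with the isomorphism $\F \otimes_{\O} E_{\O} \cong E_{\F}$ recorded after Proposition \ref{prop:formality}.

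The main obstacle will be Step 1: organizing the end-acyclic framework so as to produce a fully faithful and essentially surjective functor $\Hom^{\bullet}_{-A_{\O}}(M^{\bullet}, -)$ onto the stated subcategory. Fully faithfulness on the generators $e_f M^{\bullet}$ boils down to Lemma \ref{lem:H} (morphism spaces in $\cDb$ compute $E_{\O}$, while morphism spaces in $\DGDr E_{\O}^{\bullet}$ between the $e_f E_{\O}^{\bullet}$ compute $\Ho^{\bullet}$ of $E_{\O}^{\bullet}$), and essential surjectivity requires tracking that bounded complexes of finitely generated projective $A_{\O}$-modules map into the subcategory generated by $\{e_f E_{\O}^{\bullet}\}$. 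A secondary care point is the precise compatibility of idempotents through the zigzag, but this is under control because $E_{\rhd}^{\bullet}$ is defined as a sub-dg-algebra of $E_{\O}^{\bullet}$ containing its degree-zero part.
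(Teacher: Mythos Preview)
Your proposal is correct and follows essentially the same route as the paper's proof: both pass through the chain $\cDb_{(B)}(X,\O) \cong \langle M_f^{\bullet}\rangle_{\Delta} \cong \langle 1_f E_{\O}^{\bullet}\rangle_{\Delta} \cong \langle e_f E_{\O}\rangle_{\Delta} = \DGDfr E_{\O}$, invoking Lemma~\ref{lem:category-generated-X}, the end-acyclic framework of \S\ref{ss:homological-algebra}, Proposition~\ref{prop:formality}, and Proposition~\ref{prop:DGDf} at the corresponding steps. Your treatment is in places more explicit than the paper's (the compatibility of idempotents through the zigzag, the identification of $\langle e_f E_{\O}\rangle_{\Delta}$ with $\sfD_3$), but the argument is the same; the only point to tighten in your last step is that closure under direct summands follows because the already-established equivalence with $\cDb_{(B)}(X,\O)$ makes $\langle e_f E_{\O}\rangle_{\Delta}$ Karoubian.
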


\begin{proof}
We treat only the case of $\O$; the case of $\F$ is similar. As explained in \S \ref{ss:direct-inverse-image}, there exists an equivalence of categories
\[
\cDb_{(B)}(X,\O) \ \cong \ \cDb \bigl( \Modfr A_{\O} \bigr)
\]
Moreover, by Lemma \ref{lem:category-generated-X} the category $\cDb_{(B)}(X,\O)$ is generated (as a triangulated category) by the objects $\{ \cB\cS_f^{\O}, \ f \in F\}$. By construction (see Proposition \ref{prop:projective-resolution-decomposable}), $\cB\cS_f$ corresponds, under this equivalence, to the complex $M^{\bullet}_{f}$ of projective right $A_{\O}$-modules. Hence, with the notation of \S \ref{ss:homological-algebra} we obtain
\[
\cDb_{(B)}(X,\O) \ \cong \ \lan M_f^{\bullet}, \ f \in F \ran_{\Delta} \subset \cD \bigl( \Modr A_{\O} \bigr).
\]
By the result recalled in \S \ref{ss:homological-algebra}, the right-hand side is equivalent to
\[
\lan 1_f E_{\O}^{\bullet}, \ f \in F \ran_{\Delta} \subset \DGDr E^{\bullet}_{\O}
\]
where $1_f$ is the idempotent of $E_{\O}^{\bullet}$ given by the projection to $M_f^{\bullet}$.

Now it follows from Proposition \ref{prop:formality} that there is an equivalence of triangulated categories
\[
\DGDr E_{\O}^{\bullet} \ \cong \ \DGDr E_{\O}.
\]
It sends $1_f E_{\O}^{\bullet}$ to $e_f E_{\O}$, where $e_f \in E_{\O}$ is defined in \S\ref{subsec:BS}. We deduce an equivalence
\[
\cDb_{(B)}(X,\O) \cong \lan e_f E_{\O}, \ f \in F \ran_{\Delta} \subset \DGDr E_{\O}.
\]
Finally, it is not difficult to check (using the category $\mathsf{D}_3$ of \S \ref{ss:DGDf}) that the right-hand side is equivalent to $\DGDfr E_{\O}$.
\end{proof}

\subsection{Parabolic case}

It is not difficult to generalize the results of \S\ref{ss:formality} to parabolic flag varieties. Here, for future reference, we explain the case of the variety $X^s$ for $s\in S$. First, we remark that by an obvious analogue of Lemma \ref{lem:E-finite-global-dim} the rings $E^s_{\O}$ and $E^s_{\F}$ have finite global dimension, so that we can consider the categories $\DGDfr E^s_{\O}$ and $\DGDfr E^s_{\F}$.

\begin{thm}
\label{thm:formality-parabolic}

Let $s \in S$, and assume that condition \eqref{eqn:condition} is satisfied.

There exist equivalences of triangulated categories
\[
\cDb_{(B)}(X^s,\O) \ \cong \ \DGDfr E^s_{\O}, \quad \cDb_{(B)}(X^s,\F) \ \cong \ \DGDfr E^s_{\F}
\]
where $E_{\O}^s$ and $E_{\F}^s$ are considered as dg-algebras with their natural grading and trivial differential.

\end{thm}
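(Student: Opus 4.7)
The plan is to imitate the proof of Theorem \ref{thm:formality}, replacing the generating family $\{\cB\cS_f^\E \mid f \in F\}$ on $X$ by the family $\{\pi_{s!}\cB\cS_f^\E \mid f \in F\}$ on $X^s$, which generates $\cDb_{(B)}(X^s,\E)$ by Lemma \ref{lem:category-generated-X}(2). I focus on the case $\E = \O$, the case $\E = \F$ being entirely analogous.

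First I would build an appropriate resolution of $\pi_{s!}\cB\cS_{f,\circ}^\O$ on the algebraic side. Let $M_\circ^\bullet$ be the complex furnished by Proposition \ref{prop:projective-resolution-decomposable}, and set
\[
N_\circ^\bullet \ := \ M_\circ^\bullet \otimes_{A_\O} A^s_\O,
\]
which makes sense as a bounded complex in $\Modfrdecproj(A^s_\O,\phi^s)$ because each $M_\circ^k$ is projective over $A_\O$ (so the tensor product computes the derived tensor product) and because $A^s_\O$ is $q$-decomposable and $\O$-free, so each tensor factor remains $q$-decomposable and $A^s_\O$-projective by Lemmas \ref{lem:projective-decomposable}, \ref{lem:quotient-decomposable} and \ref{lem:tensor-product-projective}. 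By Proposition \ref{prop:direct-inverse-image-0}, the image of $N_\circ^\bullet$ in $\cDb(\Modfr(A^s_\O,\phi^s))$ corresponds to $\pi_{s!}\cB\cS_{f,\circ}^\O[1]$ (after an overall cohomological shift that we absorb in the indexing), and similarly after modular reduction.

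Next I would form the dg-algebra
\[
(\tilde E^s_\O)^\bullet \ := \ \Hom^\bullet_{-A^s_\O}(N^\bullet,N^\bullet),
\]
equipped with its natural $(\O,\phi)$-structure. The analogue of Lemma \ref{lem:H} holds: the cohomology ring is canonically $E^s_\O$ as an $(\O,\phi)$-algebra, because $\mathsf{real}^s \circ (\Pi^s)^{-1}(N^\bullet) \cong \pi_{s!}\cB\cS^\O[1]$ and Tate twists of the $[1]$-shift do not affect the graded ring $\Ext^\bullet$. Since $N_\circ^\bullet$ is $q$-decomposable, the $(\O,\phi)$-module $(\tilde E^s_\O)^\bullet$ is $q$-decomposable by exactly the argument of Proposition \ref{prop:formality} (it is a sub-$(\O,\phi)$-module of the $q$-decomposable $\Hom^\bullet_\O(N^\bullet,N^\bullet)$ with torsion-free quotient). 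Combined with Theorem \ref{thm:H-diagonal}, which asserts that $E^s_\O$ is concentrated in even degrees with $q$-weights obtained from $\{-m\}$ in degree $2m$, the hypotheses of the Deligne formality criterion (Lemma \ref{lem:formality-criterion}) are satisfied: defining $(\tilde E^s_\O)^i_{j} := \{e \in (\tilde E^s_\O)^i \mid (\phi - q^{-j/2})^N e = 0 \text{ for } N \gg 0\}$ gives a bigrading with cohomology on the diagonal. This yields a zig-zag of quasi-isomorphisms $(\tilde E^s_\O)^\bullet \hookleftarrow (\tilde E^s_\O)^\bullet_\rhd \to E^s_\O$ between $\O$-free dg-algebras, hence also (by $\O$-freeness) quasi-isomorphisms after applying $\F \otimes_\O (-)$, giving formality of $(\tilde E^s_\F)^\bullet$ as well.

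Finally I assemble the equivalence exactly as in the proof of Theorem \ref{thm:formality}. Write $N^\bullet = \bigoplus_{f \in F} N_f^\bullet$ with $N_f^\bullet = M_f^\bullet \otimes_{A_\O} A^s_\O$, and let $1_f \in (\tilde E^s_\O)^\bullet$ be the corresponding idempotent. By the generalities of \S\ref{ss:homological-algebra} applied to the end-acyclic family $\{N_f^\bullet\}$ in $\cD(\Modr A^s_\O)$,
\[
\cDb_{(B)}(X^s,\O) \ \cong \ \langle N_f^\bullet \mid f \in F\rangle_\Delta \ \cong \ \langle 1_f (\tilde E^s_\O)^\bullet \mid f \in F\rangle_\Delta \ \subset \ \DGDr (\tilde E^s_\O)^\bullet.
\]
The formality zig-zag gives $\DGDr (\tilde E^s_\O)^\bullet \cong \DGDr E^s_\O$, sending $1_f(\tilde E^s_\O)^\bullet$ to $e_f E^s_\O$; and using the equivalent descriptions of $\DGDfr E^s_\O$ from Proposition \ref{prop:DGDf} (which apply by the parabolic version of Lemma \ref{lem:E-finite-global-dim}), the subcategory generated by the $e_f E^s_\O$ is precisely $\DGDfr E^s_\O$. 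The only real obstacle I anticipate is checking that $N_\circ^\bullet$ genuinely represents $\pi_{s!}\cB\cS_{f,\circ}^\O$ up to a uniform shift and twist on both the $\O$- and $\F$-sides simultaneously, but this is a formal consequence of the compatibilities in Proposition \ref{prop:direct-inverse-image-0} combined with the flatness of each $M_\circ^k$ over $\O$.
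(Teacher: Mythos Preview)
Your proposal is correct and follows essentially the same route as the paper's own proof: the paper also sets $M^\bullet_{s,\circ} := M^\bullet_\circ \otimes_{A_\O} A^s_\O$ (your $N^\bullet_\circ$), forms the dg-algebra $E^{s,\bullet}_\O := \Hom^\bullet_{-A^s_\O}(M^\bullet_s,M^\bullet_s)$, identifies its cohomology with $E^s_\O$ via the analogue of Lemma \ref{lem:H}, applies the $q$-decomposability and Deligne-formality argument of Proposition \ref{prop:formality}, and then concludes exactly as in Theorem \ref{thm:formality}. The only cosmetic point is that the $A^s_\O$-projectivity of each $N^k_\circ$ follows directly from extension of scalars (not from Lemma \ref{lem:tensor-product-projective}, which concerns bimodules), but this does not affect the argument.
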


\begin{proof}
First we treat the case of $\O$. Recall the resolution $M^{\bullet}_{\circ}$ constructed in the proof of Proposition \ref{prop:projective-resolution-decomposable}. 
Set
\[
M^{\bullet}_{s,\circ} := M^{\bullet}_\circ \otimes_{A_{\O}} A^s_{\O}
\]
and denote by $M^{\bullet}_s$ the complex obtained by forgetting the automorphism $\phi$. Then by construction $M^{\bullet}_{s,\circ}$ is a complex of objects of $\Modfrdecproj (A^s_{\O},\phi^s)$, and by Proposition \ref{prop:direct-inverse-image-0} its image in $\cDb \bigl( \Modfr (A^s_{\O},\phi) \bigr)$ is $\Pi_{\circ}^{s,\O} \circ (\mathsf{real}_{\circ}^{s,\O})^{-1} \bigl( \pi_{s!} \cB\cS_{\circ}^{\O}[1] \bigr)$. Then we consider the dg-algebra
\[
E_{\O}^{s,\bullet} \ := \ \Hom^{\bullet}_{-A_{\O}^s}(M_s^\bullet, M_s^\bullet).
\]
By construction it is endowed with an automorphism $\phi^s$.

The same argument as for Lemma \ref{lem:H} shows that we have an isomorphism of $(\O,\phi)$-algebras $\Ho^{\bullet}(E_{\O}^{s,\bullet}) \cong E_{\O}^s$. Then the same argument as for Proposition \ref{prop:formality} shows that there exists a sub-dg-algebra $E^{s,\bullet}_{\rhd} \subset E^{s,\bullet}_{\O}$ and quasi-isomorphisms
\begin{equation}
\label{eqn:qis-E^s}
\xymatrix{
E^{s,\bullet}_{\O} & \ar@{_{(}->}[l]_-{\qis} E^{s,\bullet}_{\rhd} \ar[r]^-{\qis} & E^s_{\O}
}
\end{equation}
The rest of the proof is similar to that of Theorem \ref{thm:formality}, replacing $E_{\O}^{\bullet}$ by $E_{\O}^{s,\bullet}$.

The case of $\F$ is similar, using the dg-algebra
\[
E_{\F}^{s,\bullet} \ := \ \Hom^{\bullet}_{-A_{\F}^s}(\F \otimes_{\O} M_s^\bullet, \F \otimes_{\O} M_s^\bullet)
\]
and the quasi-isomorphisms
\[
\xymatrix{
E^{s,\bullet}_{\F} & \ar@{_{(}->}[l]_-{\qis} \F \otimes_{\O} E^{s,\bullet}_{\rhd} \ar[r]^-{\qis} & \F \otimes_{\O} E^s_{\O}
}
\]
obtained from \eqref{eqn:qis-E^s} by modular reduction.
\end{proof}

Let us study the compatibility of the equivalences in Theorem \ref{thm:formality} and Theorem \ref{thm:formality-parabolic}. We set $\E=\O$ or $\F$. For any $s \in S$, the functor $\pi_{s!}$ induces an algebra morphism $E_{\E} \to E^s_{\E}$; hence we have a restriction functor
\[
\mathsf{res} : \DGDfr E^s_{\E} \to \DGDfr E_{\E}.
\]

\begin{prop}
\label{prop:formality-res}

Assume that condition \eqref{eqn:condition} is satisfied.

The equivalences in Theorem {\rm \ref{thm:formality}} and Theorem {\rm \ref{thm:formality-parabolic}} (for all $s \in S$) can be chosen so that for any $s \in S$ the following diagram commutes up to natural transformation:
\[
\xymatrix@C=2cm{
\cDb_{(B)}(X^s,\E) \ar[r]^-{\mathrm{Th.~\ref{thm:formality-parabolic}}}_-{\sim} \ar[d]_-{\pi_s^![-1]} & \DGDfr E^s_{\E} \ar[d]^-{\mathsf{res}} \\
\cDb_{(B)}(X,\E) \ar[r]^-{\mathrm{Th.~\ref{thm:formality}}}_-{\sim} & \DGDfr E_{\E}.
}
\]

\end{prop}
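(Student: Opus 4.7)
My plan is to reduce the compatibility to a compatibility at the level of dg-algebras, which will follow essentially tautologically from the constructions in the proofs of Theorems \ref{thm:formality} and \ref{thm:formality-parabolic}. First, recall from the proof of Theorem \ref{thm:formality-parabolic} that $M^\bullet_{s,\circ} = M^\bullet_\circ \otimes_{A_\O} A^s_\O$, which yields a natural morphism of $(\O,\phi)$-dg-algebras
$$
\rho : E^\bullet_\O \to E^{s,\bullet}_\O, \qquad \varphi \mapsto \varphi \otimes_{A_\O} \mathrm{id}_{A^s_\O}.
$$
Combining Lemma \ref{lem:H}, Remark \ref{rmk:automorphisms-Hom}, and the commutativity of the right square in Proposition \ref{prop:direct-inverse-image-0}, one identifies $\Ho^\bullet(\rho)$ with the ring morphism $E_\O \to E^s_\O$ induced by $\pi_{s!}$, and similarly over $\F$ after modular reduction.

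Next, I would exploit the fact that $\rho$ is a morphism of $(\O,\phi)$-dg-algebras, hence strictly compatible with the generalised $\phi$-eigenspace decompositions underlying the proof of Proposition \ref{prop:formality}. The construction of $E^\bullet_\rhd \subset E^\bullet_\O$ via Lemma \ref{lem:formality-criterion} is manifestly functorial with respect to such morphisms, so one obtains canonical choices of $E^\bullet_\rhd$ and $E^{s,\bullet}_\rhd$ together with a morphism $\rho_\rhd$ fitting into a commutative diagram
$$
\xymatrix@C=1cm{
E^\bullet_\O \ar[d]_-{\rho} & \ar@{_{(}->}[l]_-{\qis} E^\bullet_\rhd \ar[r]^-{\qis} \ar[d]^-{\rho_\rhd} & E_\O \ar[d] \\
E^{s,\bullet}_\O & \ar@{_{(}->}[l]_-{\qis} E^{s,\bullet}_\rhd \ar[r]^-{\qis} & E^s_\O
}
$$
whose modular reduction again consists of quasi-isomorphisms (since all top-row terms are $\O$-free). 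Passing to derived categories of dg-modules and restricting to the appropriate compact generators gives equivalences intertwining the dg-restriction functors along the vertical arrows.

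Finally, I would match this algebraic restriction with the geometric functor $\pi_s^![-1]$. The adjunction isomorphism
$$
\Hom^\bullet_{-A^s_\O}(M^\bullet_s, -) \cong \Hom^\bullet_{-A_\O}(M^\bullet, \mathsf{res}(-))
$$
intertwines the $E^{s,\bullet}_\O$-module structure on the left with the $E^\bullet_\O$-module structure on the right (through $\rho$), so the equivalences of Theorems \ref{thm:formality} and \ref{thm:formality-parabolic}, built with the above choice of subalgebras, identify dg-restriction $\DGDr E^{s,\bullet}_\O \to \DGDr E^\bullet_\O$ with the module-theoretic $\mathsf{res} : \cDb(\Modfr A^s_\O) \to \cDb(\Modfr A_\O)$. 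Proposition \ref{prop:direct-inverse-image} then identifies the latter with $\pi_s^{[!]} = \pi_s^![-1]$, yielding the desired commutativity. The main obstacle is purely the bookkeeping needed to verify that all the stepping-stone equivalences of \S\ref{ss:homological-algebra} are compatible with $\rho$ and its cohomology; no new geometric input beyond what entered the proofs of Theorems \ref{thm:formality} and \ref{thm:formality-parabolic} is required.
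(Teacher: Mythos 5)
Your proposal is correct and follows essentially the same route as the paper: define the dg-algebra morphism $E^{\bullet}_{\O} \to E^{s,\bullet}_{\O}$ induced by $- \otimes_{A_{\O}} A^s_{\O}$, check it induces the map $E_{\O} \to E^s_{\O}$ in cohomology via Proposition \ref{prop:direct-inverse-image}, use its compatibility with the $\phi$-eigenspace bigrading to get the commutative ladder through the subalgebras $E^{\bullet}_{\rhd}$ and $E^{s,\bullet}_{\rhd}$, and finally match the dg-restriction with $\pi_s^![-1]$ via the $(-\otimes_{A_{\O}} A^s_{\O}, \mathsf{res})$ adjunction. The paper spells out the final bookkeeping with an explicit large diagram through $\cKb \Projfr A_{\O}$ and the homotopy categories of dg-modules, but the content is the same.
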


\begin{proof}
We consider the equivalences constructed in the proofs of Theorem \ref{thm:formality} and Theorem \ref{thm:formality-parabolic} (using the same notation), and we will show that with this choice, the diagram commutes up to natural transformation. For simplicity we only treat the case $\E=\O$.

First, as the functor $\DGDfr E_{\O} \to \DGDr E_{\O}$ is fully faithful, it is enough to show that the following diagram commutes, where the horizontal arrows are the composition of our equivalences with the obvious embeddings:
\[
\xymatrix{
\cDb_{(B)}(X^s,\O) \ar@{^{(}->}[r] \ar[d]_-{\pi_s^![-1]} & \DGDr E^s_{\O} \ar[d]^-{\mathsf{res}} \\
\cDb_{(B)}(X,\O) \ar@{^{(}->}[r] & \DGDr E_{\O}.
}
\]
Now by construction the horizontal arrows factor through fully faithful functors $\cDb_{(B)}(X^s,\E) \to \DGDr E^{s,\bullet}_{\O}$, resp.~$\cDb_{(B)}(X,\E) \to \DGDr E^{\bullet}_{\O}$. Moreover, the functor $- \otimes_{A_{\O}} A^s_{\O}$ induces a morphism of $(\O,\phi)$-dg-algebras $E_{\O}^{\bullet} \to E_{\O}^{s,\bullet}$ which, by Proposition \ref{prop:direct-inverse-image}, induces our morphism $E_{\O} \to E_{\O}^s$ in cohomology. Hence we obtain morphisms which make the following diagram commutative:
\begin{equation}
\label{eqn:diagram-morphisms-E}
\vcenter{
\xymatrix{
E_{\O}^{\bullet} \ar[d] & E^{\bullet}_{\rhd} \ar[d] \ar[l]_-{\qis} \ar[r]^-{\qis} & E_{\O} \ar[d] \\
E_{\O}^{s,\bullet} & E^{s,\bullet}_{\rhd} \ar[l]_-{\qis} \ar[r]^-{\qis} & E_{\O}^s. 
}
}
\end{equation}
Then the following diagram commutes up to natural transformations, where all functors are restriction functors for the morphisms in \eqref{eqn:diagram-morphisms-E}:
\[
\xymatrix{
\DGDr E_{\O}^{s,\bullet} \ar[d]_-{\mathsf{res}} & \DGDr E^{s,\bullet}_{\rhd} \ar[l]_-{\sim} \ar[r]^-{\sim} \ar[d]^-{\mathsf{res}} & \DGDr E^s_{\O} \ar[d]^-{\mathsf{res}} \\
\DGDr E_{\O}^{\bullet} & \DGDr E^{\bullet}_{\rhd} \ar[l]_-{\sim} \ar[r]^-{\sim} & \DGDr E_{\O}.
}
\]
Hence it is enough to prove the commutativity of the following diagram:
\[
\xymatrix{
\cDb_{(B)}(X^s,\O) \ar@{^{(}->}[r] \ar[d]_-{\pi_s^![-1]} & \DGDr E^{s,\bullet}_{\O} \ar[d]^-{\mathsf{res}} \\
\cDb_{(B)}(X,\O) \ar@{^{(}->}[r] & \DGDr E^{\bullet}_{\O}.
}
\]
Using Proposition \ref{prop:direct-inverse-image}, it is even enough to show the commutativity of the following diagram, where horizontal arrows are the functors considered in the proofs of Theorem \ref{thm:formality} and Theorem \ref{thm:formality-parabolic}:
\begin{equation}
\label{eqn:diagram-inverse-image}
\vcenter{
\xymatrix{
\cDb \bigl( \Modfr A_{\O}^s \bigr) \ar@{^{(}->}[r] \ar[d]_-{\mathsf{res}} & \DGDr E^{s,\bullet}_{\O} \ar[d]^-{\mathsf{res}} \\
\cDb \bigl( \Modfr A_{\O} \bigr) \ar@{^{(}->}[r] & \DGDr E^{\bullet}_{\O}.
}
}
\end{equation}

Now, consider the following diagram:
{\small
\[
\xymatrix@C=0.6cm{
\cDb \Modfr A_{\O}^s \ar[d]_-{\mathsf{res}} & \cKb \Projfr A_{\O}^s \ar[l]_-{\sim} \ar[r]^-{a}_-{\sim} \ar[d]^-{\mathsf{res}} &  \lan 1_f E_{\O}^{s,\bullet} \ran_{\Delta}^{\mathsf{dgHo}} \ar[r]^-{\sim} \ar[d]^-{\mathsf{res}} & \lan 1_f E_{\O}^{s,\bullet} \ran_{\Delta}^{\mathsf{dgDer}} \ar@{^{(}->}[r] \ar[d]^-{\mathsf{res}} & \DGDr E_{\O}^{s,\bullet} \ar[ld]^-{\mathsf{res}} \\
\cDb \Modfr A_{\O} & \cKb \Modfr A_{\O} \ar[l] \ar[r]^-{b} & \DGHr E_{\O}^{\bullet} \ar[r]^-{d} & \DGDr E_{\O}^{\bullet} & \\
& \cKb \Projfr A_{\O} \ar[r]_-{\sim}^-{c} \ar[lu]_-{\sim} \ar[u] & \lan 1_f E_{\O}^{\bullet} \ran_{\Delta}^{\mathsf{dgHo}} \ar[r]^-{\sim} \ar@{^{(}->}[u] & \lan 1_f E_{\O}^{s,\bullet} \ran_{\Delta}^{\mathsf{dgDer}}. \ar@{^{(}->}[u]
}
\]
}Here all unnamed functors are the obvious ones, as well as $d$, the functor $a$ is given by $\Hom^{\bullet}_{-A^s_{\O}}(M^{\bullet}_s, -)$, and the functors $b$ and $c$ are given by $\Hom^{\bullet}_{-A_{\O}}(M^{\bullet}, -)$, so that the upper path from $\cDb \Modfr A_{\O}^s$ to $\DGDr E_{\O}^{s,\bullet}$ is the upper horizontal arrow in \eqref{eqn:diagram-inverse-image}, and the lower path from $\cDb \Modfr A_{\O}$ to $\DGDr E_{\O}^{\bullet}$ is the lower horizontal arrow in \eqref{eqn:diagram-inverse-image}. The categories $\lan 1_f E_{\O}^{s,\bullet} \ran_{\Delta}^{\mathsf{dgHo}} \subset \DGHr E_{\O}^{s,\bullet}$ and $\lan 1_f E_{\O}^{s,\bullet} \ran_{\Delta}^{\mathsf{dgDer}} \subset \DGDr E_{\O}^{s,\bullet}$ are the triangulated subcategories generated by the objects $1_f E_{\O}^{s,\bullet}$ for $f \in F$, and similarly for $E_{\O}^{\bullet}$.

All the squares and triangles in our large diagram are obviously commutative, except for the square involving $a$ and $b$. The commutativity of this diagram follows from the fact that the functor $- \otimes_{A_{\O}} A^s_{\O}$ is left adjoint to $\mathsf{res} : \Modr A^s_{\O} \to \Modr A_{\O}$.

We observe that the functor $d \circ b$ factors through a functor $e : \cDb \Modfr A_{\O} \to \DGDr E_{\O}^{\bullet}$, which must coincide with the lower path in our large diagram. The commutativity of \eqref{eqn:diagram-inverse-image} easily follows from this observation.
\end{proof}

\subsection{Application to Koszulity}
\label{ss:koszulity}

In this subsection we deduce from our results that, under certain
assumptions, the ring $A_{\F}$ can be endowed with a Koszul
grading. This statement is a modular analogue of \cite[Theorem
4.4.4]{BGS} (in the case of the flag variety). It will not be used in
the rest of the paper, except in \S\ref{ss:koszulity-standard}.

The proof will use the notion of a dgg-algebra: such an object is a $\Z^2$-graded algebra endowed with a differential $d$ of bidegree $(1,0)$ which satisfies the Leibniz rule with respect to the first grading, and such that $d^2=0$. One has an obvious notion of dgg-module over a dgg-algebra, and we denote the derived category of right dgg-modules over the dgg-algebra $R$ by $\DGDZr R$.

We also recall some well-known facts on graded algebras. Let $R$ be a finite dimensional graded $\F$-algebra. If $M$ is in $\Modfr R$, an object $M^{\Z}$ of $\Modfrz R$ is called a \emph{graded lift} of $M$ if there exists an isomorphism $v(M^{\Z}) \cong M$, where $v$ is the obvious forgetful functor. Then every simple or indecomposable projective right $R$-module admits a graded lift, which is unique up to isomorphism and shift in the grading. Moreover we have the following easy lemma, whose proof is left to the reader. (See e.g.~\cite[Proposition 2.7.2]{So2} for a similar statement.)

\begin{lem}
\label{lem:graded-lift}

Let $M^{\Z}$ and $M_i^{\Z}$ ($i=1, \cdots, r$) be objects of $\Modfrz R$, such that $v(M_i^{\Z})$ is indecomposable for any $i$. Assume that there exists an isomorphism
\[
v(M^{\Z}) \ \cong \ \bigoplus_i v(M_i^{\Z})
\]
in $\Modfr R$. Then there exist integers $k_i$ ($i=1, \cdots, r$) and an isomorphism
\[
M^{\Z} \ \cong \ \bigoplus_i M_i^{\Z} \lan k_i \ran
\]
in $\Modfrz R$.

\end{lem}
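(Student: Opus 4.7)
I would prove this by induction on $r$, with $r=0$ being trivial (the forgetful functor $v$ is faithful, so $v(M^{\Z})=0$ forces $M^{\Z}=0$). The key structural input is that each $M_i^{\Z}$ is itself indecomposable in $\Modfrz R$: a nontrivial graded decomposition of $M_i^{\Z}$ would produce a nontrivial ungraded decomposition of $v(M_i^{\Z})$, contradicting the hypothesis. Since $R$ is finite-dimensional over $\F$, the category $\Modfrz R$ is Hom-finite and satisfies Krull--Schmidt, so $\End_{\Modfrz R}(M_i^{\Z})$ is local.

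The crux of the argument is to split off a graded summand of $M^{\Z}$ isomorphic to a shift of $M_r^{\Z}$. Fix the given isomorphism $\psi \colon v(M^{\Z}) \xrightarrow{\sim} \bigoplus_i v(M_i^{\Z})$, and let $j \colon v(M_r^{\Z}) \to v(M^{\Z})$ and $p \colon v(M^{\Z}) \to v(M_r^{\Z})$ be the associated inclusion and projection of the $r$-th factor, so that $p \circ j = \id$. Using the canonical decomposition
\[
\Hom_{-R}(v(N), v(N')) \ = \ \bigoplus_{k \in \Z} \Hom_{\Modfrz R}(N, N'\lan k\ran),
\]
valid for any $N,N' \in \Modfrz R$, write $j = \sum_k j_k$ and $p = \sum_k p_k$ with $j_k \in \Hom_{\Modfrz R}(M_r^{\Z}, M^{\Z}\lan k\ran)$ and $p_k \in \Hom_{\Modfrz R}(M^{\Z}, M_r^{\Z}\lan k\ran)$. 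Extracting the weight-zero component of the graded endomorphism $p \circ j = \id_{M_r^{\Z}}$ of $M_r^{\Z}$ yields
\[
\sum_{k \in \Z} p_{-k}\lan k\ran \circ j_k \ = \ \id_{M_r^{\Z}} \quad \text{in} \quad \End_{\Modfrz R}(M_r^{\Z}).
\]
Since the right-hand side is a unit in the local ring $\End_{\Modfrz R}(M_r^{\Z})$, at least one summand $u := p_{-m}\lan m\ran \circ j_m$ must also be a unit. Consequently $j_m$ admits the graded left inverse $u^{-1} \circ p_{-m}\lan m\ran$ and is split injective in $\Modfrz R$, producing a graded decomposition $M^{\Z} \cong M_r^{\Z}\lan -m\ran \oplus M'^{\Z}$.

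Applying $v$ to this decomposition and invoking Krull--Schmidt in $\Modfr R$ (also valid since $R$ is finite-dimensional) yields $v(M'^{\Z}) \cong \bigoplus_{i < r} v(M_i^{\Z})$. The inductive hypothesis then supplies integers $k_1, \dots, k_{r-1}$ with $M'^{\Z} \cong \bigoplus_{i < r} M_i^{\Z}\lan k_i\ran$; setting $k_r := -m$ completes the induction. The one conceptual point to recognize is that the hypothesis on $v(M_i^{\Z})$ secures locality of $\End_{\Modfrz R}(M_r^{\Z})$, which is what bridges the ungraded splitting provided by $\psi$ and the graded splitting we need via the local-ring pigeonhole.
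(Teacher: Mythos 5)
Your proof is correct and complete: the paper itself leaves this lemma to the reader (pointing to \cite[Proposition 2.7.2]{So2} for a similar statement), and your argument — indecomposability of each $M_i^{\Z}$ in the graded category, locality of its endomorphism ring, the decomposition of ungraded Hom into graded Homs of all degrees, and the local-ring pigeonhole applied to the degree-zero component of $p \circ j = \id$ to split off a shifted copy of $M_r^{\Z}$, followed by Krull--Schmidt cancellation and induction — is exactly the standard argument intended here.
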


Now we come to our Koszulity result. Here our conventions are slightly different from those of \cite{BGS}: we define the Koszul dual of a Koszul ring $K=\bigoplus_n K_n$ as the graded ring $K^{\dag}:=\Ext^{\bullet}_{-K}(K_0,K_0)$. It is also Koszul.

\begin{prop}
\label{prop:koszulity}

Assume that condition \eqref{eqn:condition} is satisfied. Assume moreover that for any $x \in W$ we have $\cE_x^{\F}=\IC_{x,\F}$. 

There exists a Koszul $\F$-algebra $K$ and an equivalence of categories
\[
\Modfr K \ \cong \ \Perv_{(B)}(X,\F)
\]
such that the Koszul dual ring satisfies
\[
K^{\dag} \ \cong \ \Ext^{\bullet}(\IC_X,\IC_X)
\]
as graded rings, where $\IC_X:= \bigoplus_{x \in W} \IC_{x, \F}$

\end{prop}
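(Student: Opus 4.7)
My plan is to transport the formality equivalence of Theorem \ref{thm:formality} from the parity-based ring $E_{\F}$ to the intersection-cohomology-based ring
\[
\mathscr{E} \ := \ \Ext^{\bullet}(\IC_X, \IC_X),
\]
prove that $\mathscr{E}$ is Koszul, and then define $K$ to be its Koszul dual.

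Because $\cE_x^{\F} = \IC_{x,\F}$ and $\cB\cS^{\F}$ is parity, the latter is a direct sum of shifted $\IC_{x,\F}$'s. Consequently $E_{\F}$ and $\mathscr{E}$ are graded Morita equivalent, and combining this with Theorem \ref{thm:formality} yields an equivalence
\[
\cDb_{(B)}(X,\F) \ \cong \ \DGDfr \mathscr{E}
\]
with $\mathscr{E}$ viewed as a dg-algebra with trivial differential. By Theorem \ref{thm:H-diagonal} (applied to the summand $\IC_X$ of $\cB\cS^{\F}$), $\mathscr{E}$ vanishes in odd degrees; halving the grading we thus view $\mathscr{E}$ as a non-negatively graded $\F$-algebra with semisimple degree-zero part $\mathscr{E}^0 = \bigoplus_{x \in W} \F \cdot \id_{\IC_{x,\F}}$. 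Set $K := \Ext^{\bullet}_{\mathscr{E}}(\mathscr{E}^0, \mathscr{E}^0)$. Granted that $\mathscr{E}$ is Koszul, the standard theory of \cite[Ch.~2]{BGS} automatically gives that $K$ is Koszul with $K^{\dag} \cong \mathscr{E}$, and provides an equivalence of abelian categories between $\Modfr K$ and the heart of the Koszul t-structure on $\DGDfr \mathscr{E}$. Transporting the latter through the formality equivalence, I will match it with $\Perv_{(B)}(X,\F)$ by comparing simple objects: on both sides the simples are indexed by $W$ and correspond to the $\IC_{x,\F}$ (the summands of $K^0 = \mathscr{E}^0$ on the one hand, the simple perverse sheaves on the other), and both abelian categories are of finite length generated by these simples inside $\cDb_{(B)}(X,\F)$, so they coincide.

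The main obstacle is Koszulity of $\mathscr{E}$, i.e.\ the vanishing
\[
\Ext^i_{\mathscr{E}}(\mathscr{E}^0, \mathscr{E}^0 \lan j \ran) \ = \ 0 \quad \text{for } i \neq j.
\]
My strategy is to exploit the Frobenius weight bigrading on the dg-enhancement $E^{\bullet}_{\O}$ underlying $\mathscr{E}$: by Theorem \ref{thm:H-diagonal}, on cohomology the homological grading of $\mathscr{E}$ coincides (after halving) with the Frobenius weight grading, so that $\mathscr{E}$ is weight-diagonal in precisely the sense invoked in Lemma \ref{lem:formality-criterion} to deduce formality. The plan is then to push this diagonality down to a resolution: starting from the dg-module $\F \otimes_{\O} M_\circ^{\bullet}$ of Proposition \ref{prop:projective-resolution-decomposable}, cut out a minimal $q$-decomposable projective resolution of $\mathscr{E}^0$ over $\mathscr{E}$; weight-purity of each term, combined with the diagonality of the cohomology, will force the $i$-th term to be generated purely in internal degree $i$, which is the definition of a linear resolution, hence of Koszulity. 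This is the modular analogue of the BGS proof of Koszulity for category $\cO$, with Frobenius weights playing the role that pointwise purity of $\IC$-sheaves plays over $\C$; extracting the correct bounds from our weight control is where the real work will lie.
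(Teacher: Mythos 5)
There is a genuine gap at the heart of your argument: the Koszulity of $\mathscr{E} = \Ext^{\bullet}(\IC_X,\IC_X)$ is exactly the hard point, and the mechanism you propose for it does not work. By Theorem \ref{thm:H-diagonal}, the Frobenius weight on $\mathscr{E}$ is determined by the cohomological degree (weight $-m$ in degree $2m$); that diagonality is precisely what gives \emph{formality} of the dg-algebra $E^{\bullet}$, but once you pass to the graded ring $\mathscr{E}$ the weight grading carries no information beyond the internal grading you already have. A minimal graded projective resolution of $\mathscr{E}^0$ over $\mathscr{E}$ lives purely in $\Modfrz \mathscr{E}$; there is no independent ``weight-purity of each term'' to invoke, so nothing forces the $i$-th term to be generated in internal degree $i$. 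Koszulity of $\mathscr{E}$ is a statement about $\Ext^i_{\Modfrz \mathscr{E}}(\mathscr{E}^0,\mathscr{E}^0\lan j\ran)$, and these groups are computed on the \emph{other} side of the duality: under the formality equivalence $\cDb_{(B)}(X,\F)\cong\DGDfr \mathscr{E}$ the simple $\mathscr{E}$-modules correspond to the projective perverse sheaves, so what is really needed is a second ($\Z$-)grading on $A_{\F}=\End(\cP^{\F})$ and a proof that the graded simple $A_{\F}$-modules become, after the shearing $\Omega$, graded \emph{projective} $E_{\F}$-modules. That last step is where the hypothesis $\cE_x^{\F}=\IC_{x,\F}$ enters (it makes $\Psi(S_x)$ a summand of $\widetilde{E}_{\F}[-\ell(x)]$), and it is absent from your sketch.

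The paper's proof runs in the opposite direction and avoids your difficulty: it puts the Frobenius grading on $A_{\F}$ (via $\F\otimes_{\O}A_{\O}$), lifts the simples $S_x=\Pi^{\F}(\IC_{x,\F})$ to graded modules $\widetilde{S}_x$, and uses the graded derived equivalence $\Psi^{\Z}$ followed by the shear $\Omega$ to show $\Omega(\Psi^{\Z}(\widetilde{S}_x))$ is graded projective over $E_{\F}$; this yields $\Ext^n_{\Modrz A_{\F}}(\widetilde{S}_x,\widetilde{S}_y\lan m\ran)=0$ unless $n+m=0$, i.e.\ Koszulity of the basic graded algebra $K$ of $A_{\F}$, with $K^{\dag}\cong\Ext^{\bullet}(\IC_X,\IC_X)$ read off from the definitions. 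Your choice to define $K$ as the Koszul dual of $\mathscr{E}$ also creates secondary obligations you do not discharge: you need $(\mathscr{E}^!)^!\cong\mathscr{E}$ and, more seriously, the identification of the heart of the ``Koszul t-structure'' on $\DGDfr\mathscr{E}$ with $\Perv_{(B)}(X,\F)$ requires knowing that the simples of that heart are the $\IC_{x,\F}$ \emph{on the nose} (no shifts or internal twists), which again amounts to the graded-simples-go-to-graded-projectives statement. In the paper's setup the equivalence $\Modfr K\cong\Perv_{(B)}(X,\F)$ is immediate, since $K$ is by construction Morita equivalent to $A_{\F}$.
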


\begin{rmk}
By \cite[Corollary 1.0.3]{So2}, the assumption that $\cE_x^{\F}=\IC_x^{\F}$ for all $x$ is equivalent to a particular case of Lusztig's conjecture on characters of simple representations of reductive groups over $\F$. See also \cite[Proposition 3.11]{W} for other conditions equivalent to this assumption.
\end{rmk}

\begin{proof}
To make the proof clearer, we denote the graded ring $E_{\F}$ by $\widetilde{E}_{\F}$ when it is considered as a dg-algebra, or as a dgg-algebra with bigrading concentrated on the diagonal in $\Z^2$.

Recall the equivalence of categories $\Pi^{\F} : \Perv_{(B)}(X,\F) \xrightarrow{\sim} \Modfr A_{\F}$. The simple right $A_{\F}$-modules are parametrized by $W$: we set $S_x:=\Pi^{\F}(\IC_{x,\F})$. Let us denote by
\[
\Psi : \cDb(\Modfr A_{\F}) \xrightarrow{\sim} \DGDfr \widetilde{E}_{\F}
\]
the equivalence constructed in the proof of Theorem
\ref{thm:formality}. By construction we have
$\Psi(\Pi^{\F}(\cB\cS_f^\F))=e_f \cdot \widetilde{E}_{\F}$ for any $f
\in F$. Now if $f$ is a reduced expression for $x^{-1}$, by
Proposition \ref{prop:BS parity}, $\cE_x^{\F}[\ell(x)]$ is a direct
summand in $\cB\cS_f^{\F}$. We deduce that, under our assumption,
$\Psi(S_x)$ is a direct summand in $\widetilde{E}_{\F}[-\ell(x)]$.

Now we remark that the action of the Frobenius defines (as in the proof of Proposition \ref{prop:formality}) a $\Z$-grading on the $\O$-algebra $A_{\O}$. Using the first isomorphism in \eqref{eqn:modular-reduction-A} we deduce a $\Z$-grading on $A_{\F}$. 
Each simple $A_{\F}$-module $S_x$ can be lifted to a graded right $A_{\F}$-module $\widetilde{S}_x$. It is clear that, using eigenvalues of the Frobenius again, from the resolution $M_{\circ}^\bullet$ of Proposition \ref{prop:projective-resolution-decomposable} one can deduce a complex of projective graded right $A_{\F}$-modules $M_{\Z}^{\bullet}$ and use it to construct an equivalence $\Psi^{\Z}$ which makes the following diagram commutative, where vertical arrows are the obvious forgetful functors:
\[
\xymatrix{
\cDb(\Modfrz A_{\F}) \ar[r]^-{\Psi^{\Z}}_-{\sim} \ar[d] & \DGDZr \widetilde{E}_{\F} \ar[d] \\
\cDb(\Modfr A_{\F}) \ar[r]^-{\Psi}_-{\sim} & \DGDr \widetilde{E}_{\F}.
}
\]
Using the observation that $\Psi(S_x)$ is a direct summand in $\widetilde{E}_{\F}[-\ell(x)]$ and an argument similar to the one for Lemma \ref{lem:graded-lift}, it is not difficult to check that there exists $n \in \Z$ such that $\Psi^{\Z}(\widetilde{S}_x)\langle n \rangle$ is a direct summand in $\widetilde{E}_{\F}[-\ell(x)]$ in the category $\DGDZr \widetilde{E}_{\F}$.

One easily checks that the functor
\[
\Omega : \DGDZr \widetilde{E}_{\F} \to \cDb(\Modrz E_{\F})
\]
which sends a dgg-module $M$ to the complex $\Omega(M)$ such that $\Omega(M)^{i,j}=M^{i+j,j}$ is an equivalence of triangulated categories, which satisfies $\Omega(M\langle n \rangle)=\Omega(M)[n]\langle n \rangle$. The object $\Omega\bigl( \Psi^{\Z}(\widetilde{S}_x) \bigr) \langle n \rangle [n]$ is a direct summand in $E_{\F}[\ell(x)]$. Hence, replacing $\widetilde{S}_x$ by $\widetilde{S}_x\langle n - \ell(x) \rangle$, we can assume that $\Omega(\Psi^{\Z}(\widetilde{S}_x))$ is a projective graded right $E_{\F}$-module. As the functor $\Omega \circ \Psi^{\Z}$ is an equivalence of categories, we deduce that
\[
\Ext^n_{\Modrz A_{\F}}(\widetilde{S}_x,\widetilde{S}_y\langle m \rangle) = 0 \quad \text{unless } n+m=0.
\]

For any $x \in W$, let $\widetilde{Q}_x$ be the projective cover of $\widetilde{S}_x$ in $\Modfrz A_{\F}$, and set $\widetilde{Q}:=\bigoplus_x \widetilde{Q}_x$. Define the graded ring
\[
K \ := \ \bigoplus_{n \in \Z} \, \Hom_{\Modfrz A_{\F}} ( \widetilde{Q}, \widetilde{Q} \lan n \ran \bigr). 
\]
By construction the underlying ungraded ring of K is Morita equivalent
to $A_{\F}$, and the arguments in \cite[\S 9.2]{Riche} show that $K$
is a Koszul ring. The description of $K^{\dag}$ is clear by
construction, using the equivalence $\Pi^{\F}$.
\end{proof}

\part{Modular category $\cO$ and Koszul duality}
\label{pt:MKD}

\section{Reminder on modular category $\cO$}
\label{sec:reminder}

From now on we will no longer consider coefficients in $\O$. For
simplicity, we sometimes drop the subscripts ``$\F$" in the
notation. We assume from now on that $G$ is semisimple of adjoint
type.

In order to arrive at the results announced in the introduction, we
have to recall several results from \cite{So2}. All references in square brackets in this section refer to this source. We denote by $\F$ a field of characteristic $\ell$, assumed to be bigger than the Coxeter number of $G$, and by $G^{\vee}_{\F}$ the connected reductive algebraic group over $\F$ which is Langlands dual to $G$ (which is simply connected). Our choice of $B$ and $T$ determines a Borel subgroup $B^{\vee}_{\F} \subset G^{\vee}_{\F}$ and a maximal torus $T^{\vee}_{\F} \subset B^{\vee}_{\F}$.

\subsection{Definitions}

From now on we write $\cO$ for the regular subquotient category
defined in [\S 2.3] for the group $G^{\vee}_{\F}$. (It was denoted by $\cO_0(\F)$ in the introduction.) We
write $(M_x)_{x \in W}$ 
for the standard objects in $\cO$. The parametrisation is chosen so
that $M_e$ is projective. We consider the coinvariant algebra
$C$ as in [\S 2.1] and the functor
\[
\V : \cO \to C\Modf
\]
which is fully-faithful on morphisms between projective objects (see [Theorem 2.6.1]). For
each simple reflection $s \in S$ we consider the semi-regular
subquotient category $\cO^s$ from [\S 2.4] with standard objects $M_x^s =
M_{xs}^s$. We denote by $C^s$ the $s$-invariants in $C$. We have
an exact functor
\[
\V^s : \cO^s \to C^s\Modf
\]
as in [proof of Theorem 2.6.2]. Finally, we consider the exact translation functors
$T^s : \cO \to \cO^s$ and $T_s : \cO^s \to \cO$ from [\S 2.5] and
diagrams (see [proof of Theorem 2.6.2]) which commute up to natural
isomorphism:
\begin{equation}
\label{eq:V}
\vcenter{
\xymatrix{
\cO \ar[r]^-{\V} \ar[d]_-{T^s} & C\Modf \ar[d]^{\res} & \cO^s \ar[d]_-{T_s}
\ar[r]^-{\V^s} & C^s\Modf \ar[d]^{C \otimes_{C^s}(-)} \\ 
 \cO^s \ar[r]^-{\V^s} & C^s\Modf & \cO \ar[r]^-{\V} & C\Modf 
}
}
\end{equation}

Amongst other things this shows that $\V^s$ is also fully-faithful on
morphisms between projective objects in $\cO^s$. Indeed, any
projective object in $\cO$ is a direct summand of $T^sQ$ with $Q$
projective in $\cO$. Then using the adjunctions $(T^s,T_s)$ and
$(T_s,T^s)$ from [\S 2.5] and \eqref{eq:V} we deduce isomorphisms
\begin{align*}
  \Hom_{\cO^s}(M, T^sQ) & = \Hom_{\cO}(T_sM, Q) \\
& = \Hom_{C}(\V T_sM, \V Q) \\
& = \Hom_{C}(C \otimes_{C^s} \V^s M, \V Q) \\
& = \Hom_{C^s}(\V^s M, \res \V Q) \\
& = \Hom_{C^s}(\V^s M, \V^s T^s Q).
\end{align*}

\subsection{Projective objects and equivalences}
\label{subsect: O proj}

We abbreviate $\Th_s := T_sT^s$. Given any sequence $f = (s, t, \dots,
r)$ of simple reflections, consider the projective object $P_f := \Th_s
\Th_t \dots \Th_r M_e$. If we denote by $P_s$ the projective cover
of $M_s$ and if our sequence $f$ is a reduced expression for
$x^{-1}$ then
\begin{equation}
\label{eqn:decomposition-Pf}
P_f = P_x \oplus \bigoplus_{y < x} P_y^{\oplus m(f,y)}
\end{equation}
for some unknown multiplicities $m(f,y) \ge 0$. Furthermore, $\cO$ has
finite homological dimension. As in \S \ref{subsec:BS} let us
fix a family $F = \{ f_1, \dots, f_n \}$ of reduced expressions for the elements of $W$. It
follows that every object of $\cO$ has a finite 
projective resolution in which every term is a finite direct sum of objects of
the form $P_f$ for some $f \in F$.

Let us write $P_x^s$ for the projective cover of
$M_x^s$ in $\cO^s$. Then if $f$ a reduced expression for
$x^{-1}$ with $x < xs$ and if $P_f^s:=T^s P_f$, we have an analogous decomposition
\[
P^s_f = P^s_x \oplus \bigoplus_{y < ys \atop y < x} P_y^{\oplus m^s(f,y)}
\]
where $m^s(f,y) \ge 0$ are again unknown multiplicities. As before,
$\cO^s$ has finite homological dimension, and every object has a finite projective resolution in which
every term is a finite direct sum of objects of the form $P_f^s$ for
$f \in F$.

Set $P := \bigoplus_{f \in F} P_f$ and $E_\F := \End_{\cO}(P)$. We
obtain an equivalence
\[
\Hom_{\cO}(P, -) : \cO \simto \Modfr E_\F.
\]
If we denote by $e_f \in E_\F$ the projection to $P_f$ then we have $P_f
\mapsto e_fE_\F$ under this equivalence. Similarly, if  
we set $P^s := \bigoplus_{f \in F} P^s_f$ and $E_\F^s := \End_{\cO}(P^s)$. Then we
obtain an equivalence
\[
\Hom_{\cO}(P^s, -) : \cO^s \simto \Modfr E_\F^s.
\]
and if we denote by $e_f \in E_\F^s$ the projection to $P^s_f$ then we have $P^s_f
\mapsto e_fE_\F^s$.

Translation onto the wall yields a ring homomorphism $E_\F \to E_\F^s$ with
$e_f \mapsto e_f$. It is straightforward to see that this is an
injection (which explains why we do not decorate our idempotents with an
upper index). Because $E_\F^s = \Hom(T^sP, T^sP) \cong \Hom(P, T_sT^sP)$,
$E_\F^s$
is a projective right $E_\F$-module. Similarly, the isomorphism $E_\F^s \cong
\Hom(T_sT^sP, P)$ shows that $E_\F^s$ is a projective left 
$E_\F$-module. This bimodule gives us a commutative diagram up to natural
isomorphism
\[
\xymatrix{
\cO^s \ar[r]^-{\sim} \ar[d]_-{T_s} & \Modfr E_\F^s \ar[d]^-{\res} \\ 
 \cO \ar[r]^-{\sim} & \Modfr E_\F 
}
\]
where the right-hand vertical map is the restriction under $E_\F \to
E_\F^s$. The natural transformation is simply the adjunction
isomorphism
\[
\Hom(T^sP, M) \simto \Hom(P,T_sM).
\]

The right and left adjoints of $T_s$ coincide, giving us two
commutative diagrams up to natural isomorphism
\[
\xymatrix{
\cO \ar[r]^-{\sim} \ar[d]_{T^s} & \Modfr E_\F \ar[d]^{(-)\otimes_{E_\F}
  E_\F^s} &  & \cO \ar[d]_{T^s}
\ar[r]^-{\sim} & \Modfr E_\F \ar[d]^{ \Hom_{-E_\F}(E_\F^s, -)} \\ 
 \cO^s \ar[r]^-{\sim} & \Modfr E_\F^s & & \cO^s \ar[r]^-{\sim} &
 \Modfr E_\F^s
}
\]

\subsection{Standard objects}

For later use, let us briefly explain how one can describe morphisms between standard objects in $\cO$.

\begin{lem}
\label{lem:morphisms-standard}

For $x,y \in W$ we have
\[
\Hom_{\cO}(M_x,M_y) \ \cong \ \left\{
\begin{array}{cl}
\F & \text{if } y \leq x, \\
0 & \text{otherwise.}
\end{array}
\right.
\]
Moreover, all non-zero morphisms between standard objects are injective.

\end{lem}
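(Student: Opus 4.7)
My plan is to prove both assertions by induction on $\ell(x)$, using the translation functors $T^s, T_s$ together with their adjunctions and Soergel's functor $\V \colon \cO \to C\Modf$. The base case $x = e$ exploits that $M_e = P_e$ is projective by the choice of parametrization: fully-faithfulness of $\V$ on morphisms out of projectives gives
\[
\Hom_{\cO}(M_e, M_y) \cong \Hom_C(\V M_e, \V M_y),
\]
and combining the explicit description $\V M_e \cong C$ (free of rank one) with a parallel induction computing $\V M_y$ as a cyclic $C$-module, one checks that this Hom space is one-dimensional if $y = e$ and zero otherwise (agreeing with the Bruhat condition $y \leq e$, which forces $y = e$).

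For the inductive step $\ell(x) \geq 1$, I would choose a simple reflection $s$ with $xs < x$ and employ the standard short exact sequence of the form $M_{?} \hookrightarrow T_s M^s_{xs} \twoheadrightarrow M_{?'}$, with $\{?,?'\} = \{x,xs\}$, arising from translation through the $s$-wall. Applying $\Hom_{\cO}(-, M_y)$ to this sequence, together with the adjunction $\Hom_{\cO}(T_s -, -) \cong \Hom_{\cO^s}(-, T^s -)$ and the fact that $T^s M_y$ is a standard object in $\cO^s$ (using the identification $M^s_y = M^s_{ys}$), reduces the computation of $\Hom(M_x, M_y)$ to an analogous problem about Hom spaces between standard objects in $\cO^s$ (proved by a parallel induction on length) together with $\Hom(M_{xs}, M_y)$ (to which the inductive hypothesis applies directly). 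The Bruhat-order lifting property --- which for $xs<x$ states that $y \leq x$ iff $y' \leq xs$ where $y'$ is the unique element of $\{y,ys\} \cap W^s$ --- combined with case analysis on whether $ys > y$, then matches the predicted formula.

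For injectivity, I would exploit the fact that $M_x$ is generated under $\V$ by a single cyclic vector; any nonzero morphism $f \colon M_x \to M_y$ must send this vector to a nonzero element of $M_y$, whose cyclic submodule is a quotient of $M_x$ in $M_y$. Comparing the composition-factor multiplicity of the head $L_x$ on both sides (using the first part of the lemma to count occurrences of $L_x$ in $M_y$), one sees that this quotient must in fact be all of $M_x$, so $f$ is injective. The principal obstacle will be assembling the parallel induction across $\cO$ and $\cO^s$ cleanly, and in particular verifying that the translation exact sequence takes the expected form in this modular subquotient setting --- but these are technical rather than conceptual hurdles, as all the required ingredients (the exact sequences, the adjunctions, and the behavior of $\V$) have been recalled from Soergel's framework.
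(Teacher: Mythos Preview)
Your approach has real gaps. In the base case, the identification $\V P_f \cong D_f$ gives $\V M_e = D_\emptyset \cong \F$, not $C$; and the paper only records that $\V$ is fully faithful on morphisms \emph{between} projectives, so invoking it on $\Hom(M_e, M_y)$ with $M_y$ non-projective needs separate justification. More seriously, in the inductive step the sequence $M_{xs} \hookrightarrow T_sT^s M_{xs} \twoheadrightarrow M_x$ yields, after applying $\Hom_{\cO}(-,M_y)$, a four-term exact sequence ending in $\Ext^1_{\cO}(M_x,M_y)$, and you cannot extract $\dim \Hom(M_x,M_y)$ without controlling that boundary map; $\Ext^1$ between standard objects is in general nonzero, so nothing you have written forces the restriction map $\Hom(T_sT^sM_{xs},M_y)\to\Hom(M_{xs},M_y)$ to be surjective. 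Finally, your injectivity argument conflates $\Hom(M_x,-)$ with $\Hom(P_x,-)$: the first part of the lemma computes the former, whereas multiplicities $[M_y:L_x]$ are governed by the latter, and these differ as soon as $x\neq e$.

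The paper's proof is short and structurally different: it imports from \cite{So2} the fact that every $M_y$ has simple socle $L_{w_0}$, occurring with multiplicity one. This single fact forces any nonzero $f \colon M_x \to M_y$ to be injective (a nontrivial kernel would contain $\soc M_x = L_{w_0}$, so $\ima f$ would have no $L_{w_0}$ composition factor, yet any nonzero submodule of $M_y$ contains $\soc M_y = L_{w_0}$) and simultaneously gives $\dim \Hom(M_x,M_y) \le 1$. The necessity of $y\le x$ is then a citation to \cite{So2}, and the existence of an embedding for $y\le x$ follows the classical Verma-module pattern. If you want a clean route, this socle argument is it; the translation-functor induction could presumably be made to work, but it requires genuine control of $\Ext^1$ and a parallel induction in $\cO^s$, which is substantially more than you have sketched.
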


\begin{proof}
By [\S 2.5.8], all $M_y$'s have the same socle $M_{w_0}$, where $w_0 \in W$ is the longest element, and moreover this simple object appears with multiplicity $1$ in these modules. The last claim of the lemma follows directly from this.

The fact that the existence of an embedding $M_x \subset M_y$ implies
that $y \leq x$ follows from [\S 2.3.4]. Finally, one can show the
existence of such an embedding when $y \leq x$ using similar arguments
as for existence of non-zero morphisms between Verma modules in usual
category $\cO$.
\end{proof}

\subsection{Description of $E_\F$ in terms of the coinvariant algebra}
\label{ss:description-E}

With the help of \eqref{eq:V} and the fully-faithfulness of $\V$ and
$\V^s$ on morphisms between projective objects we can describe the rings $E_\F \subset E_\F^s$ more explicitly. For
any expression $f = (s,t,\dots, r)$ let us consider the $C$-module
\[
D_f := C \otimes_{C^s} C \otimes_{C^t}  \dots \otimes_{C^r} k.
\]
If we set $D := \bigoplus_{f \in F} D_f$ and denote by $e_f \in \End_C(D)$ the projection to $D_f$, then \eqref{eq:V} gives an
isomorphism
\[
\V P \simto D
\]
and hence an isomorphism $E_\F \simto \End_C D$ which matches the
idempotents $e_f$ on both sides. Similarly we obtain an isomorphism
$\V^sP^s \simto D$ (where $D$ is now viewed as a module over $C^s$ by
restriction) and a commutative diagram:
\begin{equation}
\label{eqn:diagram-E-E^s}
\vcenter{
\xymatrix{
E_\F \ar@{^{(}->}[d] \ar[r]^-{\sim} & \End_C(D) \ar@{^{(}->}[d]\\ 
E_\F^s \ar[r]^-{\sim} & \End_{C^s}(D)
}
}
\end{equation}

In order to obtain a better compatibility with our geometric
constructions we equip $C$ with the doubled $\Z$-grading (so that
$C^{\textrm{odd}} = 0$ and $C^2 \ne 0$). We immediately obtain
$\Z$-gradings on $D$, $E_\F$ and $E_\F^s$ (using the isomorphisms in \eqref{eqn:diagram-E-E^s}). It is easy to see
that $\cDb(\Modfrz E_\F)$ (resp.~$\cDb(\Modfrz E_\F^s)$) is generated as a
triangulated category by the shifts of $e_fE_\F$ (resp.~$e_fE_\F^s$) for all $f \in F$.

We have now introduced two rings which are both called $E_\F$, namely the
ring $E_\F := \Ext^\bullet(\cB\cS^{\F}, \cB\cS^{\F})$ in \S
\ref{subsec:BS} and the ring $E_\F = \End_{\cO}(P)$ above. Similarly, we
have introduced two rings $E_\F^s$, namely the ring $E_\F^s :=
\Ext^\bullet(\pi_{s!}\cB\cS^{\F}, \pi_{s!}\cB\cS^{\F})$ of \S
\ref{subsec:BS} and the ring $E_\F^s$ above. This was intentional however, as we obtain a commutative
diagram of rings with horizontal isomorphisms:
\[
\xymatrix{ \End_ {\cO}(P) \ar^\sim[r] \ar[d]&  \End_C(D) \ar[d] &
  \ar[l]_-\sim \Ext^\bullet(\cB\cS^{\F}, \cB\cS^{\F}) \ar[d] \\
 \End_ {\cO^s}(T^sP) \ar^\sim[r] &  \End_{C^s}(D) &
  \ar[l]_-\sim \Ext^\bullet(\pi_{s!}\cB\cS^{\F}, \pi_{s!}\cB\cS^{\F}) }
\]
The left square is simply a copy of diagram \eqref{eqn:diagram-E-E^s}, the right
vertical morphism is induced by $\pi_{s!}$ and the horizontal ring
isomorphisms on the right-hand side are induced by hypercohomology
(see [Theorem 4.2.1]). In particular our functors preserve the idempotents
$e_f$ for $f \in F$ in all six rings. Hence from now on we can use the
notation $E_\F$ for all three rings in the upper row, and $E_\F^s$ for all
three rings in the lower row. In particular, because $\cO$ and $\cO^s$
have finite global dimension (see \S\ref{subsect: O proj}) we obtain the promised:

\begin{prop} \label{cor:globaldimension}
  The rings $E_\F$ and $E_\F^s$ have finite global dimension.
\end{prop}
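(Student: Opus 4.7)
The plan is to deduce this proposition from the two ingredients already assembled: the equivalences of categories $\cO \simto \Modfr E_\F$ and $\cO^s \simto \Modfr E_\F^s$ recalled in \S\ref{subsect: O proj}, together with the observation (also made in \S\ref{subsect: O proj}) that $\cO$ and $\cO^s$ have finite homological dimension, since every object admits a finite projective resolution whose terms are finite direct sums of the $P_f$ (resp.~$P_f^s$) for $f \in F$.

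First I would verify that both $E_\F$ and $E_\F^s$ are finite-dimensional $\F$-algebras. This is immediate from the isomorphisms $E_\F \cong \End_C(D)$ and $E_\F^s \cong \End_{C^s}(D)$ of \S\ref{ss:description-E}: the coinvariant algebra $C$ is finite-dimensional over $\F$ and the module $D$, being a finite direct sum of iterated tensor products $C \otimes_{C^s} C \otimes_{C^t} \cdots \otimes_{C^r} \F$, is also a finite-dimensional $\F$-vector space, so both endomorphism algebras are finite-dimensional over $\F$.

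Next, transporting the existence of a finite projective resolution from $\cO$ to $\Modfr E_\F$ along the equivalence $\Hom_{\cO}(P,-)$, one obtains that every finitely generated right $E_\F$-module has finite projective dimension; in particular, every simple right $E_\F$-module has finite projective dimension. For a finite-dimensional $\F$-algebra, the global dimension equals the supremum of the projective dimensions of the simple (right) modules (a standard consequence of Auslander's theorem in the noetherian setting), so this supremum, being attained on the finite set of simples, is finite. Hence $E_\F$ has finite global dimension, and the same argument, applied to $\cO^s \simto \Modfr E_\F^s$ and the finite projective resolutions by sums of $P_f^s$, yields the result for $E_\F^s$.

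The main obstacle is really only bookkeeping: all the genuine content---the existence of finite projective resolutions of objects of $\cO$ and $\cO^s$ by the specific generators $P_f$ and $P_f^s$, and the equivalences of categories with $\Modfr E_\F$ and $\Modfr E_\F^s$---has already been imported from \cite{So2}. The one formal point one must not forget is to check finite-dimensionality of the algebras, which is what allows one to pass from the a priori weaker statement \textquotedblleft every finitely generated module has finite projective dimension\textquotedblright{} to the honest finiteness of the global dimension.
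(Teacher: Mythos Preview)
Your proof is correct and follows exactly the paper's approach: the paper simply observes that since $\cO$ and $\cO^s$ have finite global dimension (as recorded in \S\ref{subsect: O proj} from \cite{So2}) and are equivalent to $\Modfr E_\F$ and $\Modfr E_\F^s$, the conclusion is immediate. Your additional verification that $E_\F$ and $E_\F^s$ are finite-dimensional over $\F$, in order to pass from ``every finitely generated module has finite projective dimension'' to a uniform bound, is a reasonable elaboration, though strictly speaking the paper already takes the finite global dimension of $\cO$ and $\cO^s$ as given, so this step is not needed.
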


\begin{rmk}
Note that in \cite{So2} the flag varieties are defined over the complex numbers, while here we work over an algebraically closed field of positive characteristic, and with the {\'e}tale topology. One can check that the arguments in \cite{So2} also apply to our setting. Alternatively, one can deduce the statement in one setting from the other, see Remark \ref{rmk:etale-classical}(2) below.
\end{rmk}

\section{Modular Koszul duality}
\label{sec:MKD}

\subsection{Statement}

We briefly recall our notation in order to give a complete formulation
of our main theorem. We let $R \supset R^+$ denote a root system
together with a choice of positive roots, $W$ the Weyl group and $S
\subset W$ the simple reflections. Now, let us fix a finite field $\F$
of characteristic $\ell$ strictly bigger than the Coxeter number. We
consider the following categories:
\begin{enumerate}
\item To the dual root system $R^\vee$ we associate the regular and
  semi-regular subquotient categories $\cO$ and $\cO^s$
  for $s \in S$ with coefficients in $\F$. Recall that these
  categories are obtained as subquotients of the category of rational
  representations of $G^\vee_\F$, a semi-simple, simply-connected and
  split algebraic group over $\F$ associated to $R^\vee$.
\item To the root system $R$ we associate the flag variety $X = G/B$ and partial flag
  varieties $X^s = G/P_s$ for $s \in S$ and consider $\cDb_{(B)}(X,\F)$ and
  $\cDb_{(B)}(X^s,\F)$ the derived categories of Bruhat constructible
  \'etale sheaves on $X$ and $X^s$ with coefficients in $\F$. In order
  to define these
  varieties we first choose a split connected reductive group $G_\circ$
  and Borel subgroup $B_\circ \subset G_\circ$ over a finite field $\F_q$ of
  characteristic $\ne \ell$. The varieties in question are then
  obtained by base change from the corresponding versions over $\F_q$.
\end{enumerate}

The following result is a more precise and expanded version of the first part of Theorem
\ref{thm:intro} from the introduction.

\begin{thm}[``Modular Koszul duality'']
\label{thm:MKD}

Assume that the order of $q$
 in $\F^\times$ is strictly bigger than $|R|$. Then there exist a finite dimensional graded $\F$-algebra $E_{\F}$ with a complete set of mutually orthogonal idempotents $\{e_f, \, f \in F\}$, a finite dimensional graded $\F$-algebra $E^s_{\F}$ containing $E_{\F}$ as a graded subalgebra for
 all simple reflections $s \in S$, and equivalences of
categories such that all squares in the following
diagram commute up to natural transformation:
\begin{equation*}
\vcenter{
\xymatrix@C=0.5cm{
\cDb(\cO^s)  \ar[r]^-{\sim} \ar[d]^{T_s} & \cDb(\Modfr E^s_{\F})
\ar[d]^{\res} & \ar[l]_-{v} \cDb(\Modfrz E^s_{\F}) \ar[d]^{\res} 
\ar[r]^-{\overline{v}} & \DGDfr E^s_{\F} \ar[r]^-\sim \ar[d]^{\res}  & \cDb_{(B)} (X^s, \F) \ar[d]^{\pi_s^![-1]}\\
\cDb(\cO)  \ar[r]^-{\sim} & \cDb(\Modfr E_{\F})  & \ar[l]_-{v} \cDb(\Modfrz E_{\F} )
\ar[r]^-{\overline{v}} & \DGDfr E_{\F} \ar[r]^-\sim & \cDb_{(B)} (X, \F) 
}
}
\end{equation*}
(Here, the functors $v$ and $\overline{v}$ are defined as in \S{\rm \ref{ss:scaffolding}}.)
Moreover, the left equivalences are derived from equivalences of
abelian $\F$-categories $\cO \simto \Modfr E_{\F}$ and $\cO^s \simto \Modfr
E^s_{\F}$ and for all $f \in F$ we have
\begin{equation}
\label{eqn:MKD-projective-1}
P_f \qquad \mapsfrom \qquad e_f \cdot E_{\F} \qquad \mapsto \qquad \cB\cS_f^{\F}
\end{equation}
on the lower line.

In particular, the indecomposable projective objects in $\cO$ are
mapped to the indecomposable parity sheaves on the flag variety. That
is, for all $x \in W$ there exists a projective object $\widetilde{P}_x \in \Modfrz E$
with
\begin{equation}
\label{eqn:MKD-projective-2}
P_x \qquad \mapsfrom \qquad \widetilde{P}_x \qquad \mapsto \qquad \cE_x^{\F}.
\end{equation}
where $P_x \onto M_x$ is the projective cover in $\cO$ and $\cE^\F_x$ is the
parity sheaf from \S{\rm \ref{subsec:BS}}.

\end{thm}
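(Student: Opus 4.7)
The plan is to assemble the equivalences by combining Part II (formality on the flag variety) with the representation-theoretic machinery of \cite{So2} recalled in Section \ref{sec:reminder}, and then to verify that the three \emph{a priori} distinct rings denoted $E_\F$ (respectively $E_\F^s$) in this paper actually coincide, so that the various equivalences can be glued into one commutative diagram. The hypothesis that the order of $q$ in $\F^\times$ is strictly bigger than $|R| = 2\ell(w_0)$ immediately implies condition \eqref{eqn:condition} via Propositions \ref{prop:weights-morphisms-projectives} and \ref{prop:End-decomposable-partial}, so all the formality results of Section \ref{sec:formality} are at our disposal.

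First, for the right-hand column, I would invoke Theorem \ref{thm:formality} and Theorem \ref{thm:formality-parabolic} to obtain equivalences $\cDb_{(B)}(X,\F) \simeq \DGDfr E_\F$ and $\cDb_{(B)}(X^s,\F) \simeq \DGDfr E_\F^s$ where here $E_\F = \Ext^\bullet(\cB\cS^\F,\cB\cS^\F)$ and $E_\F^s = \Ext^\bullet(\pi_{s!}\cB\cS^\F,\pi_{s!}\cB\cS^\F)$; Proposition \ref{prop:formality-res} ensures the compatibility with the square involving $\pi_s^![-1]$ and $\mathsf{res}$. By construction these equivalences send $\cB\cS_f^\F$ to $e_f\cdot E_\F$, which takes care of the rightmost part of the correspondence \eqref{eqn:MKD-projective-1}. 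For the left-hand column, I would invoke the results recalled in \S\ref{subsect: O proj}, which produce abelian equivalences $\cO \simeq \Modfr E_\F$ and $\cO^s \simeq \Modfr E_\F^s$ with $E_\F = \End_\cO(P)$ and $E_\F^s = \End_{\cO^s}(T^sP)$, compatible with $T_s$ and $\mathsf{res}$, and sending $P_f$ to $e_f\cdot E_\F$.

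The crux of the argument — and what I expect to be the main conceptual hurdle — is to verify that the two meanings of $E_\F$ (and of $E_\F^s$) genuinely agree as \emph{graded} $\F$-algebras with the same labelled idempotents $e_f$. This is exactly the content of the diagram displayed in \S\ref{ss:description-E}: the algebras $\End_\cO(P)$ and $\Ext^\bullet(\cB\cS^\F,\cB\cS^\F)$ are both identified with $\End_C(D)$ via the functor $\V$ on the representation-theoretic side (combined with the full faithfulness of $\V$ on projectives and the commutative diagrams \eqref{eq:V}) and via the hypercohomology isomorphism of \cite[Theorem~4.2.1]{So2} on the geometric side. Once both sides are identified with $\End_C(D)$ equipped with the doubled grading on $C$, the grading on the geometric side — which in the proof of Theorem \ref{thm:formality} comes from the Frobenius eigenvalue decomposition — must be shown to match the doubled grading inherited from $C$; this is essentially the content of Theorem \ref{thm:H-diagonal}, which says that the Frobenius eigenvalue on $E_\F^{2m}$ is $\bar{q}^{-m}$, so eigenvalue and cohomological degree determine each other. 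The same argument handles the parabolic case, and the square involving $T_s$ versus $\pi_s^![-1]$ and the restriction along $E_\F \hookrightarrow E_\F^s$ then commutes simply because both $T_s$ and $\pi_s^![-1]$ are intertwined with the \emph{same} restriction functor along the \emph{same} inclusion of rings.

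Finally, the middle columns involving $\cDb(\Modfrz E_\F)$ and the functors $v$, $\overline{v}$ are formal from the definitions of $v$ and $\overline{v}$ in \S\ref{ss:scaffolding} once $E_\F$ is graded, and the squares commute by construction. The statement \eqref{eqn:MKD-projective-2} about indecomposable projectives and parity sheaves follows by the Krull--Schmidt property on both sides: the decomposition \eqref{eqn:decomposition-Pf} of $P_f$ in $\cO$ matches the decomposition of $\cB\cS_f^\F$ as a sum of parity sheaves (Theorem \ref{thm:parity classification} combined with Proposition \ref{prop:BS parity}), and the indecomposable graded projective summands of $e_f\cdot E_\F \in \Modfrz E_\F$ are the desired graded lifts $\widetilde{P}_x$ — unique up to shift — whose existence and uniqueness follows as in Lemma \ref{lem:graded-lift}, with the shift normalised by requiring $\widetilde{P}_x$ to map on the right-hand side to $\cE_x^\F$ (concentrated, by our grading conventions, in degree $0$).
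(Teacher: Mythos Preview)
Your proposal is correct and follows essentially the same route as the paper: invoke Propositions~\ref{prop:weights-morphisms-projectives} and~\ref{prop:End-decomposable-partial} for condition~\eqref{eqn:condition}, Theorems~\ref{thm:formality}, \ref{thm:formality-parabolic} and Proposition~\ref{prop:formality-res} for the right-hand squares, \S\ref{subsect: O proj} for the left-hand square, and the identification of the two incarnations of $E_\F$ via $\End_C(D)$ from \S\ref{ss:description-E}; the paper then deduces \eqref{eqn:MKD-projective-2} by an explicit Bruhat induction using Lemma~\ref{lem:graded-lift}, which is precisely the Krull--Schmidt argument you sketch. One small clarification: the grading on the geometric $E_\F$ is simply the $\Ext$ degree, and the hypercohomology isomorphism of \cite[Theorem~4.2.1]{So2} is already graded with respect to this and the doubled $C$-grading, so no separate appeal to Theorem~\ref{thm:H-diagonal} is needed for the grading match (that theorem is used only inside the proof of formality).
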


\begin{rmk}
\label{rmk:etale-classical}
\begin{enumerate}
\item The graded modules $\widetilde{P}_x$ are uniquely determined by
properties \eqref{eqn:MKD-projective-2}, as follows e.g.~from the reminder on graded rings in \S\ref{ss:koszulity}.
\item
Theorem \ref{thm:MKD} differs from Theorem \ref{thm:intro} by the fact that our variety $X$ is defined over $\Fqb$ instead of $\C$, and that we work with the {\'e}tale topology instead of the classical topology. Let us briefly explain how one can deduce Theorem \ref{thm:intro} from Theorem \ref{thm:MKD}. First, one can replace the classical topology in Theorem \ref{thm:intro} by the {\'e}tale topology (still over $\C$) by the arguments in \cite[\S 6.1.2]{BBD}. If $\ell >|R|+1$ then by Dirichlet's theorem we can choose some prime number $p$ whose order in $\F^{\times}$ is strictly bigger than $|R|$, so that Theorem \ref{thm:MKD} applies to $\Fpb$. Choose a strictly henselian local ring $\mathfrak{R} \subset \C$ whose residue field is $\Fpb$. Then the flag varieties $X_{\Fpb}$ and $X_{\C}$ over $\Fpb$ and $\C$ are obtained by base change from the flag variety $X_{\mathfrak{R}}$ over $\mathfrak{R}$. Moreover we have inverse image functors
\[
\cDb_{(B)}(X_{\C,\mathrm{et}},\F) \leftarrow \cDb_{(B)}(X_{\mathfrak{R}},\F) \to \cDb_{(B)}(X_{\Fpb},\F)
\]
(where all categories are defined as in \S\ref{ss:definitions-first-results}).
Both of these functors are equivalences, which finishes the proof. Indeed these categories are generated by standard objects, as well as by costandard objects. Hence it is sufficient to prove that morphisms from a standard object to a shift of a costandard object coincide in all these categories. However, this easily follows from \cite[Corollary VI.4.20]{Milne}.
\end{enumerate}
\end{rmk}

\begin{proof}
Define the algebras $E_{\F}$ and $E_{\F}^s$ as above. The commutativity of the left square is established in \S
  \ref{subsect: O proj}. The commutativity of the middle two squares
  is unproblematic. By Propositions  \ref{prop:weights-morphisms-projectives}
and \ref{prop:End-decomposable-partial} our assumptions on the order
of $q$ in $\F$ guarantee that condition \eqref{eqn:condition} is satisfied.
The equivalences on the right and the commutativity of the right-hand square
are proved in Theorem \ref{thm:formality}, Theorem \ref{thm:formality-parabolic} and Proposition \ref{prop:formality-res}.

Property \eqref{eqn:MKD-projective-1} is clear by construction. Let us deduce \eqref{eqn:MKD-projective-2}, by induction on the Bruhat order. If $x=e$ then the corresponding sequence of simple reflections is $f=\emptyset$, and $P_{\emptyset}=P_e$. Hence we can take $\widetilde{P}_e=e_{\emptyset} \cdot E_{\F}$. Now let $x \in W$, and assume the result is known for all $y < x$. Let $\overline{P}_x$ be a projective object in $\Modfrz E$ such that $v(\overline{P}_x)$ is sent to $P_x$. Let $f \in F$ be a reduced decomposition of $x^{-1}$. From \eqref{eqn:decomposition-Pf} and Lemma \ref{lem:graded-lift} we deduce that there exist $m \in \Z$, graded vector spaces $V_y$ for $y < x$, and an isomorphism in $\Modfrz E_{\F}$
\[
e_f \cdot E_{\F} \ \cong \ \overline{P}_x \lan m \ran \ \oplus \Bigl( \bigoplus_{y<x} \, V_y \otimes_{\F} \widetilde{P}_y \Bigr).
\]
Then, using induction and \eqref{eqn:MKD-projective-1}, the image of $\overline{v}(\overline{P}_x)[-m]$ in $\cDb_{(B)}(X,\F)$ is the only direct summand $\cF$ in $\cB\cS_f^{\F}$ such that $i_x^* \cF \neq 0$; it follows that this image is $\cE_x[\ell(x)]$. Hence the object $\widetilde{P}_x := \overline{P}_x \lan m + \ell(x) \ran$ satisfies \eqref{eqn:MKD-projective-2}.
 \end{proof}

The left adjoint of the three middle vertical arrows in the diagram in Theorem \ref{thm:MKD} is simply
$- \otimes_E E^s$. (Recall that $E^s$ is a projective left $E$-module, see \S\ref{subsect: O proj}.) If we add the left adjoints on the edges we obtain a
diagram in which all squares commute up to natural isomorphism:
\begin{equation*}
\vcenter{
\xymatrix@C=0.5cm{
\cDb(\cO)  \ar[r]^-{\sim} \ar[d]^-{T^s} & \cDb(\Modfr E)
\ar[d]^-{- \otimes_E E^s} & \ar[l]_-{v} \cDb(\Modfrz E )
\ar[r]^-{\overline{v}} \ar[d]^{- \otimes_E E^s} & \DGDfr E \ar[r]^-{\sim} \ar[d]^{- \otimes_E E^s} & \cDb_{(B)} (X, \F) \ar[d]^-{\pi_{s!}[1]}\\
\cDb(\cO^s)  \ar[r]^-{\sim} & \cDb(\Modfr E^s)
& \ar[l]_-{v} \cDb(\Modfrz E^s) 
\ar[r]^-{\overline{v}} & \DGDfr E^s \ar[r]^-{\sim} & \cDb_{(B)} (X^s, \F) 
}
}
\end{equation*}
Similarly, the right adjoint of $\res$ can be described as $\Hom_{-E}(E^s,-)$, and
adding all right adjoints we obtain a commutative diagram
\begin{equation*}
\vcenter{
\xymatrix@C=0.4cm{
\cDb(\cO)  \ar[r]^-{\sim} \ar[d]^-{T^s} & \cDb(\Modfr E)
\ar[d]^-{\Hom_{-E}(E^s,-)} & \ar[l]_-{v} \cDb(\Modfrz E )
\ar[r]^-{\overline{v}} \ar[d]^-{\Hom_{-E}(E^s,-)} & \DGDfr E \ar[r]^-{\sim} \ar[d]^-{\Hom_{-E}(E^s,-)} & \cDb_{(B)} (X, \F) \ar[d]^-{\pi_{s*}[-1]}\\
\cDb(\cO^s)  \ar[r]^-{\sim} &\cDb(\Modfr E^s)
& \ar[l]_-{v} \cDb(\Modfrz E^s) 
\ar[r]^-{\overline{v}} & \DGDfr E^s \ar[r]^-{\sim} & \cDb_{(B)} (X^s, \F) 
}
}
\end{equation*}
Comparison of both right-hand squares in the above diagrams (using the fact that $\pi_{s*}=\pi_{s!}$) allows us
to deduce an isomorphism of $\Z$-graded $(E,E^s)$-bimodules
\[
\Hom_{-E}(E^s,E) \cong E^s \lan 2 \ran.
\]

  Sometimes it is easier to think of the middle category above as a
  ``graded version''. In order to emphasise this way of thinking we
 define 
 \[
 \widetilde{\cO} := \Modfrz E, \qquad
  \widetilde{\cO}^s := \Modfrz E^s. 
  \]
  We can then regard our
  restriction functor $\res$ as a ``graded version of translation onto
  the wall'' and denote it by $\widetilde{T}_s$. This functor has a
  right adjoint $\widetilde{T}^s_*$ and a left adjoint
  $\widetilde{T}^s_!$ which are related by $\widetilde{T}^s_* = 
  \widetilde{T}^s_! \lan 2 \ran$.

\subsection{Standard objects}
\label{ss:standard objects}

In this section we establish the last part of Theorem
\ref{thm:intro}. More precisely we prove:

\begin{thm}[``Koszul dual of standard objects'']
\label{thm:MKD-standard}

For all $x \in W$
  there exist gra\-ded 
  right $E_\F$-modules $\widetilde{M}_x$ in $\widetilde{\cO}$ with the
  following properties:
  \begin{enumerate}
  \item $\widetilde{M}_x \mapsto M_x$ in $\cO$;
  \item $\widetilde{M}_x \mapsto \nabla_{x,\F}$ in
    $\cDb_{(B)}(X,\F)$;
  \item If $xs > x$ there exists an embedding
    $\widetilde{M}_{xs} \subset \widetilde{M}_{x} \lan -1\ran$ of graded
    $E_\F$-modules.
  \end{enumerate}
\end{thm}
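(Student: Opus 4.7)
The plan is to construct the graded modules $\widetilde{M}_x$ by induction on $\ell(x)$, using the machinery of modular Koszul duality (Theorem~\ref{thm:MKD}) together with its parabolic version (Theorem~\ref{thm:formality-parabolic}, Proposition~\ref{prop:formality-res}).

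For the base case $x=e$, I would set $\widetilde{M}_e := e_\emptyset \cdot E_\F$. Since $M_e = P_e$ is projective in $\cO$ (by the convention recalled in Section~\ref{sec:reminder}), Theorem~\ref{thm:MKD} gives $v(\widetilde{M}_e) = P_e = M_e$. On the geometric side, $X_e$ is a point so $\nabla_e = \IC_e^\F = \cB\cS_\emptyset^\F$, and hence $\bar{v}(\widetilde{M}_e) = \nabla_e$ by the same theorem. Thus (1) and (2) hold. Assuming $\widetilde{M}_x$ has been constructed satisfying (1) and (2), and $s \in S$ with $xs>x$, I would construct $\widetilde{M}_{xs}$ as follows.

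The key geometric input is a short exact sequence of perverse sheaves on $X$,
$$
0 \to \pi_s^*[1]\, \nabla^s_x \to \nabla_{xs} \to \nabla_x(-1) \to 0,
$$
which one obtains by writing down the open-closed distinguished triangle
$(i_{X_x})_*(i_{X_x})^!\, \pi_s^*[1]\nabla^s_x \to \pi_s^*[1]\nabla^s_x \to \nabla_{xs} \triright$
using $X_x$ closed and $X_{xs}$ open inside $\pi_s^{-1}(X^s_x)$, computing $(i_{X_x})^! \pi_s^*[1] = (i^s_x)^![-1](-1)$ via smoothness of $\pi_s$, and then reading off the long exact sequence of perverse cohomology (the only non-vanishing term coming from $\pH^1(\nabla_x[-1](-1)) = \nabla_x(-1)$). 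By Proposition~\ref{prop:formality-res}, the functors $\pi_s^![-1]$ and $\pi_s^*[1]$ (differing only by a Tate twist) are intertwined with $\res = \widetilde{T}_s$ on the algebraic side, and the perverse-sheaf-category $\Perv_{(B)}(X^s,\F)$ admits graded lifts by inductively constructing $\widetilde{N}^s_x \in \widetilde{\cO}^s$ with $v(\widetilde{N}^s_x)=M^s_x$ and $\bar{v}(\widetilde{N}^s_x)=\nabla^s_x$ (by the same recipe applied to the partial flag variety $X^s$). Lifting the displayed SES to $\widetilde{\cO}$ then yields a graded short exact sequence
$$
0 \to \widetilde{T}_s \widetilde{N}^s_x \to \widetilde{M}_{xs} \to \widetilde{M}_x \lan c \ran \to 0
$$
for some integer $c$ recording the Tate twist $(-1)$ (which $\bar v$ converts into a grading shift, determined by the Frobenius normalisation in Section~\ref{ss:formality}). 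This defines $\widetilde{M}_{xs}$, and applying $v$ (resp.~$\bar{v}$) to the sequence recovers the familiar translation-functor sequence $0 \to T_s M^s_x \to M_{xs} \to M_x \to 0$ in $\cO$ (resp.~the geometric SES), verifying (1) and (2).

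For property (3), I would argue as follows. By Lemma~\ref{lem:morphisms-standard} we have $\Hom_\cO(M_{xs}, M_x) = \F$ and every non-zero morphism is injective. Using the graded decomposition
$$
\Hom_\cO(M_{xs},M_x) \ = \ \bigoplus_{n \in \Z} \Hom_{\widetilde{\cO}}(\widetilde{M}_{xs}, \widetilde{M}_x \lan n \ran),
$$
exactly one summand, indexed by some $n_0$, is non-zero (and one-dimensional), and the graded lift of the embedding is injective because $v$ is faithful on morphisms. It remains to identify $n_0 = -1$. For this I would use the geometric side: by Proposition~\ref{prop:morphisms-standard-R-varieties} and the computation of the Deodhar variety $X_{xs} \cap X_x^-$ for $xs$ covering $x$ (which is isomorphic to $\mathbb{G}_m$ by Lemma~\ref{lem:R-varieties}), the $\Ext^\bullet$ between $\Delta_x$ and $\Delta_{xs}$ is concentrated in degrees $0$ and $1$, both with a specific Frobenius weight; translating back through the equivalences and the conventions relating Frobenius weight to grading shift (the shearing used in the proof of Proposition~\ref{prop:formality}) then pins down $n_0=-1$.

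The main obstacle will be the bookkeeping around Tate twists and grading shifts: the geometric SES involves a Tate twist $(-1)$ which must be matched with the correct graded shift $\lan c \ran$ under the equivalences, and one has to be careful that the lift to $\widetilde{\cO}$ produced via Proposition~\ref{prop:formality-res} is well defined at the level of the abelian category (i.e.~that the relevant object in $\cDb(\widetilde\cO)$ is concentrated in a single cohomological degree). The dimension calculation for $X_{xs}\cap X_x^-$ and the Frobenius weight estimates of Section~\ref{sec:bounding-weights} should force these compatibilities, but writing them out carefully is the essential technical point.
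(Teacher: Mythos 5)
Your base case is the paper's, and your geometric triangle is essentially the paper's Lemma~\ref{lem:standard triangle} in rotated form (the short exact sequence $0 \to \pi_s^*[1]\nabla^s_x \to \nabla_{xs} \to \nabla_x(-1) \to 0$ of perverse sheaves is correct). But the inductive step contains a genuine error: the graded short exact sequence you propose,
\[
0 \to \widetilde{T}_s \widetilde{N}^s_x \to \widetilde{M}_{xs} \to \widetilde{M}_x \lan c \ran \to 0 ,
\]
cannot exist. Applying the forgetful functor $v$ and the equivalence with $\cO$ would produce a short exact sequence $0 \to T_sT^sM_x \to M_{xs} \to M_x \to 0$; since $T_sT^sM_x$ is itself an extension of $M_{xs}$ by $M_x$ (this is the sequence of \cite[\S 2.5.4]{So2}), the middle term of your sequence would have class $2[M_x]+[M_{xs}]$ in the Grothendieck group, not $[M_{xs}]$. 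The underlying mistake is the assumption that a short exact sequence of \emph{perverse} sheaves transports to a short exact sequence of (graded) $E_\F$-modules: the equivalence $\DGDfr E_\F \simeq \cDb_{(B)}(X,\F)$ is precisely \emph{not} t-exact for these two t-structures (that is the content of Koszul duality), so perverse subs and quotients do not become module subs and quotients. Concretely, the connecting map of your triangle corresponds on the module side to a \emph{degree-zero} morphism of graded modules (the adjunction unit), not to a class in $\Ext^1$ of graded modules, so the middle term is a cokernel rather than an extension.

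The correct reading of the same triangle — and the paper's construction — is: the adjunction unit gives an injection $\widetilde{M}_x \into \widetilde{T}_s\widetilde{T}^s_! \widetilde{M}_x$ of graded modules (ungraded, this is $M_x \into T_sT^sM_x$), and one \emph{defines} $\widetilde{M}_{xs}\lan -1\ran$ as its cokernel; Lemma~\ref{lem:standard triangle} then identifies the image of this cokernel in $\cDb_{(B)}(X,\F)$ as $\nabla_{xs}[1]=\overline{v}(\widetilde{M}_{xs}\lan -1 \ran)$, giving (2). This also sidesteps your auxiliary induction on the partial flag variety. For (3), your reduction to identifying a single integer $n_0$ via Deodhar varieties and Frobenius weights could in principle be made to work, but it is contingent on first fixing the construction, and the paper has a much shorter argument: the composite of the unit and counit $\widetilde{M}_x \to \widetilde{T}_s\widetilde{T}^s_!\widetilde{M}_x \to \widetilde{M}_x\lan -2\ran$ vanishes because $\Hom_{\widetilde{\cO}}(\widetilde{M}_x,\widetilde{M}_x\lan -2\ran)=0$, so the non-zero counit factors through the cokernel, yielding a non-zero (hence, by Lemma~\ref{lem:morphisms-standard}, injective) map $\widetilde{M}_{xs}\lan -1\ran \to \widetilde{M}_x\lan -2\ran$.
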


\begin{rmk}
\label{rmk:standard-graded}
\begin{enumerate}
\item The
modules $\widetilde{M}_x$ are uniquely determined by
properties $(1)--(2)$, as follows e.g.~from the reminder on graded rings in \S\ref{ss:koszulity}.
\item More generally one can show, as in Lemma \ref{lem:morphisms-standard}, that there exists an embedding
\[
\widetilde{M}_y \lan \ell(y) \ran \subset \widetilde{M}_x \lan \ell(x) \ran
\]
between graded standard modules if and only if $y \geq x$ in $W$ and
that these are the only homomorphisms of any degree (up to shift). 
\item Using the fact that $\Hom_{\cO}(P_x,M_x) \cong \F$, Theorem \ref{thm:MKD} and Theorem \ref{thm:MKD-standard}, one can easily check that that the surjection $P_x \onto M_x$ lifts to a morphism of graded right $E_{\F}$-modules $\widetilde{P}_x \onto \widetilde{M}_x$.
\end{enumerate}
\end{rmk}

\begin{proof} If we examine $P_f \mapsfrom e_f \cdot E_\F \mapsto
  \cB\cS_f^{\F}$ in the case of the empty sequence $f = \emptyset$ we
  see that $M_e \mapsfrom e_\emptyset \cdot E_\F \mapsto \IC_{e,\F}=\nabla_{e,\F}
  = i_{e*} \underline{\F}_\id$. Hence we can take $\widetilde{M_e} =
  e_{\emptyset} \cdot E_\F$. If $x < xs$ then by \cite[\S 2.5.4]{So2} there is a short exact
  sequence
\[
M_x \into T_sT^sM_x \onto M_{xs}
\]
and by \cite[\S 2.5.2]{So2} we have $\Hom_{\cO}(M_x, T_sT^sM_x) \cong \F$. Hence, if we have already constructed a graded lift
$\widetilde{M}_x$ of $M_x$ satisfying $(1)$ and $(2)$ then we obtain a graded lift
$\widetilde{M}_{xs}$ of $M_{xs}$ as the cokernel of the adjunction
morphism after an appropriate shift in the grading. That is, we can define
$\widetilde{M}_{xs}$ via the short exact sequence
\[
\widetilde{M}_x \into \widetilde{T}_s \widetilde{T}^s_! \widetilde{M}_x
\onto \widetilde{M}_{xs} \lan -1 \ran.
\]
Property (1) for $\widetilde{M}_{xs}$ now follows. Lemma \ref{lem:standard triangle} below implies that this object also satisfies
(2).

We now turn to (3). We have $\Hom_{\cO}(M_x,M_x)=\F$, hence 
\begin{equation}
\label{eqn:morphisms-standard-graded}
\Hom_{\widetilde{\cO}}(\widetilde{M}_x,\widetilde{M}_x \lan i \ran)=0 \quad \text{ unless } i=0. 
\end{equation}
Consider the composition of adjunction morphisms
\[
\widetilde{M}_x \into \widetilde{T}_s \widetilde{T}^s_! \widetilde{M}_x \to \widetilde{M}_x \lan -2 \ran
\]
(where we use
$\widetilde{T}^s_! = \widetilde{T}^s_* \lan -2 \ran$). This composition is zero
by \eqref{eqn:morphisms-standard-graded}. On the other hand the second morphism is
non-zero so that we obtain a non-zero morphism
$\widetilde{M}_{xs}\lan -1\ran \to \widetilde{M}_x\lan -2 \ran$. This morphism must be injective by Lemma \ref{lem:morphisms-standard}.
\end{proof}

\begin{lem} \label{lem:standard triangle}
Given $s\in S$ and $x \in W^s$ we have a distinguished
triangle
\[
\nabla_{x,\F} \to \pi_s^! \pi_{s!} \nabla_{x,\F} \to \nabla_{xs,\F}[1] \triright
\]
where the left-hand arrow is the adjunction morphism.
\end{lem}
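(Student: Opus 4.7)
The plan is to compute $\pi_s^!\pi_{s!}\nabla_{x,\F}$ explicitly via base change, extract the distinguished triangle from the attachment triangle on the $\mathbb{P}^1$-fibre $\pi_s^{-1}(X_x^s)$, and then identify the first arrow with the adjunction morphism.

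First I would compute $\pi_{s!}\nabla_{x,\F}$. Since $\pi_s$ is proper, $\pi_{s!} = \pi_{s*}$. Because $x \in W^s$, the restriction of $\pi_s$ to $X_x$ is an isomorphism $\sigma : X_x \simto X_x^s$, so $\pi_s \circ i_x = i_x^s \circ \sigma$ where $i_x^s : X_x^s \hookrightarrow X^s$ is the inclusion of the stratum. Therefore
\[
\pi_{s!}\nabla_{x,\F} \ = \ (\pi_s \circ i_x)_*\underline{\F}_{X_x}[\ell(x)] \ \cong \ i^s_{x*}\underline{\F}_{X_x^s}[\ell(x)] \ =: \ \nabla^s_{x,\F}.
\]

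Next I would compute $\pi_s^!\nabla^s_{x,\F}$ using base change. Consider the Cartesian square
\[
\xymatrix@R=0.7cm{
Y_x \ar@{^{(}->}[r]^-{u} \ar[d]_-{p} & X \ar[d]^-{\pi_s} \\
X_x^s \ar@{^{(}->}[r]^-{i_x^s} & X^s
}
\]
with $Y_x := \pi_s^{-1}(X_x^s) = X_x \sqcup X_{xs}$, $u$ the locally closed inclusion, and $p : Y_x \to X_x^s$ a smooth $\mathbb{P}^1$-bundle. Smooth base change applied to $\pi_s$ gives $\pi_s^* i^s_{x*} \cong u_* p^*$, and combining this with $\pi_s^! \cong \pi_s^*[2](1)$ one obtains $\pi_s^!\nabla^s_{x,\F} \cong u_*\underline{\F}_{Y_x}[\ell(x)+2](1)$. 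Now apply the attachment triangle on the smooth variety $Y_x$ for the closed codimension-one inclusion $i : X_x \hookrightarrow Y_x$ with open complement $j : X_{xs} \hookrightarrow Y_x$; using $i^!\underline{\F}_{Y_x} \cong \underline{\F}_{X_x}[-2](-1)$ it reads
\[
i_*\underline{\F}_{X_x}[\ell(x)] \to \underline{\F}_{Y_x}[\ell(x)+2](1) \to j_*\underline{\F}_{X_{xs}}[\ell(x)+2](1) \triright.
\]
Pushing forward by $u_*$, using $u \circ i = i_x$, $u \circ j = i_{xs}$, and that Tate twists are trivial in $\cDb_{(B)}(X,\F)$ since the Frobenius has been forgotten, one obtains a distinguished triangle $\nabla_{x,\F} \to \pi_s^!\pi_{s!}\nabla_{x,\F} \to \nabla_{xs,\F}[1] \triright$.

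The main obstacle is identifying the first arrow of this constructed triangle with the adjunction morphism. Here I would argue as follows: by adjunction and the first computation, $\Hom(\nabla_{x,\F},\pi_s^!\pi_{s!}\nabla_{x,\F}) \cong \End(\nabla^s_{x,\F}) \cong \F$, so this $\Hom$-space is one-dimensional. The adjunction unit is non-zero since it corresponds to $\id_{\nabla^s_{x,\F}}$, and the map constructed above inherits non-vanishing from the attachment map $i_*i^!\underline{\F}_{Y_x} \to \underline{\F}_{Y_x}$. Hence the two maps differ by a non-zero scalar, which identifies the constructed triangle with the desired one up to isomorphism.
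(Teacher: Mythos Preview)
Your proposal is correct and follows essentially the same approach as the paper: compute $\pi_s^!\pi_{s!}\nabla_{x,\F}\cong u_*\underline{\F}_{Y_x}[\ell(x)+2]$ via base change (your $Y_x$ and $u$ are the paper's $X_{x,xs}$ and $a$), apply the Gysin/attachment triangle for the decomposition $Y_x=X_x\sqcup X_{xs}$, push forward, and then identify the first arrow with the adjunction unit using that $\Hom(\nabla_{x,\F},\pi_s^!\pi_{s!}\nabla_{x,\F})$ is one-dimensional. Your write-up is slightly more explicit about the base change step and the computation of that $\Hom$-space via adjunction, but the argument is the same.
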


\begin{proof}
  Using the isomorphisms of functors $\pi_s^! \cong \pi_s^*[2]$, $\pi_{s*} \cong \pi_{s!}$ and the
  fact that $\pi_s$ restricted to $X_x$ is an isomorphism we have
\[
\pi_s^! \pi_{s!} i_{x*} \underline{\F}_{X_{x}} \cong a_* \underline{\F}_{X_{x,xs}}[2]
\]
where $X_{x,xs} := X_x \sqcup X_{xs}$ and $a: X_{x,xs} \into X$ denotes
the inclusion. If we write $i$ (resp.~$j$) for the closed (resp.~open)
inclusion of $X_x$ (resp.~$X_{xs}$) in $X_{x,xs}$ then we have a Gysin  triangle
\[
i_!i^! \underline{\F}_{X_{x,xs}} \to \underline{\F}_{X_{x,xs}} \to j_*j^*\underline{\F}_{X_{x,xs}} \triright
\]
which we can rewrite as
\[
i_*\underline{\F}_{X_{x}}[-2] \to \underline{\F}_{X_{x,xs}} \to j_*\underline{\F}_{X_{xs}} \triright.
\]
If we now apply $a_*[\ell(x)+2]$ we obtain the desired triangle except
for the fact that it is not clear if the left-hand morphism so
constructed coincides with the adjunction morphism. However both
morphisms generate the one-dimensional space $\Hom(\nabla_{x,\F},
\pi_s^! \pi_{s!} \nabla_{x,\F})$ and the lemma follows.
\end{proof}

\subsection{Standard object are Koszul modules}
\label{ss:koszulity-standard}

Let us consider again the setting of \S\ref{ss:koszulity}. In particular we assume that $\cE_x^{\F}=\IC_{x,\F}$ for all $x \in W$, and we consider the Koszul rings $K$ and $K^{\dag}$.

In the proof of Proposition \ref{prop:koszulity} we have considered a grading on the algebra $A_{\F}$. The functor
\[
\Xi : \Modfrz A_{\F} \to \Modfrz K
\]
which sends $M$ to the graded module whose $n$-th component is 
\[
\Hom_{\Modfrz A_{\F}}(\widetilde{Q} ,M\lan n \ran)
\]
is an equivalence of categories, which satisfies $\Xi(M \lan n \ran) = \Xi(M) \lan -n \ran$.

Recall the objects $\widetilde{P}_x$ in $\widetilde{\cO}$ defined in Theorem \ref{thm:MKD}, and set $\widetilde{P}:=\bigoplus_x \widetilde{P}_x$.

\begin{lem}
\label{lem:Kdag}

The graded ring $K^{\dag}$ is isomorphic to the graded ring whose $n$-th component is
\[
\Hom_{\widetilde{\cO}}( \widetilde{P},  \widetilde{P} \lan -n \ran).
\]

\end{lem}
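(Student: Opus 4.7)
My plan is to compare both graded rings to $\Ext^\bullet(\IC_X,\IC_X) = \Ext^\bullet(\cE,\cE)$, where $\cE := \bigoplus_x \cE_x^{\F}$; the latter will be identified with $K^\dag$ via the last statement of Proposition \ref{prop:koszulity}.

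The first thing I will observe is that each $\widetilde{P}_x$, and hence $\widetilde{P}$, is projective in $\widetilde{\cO} = \Modfrz E_\F$: this is explicit from the construction in the proof of Theorem \ref{thm:MKD}, where $\widetilde{P}_x$ is defined as a grading shift of an object $\overline{P}_x$ that is by construction projective. Next, for any projective graded $P \in \Modfrz E_\F$, the image $\overline{v}(P)$ is a dg-module over $(E_\F, d=0)$ with trivial differential, hence $K$-projective; therefore morphisms from $\overline{v}(P)$ in $\DGDfr E_\F$ reduce to morphisms in the homotopy category of dg-modules, and for trivial-differential targets these are simply graded module homomorphisms. Combining this with the identity $\overline{v}(M\lan 1\ran) = \overline{v}(M)[-1]$ recalled in \S\ref{ss:scaffolding}, I obtain for all projective $P,Q \in \widetilde{\cO}$ and all $n \in \Z$ a natural isomorphism
\[
\Hom_{\widetilde{\cO}}(P, Q\lan -n\ran) \;\xrightarrow{\sim}\; \Hom_{\DGDfr E_\F}(\overline{v}(P), \overline{v}(Q)[n]),
\]
intertwining composition of graded homomorphisms on the left with the Yoneda product on the right.

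Specialising to $P = Q = \widetilde{P}$ and invoking the equivalence $\DGDfr E_\F \simto \cDb_{(B)}(X,\F)$ of Theorem \ref{thm:MKD}, under which $\overline{v}(\widetilde{P}_x)$ corresponds to $\cE_x^\F$, I will then obtain a graded ring isomorphism
\[
\bigoplus_n \Hom_{\widetilde{\cO}}(\widetilde{P}, \widetilde{P}\lan -n\ran) \;\cong\; \Ext^\bullet(\cE,\cE).
\]
Under the standing assumption $\cE_x^\F = \IC_{x,\F}$, the right-hand side is $\Ext^\bullet(\IC_X,\IC_X)$, which is identified with $K^\dag$ as graded rings by Proposition \ref{prop:koszulity}. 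Chaining the isomorphisms proves the lemma.

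The point that will require the most care is the matching of ring structures at each stage (composition in $\widetilde{\cO}$, Yoneda composition in $\DGDfr E_\F$, and Yoneda composition in $\cDb_{(B)}(X,\F)$), but this follows routinely from the functoriality of $\overline{v}$ and of the equivalences established in Theorem \ref{thm:MKD}. I do not anticipate a genuine obstacle beyond this verification.
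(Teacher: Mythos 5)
Your proof is correct, but it runs in the opposite direction from the one in the paper. The paper starts on the Koszul-dual side: it unwinds $(K^{\dag})^n=\Ext^n(\IC_X,\IC_X)$ into $\Ext^n_{\Modfrz A_{\F}}(\bigoplus_x \widetilde{S}_x,\bigoplus_x \widetilde{S}_x\lan -n\ran)$ (using the purity statement $\Ext^n(\widetilde{S}_x,\widetilde{S}_y\lan m\ran)=0$ unless $n+m=0$ from the proof of Proposition \ref{prop:koszulity}), transports this through the equivalence $\Omega\circ\Psi^{\Z}$ into $\widetilde{\cO}$, and then must identify $\Omega\circ\Psi^{\Z}(\widetilde{S}_x)\cong\widetilde{P}_x$ by an indecomposability-plus-image argument. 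You instead start from $\Hom_{\widetilde{\cO}}(\widetilde{P},\widetilde{P}\lan -n\ran)$, use that $\overline{v}$ of a projective graded module is $K$-projective (the observation already made in \S\ref{ss:DGDf}) together with $\overline{v}(M\lan 1\ran)=\overline{v}(M)[-1]$, and then invoke the formality equivalence and \eqref{eqn:MKD-projective-2} to land in $\Ext^\bullet(\cE,\cE)=\Ext^\bullet(\IC_X,\IC_X)\cong K^{\dag}$. The two routes rest on the same machinery, but yours imports the object identification $\widetilde{P}_x\mapsto\cE_x^{\F}$ ready-made from Theorem \ref{thm:MKD} rather than re-deriving it, at the cost of passing through the geometric category $\cDb_{(B)}(X,\F)$; the paper's version stays inside the algebraic categories $\Modfrz A_{\F}$ and $\Modfrz E_{\F}$ and keeps the Frobenius-grading bookkeeping explicit. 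Your one point of care --- compatibility of the ring structures under $\overline{v}$ and the equivalences --- is indeed routine, since all the functors involved are triangulated and the products on both sides are Yoneda compositions.
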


\begin{proof}
By Proposition \ref{prop:koszulity} the $n$-th component of $K^{\dag}$ is 
\[
\Ext^n(\IC_X,\IC_X) \ \cong \ \Ext^n_{-A_{\F}}(\bigoplus_x S_x, \bigoplus_x S_x) \ \cong \ \Ext^n_{\Modfrz A_{\F}}(\bigoplus_x \widetilde{S}_x, \bigoplus_x \widetilde{S}_x \lan -n \ran).
\]
Now using the equivalence $\Omega \circ \Psi^{\Z}$ considered in the proof of Proposition \ref{prop:koszulity} we obtain an isomorphism
\begin{multline*}
\Ext^n_{\Modfrz A_{\F}}(\bigoplus_x \widetilde{S}_x, \bigoplus_x \widetilde{S}_x \lan -n \ran) \ \cong \\
\Hom_{\widetilde{\cO}} (\bigoplus_x \Omega \circ \Psi^{\Z}(\widetilde{S}_x), \bigoplus_x \Omega \circ \Psi^{\Z} (\widetilde{S}_x) \lan -n \ran)
\end{multline*}
Hence it is sufficient to prove that for every $x \in W$ we have an isomorphism
\[
\Omega \circ \Psi^{\Z} (\widetilde{S}_x) \ \cong \ \widetilde{P}_x.
\]
However the object $\Omega \circ \Psi^{\Z} ( \widetilde{S}_x)$ is an
indecomposable graded projective right $E_{\F}$-module, hence is isomorphic to $\widetilde{P}_y \lan j \ran$ for some $y \in W$ and $j \in \Z$. As its image under $\Psi^{-1} \circ \overline{v} : \cDb \Modfrz E_{\F} \to \cDb \Modfr A_{\F}$ is $S_x$ we must have $y=x$ and $j=0$.
\end{proof}

Using Lemma \ref{lem:Kdag} we can construct an equivalence of categories
\[
\Upsilon : \Modfrz E_{\F} \to \Modfrz K^{\dag}
\]
sending $M$ to the graded module with $n$-th component
\[
\Hom_{\Modfrz E_{\F}}(\widetilde{P}, M \lan -n \ran).
\]
This equivalence satisfies $\Upsilon(M\lan n \ran) = \Upsilon(M) \lan n \ran$.

Finally, we let
\[
\kappa : \cDb \bigl( \Modfrz K \bigr) \xrightarrow{\sim} \cDb \bigl( \Modfrz K^{\dag} \bigr) 
\]
be the composition
\[
\cDb \Modfrz K \xrightarrow{\Xi^{-1}} \cDb \Modfrz A_{\Z}  \xrightarrow{\Omega \circ \Psi^{\Z}} \cDb \Modfrz E_{\F} \xrightarrow{\Upsilon} \cDb \Modfrz K^{\dag}. 
\]
This functor satisfies $\kappa(M\lan n \ran) = \kappa(M) \lan -n \ran [-n]$. We set
\[
\mathfrak{P}_x := \Upsilon(\widetilde{P}_x), \quad \mathfrak{M}_x := \Upsilon(\widetilde{M}_x), \quad \mathfrak{S}_x := \Xi(\widetilde{S}_x).
\]
We let $\mathfrak{L}_x$ be the unique simple quotient of $\mathfrak{P}_x$, and define
\[
\mathfrak{N}_x:= \kappa^{-1}(\mathfrak{M}_x), \quad \mathfrak{I}_x:= \kappa^{-1}(\mathfrak{L}_x).
\]

The following theorem summarizes all the results related to our ``Koszul duality" $\kappa$. Here we denote by $\mathcal{I}_x$ the injective hull of $\IC_{x,\F}$ in $\Perv_{(B)}(X,\F)$.

\begin{thm}

Assume that condition \eqref{eqn:condition} is satisfied. Assume moreover that for any $x \in W$ we have $\cE_x^{\F}=\IC_{x,\F}$.

There exist dual Koszul rings $K$ and $K^{\dag}$ and an equivalence $\kappa$ fitting in the following diagram:
\[
\xymatrix@C=0.6cm{
\cDb (\Modfr K) \ar[d]_-{\wr} & \ar[l]_-{v} \cDb (\Modfrz K) \ar[r]^-{\kappa}_-{\sim} & \cDb (\Modfrz K^{\dag}) \ar[r]^-{v} & \cDb (\Modfr K^{\dag}) \ar[d]^-{\wr} \\
\cDb_{(B)}(X,\F) &&& \cDb (\cO)
}
\]
where vertical arrows are induced by equivalences of abelian categories
\[
\Modfr K \ \cong \ \Perv_{(B)}(X,\F), \quad \Modfr K^{\dag} \ \cong \ \cO
\]
and $v$ are forgetful functors. The objects $\mathfrak{P}_x$, $\mathfrak{M}_x$, $\mathfrak{L}_x$, $\mathfrak{I}_x$, $\mathfrak{N}_x$, $\mathfrak{S}_x$ are all graded modules, which satisfy
\[
\xymatrix@C=2cm@R=0.2cm{
\IC_x & \mathfrak{S}_x \ar@{<->}[r] \ar@{|->}[l] & \mathfrak{P}_x \ar@{|->}[r] & P_x; \\
\nabla_x & \mathfrak{N}_x \ar@{<->}[r] \ar@{|->}[l] & \mathfrak{M}_x \ar@{|->}[r] & M_x; \\
\mathcal{I}_x & \mathfrak{I}_x \ar@{<->}[r] \ar@{|->}[l] & \mathfrak{L}_x \ar@{|->}[r] & L_x. \\
}
\]
Moreover, $\mathfrak{M}_x$ is a Koszul $K^{\dag}$-module in the sense of {\rm \cite[Definition 2.14.1]{BGS} }.

\end{thm}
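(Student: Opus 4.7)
My proof proceeds in three stages, corresponding to the three assertions of the theorem: constructing the displayed diagram with its equivalences, identifying the listed graded modules, and proving Koszulity of $\mathfrak{M}_x$.

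For the diagram, the rings $K$ and $K^{\dag}$ are already produced in Proposition \ref{prop:koszulity} and Lemma \ref{lem:Kdag}, and the equivalence $\kappa = \Upsilon \circ (\Omega \circ \Psi^{\Z}) \circ \Xi^{-1}$ is set up in the text preceding the theorem. The two vertical abelian equivalences combine the Morita equivalences $K \sim A_{\F}$ and $K^{\dag} \sim E_{\F}$ with the geometric equivalences $\Modfr A_{\F} \simeq \Perv_{(B)}(X,\F)$ (from Proposition \ref{prop:koszulity}) and $\Modfr E_{\F} \simeq \cO$ (from Theorem \ref{thm:MKD}). The compatibility of $\kappa$ with the forgetful functors $v$ is built into its construction: each of $\Xi$, $\Omega \circ \Psi^{\Z}$, and $\Upsilon$ is a graded enhancement of an ungraded equivalence which, upon forgetting the grading, recovers the corresponding horizontal equivalence on the outer columns.

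For the identifications, the right column correspondences $\mathfrak{P}_x \mapsto P_x$, $\mathfrak{M}_x \mapsto M_x$, $\mathfrak{L}_x \mapsto L_x$ are immediate from the definitions combined with Theorem \ref{thm:MKD} and Theorem \ref{thm:MKD-standard}(1). The left column correspondences then follow by transport through $\kappa$. The identification $\kappa(\mathfrak{S}_x) = \mathfrak{P}_x$ is already established inside the proof of Lemma \ref{lem:Kdag}, so $\mathfrak{S}_x \mapsto \IC_x$ because $\widetilde{S}_x$ is the graded lift of $S_x = \Pi^{\F}(\IC_{x,\F})$. Similarly $\mathfrak{N}_x = \kappa^{-1}(\mathfrak{M}_x) \mapsto \nabla_{x,\F}$ by Theorem \ref{thm:MKD-standard}(2), and $\mathfrak{I}_x = \kappa^{-1}(\mathfrak{L}_x) \mapsto \mathcal{I}_x$ by the general feature of Koszul duality that the simples on one side match the indecomposable projectives on the other in both directions; applied on the $K^{\dag}$-side, the simples $\mathfrak{L}_x$ are sent to the indecomposable injective objects of $\Perv_{(B)}(X,\F)$, namely the $\mathcal{I}_x$.

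The key point is the Koszulity of $\mathfrak{M}_x$. By \cite[Definition 2.14.1]{BGS}, together with the standard $\Ext$-reformulation, it suffices to show
\[
\Ext^{i}_{\Modrz K^{\dag}}(\mathfrak{M}_x, \mathfrak{L}_y\lan j\ran) = 0 \qquad \text{unless } i + j = 0, \quad y \in W, \; i,j \in \Z.
\]
Applying $\kappa^{-1}$ and using $\kappa^{-1}(M\lan n\ran) = \kappa^{-1}(M)\lan -n\ran[-n]$ turns this into a graded purity statement for Hom-groups between $\nabla_{x,\F}$ and $\mathcal{I}_{y,\F}$ in $\cDb_{(B)}(X,\F)$ with respect to the Frobenius-induced grading. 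Under the hypothesis $\cE_x^{\F} = \IC_{x,\F}$ the simple perverse sheaves are parity, and the injective hull $\mathcal{I}_y$ admits a costandard filtration whose Frobenius weights are controlled exactly as in Theorem \ref{thm:H-diagonal}; a parallel analysis to that of \S\ref{ss:geom-projective-generator} then yields the required vanishing.

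The main obstacle will be the bookkeeping of three different gradings in parallel—the Frobenius-weight grading that defines $K$ and $K^{\dag}$, the cohomological grading on $\cDb_{(B)}(X,\F)$, and the internal shifts $\lan \cdot \ran$ on graded modules—when making the geometric translation. A conceptually cleaner alternative, which I would attempt in parallel, is an induction on the Bruhat order via the short exact sequences
\[
\widetilde{M}_{xs} \hookrightarrow \widetilde{M}_x \lan -1\ran \twoheadrightarrow Q_x \qquad (xs > x)
\]
coming from Theorem \ref{thm:MKD-standard}(3), with base case $\mathfrak{M}_e = \mathfrak{P}_e$ which is projective and therefore trivially Koszul; here the work is to identify $Q_x$ precisely enough (essentially as a successive extension of shifted simples) that its Koszulity propagates to $\mathfrak{M}_{xs}$.
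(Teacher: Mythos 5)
Your setup of the diagram and most of the identifications follow the paper's proof: the rings, the functor $\kappa$, the correspondences $\mathfrak{S}_x\leftrightarrow\mathfrak{P}_x$, $\mathfrak{N}_x\mapsto\nabla_{x,\F}$, etc., are all obtained exactly as in the text. Two points, however, need attention. First, for $\mathfrak{I}_x\mapsto\mathcal{I}_x$ you appeal to ``the general feature of Koszul duality that simples match injectives''; in this setting that is not a citable general fact about the specific functor $\kappa$ but something to be verified, and the verification is the computation
\[
\Ext^i_{\Modfrz K}(\mathfrak{S}_x,\mathfrak{I}_y\lan j\ran)\;\cong\;\Ext^{i-j}_{\Modfrz K^{\dag}}(\mathfrak{P}_x,\mathfrak{L}_y\lan -j\ran),
\]
which vanishes unless $i=j=0$ because $\mathfrak{P}_x$ is the projective cover of $\mathfrak{L}_x$; this shows $\mathfrak{I}_x$ is the injective hull of $\mathfrak{S}_x$ in $\Modfrz K$ and hence goes to $\mathcal{I}_x$.

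The genuine gap is in your Koszulity argument for $\mathfrak{M}_x$. Once the previous point is in place, the statement you need,
\[
\Ext^i_{\Modfrz K^{\dag}}(\mathfrak{M}_x,\mathfrak{L}_y\lan j\ran)\;\cong\;\Ext^{i-j}_{\Modfrz K}(\mathfrak{N}_x,\mathfrak{I}_y\lan -j\ran)=0 \quad\text{unless } i=j,
\]
is an immediate consequence of the \emph{injectivity of $\mathfrak{I}_y$ in $\Modfrz K$}: there is nothing more to prove, and one concludes by \cite[Proposition 2.14.2]{BGS}. You instead reformulate the target as ``a graded purity statement for Hom-groups between $\nabla_{x,\F}$ and $\mathcal{I}_{y,\F}$ with respect to the Frobenius-induced grading'' and propose a weight analysis parallel to \S\ref{ss:geom-projective-generator}. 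This misidentifies what has to be shown: after transport through $\kappa^{-1}$ the required vanishing is, for each fixed cohomological degree $k=i-j$, the vanishing of $\Ext^k$ into an injective object for $k\neq 0$, uniformly in the internal shift; no relation between Frobenius weights and cohomological degree is involved, and a purity statement for $\Hom^{\bullet}(\nabla_x,\nabla_z)$ (which would enter via a $\nabla$-filtration of $\mathcal{I}_y$) is in fact false off the diagonal by Corollary \ref{cor:weights-Ext-standard}. Your fallback induction on the Bruhat order also leaves the identification of the cokernel $Q_x$ unresolved, which is precisely the hard part of that route. In short: you already have the key fact (injectivity of $\mathfrak{I}_y$ in the graded category) from the identification step, but you fail to use it, and the substitute argument you sketch would not close.
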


\begin{proof}
We have already constructed all the functors in our diagram. By definition, $\mathfrak{P}_x$, $\mathfrak{M}$, $\mathfrak{L}_x$ and $\mathfrak{S}_x$ are graded modules. We have also observed in the proof of Lemma \ref{lem:Kdag} that $\kappa(\mathfrak{S}_x) \cong \mathfrak{P}_x$. By Theorem \ref{thm:MKD-standard}, $\mathfrak{N}_x$ is sent to $\nabla_x$ in $\Perv_{(B)}(X,\F)$, hence it is a graded right $K$-module. Now we observe that
\[
\Ext^i_{\Modfrz K}(\mathfrak{S}_x, \mathfrak{I}_y \lan j \ran) \cong \Ext^{i-j}_{\Modfrz K^{\dag}}(\mathfrak{P}_x, \mathfrak{L}_y \lan -j \ran)
\]
vanishes unless $i=j=0$. This implies that $\mathfrak{I}_x$ is the injective hull of $\mathfrak{S}_x$ in $\Modfrz K$, which in turn implies that it is sent to $\mathcal{I}_x$ in $\Perv_{(B)}(X,\F)$.

It remains to show that $\mathfrak{M}_x$ is a Koszul module. However, for any $i,j$ the $\F$-vector space
\[
\Ext^i_{\Modfr K^{\dag}}(\mathfrak{M}_x,\mathfrak{L}_y \lan j \ran) \cong \Ext^{i-j}_{\Modfrz K}(\mathfrak{N}_x, \mathfrak{I}_y \lan -j \ran)
\]
vanishes unless $i-j=0$ since $\mathfrak{I}_y$ is injective. Then the claim follows from \cite[Proposition 2.14.2]{BGS}.
\end{proof}


\end{document}